\documentclass[11pt,letterpaper]{amsart}
\usepackage[utf8]{inputenc}
\usepackage[T1]{fontenc}
\usepackage{amssymb}
\usepackage{graphicx}
\usepackage{url}
\usepackage{enumerate}
\usepackage{enumitem}
\usepackage{verbatim}
\usepackage{color}
\usepackage{tikz}
\usetikzlibrary{arrows.meta,bending}
\usetikzlibrary{decorations.pathmorphing}
\usetikzlibrary{decorations.markings}
\usepackage{array}
\usepackage{tikz-cd}
\usepackage{amsmath}
\usepackage{amsthm}
\usepackage[colorlinks=true,linkcolor=purple,citecolor=violet]{hyperref}
\usepackage{mathtools}
\usepackage{bbm}
\usepackage[capitalise, noabbrev]{cleveref}
\usepackage{fullpage}
\usepackage{lmodern}
\usepackage{mathrsfs}
\usepackage{esint}

\usetikzlibrary{positioning,arrows.meta}

\DeclareMathOperator{\bS}{\mathbb{S}}
\DeclareMathOperator{\bR}{\mathbb{R}}

\newcommand{\Ent}{\mathrm{Ent}}

\usepackage{lipsum}

\newtheorem{theorem}{Theorem}[section]
\newtheorem{lemma}[theorem]{Lemma}
\newtheorem{corollary}[theorem]{Corollary}
\newtheorem{proposition}[theorem]{Proposition}

\newtheorem{remark}[theorem]{Remark}

\theoremstyle{definition}

\title{Boundary Value Problems for the $SU(\infty)$-Toda Equation via the Onofri Inequality}
\author{Sun-Yung Alice Chang and Hongyi Liu}
\date{}

\newcommand{\Addresses}{%
	\bigskip
	\footnotesize
	
	\noindent
	\textsc{Department of Mathematics, Princeton University,
		Princeton, NJ 08544, USA.}\par\nopagebreak
	\noindent
	\textit{E-mail address}: 
	\href{mailto:syachang@math.princeton.edu}
	{\texttt{syachang@math.princeton.edu}}
	
	\medskip
	
	\noindent
	\textsc{Department of Mathematics, University of California, Irvine,
		Irvine, CA 92697, USA.}\par\nopagebreak
	\noindent
	\textit{E-mail address}: 
	\href{mailto:hongyi.liu@uci.edu}
	{\texttt{hongyi.liu@uci.edu}}
}

\begin{document}
	
	\begin{abstract}
		We formulate and solve a degenerate elliptic boundary value problem for the $SU(\infty)$-Toda
		equation on $[0,\frac12)\times\bS^2$. This constructs anti-self-dual Poincar\'e--Einstein metrics with circle symmetry on the $4$-ball and on complex line bundles over $\bS^2$.
	\end{abstract}

\maketitle

\tableofcontents

\section{Introduction}

We study boundary value problems for the $SU(\infty)$-Toda equation that arise in the construction of four-dimensional Einstein metrics.

Let $(\bS^2,g_{\bS^2})\subset (\bR^3,g_{\bR^3})$ be the standard round sphere with Gauss curvature 1, and let $d\mu$ be its normalized area measure, so that $\oint_{\bS^2}d\mu=1$. We write $\nabla$ and $\Delta=\mathrm{div}\nabla$ for the gradient and Laplacian in the metric $g_{\bS^2}$. The coordinate functions $x_i$ satisfy $\Delta x_i=-2x_i$, $i=1,2,3$. Let $x\mapsto-x$ denote the antipodal map on $\bS^2$.

We consider the boundary value problem for the $SU(\infty)$-Toda equation
\begin{align}\label{eq:intro_toda}
	\Delta w+(e^w)_{\xi\xi}=2,\quad (\xi,x)\in[0,\frac12)\times\bS^2.
\end{align}
The $SU(\infty)$ Toda equation has appeared extensively in four-dimensional differential geometry; see, for example, \cite{LeBrun1991Toda,Ward1990,Tod1997,Calderbank2000,CalderbankTod2001,Tod2006}. For a detailed account of the geometric reduction leading to \eqref{eq:intro_toda} in the normalization used here, see \cite[Sections 2.1 and 2.4]{LiLiu2025}.

We prescribe a function $\psi\in C^{2,\alpha}(\bS^2)$, $0<\alpha<1$, by
\begin{align}\label{eq:intro_toda_boundary}
	w(0,x)=\psi(x).
\end{align}

We assume that $w$ degenerates at $\xi=\frac12$ in the sense that
\begin{align}\label{eq:intro_degenerate_end}
	\lim\limits_{\xi\rightarrow\frac12}\oint_{\bS^2}e^{w(\xi,x)}\,d\mu=0.
\end{align}

Integrating \eqref{eq:intro_toda} over $\bS^2$ and using \eqref{eq:intro_degenerate_end} gives
\begin{align}\label{eq:intro_spherical_mean}
	\oint_{\bS^2}e^{w(\xi,x)}\,d\mu=(\frac12-\xi)(c-\xi).
\end{align}
Since $e^w>0$ for $0\leq\xi<\frac12$, we must have $c\geq\frac12$. In particular, the boundary data necessarily satisfy
\begin{align}\label{eq:intro_boundary_restriction}
	\oint_{\bS^2}e^\psi\,d\mu\geq\frac14.
\end{align}

\subsection{Main analytic results}

The necessary condition \eqref{eq:intro_boundary_restriction} turns out to be sufficient for the existence of a solution. Moreover, the equality and strict inequality cases determine its asymptotic behavior at the degenerate end. We prove the following existence, uniqueness, and asymptotic classification.

\begin{theorem}\label{thm:intro_asymptotics}
Every classical solution $w$ of \eqref{eq:intro_toda}--\eqref{eq:intro_toda_boundary} satisfying \eqref{eq:intro_degenerate_end} has one of the following two asymptotic behaviors as $\xi\rightarrow\frac12$, uniformly in $x\in\bS^2$:

Case 1:
\begin{align}\label{eq:intro_ball_end}
	\oint_{\bS^2}e^\psi\,d\mu=\frac14,\quad w=2\log(\frac12-\xi)+O(1).
\end{align}

Case 2:
\begin{align}\label{eq:intro_line_end}
	\oint_{\bS^2}e^\psi\,d\mu>\frac14,\quad w=\log(\frac12-\xi)+O(1).
\end{align}
Moreover, every boundary value $\psi\in C^{2,\alpha}(\bS^2)$ satisfying \eqref{eq:intro_boundary_restriction} admits exactly one such solution.
\end{theorem}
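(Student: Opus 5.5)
The plan is to reduce the problem to the one-dimensional structure provided by the spherical mean identity \eqref{eq:intro_spherical_mean}, and then to control the oscillation of $w$ on each slice $\{\xi\}\times\bS^2$ by the Onofri inequality. Write $u=e^w$. Integrating \eqref{eq:intro_toda} over $\bS^2$ gives $\frac{d^2}{d\xi^2}\oint u\,d\mu=2$. Together with \eqref{eq:intro_degenerate_end}, this yields $\oint u\,d\mu=(\tfrac12-\xi)(c-\xi)$ with $c$ fixed by $\tfrac12 c=\oint e^\psi d\mu$. Thus $c=\tfrac12$ exactly in Case 1 and $c>\tfrac12$ in Case 2. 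The spherical mean therefore already vanishes like $(\tfrac12-\xi)^2$ in Case 1 and like $(\tfrac12-\xi)$ in Case 2. The asymptotic part of the theorem then reduces to proving the pointwise Harnack-type bound
\[
\sup_{x\in\bS^2} w(\xi,x)-\log\oint_{\bS^2} e^{w(\xi,\cdot)}\,d\mu \le C,\qquad \inf_{x\in\bS^2} w(\xi,x)-\log\oint_{\bS^2}e^{w(\xi,\cdot)}\,d\mu\ge -C,
\]
uniformly as $\xi\to\tfrac12$.

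To get this bound I would use the Onofri inequality $\log\oint e^{v}d\mu\le \oint v\,d\mu+\frac14\oint|\nabla v|^2d\mu$ on each slice, combined with an energy functional in $\xi$. I would multiply \eqref{eq:intro_toda} by suitable test functions (for example $w_\xi$, or $w-\bar w$ with $\bar w(\xi)=\oint w\,d\mu$) and integrate over $\bS^2$. The aim is a differential inequality for $E(\xi)=\frac14\oint|\nabla w|^2d\mu+\oint w\,d\mu-\log\oint e^wd\mu\ge0$ (nonnegative by Onofri), showing that $E$ stays bounded up to $\xi=\tfrac12$. After rescaling $\xi$ near the end, the equation looks like a Liouville-type equation $\Delta w = 2 - (\text{const})e^{w-\log\bar u}$ on $\bS^2$. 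Bounded energy plus the Brezis--Merle / Moser--Trudinger bound $\oint e^{p(w-\bar w)}\le C$ for some $p>1$ then gives, via elliptic estimates for $\Delta$ on the sphere, an $L^\infty$ bound on $w-\bar w$. Jensen then controls $\bar w-\log\oint e^w$ from both sides, which gives the stated $O(1)$ asymptotics in each case.

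Existence would come from a continuity/degree argument. The explicit solutions are $w=\log((\tfrac12-\xi)(c-\xi))$ for constant $\psi$, and the family of Möbius-pulled-back round data should also give explicit solutions. I would deform $\psi$ to a constant within the admissible class $\{\oint e^\psi\ge\tfrac14\}$, keeping $c$ fixed. At each stage I would solve regularized nondegenerate problems on $[0,\tfrac12-\epsilon]$ with Dirichlet data at both ends matching the model profile, and pass $\epsilon\to0$ using the uniform a priori bounds above (equivalently, Schauder estimates for the quasilinear operator after the change of variable that straightens the degeneracy). Uniqueness would follow from a comparison principle. For two solutions $w_1,w_2$ with the same $\psi$, the difference satisfies a linear equation $\Delta\phi+(a\phi)_{\xi\xi}=0$ with $a>0$. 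Both solutions have the same $c$ and hence the same asymptotic rate, so the ratio $e^{w_1-w_2}$ is bounded above and below. One would then apply the maximum principle to $e^{w_1}-e^{w_2}$, which satisfies $\Delta$-type degenerate equations with zero data at $\xi=0$ and vanishing relative data at $\xi=\tfrac12$.

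I expect the main obstacle to be the uniform oscillation bound at the degenerate end, especially in Case 1. There the mean vanishes quadratically, and concentration of $e^w$ at a point of $\bS^2$ (bubbling, as with conformal factors of the round sphere) is exactly the borderline that Onofri's sharp constant must rule out. The first thing I would try is to show that the Onofri deficit $E(\xi)$ is monotone in $\xi$ using the equation, so that it is bounded by its value at $\xi=0$, which depends only on $\psi$.
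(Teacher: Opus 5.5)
Your spherical-mean computation and the reduction to a slicewise Harnack bound are correct. The gap is that everything rests on an estimate you do not establish, and you miss the idea that makes the classification easy. Uniqueness holds in the whole class of solutions satisfying \eqref{eq:intro_degenerate_end}, with no asymptotic information. Write $h=e^{w_1}-e^{w_2}$ and $w_1-w_2=\Theta h$ with $\Theta>0$; then $h_{\xi\xi}+\Delta(\Theta h)=0$, and Kato's inequality makes $P(\xi)=\oint_{\bS^2}(e^{w_1}-e^{w_2})_+\,d\mu$ convex on $(0,\frac12)$. Since $P(0)=0$ and $0\le P\le\oint_{\bS^2}e^{w_1}\,d\mu\to0$ as $\xi\to\frac12$, we get $P\equiv0$ (Lemma \ref{lem:comparison}, Corollary \ref{cor:uniqueness}). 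Hence the first assertion of the theorem follows from existence alone: once one solution with the stated behavior is built for each admissible $\psi$, every solution coincides with it. A priori estimates are then needed only along a continuity path, for solutions already known to behave well at the end. Your uniqueness step instead assumes the asymptotics (a bounded ratio $e^{w_1-w_2}$), so it is circular within your plan. Moreover, a pointwise maximum principle does not apply directly to $\Delta\phi+(a\phi)_{\xi\xi}=0$: at a positive maximum the term $a_{\xi\xi}\phi$ has no sign. That is why the comparison must be done after integrating over $\bS^2$.

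The Harnack bound itself would also fail by the mechanism you propose. Your $E(\xi)$ is invariant under $w\mapsto w\circ\phi+\log J_\phi$ and under adding functions of $\xi$. The explicit solutions $w_a=2\log(\frac12-\xi)+\log(\mathfrak{j}_a/4)$ have $E\equiv0$, yet $\sup_x w_a-\log\oint_{\bS^2}e^{w_a}\,d\mu=2\log(\sqrt{1+|a|^2}+|a|)\to\infty$, and the lower bound degenerates symmetrically. So no bound on $E$, monotone or not, controls $\|\nabla w\|_{L^2}$ or $\oint e^{p(w-\bar w)}\,d\mu$. The sharp Onofri constant cannot rule out bubbling, because the bubbles are exactly its extremals. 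The constants must depend on $\psi$ through a bound on the limit parameter $|a|$, which the paper obtains from $\Ent(\varphi)$. In addition, in $s=-\log(1-2\xi)$ the slice equation is $\Delta\log u=2-2u-(u_{ss}-3u_s)$. Treating it as a Liouville equation presupposes the $s$-derivative control that is to be proved, and in Case 2 the end is not of Liouville type at all.

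For Case 1 the paper works instead with the dual, conformally invariant deficit $D=\Ent(u)-\|u-1\|_{H^{-1}}^2$, which satisfies $D'+2I\le0$. Coercivity of $Q$ comes from Hersch balancing and the Chang--Yang inequality, and together these give exponential decay of the entropy. The $C^0$ bounds then come from arguments that use the $s$-direction, not from slicewise elliptic estimates: an ODE-based Moser iteration for $F_q$, a three-dimensional De Giorgi lemma for the tail, and comparison with explicit solutions near $\xi=0$. Case 2 is lifted to a uniformly elliptic problem for $V=e^w/(\frac12-\xi)$ on $B_{\sqrt2}\times\bS^2$.

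Finally, your truncated problems with model-profile data at $\xi=\frac12-\epsilon$ ignore that the Case 1 limit $\mathfrak{j}_a$ is not known in advance. On decaying weighted spaces the linearization has a three-dimensional cokernel from the degree-one modes. The paper removes it by enlarging the domain by $(1-e^{-s})T_{\mathfrak{j}_a}\mathcal{J}$ and letting $a$ vary in the implicit function theorem.
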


We prove Theorem \ref{thm:intro_asymptotics} by treating the two cases separately. Case 1 is more degenerate and constitutes the main analytic part of the paper. After a change of variables, the problem reduces to one on an infinite cylinder. The main issues are to establish  a priori $C^0$ estimates, prove existence for arbitrary boundary data satisfying the integral condition, and control the behavior at the degenerate end. An additional difficulty comes from the conformal invariance of the equation.

Indeed, if $\phi$ is a conformal transformation of $\bS^2$ and $J_\phi$ is its Jacobian with respect to $d\mu$, then
\begin{align}\label{eq:intro_conformal_action}
	w_\phi(\xi,x)=w(\xi,\phi(x))+\log J_\phi(x)
\end{align}
is again a solution of \eqref{eq:intro_toda}, with boundary value $\psi\circ\phi+\log J_\phi$. The following theorem establishes existence and a priori estimates and describes the asymptotic behavior.

\begin{theorem}[Case 1]\label{thm:intro_general_case}
	For any $\psi\in C^{2,\alpha}(\bS^2)$, $0<\alpha<1$, satisfying
	$$\oint_{\bS^2}e^\psi\,d\mu=\frac14,$$
	there is a unique classical solution $w$ of \eqref{eq:intro_toda}--\eqref{eq:intro_toda_boundary} satisfying \eqref{eq:intro_degenerate_end}. Moreover, there is a unique $a\in\bR^3$ such that, with
	$$\mathfrak{j}_a(x)=\frac{1}{(\sqrt{1+|a|^2}-a\cdot x)^2},$$
	\begin{align}\label{eq:intro_general_uniform_limit}
		\lim\limits_{\xi\rightarrow\frac12}\left(w(\xi,x)-2\log(\frac12-\xi)\right)=\log \mathfrak{j}_a(x),
	\end{align}
	uniformly in $x\in\bS^2$. We have $\|w\|_{C^{2,\alpha}([0,\frac13)\times\bS^2)}\leq C$, and for all nonnegative integers $j,m$, we have
	\begin{align}\label{eq:intro_general_limit}
		\left|\partial_\xi^j\nabla^m\left(w-2\log(\frac12-\xi)-\log \mathfrak{j}_a\right)\right|
		\leq C_{j,m}(\frac12-\xi)^{\delta-j},\quad  \forall\,\, \frac14 \leq \xi<\frac12. 
	\end{align}
	Here,
	$$\delta=\frac1{22},\quad C_{j,m}=C(j,m,\sup\psi,\inf\psi),\quad C=C(\|\psi\|_{C^{2,\alpha}(\bS^2)},\alpha,\sup\psi,\inf\psi).$$
\end{theorem}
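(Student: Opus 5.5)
Set $w = 2\log(\frac12-\xi) + u$ and change variables to $t = -\log(1-2\xi)\in[0,\infty)$, so that $\frac12-\xi = \frac12 e^{-t}$. A direct computation shows that $(e^w)_{\xi\xi}$ becomes $\big((e^u)_{tt} - 3(e^u)_t + 2e^u\big)$ up to a constant factor. The equation then reads
\begin{align*}
\Delta u + \tfrac14 e^{2t}\cdot 4e^{-2t}\big((e^u)_{tt}-3(e^u)_t+2e^u\big) = 2
\end{align*}
on the half-cylinder $[0,\infty)\times\bS^2$, with $u(0)=\psi+2\log 2$. The constraint $\oint e^\psi d\mu=\frac14$ becomes $\oint e^{u(t)}d\mu\equiv 1$ for all $t$, because \eqref{eq:intro_spherical_mean} with $c=\frac12$ gives $(\frac12-\xi)^2$.

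The stationary solutions, i.e.\ those with $u_t=0$, solve $\Delta u+2e^u=2$ with $\oint e^u=1$. These are exactly $\log\mathfrak{j}_a$, the conformal factors of $g_{\bS^2}$. This explains \eqref{eq:intro_general_uniform_limit}: the problem is a flow that should converge to a point of the noncompact family $\{\log\mathfrak j_a\}$.

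\textbf{Step 1: a monotone functional.} Multiply by $u_t$ and integrate over $\bS^2$. The natural quantity is the Onofri functional
\begin{align*}
J(u) = \oint \tfrac12|\nabla u|^2 + 2\oint u - 2\log\oint e^u \ \ge 0,
\end{align*}
which is $\ge0$ by Onofri's inequality, with equality exactly on $\{\log\mathfrak j_a\}$. I expect an identity of the form
\begin{align*}
\frac{d}{dt}\Big(J(u) - \oint e^u u_t^2\Big) = -c\oint e^u u_t^2 .
\end{align*}
Together with $J\ge0$, this gives $\int_0^\infty\oint e^u u_t^2\,dt<\infty$ and bounds $J(u(t))$. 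The $\oint e^u u_t^2$ term is handled by choosing the sign of the $t$-weights.

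\textbf{Step 2: $C^0$ estimates modulo conformal group.} This is the main obstacle. A bound on $J$ does not control $\sup u$, because of conformal invariance (bubbling toward $\log\mathfrak j_a$ with $|a|\to\infty$). To remove the degeneracy, fix the center of mass $\oint x e^{u}$ on each slab by composing with a time-dependent conformal map $\phi_t$. Aubin's improved Onofri inequality then gives $\oint e^{p u}$ bounds for some $p>1$. Moser iteration and elliptic estimates for the degenerate operator $e^u\partial_t^2+\Delta$ (nondegenerate once $e^u$ is bounded above and below) upgrade these to two-sided bounds on $u\circ\phi_t$.

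I then need $\phi_t$ to vary slowly. The dissipation $\int\oint e^uu_t^2<\infty$ controls $\frac{d}{dt}$ of the center of mass, which should keep $|a(t)|$ bounded. Alternatively, one can argue by contradiction: blow up near a bubble and use a Liouville classification. The dependence of the constants only on $\sup\psi,\inf\psi$ comes from the boundary slab estimate.

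\textbf{Step 3: existence.} Solve on finite cylinders $[0,T]$, with Dirichlet data $\psi$ at $t=0$ and a constant (or $\log\mathfrak j_0$) at $t=T$. These are uniformly elliptic problems, and the maximum principle plus Leray--Schauder degree or continuity gives a solution. The uniform bounds from Steps 1--2 are $T$-independent, so letting $T\to\infty$ gives a solution. Uniqueness follows from a comparison-principle argument: the operator $w\mapsto \Delta w+(e^w)_{\xi\xi}$ is monotone in $e^w$. Subtract two solutions and multiply by $(e^{w_1}-e^{w_2})$, using the matching asymptotics at $\xi=\frac12$.

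\textbf{Step 4: convergence rate.} The limit set of $u(t)$ consists of Onofri minimizers. Near the manifold $\{\log\mathfrak j_a\}$, $J$ satisfies a Łojasiewicz-type inequality with some exponent; the degenerate directions are the conformal ones, but they form an integrable manifold. Combined with Step 1, this gives $\int_t^\infty\|u_t\|\le Ce^{-\delta' t}$, hence convergence to a unique $\log\mathfrak j_a$ at an exponential rate. Translating $e^{-\delta' t}$ back gives $(\frac12-\xi)^{\delta}$. The specific value $\delta=\frac1{22}$ should emerge from interpolating the $L^2$ decay into $C^0$ via higher-order bounds. The higher derivative bounds \eqref{eq:intro_general_limit} follow from interior Schauder estimates on unit $t$-slabs, where the equation is uniformly elliptic, with each $\partial_\xi$ costing a factor $(\frac12-\xi)^{-1}$.
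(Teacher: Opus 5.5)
Your reduction is right: your $e^u$ is the paper's $u$, every slice has mass one, the stationary family is $\log\mathfrak{j}_a$, and $\delta=\frac1{22}$ does come from interpolating exponential $L^2$ decay. But the step you call the main obstacle is left open, and Step~1 cannot supply what it needs.

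\textbf{Step~1 is wrong.} Multiplying by $u_t$ and using $\oint e^u\,d\mu\equiv1$ gives
\begin{align*}
\frac{d}{dt}\Big(J(u)-\tfrac12\oint e^u u_t^2\,d\mu\Big)=-3\oint e^u u_t^2\,d\mu+\tfrac12\oint e^u u_t^3\,d\mu .
\end{align*}
The cubic term has no sign. Since the problem is elliptic, the bracket is also indefinite, so any dissipation bound already requires knowing the behavior at $t=\infty$.

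\textbf{Step~2 is not closed.} Even granting $\int_0^\infty\oint e^uu_t^2\,d\mu\,dt<\infty$, an $L^2$-in-time bound on the speed does not stop the conformal parameter from drifting to infinity; you need the speed to be integrable in $t$. The equation is equivariant only under $t$-independent conformal maps, so renormalizing each slice by $\phi_t$ introduces $\dot\phi_t$ terms. Moser iteration cannot absorb these without exactly that missing control. Step~4 does not rescue this: a \L{}ojasiewicz--Simon argument presupposes the compactness modulo conformal maps that Step~2 was meant to provide.

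\textbf{What the paper does instead: decay first, pointwise bounds afterwards.} After one fixed conformal change making the limit $1$, it works with $U=e^u$ and the conformally invariant deficit $D=\Ent(U)-\|U-1\|_{H^{-1}}^2$. Set $I=\|U_s\|_{H^{-1}}^2$ and $Q=\oint|\nabla(\log U-2f)|^2\,d\mu$ with $-\Delta f=U-1$. There are exact identities $D''-3D'=Q+\oint U_s^2/U\,d\mu-2I$ and $(D'+2I)'-3(D'+2I)\ge0$. Vanishing at infinity then gives $D'+2I\le0$ and $I\le2D$.

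Hersch balancing enters only in two static inequalities on $\bS^2$, where the invariance of $D$ and $Q$ makes it harmless:
\begin{itemize}
\item $Q\ge\kappa$ when $D$ lies in a compact range $[\epsilon,M]$, by compactness plus Chang--Yang;
\item $Q\ge(8-\epsilon_0)D$ for small $D$, which is the explicit \L{}ojasiewicz inequality you anticipate.
\end{itemize}
ODE comparison then gives $D\le Ce^{-s/2}$, with $C$ depending only on an upper bound for $D(\varphi)$. Hence $I$ decays exponentially, and integrating gives $E,\Ent\le Ce^{-s/2}$. This exponential control of the $H^{-1}$ speed is what rules out drift. It then feeds a truncated Gagliardo--Nirenberg $L^2$ bound, Moser iteration, a De Giorgi tail lower bound, and comparison. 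The limit parameter is bounded through $\Ent(\mathfrak{j}_a)\le2\Ent(\varphi)+C$.

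\textbf{Existence.} The paper uses a continuity method in weighted H\"older spaces. The domain is enlarged by $(1-e^{-s})T_{\mathfrak{j}}\mathcal{J}$, because the degree-one modes make $L_1:X_\delta\to Y_\delta$ non-surjective. Your finite-cylinder scheme would need every estimate above with boundary terms at $t=T$. There the imposed value $\log\mathfrak{j}_0=0$ differs from the true limit $\log\mathfrak{j}_a$ when $a\neq0$, creating a boundary layer.

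\textbf{Uniqueness.} Your multiplier fails, since $\nabla(e^{w_1}-e^{w_2})\cdot\nabla(w_1-w_2)$ has no sign. The paper instead shows, via Kato's inequality, that $\oint(e^{w_1}-e^{w_2})_+\,d\mu$ is convex in $\xi$.
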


The functions $\mathfrak{j}_a$ are the Jacobians of conformal transformations of $\bS^2$ and satisfy $$\Delta\log \mathfrak{j}_a+2\mathfrak{j}_a=2.$$

\begin{remark}\label{rmk:intro_even_case}
	It is natural to ask when the limit on the right-hand side of \eqref{eq:intro_general_uniform_limit} vanishes, or equivalently, when $a=0$. One sufficient condition is that the boundary data $\psi$ are even, that is, $\psi(-x)=\psi(x)$. By uniqueness, $w$ is also even, meaning $w(\xi,-x)=w(\xi,x)$, and hence $\mathfrak{j}_a$ is even, which forces $a=0$. On the other hand, the weaker balancing condition
	$
	\oint_{\bS^2}e^\psi x_i \,d\mu=0,\,  i=1,2,3,
	$
	is not sufficient to ensure that $a=0$.
\end{remark}

Our analysis of Case 1 begins with even boundary data. In this setting, an improved Onofri inequality yields entropy decay and the a priori estimates needed for the existence theory. We then extend the argument to general boundary data by replacing the entropy estimate with an estimate for a conformally invariant deficit involving the entropy and the $H^{-1}$ energy, analogous to the logarithmic Hardy--Littlewood--Sobolev deficit; see \cite{CarlenLoss1992,Beckner1993,Okikiolu2008,CarlenFigalli2013} and the references therein for related literature.

Case 2 is less degenerate and reduces, after a change of variables, to a problem on a bounded domain in higher dimension. This allows us to obtain the required a priori estimates and existence theory more directly.

\begin{theorem}[Case 2]\label{thm:intro_line_case}
	For any $\psi\in C^{2,\alpha}(\bS^2)$, $0<\alpha<1$, satisfying
	$$\oint_{\bS^2}e^\psi\,d\mu>\frac14,$$
	there is a unique classical solution $w$ of \eqref{eq:intro_toda}--\eqref{eq:intro_toda_boundary} satisfying \eqref{eq:intro_degenerate_end}. Moreover, 
	$$w(\xi,x)-\log(\frac12-\xi)$$
	extends to a function in $C^{2,\alpha}([0,\frac12]\times\bS^2)\cap C^\infty((0,\frac12]\times\bS^2)$.
	We have $\|w\|_{C^{2,\alpha}([0,\frac13)\times\bS^2)}\leq C$, and for all nonnegative integers $j,m$, we have
	\begin{align}\label{eq:intro_line_derivative_bound}
		\left|\partial_\xi^j\nabla^m\left(w-\log(\frac12-\xi)\right)\right|\leq C_{j,m},\quad \forall\,\,\frac14\leq\xi<\frac12.
	\end{align}
	Here, with $\epsilon=\oint_{\bS^2}e^\psi\,d\mu-\frac14>0$,
	$$C_{j,m}=C(j,m,\sup\psi,\inf\psi,\epsilon),\quad C=C(\|\psi\|_{C^{2,\alpha}(\bS^2)},\alpha,\sup\psi,\inf\psi,\epsilon).$$
\end{theorem}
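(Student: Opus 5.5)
The plan is to remove the degeneracy by a change of variables, so that Case 2 becomes a quasilinear elliptic problem on a bounded smooth domain of dimension six, and then to run a standard a priori estimate plus continuity argument.

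\textbf{Change of variables.} Set $s=\frac12-\xi$ and write
$$w=\log s+u, \qquad v=e^u,$$
so that $e^w=sv$. A direct computation gives $(sv)_{ss}=s\,v_{ss}+2v_s$. Now substitute $s=r^2/4$, so that $\partial_s=\frac2r\partial_r$. Then
$$s\,v_{ss}+2v_s=v_{rr}+\frac3r v_r ,$$
which is the radial Laplacian $\Delta_4$ in $\bR^4$. Hence \eqref{eq:intro_toda} becomes
\begin{align*}
\Delta u+\Delta_4 e^u=2 \quad\text{on } B^4_{\sqrt2}\times\bS^2 .
\end{align*}
Here $\xi=\frac12$ corresponds to $r=0$, which is now an interior point, and $\xi=0$ corresponds to $r=\sqrt2$, where $u=\psi+\log 2$. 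The constant $c$ in \eqref{eq:intro_spherical_mean} equals $2\oint e^\psi\,d\mu$. The spherical mean therefore becomes
$$\oint_{\bS^2} e^u\,d\mu=2\epsilon+s\ \geq\ 2\epsilon>0 .$$
The operator is uniformly elliptic as soon as $u$ is bounded above and below. Every statement of Theorem~\ref{thm:intro_line_case} then reduces to a $C^0$ bound for $u$:
\begin{enumerate}[label=(\roman*)]
\item Schauder theory in the six variables of $B^4\times\bS^2$, applied to $u$ as a function of $r^2$, gives $C^{2,\alpha}$ regularity up to $r=\sqrt2$ and smoothness near $r=0$.
\item Since $u$ depends on $r$ only through $r^2=4s$, this smoothness translates into the claimed regularity of $w-\log(\frac12-\xi)$ on $[0,\frac12]\times\bS^2$, and into the derivative bounds \eqref{eq:intro_line_derivative_bound}.
\end{enumerate}

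\textbf{$C^0$ bounds.} The upper bound comes from the maximum principle. At an interior maximum of $u$ we have $\Delta u\le0$ and
$$\Delta_4 e^u=e^u\bigl(\Delta_4u+|\nabla_4u|^2\bigr)\le0,$$
so the left-hand side is $\le0<2$, a contradiction. Hence $\sup u\le \sup\psi+\log 2$.

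The lower bound is the main obstacle. A constant barrier of the form $e^w=s(c_--\xi)$ would need $\inf e^\psi\ge c_-/2$, which is not assumed. My plan has three steps.
\begin{enumerate}[label=(\alph*)]
\item Near $r=\sqrt2$, build a local lower barrier from $\inf\psi$, using that $u=\psi+\log 2$ on the boundary and that $\partial_r$ is nondegenerate there.
\item In the interior, view the equation as a linear equation in divergence form for $v=e^u$:
$$\Delta_4 v+\operatorname{div}_{\bS^2}\bigl(v^{-1}\nabla v\bigr)=2 .$$
Its coefficients are bounded above by the sup bound. Apply a weak Harnack inequality (Moser iteration, with weights made uniform by the upper bound) to the supersolution $v$.
\item Combine the weak Harnack inequality with the mean lower bound $\oint v\,d\mu\ge 2\epsilon$ on each slice to get $v\ge c(\epsilon,\sup\psi,\inf\psi)>0$.
\end{enumerate}
The delicate point is that the ellipticity in the $\bS^2$ directions depends on $v$ itself. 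If Moser iteration is awkward here, I would first try working with $u$ and a test function of the form $e^{-ku}$ to derive an $L^p$ bound for $e^{-u}$, and then conclude by local maximum-principle estimates.

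\textbf{Uniqueness and existence.} For uniqueness, let $v_1,v_2$ be two solutions with the same boundary data. Test the difference of the equations against $(v_1-v_2)_+$. This gives
$$\int |\nabla_4(v_1-v_2)_+|^2+\int\nabla(\log v_1-\log v_2)\cdot\nabla(v_1-v_2)_+\le0,$$
and both terms are nonnegative by the monotonicity of $\log$. Hence $v_1\le v_2$, and by symmetry $v_1=v_2$. Conversely, any solution of the transformed problem satisfies \eqref{eq:intro_degenerate_end}, and \eqref{eq:intro_spherical_mean} with $c>\frac12$ forces $w-\log s$ to stay bounded, so every solution of the original problem arises this way. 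For existence, use the continuity method along the path
$$\psi_t=\log\bigl((1-t)c_0+te^\psi\bigr), \qquad c_0=\oint_{\bS^2}e^\psi\,d\mu>\tfrac14 .$$
This path keeps $\oint e^{\psi_t}\,d\mu=c_0$, so $\epsilon$ is fixed, and it keeps $\inf\psi_t,\sup\psi_t$ controlled. At $t=0$ the explicit solution $e^w=(\frac12-\xi)(2c_0-\xi)$ is available. Openness follows from invertibility of the linearization $\Delta\phi+\Delta_4(e^u\phi)$ with Dirichlet condition, which holds by the same maximum-principle argument applied to $e^u\phi$. Closedness follows from the uniform bounds above.
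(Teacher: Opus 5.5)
\textbf{The main gap is the interior lower bound.} This is the real content of the theorem, and your steps (b)--(c) cannot produce it, for two reasons.

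First, the sup bound $v\leq M_1=\sup\varphi$ gives $v^{-1}\geq M_1^{-1}$. That is a \emph{lower} bound on the $\bS^2$-coefficient in $\Delta_{\bR^4}v+\mathrm{div}_{\bS^2}(v^{-1}\nabla v)=2$. Its upper bound is $(\inf v)^{-1}$, so the constants in any Moser or De Giorgi weak Harnack inequality depend on the quantity you are trying to estimate.

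Second, and more decisively, uniform ellipticity together with the source term and the slice-mean bound would not suffice anyway. Take the frozen operator $\Delta_{\bR^4}+\Delta_{\bS^2}$. Let $\Phi$ be the radial solution of $\Delta_{\bR^4}\Phi=2\Phi$ with $\Phi(0)=a-\delta$, where $0<\delta<a\leq\frac12$. Then $v=a+\frac14|Z|^2+\Phi(|Z|)\,x_1$ has the following properties:
\begin{itemize}
\item it solves $\Delta_{\bR^4}v+\Delta_{\bS^2}v=2$ and is positive;
\item its slice mean is $a+\frac14|Z|^2$;
\item its boundary values are bounded above and below independently of $\delta$;
\item yet $\min v=\delta$.
\end{itemize}
This is why a weak Harnack inequality for $Lv\leq 2$ only yields a lower bound of the form $c$ times an average of $v$, minus a positive constant coming from the source. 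Your fallback with $e^{-ku}$ meets the same obstruction: testing with $\eta^2e^{-ku}$ puts $+2\int\eta^2e^{-ku}$ on the right-hand side, which is exactly the quantity to be controlled. Positivity has to come from the specific nonlinear coefficient $v^{-1}$, which treating it as a given coefficient discards.

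\textbf{What the paper uses instead} is a maximum principle for an auxiliary function built from a slicewise potential. Let $f$ be the mean-zero solution on each slice of $-\Delta_{\bS^2}f=V-\overline V$, where $V$ is your $v$.
\begin{itemize}
\item The upper bound gives $\|f\|_{L^\infty}\leq C\sup\varphi$.
\item The equation together with $\Delta_{\bR^4}\overline V=2$ gives $\Delta_{\bR^4}f=\log V-\overline{\log V}$.
\item The operator $\Delta_{\bR^4}+V^{-1}\Delta_{\bS^2}$ has no zeroth-order term, so it obeys the maximum principle whatever $\inf V$ is.
\end{itemize}
At an interior minimum of $\log V-Af$, Jensen's inequality and dropping a gradient term give
\[
0\leq 2V^{-1}-A\log V+A\log\overline V+A-A\,\overline V\,V^{-1}.
\]
Choose $A\cdot 2\epsilon=3$. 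Since $\overline V\geq 2\epsilon$, the last term is at most $-3V^{-1}$ and absorbs the source contribution $2V^{-1}$. Thus $V$ is bounded below at that point, and the bound on $\|f\|_{L^\infty}$ transfers the estimate to the minimum of $V$.

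\textbf{Three further steps also need repair.}
\begin{itemize}
\item Your uniqueness argument fails because $\nabla(\log v_1-\log v_2)\cdot\nabla(v_1-v_2)_+$ is not sign-definite. At a point where $v_1=2$, $v_2=1$, $\nabla v_1=2e$ and $\nabla v_2=\frac32 e$, for a unit vector $e$, it equals $-\frac14$; monotonicity of $\log$ does not pass to gradients. The paper writes $\log v_1-\log v_2=\Theta(v_1-v_2)$ with $\Theta>0$ and uses Kato's inequality to show that $\oint_{\bS^2}(v_1-v_2)_+\,d\mu$ is subharmonic on $B_{\sqrt2}$. This uniqueness also gives the radial symmetry your step (ii) needs.
\item Injectivity of the linearization does not follow from a pointwise maximum principle, since zeroth-order terms of indefinite sign appear. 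The same $L^1$ argument handles it.
\item \eqref{eq:intro_spherical_mean} does not by itself make $w-\log(\frac12-\xi)$ bounded for an arbitrary classical solution. Uniqueness in the full class should therefore be proved directly in the $\xi$ variable, as in Corollary \ref{cor:uniqueness}.
\end{itemize}
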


\subsection{Geometric interpretation}

We next explain the geometric origin of \eqref{eq:intro_toda} and the consequences of the preceding analytic results.

LeBrun \cite{LeBrun1991Toda} showed that scalar-flat K\"ahler metrics in real dimension four admitting a Killing field are locally described by the $SU(\infty)$-Toda equation. Przanowski \cite{Przanowski1991} obtained local normal forms for anti-self-dual Einstein metrics with nonzero scalar curvature admitting a Killing field. Tod \cite{Tod2006} subsequently showed that an anti-self-dual Einstein metric with nonzero scalar curvature and such a symmetry is locally described by a solution of the $SU(\infty)$-Toda equation and is locally conformal to a scalar-flat K\"ahler metric; see also Dunajski--Tod \cite[Proposition 3.1]{DunajskiTod2010}. In the following, we use the normalization and global circle-bundle formulation of Li--Liu \cite[Section 2]{LiLiu2025}.

Given a solution $w$ of \eqref{eq:intro_toda}, set
\begin{align}\label{eq:intro_W}
	W=1-\frac{\xi}{2}w_\xi.
\end{align}
The LeBrun ansatz associates to $w$ a principal $\bS^1$-bundle over $(0,\frac12)\times\bS^2$, equipped with a connection one-form $\eta$ determined by $w$. On the total space of this bundle, define
\begin{align}\label{eq:intro_metric_ansatz}
	g=Wd\xi^2+W^{-1}\eta^2+We^wg_{\bS^2},
	\quad
	h=\xi^{-2}g.
\end{align}
Then $g$ is scalar-flat K\"ahler and $h$ is anti-self-dual Einstein, normalized by
\begin{align}\label{eq:intro_einstein}
	\mathrm{Ric}(h)=-3h.
\end{align}
For the solutions constructed above, Lemma \ref{lem:pe_W_positive} gives $W>0$, so both $g$ and $h$ are Riemannian metrics.

Assume now that the boundary data $\psi$ is smooth. The regularity of $w$ up to $\xi=0$ implies that the circle bundle and the metric $g$ extend smoothly to $\xi=0$. The added boundary is the circle bundle over $\bS^2$ defined by $\xi=0$. Since $W=1$ and $w=\psi$ there, the induced metric is
$$
g|_{\xi=0}=(\eta|_{\xi=0})^2+e^\psi g_{\bS^2}.
$$
Thus $h=\xi^{-2}g$ is conformally compact at $\xi=0$, with conformal infinity represented by $g|_{\xi=0}$.

At the other end, the derivative estimates in
Theorems \ref{thm:intro_general_case} and
\ref{thm:intro_line_case} imply that $W^{-1}\to0$
as $\xi\to\frac12$, so the circle fibers collapse.
The two cases lead to different smooth completions.

In Case 1, the circle bundle is the Hopf fibration
$$
\bS^1\hookrightarrow\bS^3\rightarrow\bS^2.
$$
With circle period $2\pi$, the level sets $\{\xi=\mathrm{const}\}\cong\bS^3$ collapse to a point as $\xi\to\frac12$. The asymptotics in Theorem \ref{thm:intro_general_case} imply that the metric extends smoothly across this point, which is an isolated fixed point of the $\bS^1$-action. The resulting manifold is diffeomorphic to $B^4$, and $h$ defines a complete Poincar\'e--Einstein metric on $B^4$. This is proved in Proposition \ref{prop:ball_metric_extension}; see also Li--Liu \cite[Section 3.3]{LiLiu2025} for the corresponding geometric model.

In Case 2, only the circle fibers collapse at $\xi=\frac12$, and the fixed point set is a copy of $\bS^2$. With circle period $\pi$, the metric extends smoothly across this fixed point set to the total space of the complex line bundle $\mathcal{O}(-m)\to\bS^2$, provided
\begin{align}\label{eq:intro_bundle_degree}
	\oint_{\bS^2}e^\psi\,d\mu=\frac{m-1}{4},
	\quad
	m\geq3 \text{ an integer}.
\end{align}
The condition \eqref{eq:intro_bundle_degree} expresses the compatibility between the boundary data and the bundle degree. The resulting metric $h$ is again complete and Poincar\'e--Einstein. The smooth extension is proved in Proposition \ref{prop:line_bundle_metric_extension}; see also \cite[Section 4.4]{LiLiu2025}.

\subsection{Relation to previous work}

The $B^4$ case is related to earlier work on self-dual Einstein fillings of $\bS^3$. LeBrun \cite{LeBrun1991Duke} formulated the positive frequency conjecture, which concerns the local structure, near the round conformal class on $\bS^3$, of conformal structures arising as conformal infinities of self-dual or anti-self-dual Einstein metrics on $B^4$. Biquard \cite{Biquard2002} subsequently proved this conjecture. In contrast, our construction is global within the $\bS^1$-invariant setting: the boundary data are not required to be a small perturbation of the round metric, and the nonlinear $SU(\infty)$-Toda equation produces complete anti-self-dual Poincar\'e--Einstein metrics on $B^4$ for the full class of boundary data considered here.

The line-bundle case is closely related to the work of Calderbank--Singer \cite{CalderbankSinger2004}, who constructed complete anti-self-dual Einstein metrics of negative scalar curvature on resolutions of cyclic quotient singularities, including the complex line bundles $\mathcal{O}(-m)\to\bS^2$ for $m\geq3$. Their construction assumes $\bS^1\times\bS^1$ symmetry. Under this stronger symmetry assumption, the Calderbank--Pedersen description \cite{CalderbankPedersen2002} reduces the anti-self-dual Einstein equation to a linear Laplace eigenvalue equation on the hyperbolic plane. In contrast, our construction assumes only $\bS^1$ symmetry and is governed by the nonlinear $SU(\infty)$-Toda equation.

The present work also builds on Li--Liu \cite{LiLiu2025}, where the Toda boundary value problem and the associated circle-bundle construction were studied over compact Riemann surfaces of positive genus. In contrast, the spherical case has no apparent maximum principle and admits an additional asymptotic regime leading to a smooth completion on $B^4$; this also differs from the cusp geometries arising over $T^2$ and higher-genus surfaces in Li--Liu \cite{LiLiuCusps2026}.

The paper is organized as follows. Section \ref{sec:comparison} establishes comparison estimates and uniqueness. Sections \ref{sec:even_estimates} and \ref{sec:even_existence} give the a priori estimates and the existence proof for even boundary data. Section \ref{sec:general_case} treats Case 1 without the evenness assumption. Section \ref{sec:line_bundle_case} treats Case 2. Finally, Section \ref{sec:pe_metrics} constructs the associated Poincar\'e--Einstein metrics and proves smooth extension across the fixed point sets of the circle action.

\subsection*{Acknowledgments}

The authors thank Lihe Wang for discussions on De Giorgi estimates and Paul Yang for suggesting that they first study the case of even boundary data. The second author also thanks Mingyang Li, Jingrui Cheng, and Peter Sarnak for useful conversations, and Song Sun for his encouragement. He is grateful to his wife, Jiasu Wang, for her unwavering support and constant encouragement throughout this work.

\section{Comparison estimates and uniqueness}\label{sec:comparison}

We first prove a comparison principle for \eqref{eq:intro_toda}. 

\begin{lemma}[Comparison principle]\label{lem:comparison}
	Let $w_1,w_2$ be smooth solutions of \eqref{eq:intro_toda} on $(a,b)\times\bS^2$. Assume
	\begin{align}\label{eq:comparison_boundary_order}
		\oint_{\bS^2}(e^{w_1}-e^{w_2})_+\,d\mu\rightarrow0
		\quad\text{as }\xi\rightarrow a\text{ and as }\xi\rightarrow b.
	\end{align}
	Then $w_1\leq w_2$ on $(a,b)\times\bS^2$. 
\end{lemma}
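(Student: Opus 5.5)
The plan is to show that the function
$$f(\xi)=\oint_{\bS^2}(e^{w_1}-e^{w_2})_+\,d\mu,\qquad \xi\in(a,b),$$
is convex in $\xi$. Since $f\ge0$ and $f\to0$ at both endpoints by \eqref{eq:comparison_boundary_order}, convexity forces $f\equiv0$. Then $e^{w_1}\le e^{w_2}$, that is, $w_1\le w_2$ on $(a,b)\times\bS^2$.

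Write $u_i=e^{w_i}$, $v=u_1-u_2$ and $z=w_1-w_2$, and note that $\{v>0\}=\{z>0\}$. Subtracting the two copies of \eqref{eq:intro_toda} gives $\Delta z+v_{\xi\xi}=0$.

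\textbf{Step 1 (energy identity in $x$).} Fix a smooth nondecreasing $H_\varepsilon:\bR\to[0,1]$ with $H_\varepsilon=0$ on $(-\infty,0]$ and $H_\varepsilon\to\mathbbm{1}_{(0,\infty)}$ pointwise. For each fixed $\xi$, multiply the difference equation by $H_\varepsilon(z)$ and integrate over $\bS^2$, integrating by parts in $x$:
$$\oint_{\bS^2}H_\varepsilon(z)\,v_{\xi\xi}\,d\mu=-\oint_{\bS^2}H_\varepsilon(z)\,\Delta z\,d\mu=\oint_{\bS^2}H_\varepsilon'(z)|\nabla z|^2\,d\mu\ge0 .$$
The key point is to test against a function of $z$, not of $v$. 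The gradient term is then a perfect square with a sign, whereas testing with a function of $v$ would produce $\nabla z\cdot\nabla v$, which has no sign.

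\textbf{Step 2 (pass to the limit).} Multiply by a test function $\varphi\in C_c^\infty((a,b))$ with $\varphi\ge0$ and integrate in $\xi$. On the compact support of $\varphi$ everything is smooth and bounded, so dominated convergence gives, as $\varepsilon\to0$,
$$\int\varphi(\xi)\oint_{\bS^2}\mathbbm{1}_{\{v>0\}}\,v_{\xi\xi}\,d\mu\,d\xi\ge0 .$$

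\textbf{Step 3 (Kato's inequality in $\xi$).} For each fixed $x$, the one-variable Kato inequality $(v_+)_{\xi\xi}\ge\mathbbm{1}_{\{v>0\}}v_{\xi\xi}$ holds in $\mathcal D'$. To prove it, use smooth convex approximations $\Phi_\delta$ of $t\mapsto t_+$ with $\Phi_\delta'\to\mathbbm{1}_{(0,\infty)}$. Then $\partial_\xi^2\Phi_\delta(v)=\Phi_\delta''(v)v_\xi^2+\Phi_\delta'(v)v_{\xi\xi}\ge\Phi_\delta'(v)v_{\xi\xi}$, and one lets $\delta\to0$. Integrating this over $x$ and combining with Step 2 yields
$$\int\varphi''\,f\,d\xi\ \ge\ \int\varphi\oint_{\bS^2}\mathbbm{1}_{\{v>0\}}v_{\xi\xi}\,d\mu\,d\xi\ \ge 0,$$
so $f''\ge0$ in $\mathcal D'((a,b))$. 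Since $f$ is continuous, it is convex. A convex nonnegative function on $(a,b)$ tending to $0$ at both ends vanishes identically, which finishes the proof.

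The main technical point is the sign bookkeeping, in particular the mismatch between the variable $z$ (natural for the $\Delta$ term) and $v$ (natural for the $\partial_\xi^2$ term). This is resolved because $\{z>0\}=\{v>0\}$: in the limit both indicator functions coincide, including the convention $H_\varepsilon=0$ on $\{z\le0\}$, which makes the level set $\{v=0\}$ harmless. One must also justify the limits in Steps 2 and 3 only on compact subintervals, since no regularity is assumed at $\xi=a,b$. The boundary behaviour enters solely through the limits of $f$.
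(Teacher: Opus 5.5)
Your proof is correct and is essentially the paper's argument: the paper writes $w_1-w_2=\Theta h$ with $h=e^{w_1}-e^{w_2}$ and $\Theta>0$, applies the same two Kato-type inequalities (one to $h$ in $\xi$, one to $\Theta h=w_1-w_2$ in $x$), uses $\{\Theta h>0\}=\{h>0\}$, and then takes test functions $\chi=\eta(\xi)$ to obtain convexity of $\oint h_+\,d\mu$. You work with $z=w_1-w_2$ directly and with $\xi$-dependent test functions from the start, which is a slightly more streamlined presentation of the same proof.
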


\begin{proof}
	Set $h=e^{w_1}-e^{w_2}$. By the mean value formula, $w_1-w_2=\Theta h$ for some positive smooth function $\Theta$ on $(a,b)\times\bS^2$. Note that for the arguments below, we do not require bounds on $\Theta$ near the endpoints. Subtracting the equations gives
	\begin{align}\label{eq:comparison_principle_difference_equation}
		L_\Theta h=h_{\xi\xi}+\Delta(\Theta h)=0.
	\end{align}
	We will show that
	\begin{align}
		L_\Theta h_+=\partial_{\xi\xi}h_++\Delta(\Theta h_+)\geq0
	\end{align}
	in the distributional sense: for any $0\leq\chi\in C_c^\infty((a,b)\times\bS^2)$, we have
	\begin{align}\label{eq:comparison_principle_distribution_explaination}
		\int_a^b\oint_{\bS^2}h_+(\chi_{\xi\xi}+\Theta\Delta\chi)\,d\mu\,d\xi\geq0.
	\end{align}
	This is a form of Kato's inequality; we include the proof for completeness.
	Let $\beta_{\epsilon}\in C^\infty(\bR)$ be a smooth convex approximation of $r_+=\max\{r,0\}$, satisfying
	\begin{align}
		\beta_{\epsilon}(r)=0 \, (r\leq0),\quad \beta_{\epsilon}'(r)=1 \, (r\geq\epsilon),\quad 0\leq\beta_{\epsilon}'(r)\leq1.
	\end{align}
	Then $|\beta_{\epsilon}(r)-r_+|\leq\epsilon$ for every $r\in\bR$, and $\beta_\epsilon'(r)\rightarrow\boldsymbol{1}_{\{r>0\}}$ at every $r$, including $r=0$.
	The chain rule gives
	\begin{align}
		\partial_{\xi\xi}\beta_\epsilon(h)=\beta_\epsilon'(h)h_{\xi\xi}+\beta_\epsilon''(h)h_\xi^2\geq\beta_\epsilon'(h)h_{\xi\xi},
	\end{align}
	\begin{align}
		\Delta\beta_\epsilon(\Theta h)=\beta_\epsilon'(\Theta h)\Delta(\Theta h)+\beta_\epsilon''(\Theta h)|\nabla(\Theta h)|^2\geq\beta_\epsilon'(\Theta h)\Delta(\Theta h).
	\end{align}
	Multiply both inequalities by $\chi$, integrate by parts, and pass to the limit as $\epsilon\rightarrow0$. Since $\Theta>0$, we have $(\Theta h)_+=\Theta h_+$ and $\{\Theta h>0\}=\{h>0\}$. By the dominated convergence theorem,
	\begin{align}
		\int_a^b\oint_{\bS^2}h_+\chi_{\xi\xi}\,d\mu\,d\xi\geq\int_a^b\oint_{\bS^2}\boldsymbol{1}_{\{h>0\}}\chi h_{\xi\xi}\,d\mu\,d\xi,
	\end{align}
	\begin{align}
		\int_a^b\oint_{\bS^2}\Theta h_+\Delta\chi\,d\mu\,d\xi\geq\int_a^b\oint_{\bS^2}\boldsymbol{1}_{\{h>0\}}\chi\Delta(\Theta h)\,d\mu\,d\xi.
	\end{align}
	Adding these inequalities and using \eqref{eq:comparison_principle_difference_equation}, we obtain \eqref{eq:comparison_principle_distribution_explaination}.
	Let $P(\xi)=\oint_{\bS^2}h_+(\xi,x)\,d\mu$, and take $\chi=\eta(\xi)$ with $0\leq\eta\in C_c^\infty((a,b))$. By \eqref{eq:comparison_principle_distribution_explaination}, we get
	$P''(\xi)\geq0$ in the distributional sense on $(a,b)$. By \eqref{eq:comparison_boundary_order}, $P$ extends continuously to $[a,b]$ with $P(a)=P(b)=0$. Since $P$ is convex and nonnegative, $P\equiv0$. Hence $h_+\equiv0$, so $w_1\leq w_2$.
\end{proof}

For the following corollaries, solutions of \eqref{eq:intro_toda} are assumed to be continuous up to $\xi=0$ and smooth in the interior.

The following corollary gives a uniform lower bound for $w$ in a fixed collar neighborhood of $\{\xi=0\}$, with both the width and the bound depending only on a lower bound for $\psi$.

\begin{corollary}\label{cor:boundary_lower}
	Let $w$ satisfy \eqref{eq:intro_toda}--\eqref{eq:intro_toda_boundary}, and let $0<a\leq\frac12$ satisfy $2\log a\leq\inf_{x\in\bS^2}\psi(x)$. Then
	\begin{align}\label{eq:toda_boundary_lower}
		w(\xi,x)\geq2\log(a-\xi),\quad 0\leq\xi<a,\quad x\in\bS^2.
	\end{align}
\end{corollary}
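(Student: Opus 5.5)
We compare $w$ with the explicit subsolution-type barrier $w_2(\xi,x)=2\log(a-\xi)$ using the comparison principle of Lemma \ref{lem:comparison} on the slab $(0,a)\times\bS^2$. This barrier is independent of $x$, so $\Delta w_2=0$. Also $e^{w_2}=(a-\xi)^2$, so $(e^{w_2})_{\xi\xi}=2$. Hence $w_2$ is an exact smooth solution of \eqref{eq:intro_toda} on $(0,a)\times\bS^2$. We apply Lemma \ref{lem:comparison} with $w_1=w_2$ (the barrier) and $w_2=w$, so the role of the smaller function is played by the barrier.

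We check the boundary condition \eqref{eq:comparison_boundary_order}, which requires $\oint_{\bS^2}\bigl((a-\xi)^2-e^{w(\xi,x)}\bigr)_+\,d\mu\to0$ at both ends of $(0,a)$.

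At $\xi\to0$, continuity of $w$ up to $\xi=0$ gives uniform convergence $w(\xi,\cdot)\to\psi$ on the compact $\bS^2$. The integrand therefore tends to $\bigl(a^2-e^{\psi}\bigr)_+$, and this vanishes because $e^\psi\ge a^2$ by the hypothesis $2\log a\le\inf\psi$.

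At $\xi\to a$, the integrand is bounded by $(a-\xi)^2$, since $e^w>0$, and this tends to $0$. Here we need $a<\frac12$ or $a=\frac12$. In either case $[0,a)\subset[0,\frac12)$, so $w$ is smooth on $(0,a)\times\bS^2$ and finite there, and the bound $(a-\xi)^2\to0$ holds regardless of how $w$ behaves as $\xi\to a$.

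Lemma \ref{lem:comparison} then yields $2\log(a-\xi)\le w$ on $(0,a)\times\bS^2$. At $\xi=0$ the inequality is exactly the hypothesis $2\log a\le\psi$. This gives \eqref{eq:toda_boundary_lower}.

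There is no serious obstacle. The only points needing care are that the lemma is stated for open intervals with limits at the ends, which is why the endpoint behaviour is handled through the integral condition, and that the endpoint $\xi\to0$ uses only continuity of $w$ up to the boundary, not smoothness there.
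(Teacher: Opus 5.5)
Your proposal is correct and follows the same route as the paper: compare $w$ with the exact $x$-independent solution $2\log(a-\xi)$ via Lemma \ref{lem:comparison} on $(0,a)\times\bS^2$, using $(a-\xi)^2\to0$ at $\xi\to a$. The only addition is that you spell out the $\xi\to0$ endpoint condition, using continuity of $w$ up to $\xi=0$ and $e^\psi\geq a^2$, which the paper leaves implicit in the remark $q(0,x)\leq\psi(x)$.
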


\begin{proof}
	The function $q(\xi,x)=2\log(a-\xi)$ solves \eqref{eq:intro_toda} on $(0,a)\times\bS^2$, and $q(0,x)\leq\psi(x)$. Since
	$$0\leq\oint_{\bS^2}(e^q-e^w)_+\,d\mu\leq(a-\xi)^2\rightarrow0\quad\text{as }\xi\rightarrow a,$$
	Lemma \ref{lem:comparison} gives $q\leq w$.
\end{proof}

The next corollary shows how a lower bound at a slice $\xi=\xi_2$ propagates towards $\xi=0$.

\begin{corollary}\label{cor:comparison_lower}
	Let $w$ satisfy \eqref{eq:intro_toda}--\eqref{eq:intro_toda_boundary} and \eqref{eq:intro_degenerate_end}. For any $0<\xi_1<\xi_2<\frac12$ and $x\in\bS^2$, we have
	\begin{align}\label{eq:toda_comparison_lower}
		w(\xi_1,x)\geq w(\xi_2,x)+2\log\frac{\xi_1}{\xi_2}.
	\end{align}
\end{corollary}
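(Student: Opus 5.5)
Fix $\xi_2\in(0,\tfrac12)$. We compare $w$ on $(0,\xi_2)\times\bS^2$ with the rescaled comparison function
$$q(\xi,x)=w(\xi_2,x)+2\log\frac{\xi}{\xi_2}.$$
This $q$ does not itself solve \eqref{eq:intro_toda}, since $\Delta q=\Delta w(\xi_2,\cdot)$ is not $2$. So instead we use the scaling symmetry of the equation: if $w$ solves \eqref{eq:intro_toda}, then for $\lambda>0$ the function $w^\lambda(\xi,x)=w(\lambda\xi,x)-2\log\lambda$ also solves it, because $(e^{w^\lambda})_{\xi\xi}=\lambda^{-2}\lambda^2(e^w)_{\xi\xi}(\lambda\xi)$. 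Taking $\lambda=\xi_2/\xi_1>1$, we get $w^\lambda(\xi_1,x)=w(\xi_2,x)+2\log\frac{\xi_1}{\xi_2}$, so \eqref{eq:toda_comparison_lower} is exactly the inequality $w^\lambda\le w$ at $\xi=\xi_1$. The plan is to prove $w^\lambda\le w$ on $(0,\tfrac1{2\lambda})\times\bS^2$ by Lemma \ref{lem:comparison}, with $w_1=w^\lambda$, $w_2=w$, $a=0$, $b=\tfrac1{2\lambda}$. Since $\xi_1=\xi_2/\lambda<\tfrac1{2\lambda}$, this point lies in the interval.

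Two endpoint conditions must be checked.

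\textbf{At $\xi=\tfrac1{2\lambda}$.} Here $\lambda\xi\to\tfrac12$, so by \eqref{eq:intro_degenerate_end}
$$\oint_{\bS^2} e^{w^\lambda}\,d\mu=\lambda^{-2}\oint_{\bS^2} e^{w(\lambda\xi,\cdot)}\,d\mu\to0.$$
Hence $\oint_{\bS^2}(e^{w^\lambda}-e^{w})_+\,d\mu\le\oint_{\bS^2} e^{w^\lambda}\,d\mu\to0$.

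\textbf{At $\xi=0$.} We have $e^{w^\lambda}-e^w\to \lambda^{-2}e^{\psi}-e^{\psi}<0$, so the positive part vanishes in the limit. This needs $w$ to be continuous up to $\xi=0$, which is assumed, and uniform convergence on $\bS^2$ by compactness.

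Both $w$ and $w^\lambda$ are smooth in the interior of the interval, so Lemma \ref{lem:comparison} applies and gives $w^\lambda\le w$ on $(0,\tfrac1{2\lambda})\times\bS^2$. Evaluating at $\xi=\xi_1$ yields \eqref{eq:toda_comparison_lower}.

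The only real obstacle is noticing the scaling invariance. Once it is seen, the remaining steps are the routine endpoint checks above. The strictness $\lambda>1$ is what gives the strict inequality at $\xi=0$, though a non-strict inequality there would also suffice for the lemma.
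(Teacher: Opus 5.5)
Your proof is correct and is essentially the paper's argument. The paper uses the stretched solution $w_\lambda(\xi,x)=w(\xi/\lambda,x)+2\log\lambda$ on the original interval $(0,\frac12)$ and proves $w\le w_\lambda$ via Lemma \ref{lem:comparison}; this is your inequality $w^\lambda\le w$ on $(0,\frac{1}{2\lambda})$ after the substitution $\xi\mapsto\lambda\xi$, with the same two endpoint checks (degeneracy at the right end, $\lambda>1$ at $\xi=0$).
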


\begin{proof}
	For $\lambda>1$, define $w_\lambda(\xi,x)=w(\xi/\lambda,x)+2\log\lambda$. Then $w_\lambda$ solves \eqref{eq:intro_toda}, and $w_\lambda(0,x)\geq w(0,x)$. Moreover,
	$$0\leq\oint_{\bS^2}(e^w-e^{w_\lambda})_+\,d\mu\leq\oint_{\bS^2}e^w\,d\mu\rightarrow0\quad\text{as }\xi\rightarrow\frac12.$$
	Lemma \ref{lem:comparison} gives $w\leq w_\lambda$. Taking $\lambda=\xi_2/\xi_1$ and evaluating at $\xi=\xi_2$ proves the result.
\end{proof}

A quantitative lower estimate near the degenerate end $\xi=\frac12$ requires further arguments.

\begin{corollary}\label{cor:uniqueness}
	Let $w_1,w_2$ solve \eqref{eq:intro_toda} with $w_1(0,\cdot)=w_2(0,\cdot)$, each satisfying \eqref{eq:intro_degenerate_end}. Then $w_1=w_2$.
\end{corollary}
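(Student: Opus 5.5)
The plan is to apply Lemma \ref{lem:comparison} twice, once with the pair $(w_1,w_2)$ and once with $(w_2,w_1)$, on the interval $(a,b)=(0,\frac12)$. This gives $w_1\le w_2$ and $w_2\le w_1$, hence $w_1=w_2$. So everything reduces to verifying the boundary hypothesis \eqref{eq:comparison_boundary_order} at both ends, namely
$$\oint_{\bS^2}(e^{w_1}-e^{w_2})_+\,d\mu\to0\quad\text{as }\xi\to0\text{ and as }\xi\to\tfrac12,$$
together with the symmetric statement with $w_1$ and $w_2$ interchanged.

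At the degenerate end $\xi\to\frac12$, the pointwise bound $0\le(e^{w_1}-e^{w_2})_+\le e^{w_1}$ gives
$$\oint_{\bS^2}(e^{w_1}-e^{w_2})_+\,d\mu\le\oint_{\bS^2}e^{w_1}\,d\mu.$$
The right-hand side tends to $0$ by \eqref{eq:intro_degenerate_end}, exactly as in the proof of Corollary \ref{cor:comparison_lower}. The same bound with the roles of $w_1$ and $w_2$ swapped handles the other ordering.

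At $\xi=0$, the standing assumption is that both solutions are continuous up to $\xi=0$. Since $\bS^2$ is compact, $w_1$ and $w_2$ are uniformly continuous on $[0,\frac14]\times\bS^2$. It follows that $e^{w_1}-e^{w_2}\to e^{\psi}-e^{\psi}=0$ uniformly in $x$ as $\xi\to0$, so the integral of its positive part tends to $0$.

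Both hypotheses of Lemma \ref{lem:comparison} therefore hold in both orderings, and the conclusion $w_1=w_2$ follows. The only point needing care is the uniform convergence at $\xi=0$, which comes from compactness together with continuity up to the boundary. Smoothness in the interior, required by the lemma, is part of the standing assumptions on solutions.
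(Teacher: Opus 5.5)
Your proposal is correct and follows the paper's proof essentially verbatim. You apply Lemma \ref{lem:comparison} on $(0,\frac12)$ in both orderings, using the common boundary value at $\xi=0$ and the bound $\oint_{\bS^2}(e^{w_1}-e^{w_2})_+\,d\mu\le\oint_{\bS^2}e^{w_1}\,d\mu\to0$ at $\xi=\frac12$. Your uniform-continuity justification at $\xi=0$ just spells out a step the paper leaves implicit.
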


\begin{proof}
	The common boundary value gives
	$$\oint_{\bS^2}(e^{w_1}-e^{w_2})_+\,d\mu\rightarrow0\quad\text{as }\xi\rightarrow0.$$
	At the other endpoint,
	$$0\leq\oint_{\bS^2}(e^{w_1}-e^{w_2})_+\,d\mu\leq\oint_{\bS^2}e^{w_1}\,d\mu\rightarrow0\quad\text{as }\xi\rightarrow\frac12.$$
	Lemma \ref{lem:comparison}, applied with $a=0$ and $b=\frac12$, gives $w_1\leq w_2$. Interchanging $w_1$ and $w_2$ and repeating the argument gives $w_2\leq w_1$, hence $w_1=w_2$.
\end{proof}

\begin{corollary}\label{cor:evenness}
	Let $w$ satisfy \eqref{eq:intro_toda}--\eqref{eq:intro_toda_boundary} and \eqref{eq:intro_degenerate_end}. If $\psi$ is even, then $w$ is even.
\end{corollary}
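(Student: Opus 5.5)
The plan is to combine the invariance of \eqref{eq:intro_toda} under isometries of $\bS^2$ with the uniqueness statement of Corollary \ref{cor:uniqueness}. Define $\tilde w(\xi,x)=w(\xi,-x)$. The antipodal map $x\mapsto -x$ is an isometry of $(\bS^2,g_{\bS^2})$. It therefore commutes with $\Delta$, and $\Delta\tilde w(\xi,x)=(\Delta w)(\xi,-x)$. Since the substitution does not involve $\xi$, we also have $(e^{\tilde w})_{\xi\xi}(\xi,x)=(e^{w})_{\xi\xi}(\xi,-x)$. Hence $\tilde w$ solves \eqref{eq:intro_toda} on $(0,\frac12)\times\bS^2$. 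It is smooth in the interior and continuous up to $\xi=0$, because $w$ is. Equivalently, this is the case $\phi(x)=-x$ of \eqref{eq:intro_conformal_action}, where $J_\phi\equiv1$.

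Next I will check the boundary conditions. At $\xi=0$ we get $\tilde w(0,x)=\psi(-x)=\psi(x)$ by evenness, so $\tilde w(0,\cdot)=w(0,\cdot)$. For the degenerate end, the antipodal map preserves the normalized measure $d\mu$. Therefore $\oint_{\bS^2}e^{\tilde w(\xi,x)}\,d\mu=\oint_{\bS^2}e^{w(\xi,x)}\,d\mu\to0$ as $\xi\to\frac12$, which is \eqref{eq:intro_degenerate_end} for $\tilde w$.

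Corollary \ref{cor:uniqueness}, applied to $w_1=w$ and $w_2=\tilde w$, then gives $w=\tilde w$, that is, $w(\xi,-x)=w(\xi,x)$. There is no genuine obstacle in this argument. The only point needing care is that $\tilde w$ has the same regularity as $w$ (interior smoothness and continuity up to $\xi=0$), so that Lemma \ref{lem:comparison} underlying Corollary \ref{cor:uniqueness} applies. This is immediate because $x\mapsto-x$ is a smooth diffeomorphism.
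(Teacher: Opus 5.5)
Your proposal is correct and follows the paper's proof: you check that $w(\xi,-x)$ solves the same equation with the same boundary value and degenerate-end condition, then apply Corollary~\ref{cor:uniqueness}. Your extra checks (the antipodal map is a measure-preserving isometry commuting with $\Delta$, and it preserves the required regularity) are exactly the details the paper leaves implicit.
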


\begin{proof}
	Since $\psi$ is even, $w(\xi,-x)$ satisfies the same equation, boundary data, and integral limit as $w(\xi,x)$. Corollary \ref{cor:uniqueness} therefore gives $w(\xi,-x)=w(\xi,x)$.
\end{proof}

To analyze Case 1, it is convenient to remove the degeneracy at $\xi=\frac12$ by making the following change of variables:
\begin{align}\label{eq:ball_transform}
	s=-\log(1-2\xi),\quad e^w=(\frac12-\xi)^2u(s,x),\quad \varphi(x)=4e^{\psi(x)}.
\end{align}
The $SU(\infty)$-Toda equation becomes
\begin{align}\label{eq:s_pde}
\Delta\log u+u_{ss}-3u_s+2u=2
\end{align}
on $[0,\infty)\times\bS^2$, with boundary value $u(0,x)=\varphi(x)$. The asymptotic condition in \eqref{eq:intro_ball_end} is equivalent to $u$ being bounded above and bounded away from zero. For the estimates below, it suffices to assume that $u>0$ is bounded, continuous up to $s=0$ and smooth in the interior.

Integrating \eqref{eq:s_pde} over $\bS^2$, we find that the function $a(s)=\oint_{\bS^2}u(s,x)\,d\mu$ satisfies $a''-3a'+2a=2$. Since $a$ is bounded, we conclude $a(s)\equiv 1$, so
\begin{align}\label{eq:mass_one}
\oint_{\bS^2}u(s,x)\,d\mu=1,\quad \oint_{\bS^2}\varphi\,d\mu=1.
\end{align}
Set
\begin{align}\label{eq:boundary_constants}
m_0=\inf_{x\in\bS^2}\varphi(x),\quad M_0=\sup_{x\in\bS^2}\varphi(x),\quad \Ent_0=\oint_{\bS^2}\varphi\log\varphi\,d\mu.
\end{align}
Then $0<m_0\leq 1\leq M_0$ and $0\leq \Ent_0\leq\log M_0$.

Corollary \ref{cor:comparison_lower}, with $\xi_1=(1-e^{-s})/2$ and $\xi_2=(1-e^{-S})/2$, gives, for $0<s<S$,
\begin{align}\label{eq:comparison_lower}
	u(s,x)\geq\left(\frac{e^s-1}{e^S-1}\right)^2u(S,x).
\end{align}
Similarly, taking $a=\sqrt{m_0}/2$ in Corollary \ref{cor:boundary_lower} gives, for $0\leq s\leq s_0=-\log(1-\sqrt{m_0}/2)$,
\begin{align}\label{eq:boundary_lower}
	u(s,x)\geq(1-(1-\sqrt{m_0})e^s)^2\geq\frac{m_0}{4}.
\end{align}

By \eqref{eq:ball_transform} and Corollary \ref{cor:uniqueness}, two positive solutions $u_1,u_2$ of \eqref{eq:s_pde}, continuous up to $s=0$ and smooth in the interior, coincide if they have the same boundary value and satisfy
\begin{align}\label{eq:uniqueness_growth}
	\lim\limits_{s\rightarrow\infty}e^{-2s}\oint_{\bS^2}u_i(s,x)\,d\mu=0,\quad i=1,2.
\end{align}
Indeed, \eqref{eq:ball_transform} identifies this condition with \eqref{eq:intro_degenerate_end}. In particular, uniqueness holds for bounded positive solutions. If $\varphi$ is even, then any solution satisfying \eqref{eq:uniqueness_growth} is even by Corollary \ref{cor:evenness}.

\section{A priori estimates for even solutions}\label{sec:even_estimates}

In this section, let $u>0$ be a solution of \eqref{eq:s_pde}, continuous up to $s=0$ and smooth in the interior. For convenience in the a priori estimates, we impose the stronger asymptotic condition
\begin{align}\label{eq:boundary_asymptotics}
u(0,x)=\varphi(x),\quad \lim\limits_{s\rightarrow\infty}\sup_{x\in\bS^2}|u(s,x)-1|=0.
\end{align}
For non-even boundary data $\varphi$, the limiting function need not be $1$; we treat this case in Section \ref{sec:general_case}.

We also assume that $u$ is even, that is, $u(s,-x)=u(s,x)$ for every $(s,x)$. Set
\begin{align}\label{eq:entropy_moments}
\Ent(s)=\oint_{\bS^2}u(s,x)\log u(s,x)d\mu,\quad F_q(s)=\oint_{\bS^2}u^q(s,x)d\mu.
\end{align}

By \eqref{eq:mass_one}, $\Ent(s)=\oint_{\bS^2}(u(s,x)\log u(s,x)-u(s,x)+1)d\mu \geq 0$, $\Ent(0)=\Ent_0$, and $F_q\geq 1$ for $q\geq 1$. For $1\leq p<q$, H\"older's inequality gives $F_p\leq F_q^{p/q}\leq F_q$. The asymptotic condition \eqref{eq:boundary_asymptotics} implies that $u$ is bounded, $\Ent(s)\rightarrow 0$, and $F_q(s)\rightarrow 1$ for each finite $q\geq 1$.

\subsection{$L^2$ bound and upper bound}

We begin with the following inequality for even functions on $\bS^2$.

\begin{lemma}[Gagliardo--Nirenberg inequality]\label{lem:improved_gns}
 There are universal constants $0<A_e<2$ and $B_e\geq 1$ such that every even function $\psi\in H^1(\bS^2)$ satisfies
 \begin{align}\label{eq:improved_gns}
	\oint_{\bS^2}\psi^4d\mu\leq A_e\left(\oint_{\bS^2}|\nabla\psi|^2d\mu\right)\left(\oint_{\bS^2}\psi^2d\mu\right)+B_e\left(\oint_{\bS^2}\psi^2d\mu\right)^2.
 \end{align}
\end{lemma}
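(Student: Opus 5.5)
We will prove the inequality by localization, and use evenness to gain a factor $\frac12$ over the constant one gets for general functions. The Euclidean input is the two-dimensional Gagliardo--Nirenberg (Ladyzhenskaya) inequality
$$\int_{\bR^2}f^4\,dx\leq C_{GN}\int_{\bR^2}|\nabla f|^2dx\int_{\bR^2}f^2dx .$$
Its sharp constant is $C_{GN}=2/\|Q\|_{L^2}^2$ (Weinstein), where $Q$ is the ground state of $-\Delta Q+Q=Q^3$.

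There is an elementary bound $C_{GN}\leq\frac1\pi$. It follows from the sharp $W^{1,1}\subset L^2$ (isoperimetric) inequality $\|g\|_2^2\leq\frac1{4\pi}\|\nabla g\|_1^2$ applied to $g=f^2$, together with Cauchy--Schwarz. We will need the strict inequality $C_{GN}<\frac1\pi$. This holds because $\|Q\|_2^2\approx 11.70>2\pi$; alternatively, equality in both steps would force $f^2$ to be a multiple of an indicator of a disk while $|\nabla f|$ is proportional to $|f|$, which is impossible. Converting to the normalized measure $d\mu=dA/4\pi$, a function supported in a small geodesic cap $D$ satisfies
$$\oint f^4d\mu\leq 4\pi C_{GN}(1+\epsilon)\oint|\nabla f|^2d\mu\oint_D f^2d\mu .$$
This comes from normal or stereographic coordinates on $D$, with metric distortion at most $1+\epsilon$ once the radius is small.

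Next we localize. We fix small $\epsilon>0$ and choose smooth $\zeta_j$ with $\sum_j\zeta_j^4\equiv1$. Each $\zeta_j$ is supported in a cap $D_j$ of small radius, so that $D_j\cap(-D_j)=\emptyset$, and we arrange the overlaps so that
$$\sum_j\zeta_j^2\leq1+\epsilon \quad\text{pointwise}.$$
To get the last property, we take the caps nearly disjoint with thin transition layers. If a pointwise bound is awkward, we average the construction over rotations of $\bS^2$, which spreads the transition weight.

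We set $g_j=\zeta_j\psi$, so that $\sum_j g_j^4=\psi^4$. For even $\psi$ we have $\oint_{D_j}\psi^2\,d\mu=\oint_{-D_j}\psi^2\,d\mu$, and since the two caps are disjoint, $\oint_{D_j}\psi^2\leq\frac12\oint\psi^2$. Applying the local inequality to each $g_j$ and summing gives
$$\oint\psi^4d\mu\leq 2\pi C_{GN}(1+\epsilon)\Big(\oint\psi^2\Big)\sum_j\oint|\nabla(\zeta_j\psi)|^2 .$$
To estimate the gradient sum we expand $|\nabla(\zeta_j\psi)|^2\leq(1+\epsilon)\zeta_j^2|\nabla\psi|^2+C_\epsilon|\nabla\zeta_j|^2\psi^2$. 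This yields
$$\sum_j\oint|\nabla(\zeta_j\psi)|^2\leq(1+\epsilon)^2\oint|\nabla\psi|^2+C'_\epsilon\oint\psi^2 ,$$
where $C'_\epsilon$ depends on the partition. Consequently
$$A_e=2\pi C_{GN}(1+\epsilon)^3,\qquad B_e=\max\{1,2\pi C_{GN}(1+\epsilon)C'_\epsilon\}.$$
Because $2\pi C_{GN}<2$, a small fixed $\epsilon$ gives $A_e<2$.

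We expect the main obstacle to be the numerical margin. With the elementary constant $C_{GN}=\frac1\pi$ the argument yields exactly $A_e=2(1+\epsilon)^3$, so everything hinges on the strict inequality $C_{GN}<\frac1\pi$. If we want to avoid quoting Weinstein's value, we would instead argue by contradiction via concentration compactness. Suppose a normalized sequence of even functions has ratio $\oint\psi^4\big/\oint|\nabla\psi|^2\oint\psi^2$ tending to $2$, with the $B$-term negligible. Then the functions must concentrate at the antipodal pair of points. Blowing up at one of them produces near-extremals for the Euclidean inequality with constant $\frac1\pi$, contradicting the strict inequality. The secondary technical point, the overlap bound $\sum\zeta_j^2\leq1+\epsilon$, is routine but must be done carefully, since it costs a multiplicative factor in $A_e$.
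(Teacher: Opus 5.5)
Your route is essentially the paper's: Weinstein's sharp Euclidean constant, localization to small caps, and evenness halving the effective area. The paper phrases the last step by descending $\psi$ to $\mathbb{RP}^2$, which has area $2\pi$, and quoting the standard localization there. This gives the same constant $2\pi C_{GN}=2/1.86225\ldots<2$ that you obtain from $\oint_{D_j}\psi^2\,d\mu\le\frac12\oint\psi^2\,d\mu$.

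However, one step cannot be carried out as you first state it, so your rotation-averaging fallback is required, not optional.

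\textbf{The pointwise overlap bound is impossible.} No family exists with $\sum_j\zeta_j^4\equiv1$, small supports, and $\sum_j\zeta_j^2\le1+\epsilon$ \emph{pointwise}. Squaring the last bound gives $\sum_{i\neq j}\zeta_i^2\zeta_j^2\le2\epsilon+\epsilon^2$. So at each point exactly one $\zeta_j^2$ is at least $(1+\epsilon)^{-1}$, and all the others are $O(\epsilon)$. The sets $\{\zeta_j^2>\frac12\}$ are then disjoint, open and closed, and they cover $\bS^2$. By connectedness, some $\zeta_j$ never vanishes, which contradicts its small support. Concretely, where two pieces cross over, $\zeta_1^4+\zeta_2^4=1$ pushes $\zeta_1^2+\zeta_2^2$ up to $\sqrt2$.

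\textbf{The averaging does repair it.} For every $R\in SO(3)$, your estimate holds for $\zeta_j\circ R$: the caps $R(D_j)$ are still disjoint from their antipodes, and the local constant is rotation invariant. Averaging over Haar measure leaves the left side unchanged. It replaces $\sum_j(\zeta_j\circ R)^2$ by the constant $\sum_j\oint\zeta_j^2\,d\mu$, which is at most $1+(\sqrt k-1)\mu(O)$, where $k$ is the overlap multiplicity and $O$ the overlap set. Thin transition layers make this at most $1+\epsilon$. The resulting large $|\nabla\zeta_j|$ only enters $B_e$, so your stated $A_e$ and $B_e$ then hold.

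\textbf{An equivalent alternative.} You can run Aubin's localization with an \emph{even} partition $\sum_j\eta_j=1$, each $\eta_j$ supported on an antipodal pair of caps. This uses $\|\psi^2\|_{L^2}\le\sum_j\|\eta_j\psi^2\|_{L^2}$ and Cauchy--Schwarz, and amounts to working on $\mathbb{RP}^2$ as the paper does. With pieces that are not even, that triangle inequality would lose exactly the factor $2$ that evenness gains.

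\textbf{A minor point on strictness.} Your alternatives to quoting $\|Q\|_{L^2}^2\approx11.70$ do not stand alone. Ruling out equality in both steps only shows that $\frac1\pi$ is not attained. That yields $C_{GN}<\frac1\pi$ only once an optimizer is known to exist, which is again Weinstein's result. The concentration-compactness sketch takes $C_{GN}<\frac1\pi$ as input. Quoting Weinstein's value, as the paper does, is the clean way.
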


\begin{proof}
	We use the sharp Euclidean Gagliardo--Nirenberg inequality in \cite[p.~568]{Weinstein1983}
	$$
	\|f\|_{L^4(\bR^2)}^4\leq C_{\bR^2}\|\nabla f\|_{L^2(\bR^2)}^2\|f\|_{L^2(\bR^2)}^2, \,\, f\in C_c^\infty(\mathbb{R}^2),
	$$
	where $C_{\bR^2}=\frac{1}{\pi\cdot 1.86225...}$ numerically.
	
	Since $\psi$ is even, it descends to a function on $\mathbb{RP}^2$. Let $dA$ denote the unnormalized area measure of the induced round metric on $\mathbb{RP}^2$. By the standard localization argument for Gagliardo--Nirenberg inequalities on compact manifolds, for every $\epsilon>0$ there is $C_\epsilon$ such that
	$$
	\int_{\mathbb{RP}^2}\psi^4\,dA\leq\left(\frac{1}{\pi\cdot 1.86225...}+\epsilon\right)
	\left(\int_{\mathbb{RP}^2}|\nabla\psi|^2\,dA\right)
	\left(\int_{\mathbb{RP}^2}\psi^2\,dA\right)
	+C_\epsilon\left(\int_{\mathbb{RP}^2}\psi^2\,dA\right)^2.
	$$
	The round metric on $\mathbb{RP}^2$ has area $2\pi$. Passing to normalized area measure therefore gives
	$$
	\oint_{\bS^2}\psi^4d\mu\leq
	\left(\frac{2}{1.86225...}+2\pi\epsilon\right)
	\left(\oint_{\bS^2}|\nabla\psi|^2d\mu\right)
	\left(\oint_{\bS^2}\psi^2d\mu\right)
	+B_\epsilon\left(\oint_{\bS^2}\psi^2d\mu\right)^2.
	$$
	Since $\frac{2}{1.86225...}<2$, choosing $\epsilon>0$ sufficiently small proves the result.
\end{proof}

\begin{lemma}\label{lem:second_moment_ode}
	Set $\delta_e=8/A_e-4>0$ and $C_e=8B_e/A_e-4>0$. Then
	\begin{align}\label{eq:second_moment_ode}
		F_2''-3F_2'-\delta_e F_2\geq-C_e.
	\end{align}
	In particular, with $K_e=C_e/\delta_e\geq1$, we have
	\begin{align}\label{eq:second_moment_bound}
		F_2(s)\leq\max\{F_2(0),K_e\}.
	\end{align}
\end{lemma}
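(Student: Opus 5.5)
The plan is to derive \eqref{eq:second_moment_ode} by differentiating $F_2$ twice, substituting \eqref{eq:s_pde}, and bounding the resulting gradient term from below with Lemma \ref{lem:improved_gns} applied to $\sqrt u$. The bound \eqref{eq:second_moment_bound} should then follow from a maximum principle for the ODE inequality on $[0,\infty)$.

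\textbf{Step 1: the identity for $F_2$.} For $s>0$, $u$ is smooth and positive, so we may differentiate under the integral. This gives $F_2'=2\oint u u_s\,d\mu$ and
$$F_2''=2\oint u_s^2\,d\mu+2\oint u u_{ss}\,d\mu.$$
From \eqref{eq:s_pde} we substitute $u_{ss}=3u_s-2u+2-\Delta\log u$. Integrating by parts on $\bS^2$ gives $-\oint u\,\Delta\log u\,d\mu=\oint |\nabla u|^2/u\,d\mu$. Using the mass normalization \eqref{eq:mass_one}, $\oint u\,d\mu=1$, we obtain
$$F_2''-3F_2'=2\oint u_s^2\,d\mu+2\oint\frac{|\nabla u|^2}{u}\,d\mu-4F_2+4.$$

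\textbf{Step 2: the Gagliardo--Nirenberg lower bound.} Set $\psi=\sqrt{u(s,\cdot)}$. This function is even because $u$ is even, and it lies in $H^1(\bS^2)$. It satisfies $\oint\psi^2\,d\mu=1$, $\oint\psi^4\,d\mu=F_2$, and $|\nabla u|^2/u=4|\nabla\psi|^2$. Lemma \ref{lem:improved_gns} gives $F_2\le A_e\oint|\nabla\psi|^2\,d\mu+B_e$. Hence
$$2\oint\frac{|\nabla u|^2}{u}\,d\mu=8\oint|\nabla\psi|^2\,d\mu\ge\frac{8}{A_e}(F_2-B_e).$$
Dropping the nonnegative term $2\oint u_s^2\,d\mu$ yields
$$F_2''-3F_2'\ge\Big(\frac8{A_e}-4\Big)F_2-\frac{8B_e}{A_e}+4=\delta_eF_2-C_e,$$
which is \eqref{eq:second_moment_ode}. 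The constants behave as required: $\delta_e>0$ because $A_e<2$, and $C_e\ge\delta_e$ because $B_e\ge1$, so $K_e\ge1$.

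\textbf{Step 3: the maximum principle.} Let $G=F_2-K_e$. Then $G''-3G'-\delta_eG\ge0$ on $(0,\infty)$. We check the behavior at both ends.
\begin{itemize}
\item At $s=0$: $G$ is continuous up to $s=0$, since $u$ is bounded and continuous up to $s=0$.
\item At infinity: by \eqref{eq:boundary_asymptotics}, $F_2\to1$, so $G\to1-K_e\le0$.
\end{itemize}
Suppose $\sup G>\max\{G(0),0\}$. Then the supremum is positive and attained at some interior point $s_*$, where $G'(s_*)=0$ and $G''(s_*)\le0$. This gives $-\delta_eG(s_*)\ge0$, contradicting $G(s_*)>0$. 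Therefore $G\le\max\{G(0),0\}$, which is \eqref{eq:second_moment_bound}.

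The only delicate point is Step 2, namely recognizing that the substitution $\psi=\sqrt u$ converts the Fisher-information term into exactly the quantity controlled by Lemma \ref{lem:improved_gns}. It is this step that makes $A_e<2$ produce a positive coefficient $\delta_e$. The justification of differentiation in $s$ is routine in the interior, since the maximum-principle argument only uses interior points together with continuity at $s=0$.
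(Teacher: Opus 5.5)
Your proposal is correct and follows the paper's proof: the same identity for $F_2''-3F_2'+4F_2$, the same application of Lemma \ref{lem:improved_gns} to $\sqrt{u}$, and the same interior-maximum argument using $F_2\to1\le K_e$. Your Step 3 simply writes out in detail the maximum-principle argument that the paper states in one line.
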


\begin{proof}
	Using \eqref{eq:mass_one} and \eqref{eq:improved_gns} applied to $\sqrt{u}$, we have
	\begin{align}
		F_2''-3F_2'+4F_2&=4+2\oint_{\bS^2}(u_s^2+\frac{|\nabla u|^2}{u})d\mu\\
		&\geq4+8\oint_{\bS^2}|\nabla\sqrt{u}|^2d\mu\\
		&\geq4+\frac{8}{A_e}(F_2-B_e)=\frac{8}{A_e}F_2+4-\frac{8B_e}{A_e}.\label{eq:second_moment_gns_step}
	\end{align}
	Rearranging gives \eqref{eq:second_moment_ode} by the definitions of $\delta_e$ and $C_e$.
	A maximum of $F_2$ above $\max\{F_2(0),K_e\}$ would contradict \eqref{eq:second_moment_ode}, since $F_2(s)\rightarrow1\leq K_e$.
\end{proof}

We now use \eqref{eq:second_moment_bound} to obtain a uniform upper bound for $u$ by a Moser--type iteration.

\begin{proposition}\label{prop:upper_bound}
	Let $u$ be a positive even classical solution of \eqref{eq:s_pde} and \eqref{eq:boundary_asymptotics}. With $M_0$ as in \eqref{eq:boundary_constants}, there is a universal constant $C_*\geq1$ such that
	\begin{align}\label{eq:upper_bound}
		\|u\|_{L^\infty([0,\infty)\times\bS^2)}\leq C_*M_0^4.
	\end{align}
\end{proposition}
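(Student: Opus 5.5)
We will run a Moser iteration on $v=\log u$-type test powers of $u$, starting from the $L^2$ bound of Lemma \ref{lem:second_moment_ode}. Note first that $F_2(0)=\oint\varphi^2\,d\mu\leq M_0\oint\varphi\,d\mu=M_0$, so \eqref{eq:second_moment_bound} gives $\sup_s F_2(s)\leq \max\{M_0,K_e\}\leq K_eM_0$.

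\emph{Step 1: moment ODE.} For $q\geq 2$, multiply \eqref{eq:s_pde} by $q u^{q-1}$ and integrate over $\bS^2$. Using $\Delta\log u=\mathrm{div}(\nabla u/u)$ and integrating by parts,
\begin{align*}
F_q''-3F_q'+2qF_q-2q F_{q-1}
= q(q-1)\oint u^{q-2}u_s^2\,d\mu+q(q-1)\oint u^{q-3}|\nabla u|^2\,d\mu ,
\end{align*}
and the last term equals $\frac{4q}{q-1}\oint|\nabla u^{(q-1)/2}|^2\,d\mu$. Dropping the $u_s^2$ term, we obtain a differential inequality in $s$ whose right side contains a Dirichlet energy of $u^{(q-1)/2}$ on each slice.

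\emph{Step 2: control by the lower moment.} Apply Lemma \ref{lem:improved_gns} (or the plain Gagliardo--Nirenberg inequality on $\bS^2$, since evenness is only needed for sharpness at $q=2$) to $f=u^{(q-1)/2}$. This gives
\[
F_{2(q-1)}\leq A\,\|\nabla f\|_2^2\,F_{q-1}+B F_{q-1}^2 .
\]
Here $F_q$ is sandwiched between $F_{q-1}$ and $F_{2(q-1)}$. Interpolating, the energy term dominates $cF_q^{\theta}$ minus lower-order terms. At an interior maximum of $F_q$ (or as $s\to\infty$, where $F_q\to1$), $F_q''-3F_q'\le 0$, so
\[
F_q\lesssim \text{poly}(q)\,\max\{F_q(0),\,F_{q-1}^{\gamma}\}.
\]
Together with $F_q(0)\leq M_0^{q-1}$, this bounds $\sup_s F_q$ in terms of $\sup_s F_{q-1}$. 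A cleaner variant: take $q_k$ with $q_{k+1}-1\approx 2(q_k-1)$, i.e.\ doubling exponents $p_k=2^k$, and bound $\sup_s F_{p_{k+1}}^{1/p_{k+1}}\leq (Cp_k)^{C/p_k}\max\{M_0,\sup_s F_{p_k}^{1/p_k}\}^{\beta_k}$ with $\prod\beta_k$ finite.

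\emph{Step 3: iterate and pass to $L^\infty$.} Iterating with $p_k=2^k$, the product $\prod_k (Cp_k)^{C/p_k}$ converges. Hence $\|u(s)\|_{L^\infty(\bS^2)}=\lim_k F_{p_k}^{1/p_k}$ is bounded by a universal constant times a power of $M_0$. The exponent $4$ should emerge from the accumulated powers $\prod\beta_k$, starting from $F_2\le K_eM_0$.

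The main obstacle is Step 2. Because the maximum-principle argument is only in $s$ slice-by-slice, the slice-wise Gagliardo--Nirenberg bound must dominate the zeroth-order term $2qF_q$. This requires the ratio between $F_{2(q-1)}$ and $F_q$ to be handled carefully, so that the constants grow only polynomially in $q$ and the nonlinearity exponent $\gamma$ stays controlled. If the exponents do not close, I would instead do a genuine space–time Moser iteration. This means working on unit cylinders $[S,S+1]\times\bS^2$ for the divergence-form equation satisfied by $u$ (with coefficient $1/u$ in the spatial part, degenerate only where $u$ is large), using cutoffs in $s$ and the uniform $F_2$ bound as the starting $L^2$ norm. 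Near $s=0$, the boundary value $\varphi\leq M_0$ would be handled by the boundary version with $(u-M_0)_+$.
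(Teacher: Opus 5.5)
Your outline is the paper's argument. It has the same ingredients: the moment identity for $F_q$ (which you derive correctly), the maximum principle in $s$ at an interior maximum $s_*$ of $F_q$, a slice-wise Sobolev-type inequality for $u^{(q-1)/2}$, H\"older interpolation, and iteration from Lemma~\ref{lem:second_moment_ode}. The paper uses the $L^6$ Sobolev inequality where you use the $L^4$ Gagliardo--Nirenberg inequality, and your bounds $F_2(0)\le M_0$ and $F_q(0)\le M_0^{q-1}$ are correct.

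However, the step you flag as ``the main obstacle'' and leave conditional is the entire quantitative content of the proof, so as written the proposal is incomplete. It does close, and no space--time fallback is needed. The maximum-point inequality controls the top moment only at $s_*$. So you interpolate $F_q(s_*)$ between a lower moment and that top moment, and absorb the resulting power of $F_q(s_*)$. In the paper, $F_{3(q-1)}^{1/3}(s_*)\le CqF_q(s_*)$, and H\"older between $F_{q/2}$ and $F_{3(q-1)}$ gives, with $N_q=\sup_{s}F_q(s)^{1/q}$,
\[
N_q\le\max\bigl\{M_0,\,(Cq^3)^{1/(2q-6)}N_{q/2}^{(2q-3)/(2q-6)}\bigr\},\qquad q>3.
\]
Over $q=2^j$ the exponents telescope, $\prod_{j\ge2}\frac{2^{j+1}-3}{2(2^j-3)}=4$. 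Together with $N_2\le CM_0$, this is exactly where $M_0^4$ comes from.

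Your $L^4$ version closes even more simply, stepping from $q-1$ to $q$. At $s_*$ the identity gives $\oint|\nabla u^{(q-1)/2}|^2d\mu\le\frac{q-1}{2}F_q$, hence $F_{2(q-1)}\le\frac{A(q-1)}{2}F_qF_{q-1}+BF_{q-1}^2$ there. Combine this with $F_q\le F_{q-1}^{(q-2)/(q-1)}F_{2(q-1)}^{1/(q-1)}$ and $F_{q-1}\ge1$. This yields $F_q(s_*)\le(Cq)^{1/(q-2)}F_{q-1}(s_*)^{(q-1)/(q-2)}$, so your $\gamma$ is $(q-1)/(q-2)$ and
\[
N_q\le\max\bigl\{M_0,\,(Cq)^{1/(q(q-2))}N_{q-1}^{(q-1)^2/(q(q-2))}\bigr\},\qquad q\ge3.
\]
Here $\sum_q\frac{\log(Cq)}{q(q-2)}<\infty$ and $\prod_{q=3}^{Q}\frac{(q-1)^2}{q(q-2)}=\frac{2(Q-1)}{Q}\to2$. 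Hence $\|u\|_{L^\infty}\le C\max\{M_0,N_2\}^2\le CM_0^2$, which implies the statement because $M_0\ge1$. The exponent $4$ comes from the paper's $L^6$ and doubling bookkeeping, so you should not expect it to emerge from your products.
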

	
\begin{proof}
	A direct computation shows that for $q>1$,
	\begin{align}\label{eq:moment_identity}
		F_q''-3F_q'+2qF_q
		&=2qF_{q-1}+q(q-1)\oint_{\bS^2}u^{q-2}u_s^2d\mu+\frac{4q}{q-1}\oint_{\bS^2}|\nabla u^{\frac{q-1}{2}}|^2d\mu.
	\end{align}
	The Sobolev inequality on $\bS^2$ gives
	$$F_q''-3F_q'+2qF_q\geq\frac{4q}{q-1}(C^{-1}F_{3(q-1)}^{1/3}-F_{q-1}),$$
	where $C$ is universal. In this proof, $C$ may increase from line to line.
	
	Let $N_q=\sup_{s\geq0}F_q(s)^{1/q}$. For $q>3$, we show that
     \begin{align}	\label{eq: M_q iteration}
	N_q	\leq \max\{\|\varphi\|_{L^\infty(\bS^2)},	(Cq^3)^{1/(2q-6)}	N_{q/2}^{(2q-3)/(2q-6)}\}.
     \end{align}
	
	Since $\lim\limits_{s\rightarrow\infty}F_q(s)=1$ and $F_q(s)\geq1$, $F_q$ achieves its maximum at some $s_*\in[0,\infty)$. If $s_*=0$, then $F_q(s)\leq F_q(0)\leq\|\varphi\|_{L^\infty(\bS^2)}^q$. Otherwise, $s_*>0$, and
	$$F_q'(s_*)=0,\quad  F_q''(s_*)\leq 0.$$
	It follows that
	
	$$F_{3(q-1)}^{1/3}(s_*)\leq C((q-1)F_q(s_*)+F_{q-1}(s_*))\leq  Cq F_q(s_*),$$
	where we used $F_{q-1}\leq F_{q}$. Moreover, by H\"older's inequality,
	$$F_q^{1/q}(s_*)	\leq F_{3(q-1)}^{1/(5q-6)}(s_*)	F_{q/2}^{2(2q-3)/(q(5q-6))}(s_*),$$
	so for $q>3$,
	$$	F_q(s_*)	\leq	(Cq^3)^{q/(2q-6)}	F_{q/2}^{(2q-3)/(q-3)}(s_*).$$
	Considering the two possible locations of $s_*$ gives \eqref{eq: M_q iteration}.
	Iterate \eqref{eq: M_q iteration} for $q=4,8,16,\cdots$, and note
	$\prod_{j=2}^\infty\frac{2^{j+1}-3}{2(2^j-3)}=4$.
	The accumulated constant factor is uniformly bounded. By \eqref{eq:second_moment_bound} and $M_0\geq1$,
	$$N_2\leq \max\{F_2(0)^{\frac12},K_e^{\frac12}\}\leq \max\{\|\varphi\|_{L^\infty(\bS^2)},K_e^{\frac12}\}\leq C\|\varphi\|_{L^\infty(\bS^2)}. $$
	It follows that $\limsup\limits_{j\rightarrow\infty}N_{2^j}\leq CM_0^4$. Thus
	$$\|u(s,\cdot)\|_{L^\infty(\bS^2)}=\lim\limits_{j\rightarrow\infty}F_{2^j}(s)^{1/2^j}\leq\limsup\limits_{j\rightarrow\infty}N_{2^j}\leq CM_0^4.$$
\end{proof}

\subsection{Entropy decay and lower bound}

The upper bound in Proposition \ref{prop:upper_bound} allows us to take $\Lambda=C_*M_0^4$ in the estimates below. We first use the Onofri inequality to obtain quantitative entropy decay.

\begin{lemma}[Sharp Onofri inequality]\label{lem:improved_onofri} Let $\psi\in H^1(\bS^2)$ be an even function. Then
	\begin{align}\label{eq:improved_onofri}
		\log\left(\oint_{\bS^2}e^\psi d\mu\right)\leq \oint_{\bS^2}\psi\,d\mu+\frac18\oint_{\bS^2}|\nabla\psi|^2d\mu.
	\end{align}
\end{lemma}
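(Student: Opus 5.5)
The plan is to reduce the statement to the known resolution of the Chang--Yang conjecture on the sharp Onofri-type inequality under a center-of-mass constraint. Chang--Yang asked whether, for $\psi\in H^1(\bS^2)$ satisfying the balancing condition
$$\oint_{\bS^2}e^{\psi}x_i\,d\mu=0,\quad i=1,2,3,$$
the Onofri inequality holds with the constant $\frac14$ replaced by $\frac18$. Here the standard Onofri inequality, in the normalized measure $d\mu$, reads $\log\oint e^\psi\,d\mu\leq\oint\psi\,d\mu+\frac14\oint|\nabla\psi|^2\,d\mu$, and Aubin had earlier obtained any constant larger than $\frac18$ under the constraint. Gui and Moradifam answered this affirmatively using the sphere covering inequality, after the axially symmetric case had been handled by Feldman--Froese--Ghoussoub--Gui and Gui--Wei. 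Our task is therefore to check that even functions fall into this class and that the normalizations agree.

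First, I will treat smooth even $\psi$. Since $x_i(-x)=-x_i(x)$ and $e^{\psi(-x)}=e^{\psi(x)}$, the integrand $e^\psi x_i$ is odd under the antipodal map, which preserves $d\mu$. Hence $\oint e^\psi x_i\,d\mu=0$, so the balancing condition holds automatically. Applying the Gui--Moradifam theorem then gives the inequality. The only point to check is the normalization: in their paper the inequality is written as
$$\alpha\int_{\bS^2}|\nabla\psi|^2\frac{dA}{4\pi}\cdot\frac14+\int_{\bS^2}\psi\,\frac{dA}{4\pi}-\log\int_{\bS^2}e^\psi\frac{dA}{4\pi}\geq0,\quad \alpha=\frac12,$$
which is exactly the statement with the constant $\frac18$ in normalized measure.

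Second, I will extend the inequality to general even $\psi\in H^1(\bS^2)$ by density. I will approximate $\psi$ in $H^1$ by its symmetrization $\frac12(\psi_k(x)+\psi_k(-x))$ of smooth approximants $\psi_k$; these are smooth, even, and still converge in $H^1$. By the Moser--Trudinger inequality, $\psi\mapsto\oint e^\psi\,d\mu$ is continuous on $H^1(\bS^2)$, and the right-hand side is continuous as well, so the inequality passes to the limit.

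The main obstacle is the sharp constant $\frac18$ itself. Aubin's concentration-compactness argument only yields $\frac18+\epsilon$, and reaching $\frac18$ exactly requires showing that the critical points of the corresponding functional with $\alpha=\frac12$ are constant. This is precisely what the sphere covering inequality provides, and I will cite it rather than reprove it. If a self-contained route were preferred, I would instead work on $\mathbb{RP}^2$: pass to the quotient, minimize the functional with parameter $\alpha$ slightly above $\frac12$ (where compactness holds), and let $\alpha\downarrow\frac12$. One would then have to rule out blow-up, since evenness forces concentration at two antipodal points, which costs twice the Onofri energy. Finally, one would use a uniqueness theorem for the mean field equation $\frac{\alpha}{2}\Delta\psi+e^\psi/\oint e^\psi\,d\mu-1=0$ with $\alpha\geq\frac12$ under evenness to conclude that the minimizers are constant; that uniqueness step is exactly the content of the Gui--Moradifam work.
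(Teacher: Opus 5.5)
Your argument is correct, but it reaches the inequality by a different route than the paper. The paper gives no argument of its own: it cites the even-function version of the sharp Onofri inequality directly from Osgood--Phillips--Sarnak \cite{OsgoodPhillipsSarnak1988}.

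You instead make two observations:
\begin{enumerate}
\item Evenness of $\psi$ makes $e^\psi x_i$ odd under the measure-preserving antipodal map, so every even $\psi$ satisfies the Chang--Yang balancing condition.
\item Gui--Moradifam's resolution of the Chang--Yang conjecture \cite{GuiMoradifam2018}, i.e.\ inequality \eqref{eq:chang_yang_onofri} with $a=\frac18$, then applies. Your normalization check is right.
\end{enumerate}

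Your route makes Lemma \ref{lem:improved_onofri} a one-line special case of a theorem the paper already cites in Section \ref{sec:general_case} as the sharp form of Lemma \ref{lem:chang_yang_onofri}. Both improved Onofri inequalities used in the paper then come from a single source. The paper's route relies instead on a much older result specific to even functions. That result predates, and does not use, the sphere covering inequality, which is a considerably heavier tool than the even case requires.

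Some smaller remarks:
\begin{enumerate}
\item The density step is unnecessary. Gui--Moradifam state their theorem for all balanced $\psi\in H^1(\bS^2)$, and an even $H^1$ function is balanced directly.
\item Your sketched self-contained alternative needs no blow-up analysis. Set
$J_\alpha(\psi)=\frac{\alpha}{4}\oint_{\bS^2}|\nabla\psi|^2d\mu+\oint_{\bS^2}\psi\,d\mu-\log\oint_{\bS^2}e^\psi d\mu$.
If minimizers of $J_\alpha$ over even functions are constant for every $\alpha>\frac12$, then $J_\alpha\geq0$ on even functions for each such $\alpha$. Hence $J_{1/2}(\psi)=\lim_{\alpha\downarrow 1/2}J_\alpha(\psi)\geq0$ for each fixed $\psi$, and the whole content of that route is the uniqueness step.
\item Aubin's $\frac18+\epsilon$ result, as you quote it, holds only with an additive constant $C_\epsilon$. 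This does not affect your proof.
\end{enumerate}
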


This is the even-function version of the Onofri inequality; see \cite[Corollary 2.2, equation (2.40), p. 168]{OsgoodPhillipsSarnak1988}. We will use the sharp constant $1/8$ for the argument of the even case.

\begin{lemma} \label{lem:entropy-onofri inequality}
	Let $\psi>0$ be an even function such that $\log\psi\in H^1(\bS^2)$ and $\oint_{\bS^2}\psi\,d\mu=1$. Then
	\begin{align}\label{eq:entropy_onofri}
		\oint_{\bS^2}|\nabla\log\psi|^2d\mu\geq2\oint_{\bS^2}\psi\log\psi\,d\mu-4\oint_{\bS^2}\log\psi\,d\mu.
	\end{align}
\end{lemma}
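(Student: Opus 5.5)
The plan is to combine the even Onofri inequality of Lemma \ref{lem:improved_onofri}, applied to $2\log\psi$, with a Jensen-type upper bound for the entropy in terms of the second moment $\oint_{\bS^2}\psi^2\,d\mu$.

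\emph{Step 1 (entropy versus second moment).} Since $\psi>0$ and $\oint_{\bS^2}\psi\,d\mu=1$, the measure $\psi\,d\mu$ is a probability measure on $\bS^2$. Jensen's inequality for the concave function $\log$ with respect to this measure gives
$$\oint_{\bS^2}\psi\log\psi\,d\mu=\oint_{\bS^2}\log\psi\,(\psi\,d\mu)\leq\log\left(\oint_{\bS^2}\psi\cdot\psi\,d\mu\right)=\log\left(\oint_{\bS^2}\psi^2\,d\mu\right).$$
If $\oint_{\bS^2}\psi^2\,d\mu=\infty$ this holds trivially. In any case Step 2 shows $\oint_{\bS^2}\psi^2\,d\mu<\infty$.

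\emph{Step 2 (Onofri).} The function $2\log\psi$ is even and lies in $H^1(\bS^2)$, so Lemma \ref{lem:improved_onofri} applies to it and gives
$$\log\left(\oint_{\bS^2}\psi^2\,d\mu\right)\leq2\oint_{\bS^2}\log\psi\,d\mu+\frac12\oint_{\bS^2}|\nabla\log\psi|^2\,d\mu.$$
Here $e^{2\log\psi}=\psi^2$, and $\frac18|\nabla(2\log\psi)|^2=\frac12|\nabla\log\psi|^2$. In particular the second moment is finite.

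\emph{Step 3 (combine).} Chaining Steps 1 and 2 and multiplying by $2$ yields
$$2\oint_{\bS^2}\psi\log\psi\,d\mu\leq4\oint_{\bS^2}\log\psi\,d\mu+\oint_{\bS^2}|\nabla\log\psi|^2\,d\mu.$$
This is exactly \eqref{eq:entropy_onofri} after rearranging.

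The only step needing care is the integrability bookkeeping. We have $\log\psi\in H^1\subset L^1$, so $\oint_{\bS^2}\log\psi\,d\mu$ is finite. The Onofri bound shows $\psi^2\in L^1$, and then the entropy is finite by Jensen. Beyond this, the argument is a direct two-line combination, and I expect no genuine obstacle.
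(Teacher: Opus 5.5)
Your proof is correct and is essentially the paper's argument. The paper applies the even Onofri inequality to $tg$ with $g=\log\psi-\oint_{\bS^2}\log\psi\,d\mu$, uses the tangent-line inequality $\Phi(t)\geq\Phi(1)+(t-1)\Phi'(1)$ for the convex function $\Phi(t)=\log\bigl(\oint_{\bS^2}e^{tg}\,d\mu\bigr)$, and then takes $t=2$; at $t=2$ that tangent-line inequality is exactly your Jensen step $\oint_{\bS^2}\psi\log\psi\,d\mu\leq\log\oint_{\bS^2}\psi^2\,d\mu$ under the probability measure $\psi\,d\mu$, so you have simply fixed the optimal $t=2$ from the start.
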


\begin{proof}
	Set $g=\log\psi-\oint_{\bS^2}\log\psi\,d\mu$. The function $\Phi(t)=\log(\oint_{\bS^2}e^{tg}d\mu)$ is convex, so for $t\geq1$, $(t-1)\Phi'(1)\leq \Phi(t)-\Phi(1)$.
	By \eqref{eq:improved_onofri},
	$$\Phi(t)\leq\frac{t^2}{8}\oint_{\bS^2}|\nabla\log\psi|^2d\mu.$$
	Since $\oint_{\bS^2}\psi\,d\mu=1$, we have $\Phi'(1)=\oint_{\bS^2}(\psi-1)\log\psi\,d\mu$ and $\Phi(1)=-\oint_{\bS^2}\log\psi\,d\mu$. It follows that, for every $t\geq1$,
	$$\oint_{\bS^2}|\nabla\log\psi|^2d\mu\geq\frac{8(t-1)}{t^2}\oint_{\bS^2}(\psi-1)\log\psi\,d\mu-\frac{8}{t^2}\oint_{\bS^2}\log\psi\,d\mu.$$
	Taking $t=2$ gives \eqref{eq:entropy_onofri}.
\end{proof}

\begin{lemma}\label{lem:entropy_ode}
	If $0<u\leq\Lambda$ on $[0,\infty)\times\bS^2$, then
	\begin{align}\label{eq:entropy_ode}
		\Ent''-3\Ent'-\frac{2}{\Lambda}\Ent\geq 0.
	\end{align}
\end{lemma}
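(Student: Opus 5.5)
The plan is a direct computation of $\Ent''$ using the equation \eqref{eq:s_pde}, followed by Lemma \ref{lem:entropy-onofri inequality} applied slice by slice, and then an elementary pointwise inequality that uses the upper bound $u\leq\Lambda$.

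\emph{Step 1: compute $\Ent''-3\Ent'$.} Fix $s>0$, where $u$ is smooth and positive, so we may differentiate under the integral. Since $\oint_{\bS^2}u_s\,d\mu=0$ by \eqref{eq:mass_one}, we have $\Ent'=\oint_{\bS^2}u_s\log u\,d\mu$. Differentiating again,
$$\Ent''=\oint_{\bS^2}u_{ss}\log u\,d\mu+\oint_{\bS^2}\frac{u_s^2}{u}\,d\mu.$$
Substitute $u_{ss}=3u_s-2u+2-\Delta\log u$ from \eqref{eq:s_pde}, and integrate $-\oint(\Delta\log u)\log u$ by parts. This gives
$$\Ent''-3\Ent'=-2\Ent+2\oint_{\bS^2}\log u\,d\mu+\oint_{\bS^2}|\nabla\log u|^2d\mu+\oint_{\bS^2}\frac{u_s^2}{u}\,d\mu.$$

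\emph{Step 2: apply the Onofri-type inequality.} Each slice $u(s,\cdot)$ is even (the standing assumption of this section), positive, has mean $1$ by \eqref{eq:mass_one}, and satisfies $\log u(s,\cdot)\in H^1(\bS^2)$. So Lemma \ref{lem:entropy-onofri inequality} applies and gives $\oint|\nabla\log u|^2\geq2\Ent-4\oint\log u$. Dropping the nonnegative term $\oint u_s^2/u$, we get
$$\Ent''-3\Ent'\geq-2\oint_{\bS^2}\log u\,d\mu=2\oint_{\bS^2}(u-1-\log u)\,d\mu,$$
where the last equality again uses $\oint u\,d\mu=1$.

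\emph{Step 3: the pointwise inequality.} Since $\Ent=\oint(u\log u-u+1)\,d\mu$, it suffices to show, for $0<u\leq\Lambda$,
$$f(u):=\Lambda(u-1-\log u)-(u\log u-u+1)\geq0.$$
We have $f(1)=0$ and $f'(u)=\Lambda(1-1/u)-\log u$, so $f'(1)=0$. Also $f''(u)=(\Lambda-u)/u^2\geq0$ on $(0,\Lambda]$, so $f$ is convex there. (Note $\Lambda\geq1$, since $u$ has mean one, so $1$ lies in this interval.) A convex function with a critical point at $1$ attains its minimum there, hence $f\geq f(1)=0$. Integrating this inequality and dividing by $\Lambda$ gives $\oint(u-1-\log u)\geq\Ent/\Lambda$, which combined with Step 2 yields \eqref{eq:entropy_ode}.

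The only delicate point is Step 2. It is where evenness enters, through the sharp constant $1/8$ in Lemma \ref{lem:improved_onofri}: with the non-even Onofri constant $1/4$, the $2\Ent$ term would not be fully cancelled. Everything else is routine.
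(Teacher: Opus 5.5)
Your proposal is correct and follows the paper's proof essentially step for step: the same expansion of $\Ent''-3\Ent'$ via \eqref{eq:s_pde}, the same use of Lemma \ref{lem:entropy-onofri inequality} to reduce the claim to $-\oint_{\bS^2}\log u\,d\mu\geq\Lambda^{-1}\Ent$, and the same pointwise inequality $r\log r-r+1\leq\Lambda(r-1-\log r)$ on $(0,\Lambda]$, proved by comparing second derivatives at $r=1$. Your explicit check that $\Lambda\geq1$, so that $r=1$ lies in the interval, is a detail the paper leaves implicit.
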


\begin{proof}
	\begin{align}
		\Ent''&=\oint_{\bS^2}\left(\frac{u_s^2}{u}+|\nabla\log u|^2\right)d\mu+3\oint_{\bS^2}(1+\log u)u_s\,d\mu+2\oint_{\bS^2}(1-u)(1+\log u)d\mu\\
		&=\oint_{\bS^2}\left(\frac{u_s^2}{u}+|\nabla\log u|^2\right)d\mu+3\Ent'-2\Ent+2\oint_{\bS^2}\log u\,d\mu.
	\end{align}
	It suffices to show 
	\begin{align}
		\oint_{\bS^2}|\nabla\log u|^2d\mu\geq2(1+\frac{1}{\Lambda})\Ent-2\oint_{\bS^2}\log u\,d\mu.
	\end{align}
	By Lemma \ref{lem:entropy-onofri inequality}, we have
	\begin{align}\label{eq:entropy_onofri_step}
		\oint_{\bS^2}|\nabla\log u|^2d\mu\geq2\Ent-4\oint_{\bS^2}\log u\,d\mu.
	\end{align}
	So it suffices to show
	\begin{align}
		-\oint_{\bS^2}\log u\,d\mu\geq\frac{1}{\Lambda}\Ent,
	\end{align}
	which follows by integrating the pointwise inequality
	$$r\log r-r+1\leq\Lambda(r-1-\log r),\quad 0<r\leq\Lambda,$$
	and using \eqref{eq:mass_one}. The pointwise inequality follows by comparing second derivatives and noting that both sides and their first derivatives vanish at $r=1$.
\end{proof}

\begin{corollary}\label{cor:entropy_decay} Set
\begin{align}\label{eq:entropy_rate}
\beta_\Lambda=\frac{\sqrt{9+8/\Lambda}-3}{2}.
\end{align}
If $0<u\leq\Lambda$ on $[0,\infty)\times\bS^2$, then
	\begin{align}\label{eq:entropy_decay}
		0\leq\Ent(s)\leq \Ent_0e^{-\beta_\Lambda s}.
	\end{align}
	Moreover, for every $q>1$,
	\begin{align}\label{eq:moment_decay}
		0\leq F_q(s)-1\leq q(q-1)\Lambda^{q-1}\Ent_0e^{-\beta_\Lambda s}.
	\end{align}
\end{corollary}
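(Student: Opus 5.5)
The proof has two parts: an ODE comparison argument for the entropy decay \eqref{eq:entropy_decay}, and a pointwise inequality that converts entropy into moments for \eqref{eq:moment_decay}.

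For \eqref{eq:entropy_decay}, Lemma \ref{lem:entropy_ode} gives $\Ent''-3\Ent'-\frac{2}{\Lambda}\Ent\geq0$ on $(0,\infty)$. The characteristic roots of $r^2-3r-\frac2\Lambda=0$ are $r_\pm=\frac{3\pm\sqrt{9+8/\Lambda}}{2}$. So $r_-=-\beta_\Lambda<0$ and $r_+>3$.

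Define the barrier $v(s)=\Ent_0e^{-\beta_\Lambda s}$. It solves the equation with equality and satisfies $v(0)=\Ent(0)=\Ent_0$. Set $z=\Ent-v$. Then $z''-3z'-\frac2\Lambda z\geq0$, $z(0)=0$, and $z(s)\to0$ as $s\to\infty$, since $\Ent(s)\to0$ by \eqref{eq:boundary_asymptotics} and $v\to0$.

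Suppose $z$ takes a positive value somewhere. Because $z$ vanishes at $0$ and at infinity, it then attains a positive interior maximum at some $s_*$. There $z'(s_*)=0$ and $z''(s_*)\leq0$, so
$$z''(s_*)-3z'(s_*)-\tfrac2\Lambda z(s_*)\leq-\tfrac2\Lambda z(s_*)<0,$$
a contradiction. Hence $z\leq0$, which gives $\Ent\leq\Ent_0e^{-\beta_\Lambda s}$. Lower bound $\Ent\geq0$ was already noted after \eqref{eq:entropy_moments}.

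The only regularity needed is that $\Ent$ is $C^2$ on $(0,\infty)$ and continuous at $s=0$. This follows since $u$ is smooth in the interior, continuous up to $s=0$, and bounded between positive bounds on compact sets.

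For \eqref{eq:moment_decay}, the plan is a pointwise inequality: for $0<r\leq\Lambda$ and $q>1$,
$$r^q-1-q(r-1)\leq q(q-1)\Lambda^{q-1}\,(r\log r-r+1).$$
To check it, compare the two sides as functions of $r$. Both sides and their first derivatives vanish at $r=1$. Their second derivatives are $q(q-1)r^{q-2}$ on the left and $q(q-1)\Lambda^{q-1}r^{-1}$ on the right. Since $r^{q-2}=r^{q-1}/r\leq\Lambda^{q-1}/r$ for $r\leq\Lambda$, the left second derivative is the smaller one. Integrating twice from $r=1$, in either direction, gives the inequality on all of $(0,\Lambda]$.

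Now integrate the inequality over $\bS^2$ with $r=u(s,x)$. Using $\oint u\,d\mu=1$ from \eqref{eq:mass_one}, the left side integrates to $F_q-1$ and the right side to $q(q-1)\Lambda^{q-1}\Ent(s)$. Combining with \eqref{eq:entropy_decay} yields \eqref{eq:moment_decay}. The lower bound $F_q\geq1$ is Hölder's inequality together with the mass normalization.

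The only step needing care is the maximum-principle argument for the ODE on the half-line. It requires $\Ent\to0$ at infinity, which is supplied by \eqref{eq:boundary_asymptotics}.
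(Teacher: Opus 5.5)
Your proposal is correct and follows the paper's proof essentially verbatim. You use the same ODE comparison against the barrier $\Ent_0e^{-\beta_\Lambda s}$, with a positive interior maximum contradicting Lemma \ref{lem:entropy_ode} because $\Ent(s)\to0$. You then integrate the same pointwise inequality $r^q-1-q(r-1)\leq q(q-1)\Lambda^{q-1}(r\log r-r+1)$ on $0<r\leq\Lambda$, justified by comparing second derivatives from $r=1$.
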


\begin{proof}
	The function $\Ent_0e^{-\beta_\Lambda s}$ solves the homogeneous equation associated with \eqref{eq:entropy_ode}. Since $\Ent(s)\rightarrow0$, a positive maximum of $\Ent(s)-\Ent_0e^{-\beta_\Lambda s}$ would occur at an interior point and contradict \eqref{eq:entropy_ode}. This proves \eqref{eq:entropy_decay}.

	To prove \eqref{eq:moment_decay}, we integrate the following pointwise inequality and use \eqref{eq:entropy_decay}: 
	$$r^q-1-q(r-1)\leq q(q-1)\Lambda^{q-1}(r\log r-r+1),\quad 0<r\leq\Lambda.$$
	The pointwise inequality follows by comparing second derivatives and noting that both sides and their first derivatives vanish at $r=1$. 
	
\end{proof}

To obtain a lower bound from the entropy decay, we establish a local De Giorgi-type estimate. This local estimate does not require evenness or a previously known upper or lower bound.

Write $Q_r(S)=(S-r,S+r)\times\bS^2$ for $0<r\leq S$.

\begin{proposition}\label{prop:local_de_giorgi}
	There exists a universal constant $\epsilon_*>0$ such that, if $S\geq2$ and $u>0$ is a smooth solution of \eqref{eq:s_pde} on $Q_2(S)$ satisfying
	$$|\{u<\frac12\}\cap Q_2(S)|<\epsilon_*,$$
	then
	$$u\geq\frac14\quad\mathrm{in}\quad Q_1(S).$$
\end{proposition}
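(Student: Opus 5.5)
The plan is a De Giorgi iteration on the sublevel sets of $u$ itself, not of $\log u$. Write \eqref{eq:s_pde} as
$$u_{ss}-3u_s+\mathrm{div}\left(u^{-1}\nabla u\right)=2-2u .$$
On $\{u<\frac12\}$ the right-hand side is $\geq 1>0$. Hence, for any level $k\leq\frac12$, the function $v_k=(k-u)_+$ is a weak subsolution of a linear divergence-form operator. That operator has coefficient $1$ in the $s$-direction and $u^{-1}\geq 2$ in the $\bS^2$-directions on its support.

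The difficulty is that $u^{-1}$ has no upper bound, since no lower bound on $u$ is known yet. A cutoff in $x$ would therefore produce an uncontrollable cross term $\int \zeta\nabla\zeta\, v_k\, u^{-1}\nabla u$. To avoid this I will use cutoffs depending only on $s$, $\zeta=\zeta(s)$. This is possible because $\bS^2$ is compact, so no localization in $x$ is needed.

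\textbf{Caccioppoli inequality.} Multiply the equation by $-v_k\zeta^2$ and integrate over $Q_2(S)$.
\begin{itemize}
\item The $\bS^2$-divergence term gives $\int\zeta^2u^{-1}|\nabla v_k|^2\geq 2\int\zeta^2|\nabla v_k|^2$ with no boundary or cross term.
\item The $u_{ss}$ term gives $\int\zeta^2|\partial_s v_k|^2$ minus a term bounded by $C\int|\zeta'|^2v_k^2$.
\item The drift term is $-3\int u_s v_k\zeta^2=\frac32\int\partial_s(v_k^2)\zeta^2=-3\int v_k^2\zeta\zeta'$.
\item The right-hand side contributes $-\int(2-2u)v_k\zeta^2\leq 0$, which has the favorable sign.
\end{itemize}
Together these give
$$\int\zeta^2\left(|\partial_s v_k|^2+|\nabla v_k|^2\right)\leq C\int\left(|\zeta'|^2+|\zeta\zeta'|\right)v_k^2 .$$
Combined with the Sobolev inequality on the three-dimensional cylinder $(S-2,S+2)\times\bS^2$ (applied to $\zeta v_k$, which vanishes near the ends in $s$), this yields
$$\|\zeta v_k\|_{L^6}^2\leq C\int(|\zeta'|^2+\zeta^2+|\zeta\zeta'|)v_k^2 .$$

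\textbf{Iteration.} Take levels $k_j=\frac14+2^{-j-2}$, decreasing from $\frac12$ to $\frac14$, and radii $r_j=1+2^{-j}$. Let $\zeta_j$ be an $s$-cutoff equal to $1$ on $(S-r_{j+1},S+r_{j+1})$, supported in $(S-r_j,S+r_j)$, with $|\zeta_j'|\leq C2^j$. Set $A_j=|\{u<k_j\}\cap Q_{r_j}(S)|$.

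Since $v_{k_j}\leq k_j\leq\frac12$, the right-hand side above is at most $C4^jA_j$. On $\{u<k_{j+1}\}$ we have $v_{k_j}\geq 2^{-j-3}$. Chebyshev and H\"older then give
$$4^{-j}A_{j+1}\leq C\|\zeta_jv_{k_j}\|_{L^2}^2\leq C\|\zeta_jv_{k_j}\|_{L^6}^2A_j^{2/3},$$
so that
$$A_{j+1}\leq C\,16^jA_j^{1+2/3}.$$
The standard lemma on fast geometric convergence shows $A_j\to0$ provided $A_0\leq\epsilon_*$, with $\epsilon_*$ universal. By hypothesis $A_0\leq|\{u<\frac12\}\cap Q_2(S)|<\epsilon_*$. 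Hence $|\{u<\frac14\}\cap Q_1(S)|=0$, and by continuity $u\geq\frac14$ on $Q_1(S)$.

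\textbf{Main obstacle.} The main point is the unbounded coefficient $u^{-1}$ in the spatial operator. It is handled by two choices: using $s$-only cutoffs, so the degenerate coefficient never appears with $\nabla\zeta$, and bounding it only from below, which is the good direction on sublevel sets below $\frac12$. The remaining care is the rigorous justification of the test function $v_k\zeta^2$. This is fine because $u$ is smooth and positive on $Q_2(S)$, and $(k-u)_+$ is Lipschitz, or one can use the convex smoothing $\beta_\epsilon$ as in Lemma \ref{lem:comparison}.
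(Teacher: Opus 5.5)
Your strategy is the paper's. It is a De Giorgi iteration on $(k_j-u)_+$ with $k_j=\frac14+2^{-j-2}$ and $r_j=1+2^{-j}$. The cutoffs depend on $s$ only, so $u^{-1}$ enters solely through $u^{-1}\geq 2$ on the sublevel set. The rest is the three-dimensional Sobolev inequality and fast geometric convergence. You pass to $A_{j+1}$ via H\"older (exponent $\frac53$), whereas the paper uses Chebyshev directly in $L^6$ (exponent $3$); this difference is immaterial.

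There is, however, a sign error in the Caccioppoli step. On $\{u<\frac12\}$ we have $\mathrm{div}(u^{-1}\nabla u)+u_{ss}-3u_s=2-2u\geq 1$, so $u$ is a \emph{sub}solution there. Hence $k-u$ is a supersolution, and $v_k=(k-u)_+$ is a subsolution only up to a bounded negative source, not a subsolution of the homogeneous operator. Your gradient and drift terms have the signs you state only for the test function $v_k\zeta^2$, not $-v_k\zeta^2$. Testing with $v_k\zeta^2$ gives
\[
\int\zeta^2\bigl(u^{-1}|\nabla v_k|^2+|\partial_s v_k|^2\bigr)=\int(2-2u)v_k\zeta^2-\int\partial_s(v_k^2)\zeta\zeta'+3\int v_k^2\zeta\zeta'.
\]
The first term on the right is nonnegative, so it cannot be discarded.

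Your inequality without this term is in fact false. The function $u=1-\frac74e^{s-S}+\frac78e^{2(s-S)}$ is a positive, $x$-independent solution of \eqref{eq:s_pde}, with minimum $\frac18$ at $s=S$ and $\{u<\frac14\}\Subset(S-1,S+1)\times\bS^2$. Take $k=\frac14$ and $\zeta\equiv 1$ on $(S-1,S+1)$. Then your right-hand side $\int(|\zeta'|^2+|\zeta\zeta'|)v_k^2$ vanishes, while $\int\zeta^2|\partial_s v_k|^2>0$. This does not contradict the proposition, since here $|\{u<\frac12\}\cap Q_2(S)|$ is of order one.

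The repair takes one line, and it is exactly what the paper does with $|(u-1)w_j|\leq\frac12$. On $\{u<k_j\}$ one has $0<2-2u\leq 2$ and $v_{k_j}\leq\frac12$, so the extra term is at most $|A_j|$. You subsequently bound the whole right-hand side by $C4^jA_j$ anyway, so this term is absorbed. Your recursion $A_{j+1}\leq C16^jA_j^{5/3}$ and the conclusion then stand unchanged.
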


\begin{proof}
	Write the equation in divergence form
	\begin{align}\label{eq:weighted_divergence}
		\mathrm{div}(u^{-1}\nabla u)+u_{ss}-3u_s+2(u-1)=0.
	\end{align}
	Set $$r_j=1+2^{-j},\quad  k_j=\frac14+2^{-j-2}, \quad w_j=(k_j-u)_+,\quad A_j=Q_{r_j}(S)\cap\{u<k_j\},$$
	so $$A_0=\{u<\frac12\}\cap Q_2(S),\quad  \cap_{j=0}^\infty A_j=\overline{Q_1(S)}\cap \{u\leq \frac14\}.$$
	Choose cutoff functions $0\leq\eta_j=\eta_j(s)\leq1$ in $C_c^\infty(\bR)$ such that
	$$\mathrm{supp}(\eta_j)\subset(S-r_j,S+r_j),\quad\eta_j|_{[S-r_{j+1},S+r_{j+1}]}\equiv1,\quad|\eta_j'|\leq C2^j.$$
	Multiplying \eqref{eq:weighted_divergence} by $\eta_j^2w_j$ and integrating by parts gives
	\begin{align}
		&\int\oint_{Q_2(S)}\eta_j^2(u^{-1}\nabla u\cdot\nabla w_j+u_s\,(w_j)_s)d\mu\,ds\\
		&=-2\int\oint_{Q_2(S)}\eta_j\eta_j'w_ju_s\,d\mu\,ds-3\int\oint_{Q_2(S)}\eta_j^2w_ju_s\,d\mu\,ds\\
		&\quad+2\int\oint_{Q_2(S)}\eta_j^2(u-1)w_j\,d\mu\,ds.
	\end{align}
	Since $$u^{-1}\geq 2 \,\, \mathrm{on}\,\,  A_j, \quad |w_j|\leq\frac12,\quad |(u-1)w_j|\leq \frac12 \,\, \mathrm{on}\,\, Q_2(S), $$
	$$\nabla w_j=-\boldsymbol{1}_{\{u<k_j\}}\nabla u,\quad   (w_j)_s=-\boldsymbol{1}_{\{u<k_j\}}u_s \,\,\,\, \mathrm{a.e.},$$
	we have
	\begin{align}
		\int\oint_{A_j}\eta_j^2(|\nabla u|^2+u_s^2)d\mu\,ds
		\leq C2^j\int\oint_{A_j}|\eta_ju_s|d\mu\,ds+C|A_j|.
	\end{align}
	By Young's inequality, the first term on the right is at most
	$$\frac12\int\oint_{A_j}\eta_j^2u_s^2d\mu\,ds+C4^j|A_j|.$$
	Absorbing the gradient term, we obtain
	\begin{align}\label{eq:sublevel_energy}
		\int\oint_{A_j}\eta_j^2(|\nabla u|^2+u_s^2)d\mu\,ds\leq C4^j|A_j|.
	\end{align}
	Since $\mathrm{supp}(\eta_jw_j)\subset Q_2(S)$, the three-dimensional Sobolev inequality gives
	\begin{align}
		\left(\int\oint_{Q_{r_j}(S)}|\eta_jw_j|^6d\mu\,ds\right)^{1/3}
		&\leq C\int\oint_{Q_{r_j}(S)}(|\nabla(\eta_jw_j)|^2+|\partial_s(\eta_jw_j)|^2)d\mu\,ds\\
		&\leq C\int\oint_{A_j}\eta_j^2(|\nabla u|^2+u_s^2)d\mu\,ds+C4^j|A_j|\\
		&\leq C4^j|A_j|,
	\end{align}
	where we used \eqref{eq:sublevel_energy}. If $(s,x)\in A_{j+1}$, then $\eta_j=1$ and $w_j\geq k_j-k_{j+1}=2^{-j-3}$. It follows that
	\begin{align}\label{eq:sublevel_iteration}
	(2^{-6(j+3)}|A_{j+1}|)^{1/3}\leq C4^j|A_j|,\quad |A_{j+1}|\leq C2^{12j}|A_j|^3.
	\end{align}
	Hence, if $|A_0|<\epsilon_*:=(8\sqrt{C})^{-1}$, we have $\lim\limits_{j\rightarrow \infty}|A_j|=0$, so $|\overline{Q_1(S)}\cap \{u\leq \frac14\}|=0.$
	Since $u$ is continuous on $Q_1(S)$, we conclude $u\geq \frac{1}{4}$ on $Q_1(S)$.
\end{proof}

\begin{proposition}\label{prop:tail_lower}
	For fixed boundary data $\varphi$, assume $0<u\leq\Lambda$. If $\Ent_0>0$, define
	\begin{align}\label{eq:tail_threshold}
	S_\Lambda=\max\left\{2,\,2+\beta_\Lambda^{-1}\log\left(\frac{32\Ent_0}{\epsilon_*}\right)\right\},
	\end{align}
	where $\beta_\Lambda$ is given by \eqref{eq:entropy_rate} and $\epsilon_*$ by Proposition \ref{prop:local_de_giorgi}. If $\Ent_0=0$, set $S_\Lambda=2$. Then
	\begin{align}\label{eq:tail_lower}
	u\geq\frac14\quad\mathrm{on}\quad[S_\Lambda,\infty)\times\bS^2.
	\end{align}
\end{proposition}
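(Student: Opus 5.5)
The plan is to combine the entropy decay of Corollary \ref{cor:entropy_decay} with the local De Giorgi estimate of Proposition \ref{prop:local_de_giorgi}. The entropy bound turns into a measure bound on the sublevel set $\{u<\frac12\}$ in each window $Q_2(S)$, and the De Giorgi estimate then gives the pointwise lower bound $u\geq\frac14$ on $Q_1(S)$. Since every $S'\geq S_\Lambda$ lies in $Q_1(S)$ for some $S\geq S_\Lambda$ (take $S=S'$ itself), it suffices to verify the smallness hypothesis for every $S\geq S_\Lambda$.

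\textbf{Step 1: from entropy to a sublevel set bound.} The integrand $r\log r-r+1$ is nonnegative and decreasing on $(0,1)$. At $r=\frac12$ it equals $\frac12-\frac12\log 2>\frac18$. Using \eqref{eq:mass_one}, the entropy is $\Ent(s)=\oint_{\bS^2}(u\log u-u+1)\,d\mu$, so Chebyshev's inequality gives
$$\mu(\{x: u(s,x)<\tfrac12\})\leq 8\,\Ent(s)\leq 8\Ent_0e^{-\beta_\Lambda s}.$$
This uses \eqref{eq:entropy_decay}, which applies because $0<u\leq\Lambda$.

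\textbf{Step 2: integrate over the window.} Integrate in $s\in(S-2,S+2)$. Here $|\cdot|$ is the product measure $ds\,d\mu$ with the normalized $\mu$; a different normalization only changes a universal constant, which can be absorbed into $\epsilon_*$. This gives
$$|\{u<\tfrac12\}\cap Q_2(S)|\leq 32\,\Ent_0e^{-\beta_\Lambda(S-2)}.$$
For $S\geq S_\Lambda\geq 2+\beta_\Lambda^{-1}\log(32\Ent_0/\epsilon_*)$, the right side is at most $\epsilon_*$. To get the strict inequality required in Proposition \ref{prop:local_de_giorgi}, note that the estimate in Step 1 is actually strict, since $\frac12-\frac12\log2>\frac18$. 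Alternatively, one can bound $\Ent$ over the window by its value at $S-2$ and use that $e^{-\beta_\Lambda s}$ is strictly smaller for $s>S-2$.

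\textbf{Step 3: the case $\Ent_0=0$.} If $\Ent_0=0$, then $\Ent\equiv0$, so $u\equiv1$ and the sublevel set is empty. The same conclusion holds with $S_\Lambda=2$.

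\textbf{Step 4: conclude.} Since $S\geq2$ and $u$ is smooth and positive on $Q_2(S)\subset(0,\infty)\times\bS^2$, Proposition \ref{prop:local_de_giorgi} gives $u\geq\frac14$ on $Q_1(S)$. Letting $S$ range over $[S_\Lambda,\infty)$ covers $[S_\Lambda,\infty)\times\bS^2$, which proves \eqref{eq:tail_lower}.

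The only delicate point is the bookkeeping of constants, namely matching the factor $32=8\cdot4$ and the measure normalization with the threshold in \eqref{eq:tail_threshold}. Everything else is a direct combination of earlier results.
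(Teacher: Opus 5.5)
Your proposal is correct and follows essentially the same route as the paper: a Chebyshev bound from $r\log r-r+1\geq\frac18$ for $r<\frac12$ (the paper uses the Taylor bound $(r-1)^2/2$ where you use monotonicity), integration of the entropy decay over the window to get the strict bound $32\Ent_0e^{-\beta_\Lambda(S-2)}$, the trivial case $\Ent_0=0$, and then Proposition \ref{prop:local_de_giorgi}. Your aside about absorbing a measure normalization into $\epsilon_*$ is unnecessary, and it would not be legitimate anyway since $\epsilon_*$ appears explicitly in $S_\Lambda$. The point is moot because the paper's $|\cdot|$ is already the product of $ds$ with the normalized $d\mu$, so your constants match exactly.
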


\begin{proof}
If $\Ent_0=0$, then \eqref{eq:mass_one} and \eqref{eq:entropy_decay} give $u\equiv1$, so the conclusion is immediate. Assume therefore that $\Ent_0>0$.
For $0<r\leq1/2$, we have $r\log r-r+1\geq(r-1)^2/2\geq1/8$. The first inequality follows from Taylor's theorem at $r=1$. Hence, by \eqref{eq:mass_one} and \eqref{eq:entropy_decay}, for $S\geq2$ we have
\begin{align}\label{eq:tail_sublevel_measure}
	|\{u<\frac{1}{2}\}\cap Q_2(S)|
	&\leq8\int_{S-2}^{S+2}\Ent(s)\,ds\\
	&\leq8\Ent_0\int_{S-2}^{S+2}e^{-\beta_\Lambda s}\,ds\\
	&<32\Ent_0e^{-\beta_\Lambda(S-2)}.
\end{align}
For $S\geq S_\Lambda\geq2$, the last expression is at most $\epsilon_*$. Proposition \ref{prop:local_de_giorgi} therefore gives $u\geq1/4$ on $Q_1(S)$, proving \eqref{eq:tail_lower}.
\end{proof}

Combining the comparison estimates in Corollaries \ref{cor:boundary_lower} and \ref{cor:comparison_lower} with the tail lower bound in Proposition \ref{prop:tail_lower} gives a global lower bound. Together with the upper bound in Proposition \ref{prop:upper_bound}, this yields the following a priori $C^0$ estimate.

\begin{theorem}\label{thm:c0_estimate}
	Let $u$ be a positive even classical solution of \eqref{eq:s_pde} and \eqref{eq:boundary_asymptotics}. With $m_0,M_0,\Ent_0$ as in \eqref{eq:boundary_constants}, set $\Lambda=C_*M_0^4$, where $C_*$ is the universal constant in Proposition \ref{prop:upper_bound}, and define $S_\Lambda$ by \eqref{eq:tail_threshold}. Then
	\begin{align}\label{eq:global_c0_estimate}
	\frac{m_0}{16}e^{-2S_\Lambda}\leq u\leq\Lambda\quad\mathrm{on}\quad[0,\infty)\times\bS^2.
	\end{align}
	In particular, both bounds are controlled by $m_0$ and $M_0$, since $0\leq \Ent_0\leq\log M_0$.
\end{theorem}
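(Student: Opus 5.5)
The upper bound $u\leq\Lambda$ is exactly Proposition \ref{prop:upper_bound}, so the work is the lower bound. I will split $[0,\infty)$ into three ranges: the collar $[0,s_0]$, the tail $[S_\Lambda,\infty)$, and the intermediate range $[s_0,S_\Lambda]$.

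On the collar, with $s_0=-\log(1-\sqrt{m_0}/2)$, the estimate \eqref{eq:boundary_lower} already gives $u\geq m_0/4\geq \frac{m_0}{16}e^{-2S_\Lambda}$. On the tail, Proposition \ref{prop:tail_lower}, applied with $\Lambda=C_*M_0^4$, gives $u\geq\frac14$. Since $m_0\leq1$, this is again at least $\frac{m_0}{16}e^{-2S_\Lambda}$. If $s_0\geq S_\Lambda$ these two ranges cover everything, so assume $s_0<S_\Lambda$.

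For $s\in(s_0,S_\Lambda)$, apply the comparison estimate \eqref{eq:comparison_lower} with $S=S_\Lambda$:
$$u(s,x)\geq\Big(\frac{e^s-1}{e^{S_\Lambda}-1}\Big)^2u(S_\Lambda,x)\geq\frac14\Big(\frac{e^{s_0}-1}{e^{S_\Lambda}}\Big)^2 .$$
Now $e^{s_0}-1=\frac{\sqrt{m_0}/2}{1-\sqrt{m_0}/2}\geq\frac{\sqrt{m_0}}{2}$, so the right-hand side is at least $\frac14\cdot\frac{m_0}{4}e^{-2S_\Lambda}=\frac{m_0}{16}e^{-2S_\Lambda}$. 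The three ranges together give \eqref{eq:global_c0_estimate}.

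Finally, $S_\Lambda$ depends only on $\Ent_0$ and $\Lambda$. The bound $\Ent_0\leq\log M_0$ gives $S_\Lambda\leq\max\{2,2+\beta_\Lambda^{-1}\log(32\log M_0/\epsilon_*)\}$, with $\beta_\Lambda$ depending on $M_0$. When $\Ent_0=0$ we have $S_\Lambda=2$. So both bounds are controlled by $m_0$ and $M_0$.

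Each step is a direct invocation of an earlier result, so I expect no serious obstacle. The only points needing care are the elementary estimate $e^{s_0}-1\geq\sqrt{m_0}/2$ and checking that the collar and tail bounds dominate the claimed constant, using $m_0\leq1$ and $S_\Lambda\geq2$.
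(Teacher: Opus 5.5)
Your proposal is correct and follows essentially the same route as the paper: the collar bound \eqref{eq:boundary_lower}, the tail bound from Proposition \ref{prop:tail_lower}, and the comparison estimate \eqref{eq:comparison_lower} with $S=S_\Lambda$ on the intermediate range, combined with $e^{s_0}-1\geq\sqrt{m_0}/2$. The only cosmetic difference is that the paper rules out $s_0\geq S_\Lambda$ directly via $s_0\leq\log 2<2\leq S_\Lambda$ (using $m_0\leq1$), whereas you treat it as a separate case that in fact never occurs.
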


\begin{proof}
	The upper bound is \eqref{eq:upper_bound}. By \eqref{eq:boundary_lower}, $u\geq m_0/4$ on $[0,s_0]\times\bS^2$, where $s_0=-\log(1-\sqrt{m_0}/2)$. Since $s_0\leq\log2<S_\Lambda$, \eqref{eq:comparison_lower} and \eqref{eq:tail_lower} give, for $s_0\leq s\leq S_\Lambda$,
	\begin{align}
		u(s,x)\geq\frac14\left(\frac{e^{s_0}-1}{e^{S_\Lambda}-1}\right)^2\geq\frac{m_0}{16}e^{-2S_\Lambda}.
	\end{align}
	Together with $u\geq1/4$ on $[S_\Lambda,\infty)\times\bS^2$, this proves the lower bound. 
\end{proof}

\subsection{Derivative estimates and decay}

We use the following Gagliardo--Nirenberg interpolation inequality on the three-dimensional cylinder.

\begin{lemma}\label{lem:interpolation}
	For integers $0\leq j<k$, set $\theta=\frac{2(k-j)}{2k+3}$. For every $f\in C^k([0,1]\times\bS^2)$, we have
	$$\|f\|_{C^j([0,1]\times \bS^2)}\leq C_{j,k} \|f\|_{C^k([0,1]\times\bS^2)}^{1-\theta} \|f\|_{L^2([0,1]\times\bS^2)}^{\theta}.$$
\end{lemma}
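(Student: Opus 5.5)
The plan is to reduce to the classical Gagliardo--Nirenberg inequality on a bounded Lipschitz domain in $\bR^3$ by working in local charts, and then to interpolate between integer orders. The manifold $M=[0,1]\times\bS^2$ is compact, three-dimensional, and has a Lipschitz (in fact smooth) boundary. Cover it by finitely many coordinate patches, each diffeomorphic to a cube or half-cube in $\bR^3$, and fix a partition of unity $\{\rho_i\}$ subordinate to this cover. Then $\|f\|_{C^j(M)}\leq C\sum_i\|\rho_if\|_{C^j}$, and $\|\rho_if\|_{C^k}\leq C\|f\|_{C^k(M)}$ and $\|\rho_if\|_{L^2}\leq C\|f\|_{L^2(M)}$. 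It therefore suffices to prove the estimate for a function $g=\rho_if$ supported in a fixed cube or half-cube. Using a bounded extension operator (Stein's extension, or reflection across the flat face $\{s=0\}$ or $\{s=1\}$), we may further assume $g$ is supported in a fixed ball $B\subset\bR^3$, with $\|g\|_{C^k}$ and $\|g\|_{L^2}$ controlled by the original norms.

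For compactly supported $g$ on $\bR^3$, the key one-step estimate is
$$\|g\|_{L^\infty}\leq C\|g\|_{C^k}^{3/(2k+3)}\|g\|_{L^2}^{2k/(2k+3)}.$$
I would prove it by the standard scaling argument. Suppose $|g(x_0)|=\|g\|_\infty=:A$. Taylor expansion of order $k-1$ shows that $g$ stays close to its Taylor polynomial on a ball $B_r(x_0)$; concretely, one applies a Markov-type inequality to polynomials of degree $<k$ on $B_r$, which bounds the $L^\infty$ norm by $Cr^{-3/2}$ times the $L^2$ norm. This gives
$$A\leq C\bigl(r^{-3/2}\|g\|_{L^2}+r^k\|g\|_{C^k}\bigr)$$
for every $r\leq1$. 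Optimizing with $r=(\|g\|_{L^2}/\|g\|_{C^k})^{2/(2k+3)}$ produces the exponent $\theta=2k/(2k+3)$. If this $r$ exceeds $1$, then $\|g\|_{L^2}\geq\|g\|_{C^k}$, and the bound is trivial because $\|g\|_{L^\infty}\leq\|g\|_{C^k}$ implies the desired inequality in that regime.

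For $j\geq1$, the same argument applies to each derivative $\partial^\alpha g$ with $|\alpha|=j$, using the $C^{k-j}$ regularity of $\partial^\alpha g$ and an $L^2$ bound on it. To avoid needing $\|\partial^\alpha g\|_{L^2}$, I would instead run the Taylor/Markov argument directly. On $B_r(x_0)$, write $g=P+R$ with $\deg P<k$ and $|R|\leq Cr^k\|g\|_{C^k}$. Markov's inequality gives $|\partial^\alpha P(x_0)|\leq Cr^{-j-3/2}\|P\|_{L^2(B_r)}$, and Taylor's theorem gives $|\partial^\alpha R(x_0)|\leq Cr^{k-j}\|g\|_{C^k}$. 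Hence
$$|\partial^\alpha g(x_0)|\leq C\bigl(r^{-j-3/2}\|g\|_{L^2}+r^{k-j}\|g\|_{C^k}\bigr).$$
Choosing the same $r$ as before yields the exponent $\theta=\frac{2(k-j)}{2k+3}$, matching the statement. The large-$r$ case is again handled by the trivial bound.

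The main obstacle is the boundary of $M$: the balls $B_r(x_0)$ near $s=0$ or $s=1$ must stay inside the domain. This is handled either by the extension step above or by using cones or half-balls with uniform interior-cone geometry, which changes only the constants. Since the domain is fixed and compact, all constants depend only on $j$ and $k$, which gives $C_{j,k}$.
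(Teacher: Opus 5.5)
Your proof is correct. It follows the same skeleton as the paper: local charts, a partition of unity, and extension across the flat faces $\{s=0\}$ and $\{s=1\}$.

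The two arguments differ in the core Euclidean step. The paper simply invokes the standard three-dimensional Gagliardo--Nirenberg inequality in charts, and controls lower-order terms with $\|f\|_{L^2}\leq\|f\|_{C^k}$. You instead prove the needed $L^\infty$-endpoint case directly, using three ingredients:
\begin{itemize}
\item Taylor expansion of order $k-1$ at $x_0$;
\item the scaled Markov inequality $|\partial^\alpha P(x_0)|\leq Cr^{-|\alpha|-3/2}\|P\|_{L^2(B_r)}$ for polynomials of degree $<k$;
\item optimization in $r$.
\end{itemize}
Your route is self-contained and makes the exponent $\theta=\frac{2(k-j)}{2k+3}$ transparent. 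It also spares you from matching the parameters against the hypotheses of the cited theorem. The paper's route is shorter because it relies on that citation. Your large-$r$ regime, where $\|g\|_{C^j}\leq\|g\|_{C^k}$ gives the bound trivially, plays exactly the role of the paper's remark about lower-order terms.

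Two small steps should be stated explicitly.
\begin{itemize}
\item Passing from $\|P\|_{L^2(B_r)}$ to $\|g\|_{L^2}$ costs an extra $Cr^{k+3/2}\|g\|_{C^k}$. After multiplying by $r^{-j-3/2}$, this is absorbed into the $r^{k-j}\|g\|_{C^k}$ term.
\item The $C^j$ norm also contains derivatives of order $i=|\alpha|<j$. Use the same optimal $r\leq1$, so that $r^{k+3/2}=\|g\|_{L^2}/\|g\|_{C^k}$. Then both terms in the bound for $\partial^\alpha g(x_0)$ equal $r^{k-i}\|g\|_{C^k}$. This is at most $r^{k-j}\|g\|_{C^k}$, since $r\leq 1$ and $i\leq j$.
\end{itemize}
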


\begin{proof}
	This follows from the standard Gagliardo--Nirenberg inequality in dimension three, applied in local charts with extension across the boundary; the lower-order derivatives are controlled using $\|f\|_{L^2}\leq \|f\|_{C^k}$.
\end{proof}

\begin{corollary}\label{cor:derivative_decay}
	Under the assumptions of Theorem \ref{thm:c0_estimate}, for each integer $k\geq0$ and $0<\alpha<1$, there is a constant $C>0$, depending only on $m_0,M_0,\Ent_0,k,\alpha$, such that
	\begin{align}\label{eq:derivative_decay}
	\|u-1\|_{C^{k,\alpha}([r,r+1]\times\bS^2)}\leq Ce^{-\delta_k r},\quad r\geq1,\quad\delta_k=\frac{\beta_\Lambda}{2k+7}.
	\end{align}
	If $\varphi\in C^{2,\alpha}(\bS^2)$, there is also a uniform bound for $\|u\|_{C^{2,\alpha}([0,2]\times\bS^2)}$, with additional dependence on $\|\varphi\|_{C^{2,\alpha}(\bS^2)}$.
\end{corollary}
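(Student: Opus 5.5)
The plan is to combine three ingredients: uniform $C^{k,\alpha}$ bounds on unit slabs from the two-sided $C^0$ estimate, an $L^2$ decay of $u-1$ from the entropy decay, and the interpolation inequality of Lemma \ref{lem:interpolation} to pass from $L^2$ decay to $C^j$ decay.

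\textbf{Step 1: uniform regularity on slabs.} By Theorem \ref{thm:c0_estimate}, $\lambda\le u\le\Lambda$ on $[0,\infty)\times\bS^2$, with $\lambda,\Lambda$ depending only on $m_0,M_0$ (recall $\Ent_0\le\log M_0$). Set $v=\log u$. Then \eqref{eq:s_pde} becomes
$$\Delta v+e^{v}(v_{ss}+v_s^2)-3e^vv_s+2e^v=2.$$
Equivalently, \eqref{eq:weighted_divergence} exhibits the equation in divergence form,
$$\mathrm{div}(u^{-1}\nabla u)+\partial_s(u_s)=3u_s-2(u-1),$$
whose coefficient matrix $\mathrm{diag}(u^{-1}g_{\bS^2},1)$ is uniformly elliptic with bounds depending on $\lambda,\Lambda$.

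\emph{Interior H\"older bound.} On any slab $[r-1,r+2]\times\bS^2$ with $r\ge1$, De Giorgi--Nash--Moser gives a $C^{\gamma}$ bound on $u$ over $[r-\frac12,r+\frac32]$, with $\gamma$ and the constant depending only on $\lambda,\Lambda$.

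\emph{Gradient bound.} Since the coefficients are then H\"older, Schauder theory for divergence-form equations gives a $C^{1,\gamma}$ bound.

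\emph{Bootstrap.} The equation can now be viewed in nondivergence form, $u^{-1}\Delta u+u_{ss}=u^{-2}|\nabla u|^2+3u_s-2(u-1)$, with $C^{\gamma}$ coefficients and right-hand side. Iterating Schauder estimates yields $\|u\|_{C^{k+1,\alpha}([r,r+1]\times\bS^2)}\le C_k$ uniformly in $r\ge1$.

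\emph{Boundary slab.} Near $s=0$ the same argument with boundary Schauder estimates and $u(0,\cdot)=\varphi\in C^{2,\alpha}$ gives the $C^{2,\alpha}([0,2]\times\bS^2)$ bound, with added dependence on $\|\varphi\|_{C^{2,\alpha}}$.

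\textbf{Step 2: $L^2$ decay.} Taking $q=2$ in \eqref{eq:moment_decay} and using $\oint u\,d\mu=1$,
$$\oint_{\bS^2}(u-1)^2d\mu=F_2-1\le 2\Lambda\Ent_0e^{-\beta_\Lambda s}.$$
Integrating over $s\in[r,r+1]$ gives $\|u-1\|_{L^2([r,r+1]\times\bS^2)}\le Ce^{-\beta_\Lambda r/2}$.

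\textbf{Step 3: interpolation.} Apply Lemma \ref{lem:interpolation} to $f=u-1$ on $[r,r+1]$ (translated to $[0,1]$), with $j=k+1$ and top order $K=k+2$. Then
$$\theta=\frac{2}{2k+7},$$
and the $C^{k+2}$ norm is bounded by Step 1. Hence
$$\|u-1\|_{C^{k+1}([r,r+1]\times\bS^2)}\le C\,e^{-\theta\beta_\Lambda r/2}=Ce^{-\beta_\Lambda r/(2k+7)}.$$
Since the $C^{k+1}$ norm dominates the $C^{k,\alpha}$ norm, this is exactly \eqref{eq:derivative_decay} with $\delta_k=\beta_\Lambda/(2k+7)$.

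\textbf{Main obstacle.} The delicate point is Step 1. The constants must be uniform in $r$ and depend only on $m_0,M_0,k,\alpha$, which requires first extracting H\"older continuity of the coefficient $u^{-1}$ via De Giorgi--Nash. Once the two-sided bounds are in hand, the cylinder's translation invariance makes this routine.
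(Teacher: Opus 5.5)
Your proposal is correct and follows the paper's proof essentially step for step: De Giorgi--Nash--Moser followed by Schauder bootstrapping gives uniform $C^{k+2}$ bounds on unit slabs, \eqref{eq:moment_decay} with $q=2$ gives the $L^2$ decay, and Lemma \ref{lem:interpolation} with orders $k+1$ and $k+2$ (so $\theta=2/(2k+7)$) yields $\delta_k=\beta_\Lambda/(2k+7)$. The $C^{2,\alpha}$ bound near $s=0$ likewise comes from boundary De Giorgi--Nash--Moser and Schauder estimates with data $\varphi\in C^{2,\alpha}(\bS^2)$, exactly as you describe.
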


\begin{proof}
	By \eqref{eq:global_c0_estimate}, $u^{-1}$ is bounded above and below by positive constants. Thus the divergence form equation
	\begin{align}\label{eq:divergence_pde}
	\mathrm{div}(u^{-1}\nabla u)+u_{ss}-3u_s+2u=2
	\end{align}
	is uniformly elliptic when $u^{-1}$ is regarded as a given coefficient function. The interior De Giorgi--Nash--Moser estimate \cite[Theorem 8.22]{GilbargTrudinger2001}, followed by Schauder estimates, gives
	$$\|u\|_{C^{k+2}([r,r+1]\times\bS^2)}\leq C_k,\quad r\geq1.$$
	On the other hand, \eqref{eq:mass_one} and \eqref{eq:moment_decay} give
	\begin{align}\label{eq:l2_decay}
	\|u-1\|_{L^2([r,r+1]\times\bS^2)}^2=\int_r^{r+1}(F_2(s)-1)ds\leq2\Lambda \Ent_0e^{-\beta_\Lambda r}.
	\end{align}
	Applying Lemma \ref{lem:interpolation} with orders $k+1$ and $k+2$, for which $\theta=2/(2k+7)$, yields
	$$\|u-1\|_{C^{k,\alpha}([r,r+1]\times\bS^2)}\leq C\|u-1\|_{C^{k+1}([r,r+1]\times\bS^2)}\leq Ce^{-\delta_k r}.$$
	The estimate up to $s=0$ follows from the boundary De Giorgi--Nash--Moser estimate \cite[Theorem 8.29]{GilbargTrudinger2001} and Schauder estimates with boundary data $\varphi\in C^{2,\alpha}(\bS^2)$.
\end{proof}

\section{Existence in the even case}\label{sec:even_existence}

The aim of this section is to prove the following.

\begin{theorem}[Even case]\label{thm:existence_uniqueness}
For any positive even function $\varphi\in C^{2,\alpha}(\bS^2)$, $0<\alpha<1$, satisfying $\oint_{\bS^2}\varphi\,d\mu=1$, there is a unique positive solution $u$ of \eqref{eq:s_pde} and \eqref{eq:boundary_asymptotics}. Moreover, $u$ is even, and there exist constants $C>1,\delta\in (0,1)$, depending only on $\varphi$, $\alpha$, such that
\begin{align}\label{eq:even_existence_bounds}
	C^{-1}\leq u\leq C,\quad \sup_{s\geq 0}e^{\delta s}\|u-1\|_{C^{2,\alpha}([s,s+1]\times\bS^2)}\leq C.
\end{align}
\end{theorem}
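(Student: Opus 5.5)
Uniqueness and evenness are already settled. Any positive solution satisfying \eqref{eq:boundary_asymptotics} is bounded, so it satisfies \eqref{eq:uniqueness_growth}. Corollary \ref{cor:uniqueness} (via \eqref{eq:ball_transform}) then gives uniqueness, and Corollary \ref{cor:evenness} gives evenness. Once existence is known, the bounds \eqref{eq:even_existence_bounds} follow from Theorem \ref{thm:c0_estimate} and Corollary \ref{cor:derivative_decay}: take $k=3$ in \eqref{eq:derivative_decay}, so the $C^{3,\alpha}$ decay dominates $C^{2,\alpha}$, and set $\delta=\beta_\Lambda/13$. The constants depend only on $m_0,M_0,\Ent_0$ and on $\|\varphi\|_{C^{2,\alpha}}$, $\alpha$. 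The real task is existence, which I would obtain by a continuity method in the boundary data.

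Let $\varphi_t=(1-t)+t\varphi$ for $t\in[0,1]$. Each $\varphi_t$ is positive, even, in $C^{2,\alpha}$, and satisfies $\oint\varphi_t\,d\mu=1$. Moreover $\inf\varphi_t\geq m_0$ and $\sup\varphi_t\leq M_0$, so the a priori estimates are uniform in $t$. Let $T$ be the set of $t$ for which a positive even solution $u_t$ with \eqref{eq:boundary_asymptotics} exists. We have $0\in T$, since $u\equiv1$.

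\emph{Closedness.} Take $t_n\to t$ with $t_n\in T$. Theorem \ref{thm:c0_estimate} and Corollary \ref{cor:derivative_decay} give uniform $C^{2,\alpha}$ bounds up to $s=0$, uniform two-sided bounds, and uniform exponential decay of $u_{t_n}-1$. By Arzelà--Ascoli, a subsequence converges locally in $C^2$. The limit solves \eqref{eq:s_pde} with data $\varphi_t$, and the uniform decay passes to the limit, so \eqref{eq:boundary_asymptotics} holds. Evenness also passes to the limit.

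\emph{Openness.} This is the main obstacle. I would work in the weighted space $X=\{v: e^{\delta s}v\in C^{2,\alpha}([0,\infty)\times\bS^2)$, even$\}$ with small $\delta>0$, and write $u=u_{t_0}+v$. The map $(t,v)\mapsto$ (equation residual, $v(0)-(\varphi_t-\varphi_{t_0})$) is $C^1$ into the corresponding $C^{0,\alpha}$ weighted space times $C^{2,\alpha}(\bS^2)$. Its linearization is
\[
Lv=\Delta(v/u)+v_{ss}-3v_s+2v
\]
with Dirichlet data at $s=0$. I need $L$ to be an isomorphism on even functions decaying at rate $\delta$.

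Injectivity comes from the comparison argument of Lemma \ref{lem:comparison} applied linearly. Setting $P(s)=\oint v_+\,d\mu$, Kato's inequality gives $P''-3P'+2P\geq0$ wait—the zeroth-order term $2v$ has the wrong sign for a pure convexity argument. So instead I would use the weight $h=v$ directly as in Lemma \ref{lem:comparison}: there the linearized quantity $h=e^{w_1}-e^{w_2}$ in the $\xi$ variable satisfies $h_{\xi\xi}+\Delta(\Theta h)=0$ exactly. Translating $v$ back through \eqref{eq:ball_transform} to $h=(\frac12-\xi)^2v$ gives an equation of that form with no zeroth-order term. Then $P$ is convex in $\xi$ and vanishes at both ends, because $v\to0$ at $s=0$ and $v$ decays at infinity. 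Hence $v\equiv0$.

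Surjectivity I would get from Fredholm theory on the half-cylinder. At $s=\infty$ the operator is asymptotic to $\Delta+\partial_s^2-3\partial_s+2$, whose indicial roots on the $k$-th spherical harmonic are $\gamma=(3\pm\sqrt{9-8+4k(k+1)})/2$. These are $\{1,2\}$ for $k=0$; for even $k\geq2$ one root is negative and one is positive. So for $\delta$ small, and in particular $\delta\notin$ indicial set, $L$ is Fredholm of index zero (compare with $u\equiv1$ and deform). Injectivity then gives invertibility. Alternatively, I would solve on truncated cylinders $[0,R]$ with zero data at $s=R$, use the same comparison to bound solutions uniformly, and let $R\to\infty$. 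The implicit function theorem then yields openness, and connectedness gives $1\in T$.
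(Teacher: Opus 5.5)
Your architecture matches the paper's. You use the same path $\varphi_t=(1-t)+t\varphi$ and get closedness from Theorem~\ref{thm:c0_estimate} and Corollary~\ref{cor:derivative_decay}. Injectivity comes, as in the paper, from passing to $h=(\frac12-\xi)^2v$ with $\Theta=((\frac12-\xi)^2u)^{-1}$ and rerunning the proof of Lemma~\ref{lem:comparison}. Openness is by the implicit function theorem in weighted H\"older spaces. Your uniqueness, evenness and bounds paragraph is also fine.

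\textbf{The gap.} Surjectivity of the linearization fails in the degree-zero spherical harmonic. You computed its indicial roots $\{1,2\}$ and then asserted index zero. Because both roots are positive, this mode has no decaying homogeneous solution. For every $u$, $\oint_{\bS^2}L_uv\,d\mu=a''-3a'+2a$ with $a(s)=\oint_{\bS^2}v(s,\cdot)\,d\mu$. So decay alone determines $a$ from $f_0=\oint_{\bS^2}f\,d\mu$, namely $a(s)=\int_s^\infty(e^{s-t}-e^{2(s-t)})f_0(t)\,dt$, and this forces
\[
a(0)=\int_0^\infty(e^{-t}-e^{-2t})f_0(t)\,dt.
\]
On this mode the Dirichlet condition is therefore an extra constraint. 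The map $v\mapsto(L_uv,v(0))$ from your space $X$ (no mean constraint) into $C^{0,\alpha}_\delta\times C^{2,\alpha}(\bS^2)$ is injective but has a one-dimensional cokernel, so its index is $-1$; for instance, $(0,1)$ is not in the range. Consequences:
\begin{itemize}
\item The implicit function theorem does not apply to your map as written.
\item Deforming from $u\equiv1$ cannot help, since $L_1$ has the same defect.
\item Your truncated-cylinder alternative does not repair this. The comparison argument only controls $\oint_{\bS^2}|h|\,d\mu=(\frac12-\xi)^2\oint_{\bS^2}|v|\,d\mu$, which gives no decay of $v$ as $s\to\infty$.
\end{itemize}

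\textbf{The missing idea.} Remove the mean mode using the conservation law \eqref{eq:mass_one}: every solution has $\oint_{\bS^2}u(s,\cdot)\,d\mu\equiv1$. The paper proceeds as follows.
\begin{itemize}
\item It writes $u=E_\tau+h$, with the mean-one extension $E_\tau=1+\tau e^{-s}(\varphi-1)$ of $\varphi_\tau$.
\item It takes $h$ in the space $X_\delta$ of \eqref{eq:x_delta}: even, zero trace, and zero spherical mean on every slice.
\item The target is the space $Y_\delta$ of even functions with zero spherical mean, and $\mathcal{P}(E_\tau+h)$ lands there automatically.
\item Only degrees $j=2,4,\dots$ then occur. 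Each has exactly one decaying homogeneous solution, of rate $e^{(1-j)s}$, and the Dirichlet condition kills it.
\item $L_1:X_\delta\to Y_\delta$ is inverted by an explicit Green kernel. $L_u$ is then handled by a uniform estimate and continuity in $u_\theta=1+\theta(u-1)$ (Lemma~\ref{lem:linear_isomorphism}).
\end{itemize}
Equivalently, you could keep your map and replace its target by the closed codimension-one subspace defined by the compatibility condition above. Your map lands in that subspace because $\oint_{\bS^2}u_{t_0}\,d\mu\equiv1$ and $\oint_{\bS^2}(\varphi_t-\varphi_{t_0})\,d\mu=0$.
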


The weighted H\"older spaces used below are standard. For completeness, we give a self-contained treatment of the weighted-space arguments needed for the continuity method.

\subsection{Weighted H\"older spaces}

For integers $j\geq 0$, $\alpha\in(0,1)$, and $\delta>0$, define the weighted H\"older space $C_\delta^{j,\alpha}$ to consist of functions for which the norm
\begin{align}\label{eq:weighted_norm}
	\|h\|_{C_\delta^{j,\alpha}}=\sup_{s\geq0}e^{\delta s}\|h\|_{C^{j,\alpha}([s,s+1]\times\bS^2)}
\end{align}
is finite.

Let
\begin{align}\label{eq:x_delta}
	X_\delta=\{h\in C_\delta^{2,\alpha}|\,h\text{ is even},\,h(0,\cdot)=0,\,\oint_{\bS^2}h(s,x)d\mu=0,\,\forall s\geq0\},
\end{align}
\begin{align}\label{eq:y_delta}
	Y_\delta=\{f\in C_\delta^{0,\alpha}|\,f\text{ is even},\,\oint_{\bS^2}f(s,x)d\mu=0,\,\forall s\geq0\},
\end{align}
equipped with the induced norms. These are closed subspaces of the corresponding weighted H\"older spaces and hence are Banach spaces.

\begin{lemma}\label{lem:weighted_local_compactness}
Let $h_i\in X_\delta$ satisfy $\|h_i\|_{X_\delta}\leq B$. There is a subsequence and a function $h_\infty\in X_\delta$ such that $h_i\rightarrow h_\infty$ in $C^{2,\beta}([0,R]\times\bS^2)$ for every $R>0$ and every $0<\beta<\alpha$. Moreover, $\|h_\infty\|_{X_\delta}\leq B$.
\end{lemma}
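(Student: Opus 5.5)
The plan is Arzelà--Ascoli on an exhaustion of the half-cylinder, followed by a diagonal argument and lower semicontinuity of the norm. Fix $R>0$. The bound $\|h_i\|_{X_\delta}\leq B$ gives, for each integer $0\leq n\leq R$,
$$\|h_i\|_{C^{2,\alpha}([n,n+1]\times\bS^2)}\leq Be^{-\delta n}\leq B.$$
Covering $[0,R]$ by finitely many unit slabs $[n,n+1]$ with overlaps (for instance $n\in\frac12\mathbb{Z}$), we obtain a uniform bound $\|h_i\|_{C^{2,\alpha}([0,R]\times\bS^2)}\leq C(R)B$. Only a bounded number of slabs meet any point, so the H\"older seminorm across slab boundaries is controlled: two points at distance less than $\frac12$ lie in a common slab, and points farther apart are handled by the sup bound.

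The compact embedding $C^{2,\alpha}\hookrightarrow C^{2,\beta}$ for $\beta<\alpha$ on the compact manifold-with-boundary $[0,R]\times\bS^2$ then yields a subsequence converging in $C^{2,\beta}([0,R]\times\bS^2)$. This embedding follows from Arzelà--Ascoli applied to second derivatives, together with interpolation of H\"older seminorms. Taking $R=1,2,3,\dots$ and a diagonal subsequence gives a single limit $h_\infty$ with convergence in $C^{2,\beta}([0,R]\times\bS^2)$ for every $R$ and, by repeating the diagonal argument over a sequence $\beta_k\uparrow\alpha$ or simply noting the compact embedding holds for each $\beta<\alpha$ from the same subsequence, for every $\beta<\alpha$.

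The linear constraints pass to the limit under pointwise (indeed $C^0$ local) convergence. Evenness, $h(0,\cdot)=0$, and $\oint_{\bS^2}h(s,x)\,d\mu=0$ for each $s$ all hold for $h_\infty$.

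The remaining step is the norm bound, which is where one must be careful, since the convergence is only in $C^{2,\beta}$. Fix $s\geq0$. For any points $p\neq q$ in $[s,s+1]\times\bS^2$ and any multi-index of order at most $2$, pointwise convergence of $D^\gamma h_i$ gives
$$e^{\delta s}\Big(|D^\gamma h_\infty(p)|+\frac{|D^\gamma h_\infty(p)-D^\gamma h_\infty(q)|}{d(p,q)^\alpha}\Big)=\lim_{i}e^{\delta s}\Big(|D^\gamma h_i(p)|+\frac{|D^\gamma h_i(p)-D^\gamma h_i(q)|}{d(p,q)^\alpha}\Big).$$
On $\bS^2$ one uses a fixed finite atlas, or parallel transport, to compare derivatives at different points; this is consistent with however the $C^{2,\alpha}$ norm is defined. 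The right-hand side is at most $\limsup_i e^{\delta s}\|h_i\|_{C^{2,\alpha}([s,s+1]\times\bS^2)}\leq B$. Taking the supremum over $p,q,\gamma$, and then over $s$, gives $\|h_\infty\|_{C^{2,\alpha}_\delta}\leq B$. Thus $h_\infty\in C^{2,\alpha}_\delta$, so $h_\infty\in X_\delta$ with $\|h_\infty\|_{X_\delta}\leq B$.
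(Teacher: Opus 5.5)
Your proposal is correct and follows the same route as the paper. You get uniform $C^{2,\alpha}$ bounds on compact cylinders from the weighted norm, then use Arzel\`a--Ascoli with a diagonal argument (plus interpolation) for local $C^{2,\beta}$ convergence, lower semicontinuity of $e^{\delta s}\|\cdot\|_{C^{2,\alpha}([s,s+1]\times\bS^2)}$ for $\|h_\infty\|_{X_\delta}\leq B$, and passage of the linear constraints to the limit; you just give more detail than the paper, and the one point to tidy is that if the $C^{2,\alpha}$ norm is a sum of several sup and seminorm terms, the supremum must be taken jointly over all the evaluation points rather than term by term.
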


\begin{proof}
	The weighted bound gives uniform $C^{2,\alpha}$ bounds on compact cylinders. Arzelà--Ascoli and a diagonal argument therefore give local $C^{2,\beta}$ convergence, for every $0<\beta<\alpha$. By lower semicontinuity,
	$e^{\delta s}\|h_\infty\|_{C^{2,\alpha}([s,s+1]\times\bS^2)}\leq B	$
	for every $s\geq0$, hence $\|h_\infty\|_{X_\delta}\leq B$. The remaining defining conditions of $X_\delta$ pass to the limit.
\end{proof}

\subsection{The linearized operator}

For a positive function $u$, set
\begin{align}\label{eq:nonlinear_operator}
\mathcal{P}(u)=\Delta\log u+u_{ss}-3u_s+2u-2.
\end{align}
For a positive even function $u\in1+C_\delta^{2,\alpha}$, the linearization of $\mathcal{P}$ at $u$ is
\begin{align}\label{eq:linearized_operator}
L_uv=\Delta(u^{-1}v)+v_{ss}-3v_s+2v.
\end{align}

\begin{lemma}\label{lem:linear_isomorphism}
	Let $0<\delta<1$ and $0<\alpha<1$. Suppose that $u>0$ is even and $u-1\in C_\delta^{2,\alpha}$. Then
	$$L_u:X_\delta\rightarrow Y_\delta,\quad L_uv=\Delta(u^{-1}v)+v_{ss}-3v_s+2v,$$
	is an isomorphism of Banach spaces.
\end{lemma}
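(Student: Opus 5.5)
The plan is to prove an a priori estimate $\|v\|_{X_\delta}\leq C\|L_uv\|_{Y_\delta}$, prove injectivity from the linearized comparison principle, and get surjectivity by the method of continuity from $u\equiv1$. The case $u\equiv1$ is explicit by separation of variables.

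\emph{Step 0: $L_u$ maps $X_\delta$ to $Y_\delta$.} Evenness is preserved because $u$ is even. Zero spherical mean is preserved because $\oint\Delta(u^{-1}v)\,d\mu=0$ and $\oint v\,d\mu\equiv0$ forces $\oint(v_{ss}-3v_s+2v)\,d\mu=0$. The weighted $C^{0,\alpha}$ bound follows since $u^{-1}$ is bounded in $C^{2,\alpha}$ on each slab $[s,s+1]\times\bS^2$.

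\emph{Step 1: the model operator $L_1$.} Expand $v$ and $f$ in spherical harmonics. Evenness and zero mean leave only degrees $l\geq2$ with $l$ even, and $\Delta$ acts by $-l(l+1)$ on degree $l$. The mode equation $v_l''-3v_l'+(2-l(l+1))v_l=f_l$ has characteristic roots $l+2$ and $1-l$, since $\sqrt{9-4(2-l(l+1))}=2l+1$.
\begin{itemize}
\item For $l\geq2$ the decaying root satisfies $1-l\leq-1<-\delta$. So the solution with $v_l(0)=0$ that is bounded at $+\infty$ is given by an explicit Green's function, with weighted bound $\sup_s e^{\delta s}|v_l(s)|\lesssim l^{-1}\sup_s e^{\delta s}|f_l(s)|$.
\item This is exactly where evenness is used: the excluded mode $l=1$ has root $0$ and would not decay.
\item Rather than summing modes in H\"older norms, I would only use the mode analysis for uniqueness and Liouville statements, and let Step 2 supply the estimates.
\end{itemize}

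\emph{Step 2: uniform a priori estimate along the path $u_t=1+t(u-1)$, $t\in[0,1]$.} Each $u_t$ is positive, even, and bounded above and below uniformly in $t$. Local interior and boundary Schauder estimates (the boundary condition is $v(0,\cdot)=0$) give
$$\|v\|_{X_\delta}\leq C\left(\|L_{u_t}v\|_{Y_\delta}+\sup_{s}e^{\delta s}\|v\|_{L^\infty([s,s+1]\times\bS^2)}\right).$$
Suppose the full estimate $\|v\|_{X_\delta}\leq C\|L_{u_t}v\|_{Y_\delta}$ fails. Then there are $t_i$ and $v_i$ with $\|v_i\|_{X_\delta}=1$, $\|L_{u_{t_i}}v_i\|_{Y_\delta}\to0$, and points $s_i$ where $e^{\delta s_i}|v_i|\geq c>0$. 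There are two cases.
\begin{itemize}
\item If $s_i$ stays bounded, Lemma \ref{lem:weighted_local_compactness} gives a nonzero limit $v\in X_\delta$ with $L_{u_t}v=0$ for the limit value of $t$. This is excluded by Step 3.
\item If $s_i\to\infty$, translate and rescale: $e^{\delta s_i}v_i(\cdot+s_i,\cdot)$ converges locally on $\bR\times\bS^2$ to a nonzero even, mean-zero $w$ with $L_1w=0$ (because $u_t\to1$ at infinity) and $|w|\leq Ce^{-\delta s}$. Each mode $w_l$ is a combination of $e^{(l+2)s}$ and $e^{(1-l)s}$. As $s\to+\infty$ the first term grows, and as $s\to-\infty$ the second grows like $e^{(l-1)|s|}\geq e^{|s|}$, which beats $e^{\delta|s|}$ because $\delta<1$. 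Hence $w=0$, a contradiction.
\end{itemize}
This step is the main obstacle. It needs uniformity in $t$, handled by passing to a subsequence $t_i\to t$, and care with the boundary case in the compactness argument.

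\emph{Step 3: injectivity via the linearized comparison principle.} Let $L_uv=0$ with $v\in X_\delta$. Pass back to the $\xi$ variable by $h=(\frac12-\xi)^2v$, $\Theta=u^{-1}(\frac12-\xi)^{-2}=e^{-w}$. The equation becomes $h_{\xi\xi}+\Delta(\Theta h)=0$, which is exactly \eqref{eq:comparison_principle_difference_equation}. The Kato argument in the proof of Lemma \ref{lem:comparison} then shows that $P(\xi)=\oint h_+\,d\mu$ is convex. Moreover $P(0)=0$, and $P\to0$ as $\xi\to\frac12$ because $v$ is bounded. Hence $h_+\equiv0$, and applying the same argument to $-h$ gives $v\equiv0$.

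\emph{Step 4: surjectivity.} $L_1$ is surjective by Step 1: solve each mode equation, and the resulting $v$ lies in $X_\delta$ by the estimate of Step 2. The family $t\mapsto L_{u_t}$ is continuous in operator norm and satisfies the uniform bound of Step 2. The method of continuity then gives that $L_u=L_{u_1}$ is onto, and with Step 3 it is an isomorphism of Banach spaces.
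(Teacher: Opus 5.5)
The gap is in Step 4, in the surjectivity of $L_1$. Your claim that the mode-by-mode solution ``lies in $X_\delta$ by the estimate of Step 2'' is circular. Step 2 is an a priori estimate for functions already known to belong to $X_\delta$. It cannot show that the formal series $v=\sum v_{l,m}Y_{l,m}$ is a $C^{2,\alpha}_{\mathrm{loc}}$ solution of $L_1v=f$ with $v(0,\cdot)=0$ and finite weighted norm.

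In fact your Step 2 argument never uses evenness. For $l=1$ the blow-up limit mode is $w_1=Ae^{3s}+B$, and $|w_1|\le e^{-\delta s}$ on all of $\bR$ forces $A=B=0$. So the same estimate holds on the non-even spaces \eqref{eq:general_x_delta}--\eqref{eq:general_y_delta}, where $L_1$ is \emph{not} onto (see the discussion before Proposition \ref{prop:general_linear_isomorphism}). Membership in $X_\delta$ must therefore come from the construction itself, by summing the modes with control uniform in the degree. This is exactly where the decay of the degree $\geq2$ Green kernels, i.e.\ evenness, has to enter quantitatively.

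You explicitly decline to sum the modes, and the natural substitute does not work as stated. Applying Step 2 to the truncations $v_N$, which do lie in $X_\delta$, only gives $\|v_N\|_{X_\delta}\le C\|P_Nf\|_{Y_\delta}$. Spherical-harmonic truncation is not uniformly bounded even on $C^0(\bS^2)$ (its Lebesgue constants grow like $N^{1/2}$), so you have no uniform control of $\|P_Nf\|_{Y_\delta}$.

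The paper fills exactly this step in three moves:
\begin{enumerate}
\item The kernel bound $|G_j(s,t)|\le e^{-|t-s|}$, uniform in $j\geq2$, gives by Parseval and Minkowski $\sup_se^{\delta s}\|v(s,\cdot)\|_{L^2(\bS^2)}\le C_\delta\|f\|_{Y_\delta}$ for the whole series.
\item Local $W^{2,2}$ bounds for the truncations, which only need $P_Nf\to f$ in $L^2_{\mathrm{loc}}$, identify $v$ as a solution with zero trace.
\item Local Schauder estimates upgrade the weighted $L^2$ bound to $v\in X_\delta$.
\end{enumerate}
Within your framework you could instead approximate $f$ by the slicewise heat flow $e^{\epsilon\Delta}f$. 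It commutes with $L_1$, is bounded on $Y_\delta$ uniformly in $\epsilon$, and has rapidly decaying modes, so its mode solution visibly lies in $X_\delta$. Step 2 and Lemma \ref{lem:weighted_local_compactness} then let you pass to the limit $\epsilon\to0$.

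With that repair the proof goes through; the remaining steps are correct. Your Step 2 is a genuinely different route from the paper's. The paper derives the uniform estimate from the explicit model bound \eqref{eq:model_inverse_estimate}, a cutoff perturbation \eqref{eq:linear_compact_remainder}, and a compactness argument on $[0,R]$. Your blow-up with a translated Liouville argument on $\bR\times\bS^2$ needs no quantitative information about $L_1^{-1}$. However, that quantitative information is needed anyway for surjectivity, so the saving is smaller than it looks.
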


\begin{proof}
	We first prove injectivity, then an a priori estimate and surjectivity for $L_1$, and finally surjectivity for $L_u$ by the method of continuity.
	Since $u>0$ and $u\rightarrow1$ uniformly as $s\rightarrow\infty$, there is $c>0$ such that $c\leq u\leq c^{-1}$. Thus $L_u$ is uniformly elliptic. Moreover, if $a(s)=\oint_{\bS^2}v(s,x)d\mu$, then
	$$\oint_{\bS^2}L_uv\,d\mu=a''-3a'+2a,$$
	so $L_u$ maps $X_\delta$ continuously into $Y_\delta$.
	
	We first prove injectivity. Suppose $L_uv=0$. With $s=-\log(1-2\xi)$ as in \eqref{eq:ball_transform}, set
	$$h(\xi,x)=(\frac12-\xi)^2v(s,x),\quad \Theta(\xi,x)=\frac{1}{(\frac12-\xi)^2u(s,x)}>0.$$
	Then
	$$L_\Theta h:=h_{\xi\xi}+\Delta(\Theta h)=0.$$
	Since $v(0,\cdot)=0$ and $v$ is bounded, $h$ extends continuously to $[0,\frac12]\times\bS^2$ with $h(0,\cdot)=h(\frac12,\cdot)=0$. Applying the comparison argument in the proof of Lemma \ref{lem:comparison} to $h$ and $-h$ gives $h\equiv0$, hence $v\equiv0$.

We next prove an a priori estimate for the model operator
$$L_1=\Delta+\partial_{ss}-3\partial_s+2.$$
Let $Y_{j,m}$, $j=0,1,\ldots$ and $0\leq m\leq 2j$, be a real orthonormal basis of spherical harmonics in $L^2(\bS^2,d\mu)$, satisfying
$$-\Delta Y_{j,m}=\lambda_jY_{j,m},\quad \lambda_j=j(j+1).$$
Fix any $v\in X_\delta$ and set $f=L_1v\in Y_\delta$. Let $v_{j,m}$ and $f_{j,m}$ denote their coefficients in this basis. Since functions in $X_\delta$ and $Y_\delta$ are even and have zero spherical mean, only degrees $j=2,4,\ldots$ occur, i.e., $v_{j,m}=f_{j,m}=0$ if $j$ is odd. The equation $L_1v=f$ becomes
$$v_{j,m}''-3v_{j,m}'+(2-\lambda_j)v_{j,m}=f_{j,m},\quad v_{j,m}(0)=0,\quad v_{j,m}(s)\rightarrow0.$$
The characteristic roots are $j+2$ and $1-j$. The corresponding Dirichlet Green kernel is
$$G_j(s,t)=-\frac{e^{\frac32(s-t)}}{2j+1}\left(e^{-(j+\frac12)|s-t|}-e^{-(j+\frac12)(s+t)}\right).$$
The boundary conditions determine each mode uniquely:
\begin{align}\label{eq:model_green_formula}
	v_{j,m}(s)=\int_0^\infty G_j(s,t)f_{j,m}(t)\,dt.
\end{align}
The Green kernel satisfies
\begin{align}\label{eq:model_green_bound}
	|G_j(s,t)|
	\leq \frac{e^{-(j+\frac12)|t-s|+\frac32(s-t)}}{2j+1}
	\leq e^{-|t-s|},
\end{align}
where the last inequality follows from $j\geq2$. Parseval's identity and Minkowski's inequality give
\begin{align}
	\|v(s,\cdot)\|_{L^2(\bS^2)}
	&\leq \int_0^\infty e^{-|t-s|}
	\|f(t,\cdot)\|_{L^2(\bS^2)}\,dt.
\end{align}
Multiplying by $e^{\delta s}$ and using $0<\delta<1$, we obtain
\begin{align}\label{eq:model_weighted_l2}
	\sup_{s\geq0}e^{\delta s}\|v(s,\cdot)\|_{L^2(\bS^2)}
	\leq C_\delta
	\sup_{s\geq0}e^{\delta s}\|f(s,\cdot)\|_{L^2(\bS^2)}.
\end{align}
Standard interior estimates and boundary estimates using $v(0,\cdot)=0$ give
\begin{align}\label{eq:model_interior_schauder}
	\|v\|_{C^{2,\alpha}([r,r+1]\times\bS^2)}
	\leq C\left(
	\|L_1v\|_{C^{0,\alpha}([r-1,r+2]\times\bS^2)}
	+\|v\|_{L^2([r-1,r+2]\times\bS^2)}
	\right),\quad r\geq 1,
\end{align}
\begin{align}\label{eq:model_boundary_schauder}
	\|v\|_{C^{2,\alpha}([0,2]\times\bS^2)}
	\leq C\left(
	\|L_1v\|_{C^{0,\alpha}([0,4]\times\bS^2)}
	+\|v\|_{L^2([0,4]\times\bS^2)}
	\right).
\end{align}
Here $C$ depends only on $\alpha$. Combining
\eqref{eq:model_weighted_l2}--\eqref{eq:model_boundary_schauder}, we have
\begin{align}\label{eq:model_inverse_estimate}
	\|v\|_{X_\delta}\leq C_\delta\|L_1v\|_{Y_\delta},
	\quad v\in X_\delta.
\end{align}

The same construction proves surjectivity of $L_1$. Given $f\in Y_\delta$, let $f_{j,m}$ be its spherical harmonic coefficients and define $v_{j,m}$ by \eqref{eq:model_green_formula}. The argument leading to \eqref{eq:model_weighted_l2} shows that these coefficients define an even function with zero spherical mean
$v\in L^2_{\mathrm{loc}}([0,\infty)\times\bS^2)$
satisfying
\begin{align}\label{eq:model_constructed_l2}
	\sup_{s\geq0}e^{\delta s}\|v(s,\cdot)\|_{L^2(\bS^2)}
	\leq C_\delta\|f\|_{Y_\delta}.
\end{align}
Let $P_N$ denote the projection onto spherical harmonics of degree at most $N$, and set
$$v_N=\sum_{j=0}^N\sum_{m=0}^{2j} v_{j,m}Y_{j,m},\quad f_N=P_Nf.$$
Then
$$L_1v_N=f_N,\quad v_N(0,\cdot)=0,$$
and $v_N\rightarrow v$, $f_N\rightarrow f$ in $L^2_{\mathrm{loc}}$. The interior and boundary $W^{2,2}$ estimates give uniform local $W^{2,2}$ bounds for $v_N$. Passing to a subsequence and using continuity of the trace map, we obtain
$$L_1v=f,\quad v(0,\cdot)=0.$$
Standard elliptic regularity then gives $v\in C^{2,\alpha}([0,R]\times\bS^2)$ for every $R>0$. Finally, \eqref{eq:model_constructed_l2} and the interior and boundary estimates \eqref{eq:model_interior_schauder}\eqref{eq:model_boundary_schauder} imply
$$v\in X_\delta,\quad \|v\|_{X_\delta}\leq C_\delta\|f\|_{Y_\delta}.$$
Thus $L_1$ is surjective.

	Now set
	$$u_\theta=1+\theta(u-1),\quad 0\leq\theta\leq1.$$
	There is $c>0$, independent of $\theta$, such that $c\leq u_\theta\leq c^{-1}$, and $u_\theta, \, u_\theta^{-1}\rightarrow1$ in $C^{2,\alpha}$ on unit cylinders as $s\rightarrow\infty$, uniformly in $\theta$.
	The injectivity argument applies to every $L_{u_\theta}$.
	
	We claim that there are $R>0$ and $C>0$, independent of $\theta$, such that every $v\in X_\delta$ satisfies
	\begin{align}\label{eq:linear_compact_remainder}
		\|v\|_{X_\delta}\leq C\left(\|L_{u_\theta} v\|_{Y_\delta}+\|v\|_{C^0([0,R]\times\bS^2)}\right).
	\end{align}
	Choose a smooth cutoff $\chi=\chi(s)$ with $\chi=0$ on $[0,R]$, $\chi=1$ on $[R+1,\infty)$, and derivatives bounded independently of $R$. Then
	$$
	L_1(\chi v)=\chi L_{u_\theta}v+2\chi'v_s+(\chi''-3\chi')v+\chi\Delta\bigl((1-u_\theta^{-1})v\bigr).
	$$
	Applying \eqref{eq:model_inverse_estimate} to $\chi v\in X_\delta$ gives
	$$\|\chi v\|_{X_\delta}\leq C\|L_{u_\theta} v\|_{Y_\delta}+C_R\|v\|_{C^{1,\alpha}([R,R+1]\times\bS^2)}+C\varepsilon_R\|v\|_{X_\delta},$$
	where $\varepsilon_R\rightarrow0$ uniformly in $\theta$, $C$ is independent of $R$ and $\theta$, and $C_R$ is independent of $\theta$. On the compact cylinder, the interior and boundary Schauder estimates give
	$$\|v\|_{C^{2,\alpha}([0,R+1]\times\bS^2)}\leq C_R\left(\|L_{u_\theta} v\|_{C^{0,\alpha}([0,R+2]\times\bS^2)}+\|v\|_{C^0([0,R+2]\times\bS^2)}\right).$$
	Combining these estimates using $v=\chi v+(1-\chi)v$, then taking $R$ large enough to absorb $C\varepsilon_R\|v\|_{X_\delta}$, proves \eqref{eq:linear_compact_remainder} after increasing $R$.
	
	We next claim that
	\begin{align}\label{eq:uniform_linear_estimate}
		\|v\|_{X_\delta}\leq C\|L_{u_\theta} v\|_{Y_\delta}
	\end{align}
	for every $v\in X_\delta$, uniformly for $\theta\in[0,1]$. Otherwise there exist $\theta_i\rightarrow\theta_\infty$ and $v_i\in X_\delta$ such that
	$$\|v_i\|_{X_\delta}=1,\quad \|L_{u_{\theta_i}}v_i\|_{Y_\delta}\rightarrow0.$$
	By Lemma \ref{lem:weighted_local_compactness}, after passing to a subsequence, $v_i\rightarrow v_\infty$ locally in $C^{2,\beta}$ for every $0<\beta<\alpha$, with $v_\infty\in X_\delta$. Passing to the limit gives $L_{u_{\theta_\infty}}v_\infty=0$, hence $v_\infty=0$ by injectivity. Therefore $\|v_i\|_{C^0([0,R]\times\bS^2)}\rightarrow0$, contradicting \eqref{eq:linear_compact_remainder}. Thus \eqref{eq:uniform_linear_estimate} holds.
	
	Finally, we prove surjectivity for $L_u$ by the method of continuity. Let
	$$\mathcal{I}_0=\{\theta\in[0,1]:L_{u_\theta}:X_\delta\rightarrow Y_\delta\text{ is an isomorphism}\}.$$
	We have $0\in\mathcal{I}_0$. Since $L_{u_\theta}$ depends continuously on $\theta$ in operator norm, $\mathcal{I}_0$ is open. To prove closedness, let $\theta_i\in\mathcal{I}_0$ with $\theta_i\rightarrow\theta_\infty$, fix $f\in Y_\delta$, and solve
	$$L_{u_{\theta_i}}v_i=f.$$
	By \eqref{eq:uniform_linear_estimate}, $\|v_i\|_{X_\delta}\leq C\|f\|_{Y_\delta}$. After passing to a subsequence, Lemma \ref{lem:weighted_local_compactness} gives $v_i\rightarrow v_\infty$ locally in $C^{2,\beta}$, with $v_\infty\in X_\delta$. Passing to the limit in the equation yields
	$$L_{u_{\theta_\infty}}v_\infty=f.$$
	Thus $L_{u_{\theta_\infty}}$ is surjective, while injectivity was already proved. Hence $\mathcal{I}_0$ is closed. Therefore $\mathcal{I}_0=[0,1]$, and in particular $L_u=L_{u_\theta}$ at $\theta=1$ is an isomorphism.
\end{proof}

\subsection{The continuity method}\label{subsec:continuity_method}

Given a positive even function $\varphi\in C^{2,\alpha}(\bS^2)$ with $\oint_{\bS^2}\varphi\,d\mu=1$, set $\varphi_\tau=1-\tau+\tau\varphi$ for $\tau\in[0,1]$. Then $\varphi_0=1$, $\varphi_1=\varphi$, and
\begin{align}\label{eq:path_bounds}
m_0\leq\varphi_\tau\leq M_0,\quad\oint_{\bS^2}\varphi_\tau\,d\mu=1,\quad0\leq\oint_{\bS^2}\varphi_\tau\log\varphi_\tau\,d\mu\leq\tau \Ent_0\leq \Ent_0.
\end{align}
where we used the convexity of the entropy. Thus the boundary extrema and entropy are uniformly controlled along this path.

Define
\begin{align}\label{eq:boundary_extension}
E_\tau(s,x)=1+\tau e^{-s}(\varphi(x)-1),
\end{align}
so $E_\tau(0,\cdot)=\varphi_\tau$ and $\oint_{\bS^2}E_\tau(s,x)d\mu=1$.

Fix $\Lambda=C_*M_0^4$ and choose
\begin{align}\label{eq:weight_choice}
0<\delta<\min\{1,\beta_\Lambda/11\}.
\end{align}
By Theorem \ref{thm:c0_estimate}, Corollary \ref{cor:derivative_decay}, and \eqref{eq:path_bounds}, every positive even solution $u$ of \eqref{eq:s_pde} and \eqref{eq:boundary_asymptotics} with boundary value $\varphi_\tau$ satisfies $\|u-1\|_{C_\delta^{2,\alpha}}\leq C$, where $C$ is independent of $\tau$ and $u$. The bound near $s=0$ also uses the uniform $C^{2,\alpha}$ norm of $\varphi_\tau$. Since $\delta<1$, \eqref{eq:boundary_extension} also gives $\|E_\tau-1\|_{C_\delta^{2,\alpha}}\leq C$. Thus $u-E_\tau\in X_\delta$, and 
$\|u-E_\tau\|_{X_\delta}\leq\|u-1\|_{C_\delta^{2,\alpha}}+\|E_\tau-1\|_{C_\delta^{2,\alpha}}\leq C.$

Let
\begin{align}\label{eq:continuity_set}
\mathcal{I}=\{\tau\in[0,1]|\,\mathcal{P}(u)=0\text{ for some }u>0\text{ with }u-E_\tau\in X_\delta\}.
\end{align}
The condition $u-E_\tau\in X_\delta$ gives the required boundary value, evenness, unit spherical mean, and uniform convergence to $1$ for $u$.

We show that $\mathcal{I}=[0,1]$.

\begin{itemize}
\item $\mathcal{I}$ is non-empty: $0\in\mathcal{I}$, since $u\equiv1$ is a solution.

\item $\mathcal{I}$ is closed: Let $\tau_i\in\mathcal{I}$ with $\tau_i\rightarrow\tau_\infty\in[0,1]$, and let $u_{\tau_i}$ be corresponding solutions with $h_i=u_{\tau_i}-E_{\tau_i}\in X_\delta$. By the uniform $X_\delta$ bound established after \eqref{eq:weight_choice} and Lemma \ref{lem:weighted_local_compactness}, we may pass to a subsequence such that $h_i\rightarrow h_\infty\in X_\delta$ locally in $C^{2,\beta}$ for every $0<\beta<\alpha$. Set $u_\infty=E_{\tau_\infty}+h_\infty$. Then $u_{\tau_i}\rightarrow u_\infty$ locally in $C^{2,\beta}$, and $u_\infty>0$ by the uniform lower bound from \eqref{eq:global_c0_estimate} and \eqref{eq:path_bounds}. Passing to the limit in $\mathcal{P}(u_{\tau_i})=0$ gives $\mathcal{P}(u_\infty)=0$. Thus $\tau_\infty\in\mathcal{I}$.

\item $\mathcal{I}$ is open: We use the implicit function theorem. Assume $\tau_0\in\mathcal{I}$ and let $u_{\tau_0}$ be a corresponding solution. For $(\tau,h)\in\bR\times X_\delta$ with $E_\tau+h>0$, define
\begin{align}\label{eq:implicit_function_map}
	\mathcal{F}(\tau,h)=\mathcal{P}(E_\tau+h)\in Y_\delta.
\end{align}
Then $\mathcal{F}$ is smooth on its domain.
 Set $h_0=u_{\tau_0}-E_{\tau_0}\in X_\delta$. Then $\mathcal{F}(\tau_0,h_0)=0$, and
$$D_h\mathcal{F}(\tau_0,h_0)=L_{u_{\tau_0}}:X_\delta\rightarrow Y_\delta$$
is an isomorphism by Lemma \ref{lem:linear_isomorphism} and \eqref{eq:weight_choice}. By the implicit function theorem, for every $\tau'\in[0,1]$ sufficiently close to $\tau_0$, there is an $h\in X_\delta$ such that $E_{\tau'}+h>0$ and $\mathcal{P}(E_{\tau'}+h)=0$. Thus $\mathcal{I}$ is open in $[0,1]$.

\end{itemize}

Since $\mathcal{I}$ is non-empty, open, and closed in $[0,1]$, we have $\mathcal{I}=[0,1]$. Taking $\tau=1$ gives a solution with boundary value $\varphi$. Uniqueness follows from \eqref{eq:ball_transform} and Corollary \ref{cor:uniqueness}, and the regularity assertions follow from interior and boundary elliptic estimates. This proves Theorem \ref{thm:existence_uniqueness}.

\section{The general case on the infinite cylinder}\label{sec:general_case}

We now consider positive solutions of \eqref{eq:s_pde} without the evenness assumption. A stationary solution $u(s,x)=h(x)>0$ satisfies
\begin{align}\label{eq:general_stationary_equation}
	\Delta\log h+2h=2.
\end{align}
The smooth positive solutions of \eqref{eq:general_stationary_equation} are precisely the conformal Jacobians
\begin{align}\label{eq:general_stationary_family}
	\mathfrak{j}_a(x)=\frac{1}{(\sqrt{1+|a|^2}-a\cdot x)^2},\quad a\in\bR^3.
\end{align}
Equivalently, $h=J_\phi$ for some conformal transformation $\phi:\bS^2\rightarrow\bS^2$, where $J_\phi$ is its Jacobian.

For the general boundary problem, we allow
\begin{align}\label{eq:general_boundary_asymptotics}
	u(0,x)=\varphi(x),\quad \lim\limits_{s\rightarrow\infty}\sup_{x\in\bS^2}|u(s,x)-h(x)|=0,
\end{align}
where $h$ is one of these stationary solutions. The limiting function $h$ is not prescribed. We assume that $u$ is continuous up to $s=0$ and smooth in the interior. Integrating \eqref{eq:general_stationary_equation} gives $\oint_{\bS^2}h\,d\mu=1$. The spherical mean of $u$ satisfies the same ODE as in the derivation of \eqref{eq:mass_one}, so \eqref{eq:general_boundary_asymptotics} again implies \eqref{eq:mass_one}.

Our goal is the following extension of Theorem \ref{thm:existence_uniqueness}.

\begin{theorem}\label{thm:general_existence_uniqueness}
	For any positive $\varphi\in C^{2,\alpha}(\bS^2)$, $0<\alpha<1$, satisfying $\oint_{\bS^2}\varphi\,d\mu=1$, there is a unique positive solution of \eqref{eq:s_pde} and \eqref{eq:general_boundary_asymptotics}, with $h$ satisfying \eqref{eq:general_stationary_equation}. In particular, $h$ is uniquely determined by $\varphi$.
\end{theorem}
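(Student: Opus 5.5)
Uniqueness is immediate from what we already have. Any solution of \eqref{eq:s_pde} and \eqref{eq:general_boundary_asymptotics} is bounded, since $h=\mathfrak{j}_a$ is bounded. It therefore satisfies \eqref{eq:uniqueness_growth}, and by \eqref{eq:ball_transform} and Corollary \ref{cor:uniqueness} two such solutions with the same $\varphi$ coincide. In particular the limit $h$ is determined by $\varphi$. The substance of the proof is existence. I plan to run the continuity method of Section \ref{sec:even_existence} along $\varphi_\tau=1-\tau+\tau\varphi$, dropping evenness. The evenness was used in exactly two places: the sharp Onofri and Gagliardo--Nirenberg inequalities (Lemmas \ref{lem:improved_gns}, \ref{lem:improved_onofri}), and the absence of the degree-one modes in the linearized operator. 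Both reflect the conformal invariance \eqref{eq:intro_conformal_action}, and I will remove them by working modulo the conformal group.

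\textbf{Normalization.} For a solution $u$ with limit $h=\mathfrak{j}_a$, pick the conformal map $\phi$ with $J_\phi$ pulling $h$ back to $1$. Then $u_\phi(s,x)=u(s,\phi(x))J_\phi(x)$ solves \eqref{eq:s_pde}, tends to $1$, and has boundary value $\varphi_\phi=\varphi\circ\phi\,J_\phi$. For the a priori estimates I will replace the entropy $\Ent(s)$ by the conformally invariant deficit
\[
D(s)=\Ent(s)-\tfrac{1}{2}\oint_{\bS^2}(u-1)(-\Delta)^{-1}(u-1)\,d\mu ,
\]
as suggested in the introduction. This is the analogue of the log-HLS deficit dual to the full Onofri inequality with constant $1/4$. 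The plan is to show, as in Lemma \ref{lem:entropy_ode}, that $D''-3D'-cD\ge 0$ with $c>0$ depending on upper and lower bounds. By the Carlen--Loss/Okikiolu stability of log-HLS (Onofri), $D$ controls the distance of $u(s)$ to the family $\{\mathfrak{j}_a\}$. This yields exponential decay of $\operatorname{dist}(u(s),\{\mathfrak{j}_a\})$. It also gives the $L^2$ bound, since for the Moser iteration of Proposition \ref{prop:upper_bound} one can center at the nearest $\mathfrak{j}_a$, where the gradient term controls $F_2$ up to conformal modes. The De Giorgi lower bound of Proposition \ref{prop:local_de_giorgi} is local and applies verbatim after composing with the nearest conformal map.

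\textbf{Main obstacle.} The key point is to show that the nearest parameter $a(s)$ stays in a compact set and converges, uniformly along the path $\tau\in[0,1]$. Without this the $C^0$ bounds degenerate, because $\mathfrak{j}_a$ concentrates as $|a|\to\infty$. I would first try to control $|a(s)|$ by tracking the first moments $b_i(s)=\oint_{\bS^2} x_i u\,d\mu$. Multiplying \eqref{eq:s_pde} by $x_i$ gives
\[
b_i''-3b_i'+2b_i=2\oint_{\bS^2}x_i\log u\,d\mu .
\]
The right-hand side is controlled by the deficit, so this is a perturbed ODE. Its decaying solutions pin $a(s)$ near $a(\infty)$ with $|a(\infty)-a(0)|$ bounded in terms of $D(0)$, which depends only on $m_0,M_0$. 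Convergence of $a(s)$ then follows from the integrability of $D(s)$.

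\textbf{Continuity argument.} With these bounds in hand, I would set up the continuity method in weighted spaces centered at the moving limit. The unknowns are a pair $(a,v)$ with $u=\mathfrak{j}_a+E$-type extension $+\,v$ and $v\in C^{2,\alpha}_\delta$. For the linearization at $u\to\mathfrak{j}_a$, after conformal normalization the modes $j\ge2$ behave as in Lemma \ref{lem:linear_isomorphism}. The degree-one modes have characteristic roots $3$ and $0$, so their bounded kernel is exactly the three-dimensional tangent space to $\{\mathfrak{j}_a\}$, and this is absorbed by the parameter $a$. Injectivity again follows from the comparison argument of Lemma \ref{lem:comparison}, now with limits allowed to vary within the family. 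The implicit function theorem then gives openness. Closedness follows from the uniform $C^0$ and decay bounds together with compactness of the parameters $a$.
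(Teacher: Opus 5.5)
Your uniqueness argument and your outer framework agree with the paper: a conformal normalization of the limit, a deficit in place of the entropy, and a continuity method in $(\tau,a,h)$ on a space enlarged by $(1-e^{-s})T_{\mathfrak{j}_a}\mathcal{J}$ to absorb the degree-one modes. The gap is in the a priori estimates, and it has three parts.

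\emph{The deficit is misnormalized.} With $\oint$ normalized and Onofri constant $\frac14$, put $g=2f$, $-\Delta f=u-1$, in the variational formula for $\Ent$. This gives $\Ent(u)\ge E(u):=\oint_{\bS^2}(u-1)(-\Delta)^{-1}(u-1)\,d\mu$ with coefficient $1$, and the invariant deficit is $D=\Ent-E$. Your $\Ent-\frac12E$ equals $\frac12\Ent(\mathfrak{j}_a)\to\infty$ along $\mathcal{J}$. So it is neither conformally invariant nor zero on the stationary family.

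\emph{The inequality $D''-3D'-cD\ge0$ does not follow ``as in Lemma~\ref{lem:entropy_ode}''.} Differentiating $E$ twice along \eqref{eq:s_pde} produces $+2I$ with $I=\|u_s\|_{H^{-1}}^2$, and one gets
\[
D''-3D'=Q+\oint_{\bS^2}\frac{u_s^2}{u}\,d\mu-2I,\qquad Q=\oint_{\bS^2}|\nabla(\log u-2f)|^2\,d\mu .
\]
The term $-2I$ has the wrong sign. It cannot be bounded by $D$ slice by slice, since it depends on $u_s$ rather than on $u(s,\cdot)$. The paper needs extra structure here:
\begin{itemize}
\item $I'=6I+2D'$ gives $D=\frac12I+3\int_s^\infty I$, hence $I\le2D$.
\item Completing the square gives $(D'+2I)'-3(D'+2I)=\|u_{ss}-u_s\|_{H^{-1}}^2+\oint u_s^2/u\ge0$, hence $D'\le-2I$.
\item It then needs an entropy-production bound $Q\ge\lambda D$ with $\lambda>4$. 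This is not the log-HLS stability statement you cite, which controls distance to $\mathcal{J}$.
\end{itemize}
The paper proves $Q\ge(8-\epsilon_0)D$ for small $D$ by a second-order spherical-harmonic expansion in a Hersch-balanced gauge, where the Chang--Yang improved Onofri inequality gives $E\le CD$. It proves $Q\ge\kappa(\epsilon,M)$ on $\{\epsilon\le D\le M\}$ by compactness, and finishes with a waiting-time argument. None of this uses quantitative bounds on $u$. That matters: your $c$ depends on upper and lower bounds that your plan derives from the deficit decay, so as proposed the argument is circular.

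\emph{The first-moment argument for bounding $a$ fails as stated.} The right-hand side $2\oint x_i\log u\,d\mu$ equals $2b_i$ on $\mathcal{J}$, so it is not small in terms of $D$. Also, $b''-3b'+2b$ has exponents $1$ and $2$, so it does not see the constant limits $\oint x\,\mathfrak{j}_a\,d\mu$. The correct form is $b_i''-3b_i'=2\oint x_i(\log u-2f)\,d\mu$, which is controlled by $\sqrt{Q}$, not by $D$. More fundamentally, first moments lie in the open unit ball and $|\oint x\,\mathfrak{j}_a\,d\mu|\to1$ as $|a|\to\infty$. So an additive bound on $|b(\infty)-b(0)|$ in terms of $D(0)$ cannot keep $|a|$ bounded.

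The missing idea is to use the $H^{-1}$ energy, which is unbounded along $\mathcal{J}$ because $E(\mathfrak{j}_a)=\Ent(\mathfrak{j}_a)\to\infty$. Work in the gauge where the limit is $1$, fixed once for all $s$: \eqref{eq:s_pde} is equivariant only under $s$-independent conformal maps, so a moving center $a(s)$ is incompatible with the ODE arguments in $s$. Write $\nabla f(s)=-\int_s^\infty\nabla f_s$, and use $I\le2D\le Ce^{-s/2}$ with a weighted Cauchy--Schwarz. This gives $E(s)\le C(D(\varphi))e^{-s/2}$. At $s=0$ this reads $\|\varphi-\mathfrak{j}_a\|_{H^{-1}}^2\le C$, so $\Ent(\mathfrak{j}_a)=E(\mathfrak{j}_a)\le2\Ent(\varphi)+2C$, which bounds $|a|$.

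The same estimate gives $\Ent(s)=D(s)+E(s)\le Ce^{-s/2}$, and this is what replaces evenness downstream:
\begin{itemize}
\item It makes $\mu(\{u>L\})$ small, so the ordinary Gagliardo--Nirenberg inequality applied to $(\sqrt u-\sqrt L)_+$ gives the $L^2$ bound that starts the Moser iteration.
\item It controls $|\{u<\frac12\}\cap Q_2(S)|$ for the De Giorgi tail bound.
\end{itemize}
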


The uniqueness argument in Corollary \ref{cor:uniqueness} applies through \eqref{eq:ball_transform}: the corresponding functions $w_i$ have boundary value $\log(\varphi/4)$ at $\xi=0$ and satisfy \eqref{eq:intro_degenerate_end}, since the $u_i$ are bounded. Thus $u_1=u_2$, and hence $h_1=h_2$, even if their limiting functions were not assumed to agree.

We first study the conformally invariant deficit between entropy and $H^{-1}$ energy on $\bS^2$. The inequalities established here will be applied to each spherical slice of a solution to prove decay of the deficit in Section \ref{subsec:general_pde_estimates}.

\subsection{Functionals and inequalities on $\bS^2$}\label{subsec:general_sphere_functionals}

For a smooth mean-zero function $g$ on $\bS^2$, let $v$ solve
$$-\Delta v=g,\quad \oint_{\bS^2}v\,d\mu=0.$$
Define
$$\|g\|_{H^{-1}(\bS^2)}^2:=\oint_{\bS^2}|\nabla v|^2\,d\mu=\oint_{\bS^2}vg\,d\mu$$

Let $u>0$ be a smooth function on $\bS^2$ with $\oint_{\bS^2}u\,d\mu=1$, and let $f$ be the unique mean-zero solution of
\begin{align}\label{eq:general_potential}
	-\Delta f=u-1,\quad \oint_{\bS^2}f\,d\mu=0.
\end{align}
Define the entropy and the $H^{-1}$ energy by
\begin{align}\label{eq:general_entropy_energy}
	\Ent(u)=\oint_{\bS^2}u\log u\,d\mu,\quad
	E(u)=\|u-1\|_{H^{-1}(\bS^2)}^2=\oint_{\bS^2}|\nabla f|^2\,d\mu.
\end{align}
The deficit between entropy and $H^{-1}$ energy is
\begin{align}\label{eq:general_deficit}
	D(u)=\Ent(u)-E(u).
\end{align}
Set
\begin{align}\label{eq:general_q_operator}
	\mathcal{Q}(u)=\log u-2f,\quad Q(u)=\oint_{\bS^2}|\nabla\mathcal{Q}(u)|^2\,d\mu.
\end{align}
These definitions give the identity
\begin{align}\label{eq:general_liouville_identity}
	Q(u)=\oint_{\bS^2}|\nabla\log u|^2\,d\mu+4\oint_{\bS^2}\log u\,d\mu-4D(u),
\end{align}
where the sum of the first two terms is known as the Liouville energy. $Q(u)$ is also the $H^{-1}$ energy of $2u(K_{ug_{\bS^2}}-1)$.

Let $\phi:\bS^2\rightarrow\bS^2$ be a conformal transformation, and let $J_\phi$ be its Jacobian, so $\phi^*g_{\bS^2}=J_\phi g_{\bS^2}.$ Then
\begin{align}\label{eq:general_conformal_jacobian}
	\Delta\log J_\phi+2J_\phi=2,\quad \oint_{\bS^2}J_\phi\,d\mu=1.
\end{align}
Define
\begin{align}\label{eq:general_conformal_action}
	u_\phi=J_\phi(u\circ\phi).
\end{align}
The potential corresponding to $u_\phi$ is
\begin{align}\label{eq:general_transformed_potential}
	f_\phi=f\circ\phi+\frac12\log J_\phi+c_\phi,
\end{align}
where $c_\phi$ is constant and chosen so that $\oint_{\bS^2}f_\phi\,d\mu=0$. Indeed, \eqref{eq:general_conformal_jacobian} gives $-\Delta f_\phi=u_\phi-1$.

The entropy $\Ent(u)$ and the $H^{-1}$ energy $E(u)$ are not separately conformally invariant, but their difference $D(u)$ is. The functional $Q(u)$ is also conformally invariant. The nonnegativity of $D(u)$ and the characterization of equality in the following lemma are well-known consequences of the Onofri inequality by duality. We include a proof for completeness. We also verify the conformal invariance of $D$ and $Q$.

\begin{lemma}\label{lem:general_deficit_invariance}
	For every smooth positive function $u$ on $\bS^2$ with $\oint_{\bS^2}u\,d\mu=1$, we have $D(u)\geq0$, with equality if and only if $u=J_\phi$ for some conformal transformation $\phi$ of $\bS^2$. Moreover, for every conformal transformation $\phi$,
	\begin{align}\label{eq:general_deficit_invariance}
		D(u_\phi)=D(u),\quad Q(u_\phi)=Q(u).
	\end{align}
\end{lemma}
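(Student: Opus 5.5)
We establish $D(u)\geq0$ by duality from the (full, non-even) Onofri inequality
$$\log\oint_{\bS^2}e^{\psi}\,d\mu\leq\oint_{\bS^2}\psi\,d\mu+\frac14\oint_{\bS^2}|\nabla\psi|^2\,d\mu,$$
which we take as known, with equality exactly for $\psi=\log J_\phi+\text{const}$.

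The nonnegativity follows from Jensen's inequality (the Gibbs variational principle). For any $\psi$ and probability density $u$,
$$\oint u\psi\,d\mu-\oint u\log u\,d\mu\leq\log\oint e^\psi\,d\mu .$$
Take $\psi=2f$ with $f$ as in \eqref{eq:general_potential}. Since $\oint f\,d\mu=0$, we have $\oint u\cdot 2f\,d\mu=2\oint(u-1)f\,d\mu=2E(u)$. Onofri gives $\log\oint e^{2f}\,d\mu\leq\frac14\oint|\nabla 2f|^2\,d\mu=E(u)$. Hence $2E-\Ent\leq E$, that is, $D(u)\geq0$.

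For the equality case, suppose $D(u)=0$. Then both inequalities are equalities. Equality in Jensen forces $u=e^{2f}/\oint e^{2f}\,d\mu$. Equality in Onofri forces $2f=\log J_\phi+c$, so $u=J_\phi$ after normalization. Conversely, for $u=J_\phi$ the relation $-\Delta\frac12\log J_\phi=J_\phi-1$ from \eqref{eq:general_conformal_jacobian} shows $2f=\log J_\phi-\oint\log J_\phi\,d\mu$. Hence $u\propto e^{2f}$, so Jensen is an equality. The Onofri equality holds since $2f$ is an extremal. Thus $D(J_\phi)=0$. One could also verify this directly from $D(u_\phi)=D(u)$ with $u=1$, once invariance is proved.

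For invariance of $Q$, compute with \eqref{eq:general_transformed_potential}:
$$\mathcal{Q}(u_\phi)=\log J_\phi+\log u\circ\phi-2f\circ\phi-\log J_\phi-2c_\phi=\mathcal{Q}(u)\circ\phi-2c_\phi .$$
The Dirichlet integral is conformally invariant in dimension two, so $Q(u_\phi)=Q(u)$.

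For invariance of $D$, change variables $y=\phi(x)$, which satisfies $d\mu(y)=J_\phi\,d\mu(x)$.

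The entropy transforms as $\Ent(u_\phi)=\Ent(u)+\oint u\,\log J_\phi\circ\phi^{-1}\,d\mu$. Write $\ell=\log J_{\phi}\circ\phi^{-1}=-\log J_{\phi^{-1}}$.

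For the energy, $E(u_\phi)=\oint|\nabla f_\phi|^2\,d\mu$ with $\nabla f_\phi=\nabla(f\circ\phi)+\frac12\nabla\log J_\phi$. Expanding and using conformal invariance of Dirichlet integrals gives
$$E(u_\phi)=E(u)+\oint\nabla f\cdot\nabla(-\tfrac12\log J_{\phi^{-1}})\,d\mu+\tfrac14\oint|\nabla\log J_\phi|^2\,d\mu .$$
Integrating by parts with $-\Delta f=u-1$ turns the cross term into $\frac12\oint(u-1)\ell\,d\mu$, up to a sign to be tracked. Here $\ell$ is paired with $u-1$, and $\oint\ell\,d\mu$ and $\oint|\nabla\log J|^2\,d\mu$ are constants depending only on $\phi$.

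The difference $\Ent(u_\phi)-E(u_\phi)-D(u)$ is then affine in $u$: a constant plus $\oint(u-1)(\ell-\text{cross coefficient}\cdot\ell)\,d\mu$. The $u$-dependent terms should cancel exactly; this bookkeeping of factors and signs is the main obstacle. The remaining constant vanishes because we can evaluate at $u=1$. There $u_\phi=J_\phi$, and $D(J_\phi)=0=D(1)$ was already shown in the equality case, which makes the argument self-consistent.

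As a check, formula \eqref{eq:general_liouville_identity} together with the known invariance of the Liouville energy (the Polyakov formula) and of $Q$ gives $D(u_\phi)=D(u)$ immediately. I would present this as the cleaner route if the direct computation becomes messy.
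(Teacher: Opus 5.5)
Your proofs of $D(u)\geq0$ and of the invariance of $Q$ are the paper's. Your equality case is a harmless variant: you invoke the characterization of Onofri extremals directly, whereas the paper gets $\Delta\log u+2u=2$ from equality in the variational formula and then classifies its solutions, which is the same underlying fact.

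The direct computation of $D(u_\phi)$, however, contains a slip that makes it fail as written. Expanding $|\nabla(f\circ\phi)+\frac12\nabla\log J_\phi|^2$ produces the cross term $\oint_{\bS^2}\nabla(f\circ\phi)\cdot\nabla\log J_\phi\,d\mu$ with coefficient $2\cdot\frac12=1$, not $\frac12$. Since $\log J_\phi=\ell\circ\phi$, conformal invariance of the Dirichlet form and $-\Delta f=u-1$ turn this term into $+\oint_{\bS^2}(u-1)\ell\,d\mu$. With your coefficient $\frac12$, a term $\frac12\oint u\ell\,d\mu$ survives, so the difference is not independent of $u$. With the correct coefficient, the $u$-dependence cancels exactly against $\Ent(u_\phi)-\Ent(u)=\oint u\ell\,d\mu$, and
\begin{align*}
	D(u_\phi)-D(u)=\oint_{\bS^2}\ell\,d\mu-\frac14\oint_{\bS^2}|\nabla\log J_\phi|^2\,d\mu
	=\oint_{\bS^2}J_\phi\log J_\phi\,d\mu-\frac14\oint_{\bS^2}|\nabla\log J_\phi|^2\,d\mu.
\end{align*}
The paper shows this vanishes by \eqref{eq:general_jacobian_energy}. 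Your device of evaluating at $u=1$ and using $D(J_\phi)=0$, which you proved without using invariance, is equally valid; the two are the same identity. Once corrected, your direct route is the paper's computation written in the variable $y=\phi(x)$.

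Your fallback is correct and organizes the argument differently. By \eqref{eq:general_liouville_identity}, $4D=L-Q$ with $L(u)=\oint|\nabla\log u|^2\,d\mu+4\oint\log u\,d\mu$. So invariance of $D$ reduces to invariance of $Q$, which you have, and of $L$. The latter needs no appeal to Polyakov. With $w=\log J_\phi$, integrate the cross term by parts using $\Delta w=2-2J_\phi$ and change variables; this gives $L(u_\phi)-L(u)=\oint|\nabla w|^2\,d\mu+4\oint w\,d\mu$, which is zero by Onofri equality for $w$. This route uses the same two inputs as the paper but never touches the potential $f_\phi$ or the constant $c_\phi$. The paper's route yields slightly more: $\Ent$ and $E$ each shift by the same amount $\oint u_\phi\log J_\phi\,d\mu$.
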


\begin{proof}
	The variational formula for entropy is
	\begin{align}\label{eq:general_entropy_variational}
		\Ent(u)=\sup_{g\in H^1(\bS^2)}\left(\oint_{\bS^2}ug\,d\mu-\log\left(\oint_{\bS^2}e^g\,d\mu\right)\right).
	\end{align}
	Taking $g=2f$ and applying the ordinary sharp Onofri inequality \cite[equation (4)]{OsgoodPhillipsSarnak1988}, we obtain
	\begin{align}
		\Ent(u)&\geq2\oint_{\bS^2}uf\,d\mu-\log\left(\oint_{\bS^2}e^{2f}\,d\mu\right)\\
		&\geq2\oint_{\bS^2}uf\,d\mu-2\oint_{\bS^2}f\,d\mu-\oint_{\bS^2}|\nabla f|^2\,d\mu\\
		&=2E(u)-E(u)=E(u).
	\end{align}
	Here we used $\oint_{\bS^2}f\,d\mu=0$ and $\oint_{\bS^2}uf\,d\mu=E(u)$. Thus $D(u)\geq0$.
	If $D(u)=0$, both inequalities are equalities, so $g=2f$ attains the supremum in \eqref{eq:general_entropy_variational}. Hence
	$$u=\frac{e^{2f}}{\oint_{\bS^2}e^{2f}\,d\mu},\quad \Delta\log u+2u=2,$$
	where the second identity follows from \eqref{eq:general_potential}. By the classification of solutions of \eqref{eq:general_stationary_equation}, we have $u=J_\phi$ for some conformal transformation $\phi$.

	For any conformal transformation $\phi$, set $w=\log J_\phi$. Equality in the same Onofri inequality for $w$ gives
	$$\oint_{\bS^2}w\,d\mu=-\frac14\oint_{\bS^2}|\nabla w|^2\,d\mu.$$
	Together with \eqref{eq:general_conformal_jacobian}, this yields
	\begin{align}\label{eq:general_jacobian_energy}
		\oint_{\bS^2}J_\phi w\,d\mu
		=\frac12\oint_{\bS^2}|\nabla w|^2\,d\mu+\oint_{\bS^2}w\,d\mu
		=\frac14\oint_{\bS^2}|\nabla w|^2\,d\mu.
	\end{align}
	By change of variables,
	$$\Ent(u_\phi)-\Ent(u)=\oint_{\bS^2}u_\phi w\,d\mu.$$
	Conformal invariance of the Dirichlet integral gives $\oint_{\bS^2}|\nabla(f\circ\phi)|^2\,d\mu=E(u)$. Using \eqref{eq:general_transformed_potential} to expand $|\nabla f_\phi|^2$ and integrating by parts, we obtain
	\begin{align}
		E(u_\phi)-E(u)
		&=\oint_{\bS^2}\nabla(f\circ\phi)\cdot\nabla w\,d\mu+\frac14\oint_{\bS^2}|\nabla w|^2\,d\mu\\
		&=-\oint_{\bS^2}w\Delta(f\circ\phi)\,d\mu+\frac14\oint_{\bS^2}|\nabla w|^2\,d\mu\\
		&=\oint_{\bS^2}(u_\phi-J_\phi)w\,d\mu+\frac14\oint_{\bS^2}|\nabla w|^2\,d\mu\\
		&=\oint_{\bS^2}u_\phi w\,d\mu.
	\end{align}
	Here we used $-\Delta(f\circ\phi)=u_\phi-J_\phi$, and the last equality follows from \eqref{eq:general_jacobian_energy}.
	Hence $D(u_\phi)=D(u)$. 

	Finally, \eqref{eq:general_transformed_potential} gives
	$$\mathcal{Q}(u_\phi)=\mathcal{Q}(u)\circ\phi-2c_\phi.$$
	Conformal invariance of the Dirichlet integral therefore gives $Q(u_\phi)=Q(u)$.
\end{proof}

Nonconstant conformal Jacobians satisfy $\Ent(J_\phi)=E(J_\phi)>0$ by Lemma \ref{lem:general_deficit_invariance}, so an estimate $\Ent(u)\geq(1+\epsilon)E(u)$ with a fixed $\epsilon>0$ cannot hold in general. However, if $e^{2f}$ is balanced, we can obtain such an improvement using the following improved Onofri inequality of Chang--Yang \cite[Proposition B]{ChangYang1987}. Gui--Moradifam \cite[Theorem 1.2]{GuiMoradifam2018} proved the sharp version of \eqref{eq:chang_yang_onofri} with coefficient $a=1/8$, but the weaker inequality suffices for the compactness argument in Lemma \ref{lem:general_deficit_coercivity}.

\begin{lemma}[Improved Onofri inequality]\label{lem:chang_yang_onofri}
	There is a universal constant $a\in(\frac18,\frac14)$ such that every $\psi\in H^1(\bS^2)$ with $e^\psi$ balanced, that is,
	$$\oint_{\bS^2}e^\psi x_i\,d\mu=0,\quad i=1,2,3,$$
	satisfies
	\begin{align}\label{eq:chang_yang_onofri}
		\log\left(\oint_{\bS^2}e^\psi\,d\mu\right)\leq\oint_{\bS^2}\psi\,d\mu+a\oint_{\bS^2}|\nabla\psi|^2\,d\mu.
	\end{align}
\end{lemma}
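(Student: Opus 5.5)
This is the Chang--Yang improved Onofri inequality. The plan is a concentration--compactness argument by contradiction, reduced to the fact that balanced functions cannot concentrate.

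\textbf{Setup.} Write $I_a(\psi)=\oint_{\bS^2}\psi\,d\mu+a\oint_{\bS^2}|\nabla\psi|^2\,d\mu-\log\oint_{\bS^2}e^\psi\,d\mu$. Both sides of \eqref{eq:chang_yang_onofri} and the balancing condition are unchanged when a constant is added to $\psi$. So we may normalize $\oint\psi\,d\mu=0$, and by density assume $\psi$ smooth.

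\textbf{Step 1: reduce to a weak Onofri bound.} It suffices to find, for some $a_0<\frac14$, a constant $C$ with
\[
\log\oint_{\bS^2}e^\psi\,d\mu\le a_0\oint_{\bS^2}|\nabla\psi|^2\,d\mu+C
\]
for all balanced mean-zero $\psi$. Fix any $a\in(a_0,\frac14)$ with $a>\frac18$.
\begin{itemize}
\item If $\oint|\nabla\psi|^2\,d\mu\ge C/(a-a_0)$, then $a_0\oint|\nabla\psi|^2+C\le a\oint|\nabla\psi|^2$, and the inequality with constant $a$ follows.
\item On the remaining bounded set $\oint|\nabla\psi|^2\le C/(a-a_0)$, the inequality with $a=\frac18$ (hence with any larger $a$) is the Gui--Moradifam theorem quoted above. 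If we want to avoid that citation, we argue by contradiction instead. Take balanced $\psi_k$ with bounded Dirichlet energy violating the inequality with constant $a$. A subsequence converges weakly in $H^1$, and by Moser--Trudinger $e^{\psi_k}\to e^{\psi}$ strongly in $L^1$. The limit is balanced and satisfies $I_a(\psi)\le 0$.
\end{itemize}
The second alternative then requires a second-variation analysis. At the constant function, $I_a$ has Hessian $2a\lambda-1$ on the $\lambda$-eigenspace. Balanced perturbations have no first-spherical-harmonic component to first order, so the relevant eigenvalues satisfy $\lambda\ge 6$ and $12a-1>0$. This shows $I_a>0$ near constants. A global argument is also needed, and this is why I would rely on the cited sharp result.

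\textbf{Step 2: prove the weak bound (main obstacle).} Following Aubin's argument for balanced functions, take a partition of unity $g_1,\dots,g_m$ subordinate to a cover of $\bS^2$ by small caps. Balancing forces the mass of $e^\psi$ to be spread out:
\begin{itemize}
\item Suppose all mass sat in one small cap around a point $p$. Then $\oint e^\psi x\,d\mu$ would be close to $p\oint e^\psi\,d\mu\ne0$, contradicting balance.
\item Hence there exist two caps $U,V$ at distance bounded below, each carrying a fixed fraction $c>0$ of $\oint e^\psi$. The fraction $c$ depends only on the cover.
\end{itemize}
Choose cutoffs $\eta_U,\eta_V$ with disjoint supports, and apply the full Onofri/Moser--Trudinger inequality (constant $\frac14$) to $\eta_U\psi$ and $\eta_V\psi$. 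This gives
\[
\log\oint e^\psi\le \log\frac1c+\frac14\oint|\nabla(\eta_U\psi)|^2+C.
\]
The same holds with $V$ in place of $U$. Averaging the two bounds and using that the supports are disjoint, we get
\[
\log\oint e^\psi\le\Big(\frac18+\epsilon\Big)\oint|\nabla\psi|^2+C_\epsilon\oint\psi^2.
\]
The error term comes from the cutoff gradients via Young's inequality. The remaining term $C_\epsilon\oint\psi^2$ must then be absorbed, using only the Poincar\'e inequality.

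The hard step is exactly this absorption. Poincar\'e gives $\oint\psi^2\le\frac12\oint|\nabla\psi|^2$, which costs gradient energy and cannot be absorbed directly. The fix is to split $\psi=\psi_1+\psi_2$, where $\psi_1$ is the projection onto spherical harmonics of degree at most $N$ and $\psi_2$ is the rest. Run the concentration argument on $\psi_2$, for which $\oint\psi_2^2\le N^{-2}\oint|\nabla\psi_2|^2$ makes the $C_\epsilon$ term small once $N$ is large. The finite-dimensional part satisfies $\|\psi_1\|_\infty\le C_N\|\nabla\psi_1\|_2$, and this gives a bound $\le\epsilon\oint|\nabla\psi_1|^2+C'$. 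This yields the weak bound with $a_0=\frac18+2\epsilon<\frac14$, which combined with Step 1 gives \eqref{eq:chang_yang_onofri}.

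One gap remains. Balancing is a condition on $e^\psi$, not on $e^{\psi_2}$, so we need that $e^{\psi_2}$ is nearly balanced, or we must redo the spread-of-mass argument directly for the measure $e^{\psi_1}e^{\psi_2}\,d\mu$ with $e^{\psi_1}$ bounded. I would take the second route: treat $e^{\psi_1}$ as a bounded weight, which changes only the constant $c$.
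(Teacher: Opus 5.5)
\textbf{Verdict.} Your proposal has a genuine gap in Step 1. Your Step 2 is Aubin's weak inequality and is fine in outline. The gap is in the bounded-energy regime, which is exactly where the content of Chang--Yang's Proposition B lies.

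\textbf{The gap.} With $a$ fixed, your compactness argument only yields a balanced limit $\psi$ with $I_a(\psi)\le 0$. That contradicts nothing unless you already know $I_a\ge 0$ on balanced functions, which is the statement itself. Your second-variation computation only excludes limits near constants. Invoking Gui--Moradifam is logically valid but makes everything else idle. Their theorem gives the inequality with $a=\frac18$, hence for every $a\in(\frac18,\frac14)$ because the right-hand side increases with $a$. So the proposal reduces to citing a much deeper theorem.

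\textbf{The missing idea.} The lemma only asks for \emph{some} $a<\frac14$, so let $a$ tend to the Onofri constant instead of fixing it.
\begin{enumerate}
\item If the lemma fails, there are $a_k\uparrow\frac14$ and balanced mean-zero $\psi_k\neq 0$ with $I_{a_k}(\psi_k)<0$.
\item Once $a_k>a_0$, your weak bound forces $\|\nabla\psi_k\|_2^2\le B$. Hence $0\le I_{1/4}(\psi_k)<(\frac14-a_k)B\to 0$.
\item A weak limit $\psi$ is balanced, by your Moser--Trudinger compactness, and satisfies $I_{1/4}(\psi)\le 0$. The equality case of the sharp Onofri inequality then gives $e^{\psi}=cJ_\phi$. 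A nonconstant conformal Jacobian has nonzero center of mass, so balancing forces $\psi=0$.
\item Consequently $\frac14\|\nabla\psi_k\|_2^2=I_{1/4}(\psi_k)+\log\oint e^{\psi_k}\,d\mu\to 0$, so the convergence is strong.
\item Make your second-variation estimate quantitative. Balancing makes the degree-one component of $\psi$ of size $O(\|\nabla\psi\|_2^2)$. Hence $I_a(\psi)\ge(a-\frac1{12})\|\nabla\psi\|_2^2-C\|\nabla\psi\|_2^3$ for small balanced mean-zero $\psi$, uniformly for $a$ near $\frac14$.
\item This gives $I_{a_k}(\psi_k)>0$ for large $k$, a contradiction.
\end{enumerate}
This is the Chang--Yang argument that the paper cites: Aubin's weak bound plus Onofri with its equality case plus the local expansion.

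\textbf{A smaller point on Step 2.} The last gap you flag is not real; you never need $e^{\psi_2}$ to be balanced. Pick $U,V$ from the balanced measure $e^{\psi}\,d\mu$ itself, and use the pointwise bound $e^{\psi}\le e^{\|\psi_1\|_\infty}e^{\psi_2}$. This factor is not a uniformly bounded weight, so it does not ``change only the constant $c$''. But it only adds $\|\psi_1\|_\infty\le\epsilon\|\nabla\psi\|_2^2+C_{N,\epsilon}$ to the logarithm.
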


\begin{lemma}\label{lem:balanced_potential_entropy}
	Let $u>0$ be a smooth function on $\bS^2$ with $\oint_{\bS^2}u\,d\mu=1$, and let $f$ be the mean-zero potential defined by \eqref{eq:general_potential}. Suppose $e^{2f}$ is balanced, that is,
	$$\oint_{\bS^2}e^{2f}x_i\,d\mu=0,\quad i=1,2,3.$$
	Then, with $a$ as in Lemma \ref{lem:chang_yang_onofri},
	\begin{align}\label{eq:balanced_potential_entropy}
		\Ent(u)\geq(2-4a)\|u-1\|_{H^{-1}(\bS^2)}^2,\quad D(u)\geq(1-4a)\|u-1\|_{H^{-1}(\bS^2)}^2.
	\end{align}
\end{lemma}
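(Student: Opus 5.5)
The plan is to run the duality argument from the proof of Lemma \ref{lem:general_deficit_invariance}, but with the test function $g=2tf$ for a parameter $t>0$. We use the improved inequality \eqref{eq:chang_yang_onofri} in place of the ordinary Onofri inequality, and then optimize over $t$.

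The first thing to note is that balancedness is not preserved under scaling. If $e^{2f}$ is balanced, $e^{2tf}$ need not be. So we fix $t=1$, which is exactly the function whose balancedness is assumed. Apply the variational formula \eqref{eq:general_entropy_variational} with $g=2f$, and use Lemma \ref{lem:chang_yang_onofri} with $\psi=2f$:
\begin{align}
	\log\left(\oint_{\bS^2}e^{2f}\,d\mu\right)\leq 2\oint_{\bS^2}f\,d\mu+4a\oint_{\bS^2}|\nabla f|^2\,d\mu=4aE(u),
\end{align}
since $\oint_{\bS^2}f\,d\mu=0$. This gives
\begin{align}
	\Ent(u)\geq 2\oint_{\bS^2}uf\,d\mu-4aE(u).
\end{align}

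Next we evaluate the first term. Because $f$ has mean zero, $\oint_{\bS^2}uf\,d\mu=\oint_{\bS^2}(u-1)f\,d\mu$. By \eqref{eq:general_potential} and integration by parts this equals $\oint_{\bS^2}|\nabla f|^2\,d\mu=E(u)$. Hence
\[
\Ent(u)\geq(2-4a)E(u),
\]
which is the first inequality in \eqref{eq:balanced_potential_entropy}. Subtracting $E(u)$ from both sides gives $D(u)\geq(1-4a)E(u)$, the second inequality. Since $a<\frac14$, the coefficient $1-4a$ is positive.

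The argument has no real obstacle beyond two checks. First, the choice $g=2f$ must be admissible in \eqref{eq:general_entropy_variational}; it is, since $f$ is smooth. Second, the balancedness hypothesis must be exactly the one required by Lemma \ref{lem:chang_yang_onofri} for $\psi=2f$. Both hold directly. The only point requiring care is to resist optimizing over $t$: that would destroy the balancing condition, and it is unnecessary, because $t=1$ already yields the stated constant.
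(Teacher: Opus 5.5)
Your proof is correct and is the paper's argument. You plug $g=2f$ into the entropy variational formula, apply the Chang--Yang inequality to $\psi=2f$ (whose balancing is exactly the hypothesis), and use $\oint_{\bS^2}f\,d\mu=0$ and $\oint_{\bS^2}uf\,d\mu=\oint_{\bS^2}|\nabla f|^2\,d\mu$. The opening plan to optimize over a parameter $t$ is superfluous, as you note yourself, since rescaling would break the balancing condition.
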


\begin{proof}
	The variational formula \eqref{eq:general_entropy_variational} and Lemma \ref{lem:chang_yang_onofri}, applied to $2f$, give
	\begin{align}
		\Ent(u)
		&\geq2\oint_{\bS^2}uf\,d\mu-\log\left(\oint_{\bS^2}e^{2f}\,d\mu\right)\\
		&\geq2\oint_{\bS^2}uf\,d\mu-4a\oint_{\bS^2}|\nabla f|^2\,d\mu-2\oint_{\bS^2}f\,d\mu\\
		&=(2-4a)\oint_{\bS^2}|\nabla f|^2\,d\mu.
	\end{align}
	The last equality follows from \eqref{eq:general_potential} by integration by parts. The bound for $D(u)$ follows from the definition \eqref{eq:general_deficit}.
\end{proof}

The next lemma bounds $Q(u)$ away from zero when $D(u)$ is bounded above and away from zero, while Lemma \ref{lem:general_small_deficit_coercivity} gives a lower bound proportional to $D(u)$ when the deficit is small. These estimates will be used to prove the decay of the deficit along solutions in Lemma \ref{lem:general_deficit_decay}.

\begin{lemma}\label{lem:general_deficit_coercivity}
	For every $0<\epsilon\leq M$, there is a constant $\kappa=\kappa(\epsilon,M)>0$ such that every smooth positive function $u$ on $\bS^2$ satisfying
	$$\oint_{\bS^2}u\,d\mu=1,\quad \epsilon\leq D(u)\leq M,$$
	has $Q(u)\geq\kappa$.
\end{lemma}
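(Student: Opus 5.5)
We argue by contradiction and compactness, using the conformal invariance from Lemma \ref{lem:general_deficit_invariance} to restore compactness. Suppose there are smooth positive $u_i$ with $\oint_{\bS^2}u_i\,d\mu=1$, $\epsilon\leq D(u_i)\leq M$, and $Q(u_i)\rightarrow0$. Let $f_i$ be the potentials from \eqref{eq:general_potential}.

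\emph{Step 1: balance the normalized exponentials.} Consider the probability measure $e^{2f_i}d\mu/\oint_{\bS^2}e^{2f_i}\,d\mu$. It has a smooth positive density, so it is not a point mass. By the standard center-of-mass argument (as in Chang--Yang or Hersch), there is a conformal transformation $\phi_i$ whose pullback of this measure is balanced. By \eqref{eq:general_transformed_potential}, this pullback has density proportional to $J_{\phi_i}e^{2f_i\circ\phi_i}=e^{2f_{i,\phi_i}-2c_{\phi_i}}$. So after replacing $u_i$ by $(u_i)_{\phi_i}$, we may assume $e^{2f_i}$ is balanced. This replacement leaves $D$ and $Q$ unchanged by \eqref{eq:general_deficit_invariance}.

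Lemma \ref{lem:balanced_potential_entropy} then gives $E(u_i)\leq M/(1-4a)$, so $\Ent(u_i)=D(u_i)+E(u_i)$ is bounded as well. Thus $f_i$ is bounded in $H^1(\bS^2)$ with mean zero. By the Moser--Trudinger (Onofri) inequality, $e^{pf_i}$ is bounded in $L^1$ for every $p$.

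\emph{Step 2: use $Q\rightarrow0$ to identify $u_i$.} Write $\log u_i=2f_i+c_i+r_i$ with $\oint_{\bS^2}r_i\,d\mu=0$. Then $\|\nabla r_i\|_{L^2}^2=Q(u_i)\rightarrow0$, so $r_i\rightarrow0$ in $H^1$, and by Moser--Trudinger $e^{\pm r_i}$ is bounded in every $L^p$ and $e^{r_i}\rightarrow1$ in every $L^p$. The mass normalization gives
$$u_i=\frac{e^{2f_i+r_i}}{\oint_{\bS^2}e^{2f_i+r_i}\,d\mu}.$$
Jensen's inequality bounds the denominator below by $1$, since $\oint_{\bS^2}(2f_i+r_i)\,d\mu=0$, and Hölder's inequality bounds it above. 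Hence $u_i$ is bounded in every $L^p$.

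\emph{Step 3: pass to the limit.} After passing to a subsequence, $f_i\rightharpoonup f$ weakly in $H^1$. The compactness of the Moser--Trudinger embedding gives $e^{2f_i}\rightarrow e^{2f}$ in every $L^p$, and therefore $u_i\rightarrow u_\infty=e^{2f}/\oint_{\bS^2}e^{2f}\,d\mu$ in every $L^p$. Since $u_i-1\rightarrow u_\infty-1$ in $L^2$, hence in $H^{-1}$, we get $f_i\rightarrow f$ strongly in $H^1$, with $-\Delta f=u_\infty-1$. Consequently $E(u_i)\rightarrow E(u_\infty)$. The $L^p$ bounds together with pointwise a.e.\ convergence along a further subsequence give $\Ent(u_i)\rightarrow\Ent(u_\infty)$ by uniform integrability. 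Hence $D(u_\infty)\geq\epsilon$.

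On the other hand, $\log u_\infty=2f+\mathrm{const}$, so $\Delta\log u_\infty+2u_\infty=2$ weakly, with $u_\infty\in L^p$. Elliptic bootstrapping makes $u_\infty$ smooth and positive. The classification \eqref{eq:general_stationary_family} then gives $u_\infty=J_\phi$, so $D(u_\infty)=0$ by Lemma \ref{lem:general_deficit_invariance}. This contradicts $D(u_\infty)\geq\epsilon$. (Alternatively, one can evaluate $D(u_\infty)=0$ directly, since equality holds in the Onofri step of that lemma's proof.)

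The main obstacle is Step 1 together with the resulting uniform integrability. Without balancing, a sequence could concentrate along conformal orbits, where $E$ and $\Ent$ blow up while $D$ stays bounded, and no compactness would be available. The balancing normalization via Lemma \ref{lem:balanced_potential_entropy} is exactly what converts the bound $D\leq M$ into an $H^1$ bound on $f_i$. The remaining convergence statements are routine consequences of Moser--Trudinger.
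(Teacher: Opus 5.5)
Your proposal is correct and follows essentially the same route as the paper: Hersch balancing of $e^{2f}$, the $H^1$ bound from Lemma \ref{lem:balanced_potential_entropy}, the representation $u=e^{2f+g}/\oint_{\bS^2}e^{2f+g}\,d\mu$ controlled by Jensen and Moser--Trudinger, and identification of the limit as a conformal Jacobian with $D=0$, which contradicts $D\geq\epsilon$. The only difference is minor: you obtain $e^{2f_i}\to e^{2f}$ from compactness of the Moser--Trudinger embedding under weak $H^1$ convergence, whereas the paper uses $W^{2,q}$ estimates to get $C^{1,\alpha}$ convergence of $f_k$.
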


\begin{proof}
	Suppose the conclusion fails. There is a sequence of smooth positive functions $u_k$ with $\oint_{\bS^2}u_k\,d\mu=1$, $\epsilon\leq D(u_k)\leq M$, and $Q(u_k)\rightarrow0$. Let $f_k$ be the mean-zero potential in \eqref{eq:general_potential}. By Hersch's conformal balancing \cite{Hersch1970}, and \eqref{eq:general_transformed_potential}, we may put each $u_k$ in the balanced gauge
	$$\oint_{\bS^2}e^{2f_k}x_i\,d\mu=0,\quad i=1,2,3.$$
	This does not change $D(u_k)$ or $Q(u_k)$, by Lemma \ref{lem:general_deficit_invariance}. Lemma \ref{lem:balanced_potential_entropy} gives
	\begin{align}\label{eq:general_balanced_energy_bound}
		\oint_{\bS^2}|\nabla f_k|^2d\mu&=\|u_k-1\|_{H^{-1}(\bS^2)}^2\leq\frac{D(u_k)}{1-4a}\leq\frac{M}{1-4a}.
	\end{align}	
	Set
	$$g_k=\log u_k-2f_k-\oint_{\bS^2}(\log u_k-2f_k)d\mu.$$
	Since $\oint_{\bS^2}|\nabla g_k|^2d\mu=Q(u_k)\rightarrow0$, the Poincar\'e inequality gives $g_k\rightarrow0$ in $H^1(\bS^2)$. The mean-zero condition and \eqref{eq:general_balanced_energy_bound} also bound $f_k$ in $H^1(\bS^2)$. The Moser--Trudinger inequality \cite{Moser1971} therefore bounds $e^{g_k+2f_k}$ in $L^q(\bS^2)$ for every finite $q\geq1$. Since $g_k+2f_k$ has mean zero, Jensen's inequality gives $\oint_{\bS^2}e^{g_k+2f_k}d\mu\geq1$. Thus
	\begin{align}\label{eq:general_exponential_representation}
		u_k=\frac{e^{g_k+2f_k}}{\oint_{\bS^2}e^{g_k+2f_k}d\mu}
	\end{align}
	is uniformly bounded in $L^q(\bS^2)$ for every finite $q\geq1$.
	
	Since 
	\begin{align}\label{eq:k-equation_of_u_k_f_k}
		-\Delta f_k=u_k-1 \,\, \text{and} \,\, \oint_{\bS^2}f_k\,d\mu=0,
	\end{align}
	elliptic estimates bound $f_k$ in $W^{2,q}(\bS^2)$ for every finite $q>1$. Note also that $g_k\rightarrow0$ in $H^1(\bS^2)$, after passing to a subsequence, $f_k\rightarrow f_\infty$ in $C^{1,\alpha}(\bS^2)$ for any fixed $0<\alpha<1$, and $g_k\rightarrow0$ almost everywhere. The uniform exponential bounds imply, by uniform integrability, that $e^{g_k+2f_k}\rightarrow e^{2f_\infty}$ in every $L^q$. Therefore \eqref{eq:general_exponential_representation} gives
	$$u_k\rightarrow u_\infty=\frac{e^{2f_\infty}}{\oint_{\bS^2}e^{2f_\infty}d\mu}>0\quad\mathrm{in}\quad L^q(\bS^2),\quad 1\leq q<\infty.$$
	Taking logarithms gives
	$\log u_\infty=2f_\infty+c_\infty,\,\, c_\infty=-\log\left(\oint_{\bS^2}e^{2f_\infty}d\mu\right).$
	Passing \eqref{eq:k-equation_of_u_k_f_k} to the limit gives $-\Delta f_\infty=u_\infty-1$. Hence,
	$$\Delta\log u_\infty=-2(u_\infty-1).$$
	Elliptic regularity implies $u_\infty\in C^\infty(\bS^2)$, so $u_\infty=J_\phi$ for some conformal transformation $\phi$. Lemma \ref{lem:general_deficit_invariance} gives $D(u_\infty)=D(1)=0$.
	
	Finally, the strong $L^q$ convergence of $u_k$ for $q>1$ gives $\Ent(u_k)\rightarrow\Ent(u_\infty)$, while $f_k\rightarrow f_\infty$ in $H^1$. Thus  $D(u_k)\rightarrow D(u_\infty)=0$, contradicting $D(u_k)\geq\epsilon$.
\end{proof}

\begin{lemma}\label{lem:general_small_deficit_coercivity}
There exists a universal constant $c_0>0$ such that, for every $\epsilon_0\in(0,8)$, every smooth positive function $u$ on $\bS^2$ with $\oint_{\bS^2}u\,d\mu=1$ and $0\leq D(u)\leq c_0\epsilon_0^2$ satisfies
	\begin{align}\label{eq:general_small_deficit_coercivity}
		Q(u)\geq(8-\epsilon_0)D(u).
	\end{align}
\end{lemma}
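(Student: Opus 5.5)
The plan is to linearize around the manifold of conformal Jacobians and read off the constant $8$ from a spectral computation. The dimensional bookkeeping suggests writing the perturbation of $u=1$ in terms of the potential $f$ and the defect $g=\mathcal{Q}(u)-\oint_{\bS^2}\mathcal{Q}(u)\,d\mu$, so that
$$\log u=2f+g+c,\qquad Q(u)=\oint_{\bS^2}|\nabla g|^2\,d\mu .$$

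\textbf{Model computation.} Write $u=1+v$. Then $f=(-\Delta)^{-1}v$, and to second order
$$D(u)\approx\frac12\oint_{\bS^2} v^2\,d\mu-\oint_{\bS^2}|\nabla f|^2\,d\mu,\qquad Q(u)\approx\oint_{\bS^2}|\nabla(v-2f)|^2\,d\mu .$$
Expand in spherical harmonics with $-\Delta Y=\lambda_j Y$. A mode $v_j$ contributes $\frac{\lambda_j-2}{2\lambda_j}v_j^2$ to $D$ and $\frac{(\lambda_j-2)^2}{\lambda_j}v_j^2$ to $Q$. The ratio is $2(\lambda_j-2)$, which vanishes at $j=1$ (the conformal directions) and equals $8$ at $j=2$ ($\lambda_2=6$). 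So the lemma is the nonlinear version of the spectral gap on the complement of the degree-one modes, and $8$ is sharp.

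\textbf{Step 1: reduction to a small perturbation of $1$ in a good gauge.} By Lemma \ref{lem:general_deficit_invariance}, both $D$ and $Q$ are conformally invariant, so we may replace $u$ by $u_\phi$. Choose $\phi$ to kill the degree-one part of the perturbation. The natural first choice is Hersch balancing of $e^{2f}$, as in Lemma \ref{lem:general_deficit_coercivity}. For the expansion it is cleaner to use the gauge $\oint_{\bS^2} e^{2f}x_i\,d\mu=0$, which makes the degree-one part of $f$ quadratically small.

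In this gauge, Lemma \ref{lem:balanced_potential_entropy} gives
$$E(u)\le \frac{D(u)}{1-4a},\qquad \Ent(u)\le C\,D(u),$$
both small. We may assume $Q(u)<8D(u)$, since otherwise there is nothing to prove; then $\|\nabla g\|_{L^2}^2\le 8D$ is small as well. The argument of Lemma \ref{lem:general_deficit_coercivity} (Moser--Trudinger plus elliptic estimates for $-\Delta f=u-1$) then gives two things:
\begin{itemize}
\item $f$ is small in $W^{2,q}$, hence in $C^{1,\alpha}$, with $\|f\|_{C^{1,\alpha}}\lesssim D^{1/2}$;
\item $u$ is close to $1$ in every $L^q$.
\end{itemize}
I would also establish quantitative versions of these bounds, not merely convergence.

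\textbf{Step 2: exact expansion in the variables $(f,g)$.} Write $u=e^{2f+g+c}$, where $c$ is fixed by $\oint_{\bS^2} u\,d\mu=1$. Then
$$\Ent(u)=\oint_{\bS^2} u(2f+g+c)\,d\mu,\qquad E(u)=\oint_{\bS^2} uf\,d\mu .$$
Expand these in $f$ to second order, treating $g$ as an additional variable, and use $-\Delta f=e^{2f+g+c}-1$ to express the degree-$\ge2$ part of $f$ through $g$ and $f$. At the linear level this reads $(-\Delta-2)f\approx g$ on modes $j\ge2$. The quadratic parts then reproduce the model computation:
\begin{itemize}
\item $D\approx\sum_{j\ge2}\frac{\lambda_j-2}{2\lambda_j}\,v_j^2$, with $v_j=(\lambda_j/(\lambda_j-2))\,g_j$;
\item $Q=\sum_j\lambda_j g_j^2$.
\end{itemize}
This gives $Q\ge 8(1-o(1))D$ on the $j\ge2$ part. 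The degree-one part of $g$ only helps $Q$. The degree-one part of $f$ is controlled by the gauge condition up to quadratic errors.

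\textbf{Main obstacle: the error terms.} I must show that the cubic remainders are bounded by $C D^{3/2}$, or by $C D^{1/2}(D+Q)$. Then, as long as $C\,D^{1/2}\le \epsilon_0/16$, i.e.\ $D\le c_0\epsilon_0^2$, they are absorbed into the $\epsilon_0 D$ slack. This explains the form of the hypothesis.

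The difficulty is that $g$ is controlled only in $H^1$, not in $L^\infty$. So a term like $\oint_{\bS^2} e^{2f+c}(e^g-1-g)\,d\mu$ must be estimated using Moser--Trudinger in the form
$$\oint_{\bS^2} e^{|g|}|g|^3\,d\mu\le C\|g\|_{H^1}^3\quad\text{for }\|g\|_{H^1}\text{ small},$$
together with Poincar\'e for $g$ and the $C^{1,\alpha}$ smallness of $f$ from Step 1. I expect this bookkeeping — in particular controlling the degree-one component of $f$ induced by nonlinear terms, and showing it contributes only at order $D^{3/2}$ — to be the hardest part.

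If the direct estimate becomes unwieldy, the fallback is a contradiction argument. Take $u_k$ with $D(u_k)\to0$ and $Q(u_k)<(8-\epsilon_0)D(u_k)$, balance them, and rescale the perturbations by $D(u_k)^{-1/2}$. The normalized perturbations converge to a nonzero limit orthogonal to degree-one modes, and the limit violates the linear spectral bound. This yields the statement with $c_0$ depending on $\epsilon_0$. The explicit form $c_0\epsilon_0^2$ still requires the quantitative remainder bound above.
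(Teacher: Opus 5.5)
Your proposal is correct and follows essentially the paper's route: Hersch-balance $e^{2f}$, use Lemma \ref{lem:balanced_potential_entropy} together with the reduction to $Q<8D$ to get $H^1$-smallness of order $D^{1/2}$, and expand $D$ and $Q$ to second order with Moser--Trudinger bounding the cubic remainders by $CD^{3/2}$. The spectral inequality, whose extremal mode is $\lambda_2=6$, then gives $Q\geq(8-C\sqrt{D})D$ and hence the $c_0\epsilon_0^2$ threshold. The paper's one simplification is to expand in the single variable $v=\log u-\oint_{\bS^2}\log u\,d\mu$; there the per-mode coefficients $(\lambda_j-2)^2/\lambda_j$ and $(\lambda_j-2)/(2\lambda_j)$ both vanish at $j=1$, so neither the relation $(-\Delta-2)f\approx g$ nor the degree-one bookkeeping you expected to be the hardest part is ever needed.
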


\begin{proof}
	Suppose the conclusion fails. There are $\epsilon_k\in(0,8)$ and smooth positive functions $u_k$ with $\oint_{\bS^2}u_k\,d\mu=1$, $0<d_k:=D(u_k)\leq\epsilon_k^2/k$, and $Q(u_k)<(8-\epsilon_k)d_k$. In particular, $d_k\rightarrow0$.
	Using the conformal balancing in the proof of Lemma \ref{lem:general_deficit_coercivity}, we may assume
	$$\|\nabla f_k\|_{L^2(\bS^2)}^2=\|u_k-1\|_{H^{-1}(\bS^2)}^2\leq\frac{d_k}{1-4a}.$$
	Set $v_k=\log u_k-\oint_{\bS^2}\log u_k\,d\mu$. Since $Q(u_k)<8d_k$, the Poincar\'e inequality implies
	$$\|v_k\|_{H^1(\bS^2)}\leq C\|\nabla v_k\|_{L^2(\bS^2)}\leq C(\sqrt{Q(u_k)}+2\|\nabla f_k\|_{L^2(\bS^2)})\leq Cd_k^{1/2}.$$
	We now express $D(u_k)$ and $Q(u_k)$ in terms of $v_k$, up to errors of order $d_k^{3/2}$.
	The Sobolev inequality and the Moser--Trudinger inequalities give $\|v_k\|_{L^p(\bS^2)}\leq C_p d_k^{1/2}$ and $\oint_{\bS^2}e^{p|v_k|}\,d\mu\leq C_p$ for every fixed $1\leq p<\infty$. Thus, H\"older's inequality gives
	$$\oint_{\bS^2}|v_k|^p e^{4|v_k|}\,d\mu\leq\left(\oint_{\bS^2}|v_k|^{2p}\,d\mu\right)^{1/2}\left(\oint_{\bS^2}e^{8|v_k|}\,d\mu\right)^{1/2}\leq C_p d_k^{p/2}.$$
	Taylor's remainder theorem gives
	\begin{align}
		|e^{v_k}-1-v_k-\frac12v_k^2|\leq\frac16|v_k|^3e^{|v_k|}.
	\end{align}
	Hence,
	\begin{align}\label{eq:general_small_deficit_normalization}
		c_k:=\oint_{\bS^2}e^{v_k}\,d\mu=1+\frac12\oint_{\bS^2}v_k^2\,d\mu+O(d_k^{3/2}).
	\end{align}
	Note that $u_k=\frac{e^{v_k}}{c_k}$, hence
	\begin{align}\label{eq:general_small_deficit_mean_log}
		\oint_{\bS^2}\log u_k\,d\mu=\oint_{\bS^2}(v_k-\log c_k)\,d\mu=-\log c_k=-\frac12\oint_{\bS^2}v_k^2\,d\mu+O(d_k^{3/2}),
	\end{align}
	\begin{align}
		\Ent(u_k)=\oint_{\bS^2}\frac{e^{v_k}}{c_k}\log\left(\frac{e^{v_k}}{c_k}\right)\,d\mu=\frac12\oint_{\bS^2}v_k^2\,d\mu+O(d_k^{3/2}).
	\end{align}
	\begin{align}
		\|u_k-1-v_k\|_{H^{-1}(\bS^2)}^2\leq\frac12\|u_k-1-v_k\|_{L^2(\bS^2)}^2=O(d_k^2).\label{eq:general_small_deficit_taylor}
	\end{align}
	Thus,
	\begin{align}
		D(u_k)&=\frac12\oint_{\bS^2}v_k^2\,d\mu-\|v_k\|_{H^{-1}(\bS^2)}^2+O(d_k^{3/2}),\label{eq:general_small_deficit_D_expansion}\\
		Q(u_k)&=\oint_{\bS^2}|\nabla\log u_k|^2\,d\mu+4\oint_{\bS^2}\log u_k\,d\mu-4D(u_k)\\
		&=\oint_{\bS^2}|\nabla v_k|^2\,d\mu-4\oint_{\bS^2}v_k^2\,d\mu+4\|v_k\|_{H^{-1}(\bS^2)}^2+O(d_k^{3/2}).\label{eq:general_small_deficit_Q_expansion}
	\end{align}
	
	The nonzero eigenvalues of $-\Delta$ on $\bS^2$ are $\lambda_j=j(j+1)$, $j\geq1$, and
	\begin{align}\label{eq:general_small_deficit_spectral}
		\lambda_j-4+\frac{4}{\lambda_j}\geq8(\frac12-\frac{1}{\lambda_j}),\quad j\geq1.
	\end{align}
	Expanding the mean-zero function $v_k$ in spherical harmonics and using \eqref{eq:general_small_deficit_D_expansion}--\eqref{eq:general_small_deficit_spectral}, we conclude that
	$$Q(u_k)\geq8d_k-Cd_k^{3/2}=(8-C\sqrt{d_k})d_k,$$
	where $C$ is universal. Since $C\sqrt{d_k}\leq C\epsilon_k/\sqrt{k}<\epsilon_k$ for large $k$, this contradicts $Q(u_k)<(8-\epsilon_k)d_k$.
\end{proof}

\subsection{A priori estimates}\label{subsec:general_pde_estimates}

Throughout this subsection, we assume that $u>0$ solves \eqref{eq:s_pde} on $(0,\infty)\times\bS^2$, is continuous up to $s=0$, and is smooth in the interior. We impose the boundary and asymptotic conditions \eqref{eq:boundary_asymptotics}:
$$u(0,x)=\varphi(x)>0,\quad \lim\limits_{s\rightarrow\infty}\sup_{x\in\bS^2}|u(s,x)-1|=0.$$
Note that the limit condition gives restrictions to the boundary value $\varphi$.

The equation is equivariant under the conformal action \eqref{eq:general_conformal_action}, applied with $\phi$ independent of $s$. More precisely, for $\mathcal{P}$ defined in \eqref{eq:nonlinear_operator},
\begin{align}\label{eq:general_conformal_covariance}
	\mathcal{P}(u_\phi)(s,x)=J_\phi(x)\mathcal{P}(u)(s,\phi(x)).
\end{align}
For a solution with limiting function $h$ as in \eqref{eq:general_boundary_asymptotics}, the transformed solution has boundary value $\varphi_\phi=J_\phi(\varphi\circ\phi)$ and limit $h_\phi=J_\phi(h\circ\phi)$. Since $h$ is a conformal Jacobian, we may choose the fixed map $\phi$ to be the inverse of the corresponding conformal transformation, giving $h_\phi=1$. Thus the normalized problem considered here is equivalent, after a fixed conformal change and the corresponding change of boundary data, to the version at the beginning of Section \ref{sec:general_case}. We again write $u$ and $\varphi$ for the normalized solution and boundary value; all bounds below refer to this normalized boundary value.

By \eqref{eq:mass_one}, the functionals in Section \ref{subsec:general_sphere_functionals} apply to each slice. Let $f(s,\cdot)$ be the mean-zero potential of $u(s,\cdot)$, and write
\begin{align}\label{eq:general_slice_functionals}
	\Ent(s)&=\Ent(u(s,\cdot)),\quad E(s)=E(u(s,\cdot)),\\
	D(s)&=D(u(s,\cdot)),\quad Q(s)=Q(u(s,\cdot)).
\end{align}
We retain $F_q(s)$ from \eqref{eq:entropy_moments}. Set $q(s,x)=\mathcal{Q}(u(s,\cdot))(x)=\log u(s,x)-2f(s,x)$. Then \eqref{eq:s_pde} becomes
\begin{align}\label{eq:general_q_equation}
	u_{ss}-3u_s=-\Delta q.
\end{align}
We also set
\begin{align}\label{eq:general_kinetic_energy}
	I(s)=\|u_s(s,\cdot)\|_{H^{-1}(\bS^2)}^2
	=\oint_{\bS^2}|\nabla f_s|^2\,d\mu
	=\oint_{\bS^2}u_s f_s\,d\mu.
\end{align}

\begin{lemma}\label{lem:general_deficit_identities}
	For $s>0$, we have
	\begin{align}\label{eq:general_deficit_second_order}
		D''-3D'=Q+\oint_{\bS^2}\frac{u_s^2}{u}\,d\mu-2I,
	\end{align}
	and
	\begin{align}\label{eq:general_deficit_first_order}
		D'=\oint_{\bS^2}q u_s\,d\mu,\quad I'=6I+2D'.
	\end{align}
	Moreover,
	\begin{align}\label{eq:general_deficit_kinetic_identity}
		D(s)=\frac12I(s)+3\int_s^\infty I(r)\,dr,\quad I(s)\leq2D(s).
	\end{align}
\end{lemma}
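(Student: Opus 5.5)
The plan is to differentiate the slice functionals in $s$ directly, using the conservation law \eqref{eq:mass_one} and the form \eqref{eq:general_q_equation} of the equation, and then to integrate the resulting ODE for $I-2D$ out to $s=\infty$.

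\textbf{Regularity of the slice quantities.} Since $u$ is smooth and positive in the interior, the mean-zero potential $f(s,\cdot)$ of \eqref{eq:general_potential} is smooth in $(s,x)$ by elliptic regularity on each slice. Differentiating \eqref{eq:general_potential} in $s$ gives $-\Delta f_s=u_s$ and $\oint_{\bS^2}f_s\,d\mu=0$, which justifies the expressions for $I$ in \eqref{eq:general_kinetic_energy}. By \eqref{eq:mass_one} we also have $\oint_{\bS^2}u_s\,d\mu=0$.

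\textbf{First-order identities.} We compute
$$\Ent'=\oint_{\bS^2}(1+\log u)u_s\,d\mu=\oint_{\bS^2}u_s\log u\,d\mu,\qquad E'=2\oint_{\bS^2}\nabla f\cdot\nabla f_s\,d\mu=2\oint_{\bS^2}fu_s\,d\mu.$$
Subtracting gives $D'=\oint_{\bS^2}qu_s\,d\mu$.

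For $I$, we write $I'=2\oint_{\bS^2}\nabla f_s\cdot\nabla f_{ss}\,d\mu=2\oint_{\bS^2}f_su_{ss}\,d\mu$. Substituting $u_{ss}=3u_s-\Delta q$ from \eqref{eq:general_q_equation} gives
$$I'=6I-2\oint_{\bS^2}f_s\Delta q\,d\mu=6I-2\oint_{\bS^2}q\Delta f_s\,d\mu=6I+2\oint_{\bS^2}qu_s\,d\mu=6I+2D'.$$

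\textbf{Second-order identity.} Differentiating $D'$ gives $D''=\oint_{\bS^2}q_su_s\,d\mu+\oint_{\bS^2}qu_{ss}\,d\mu$. Since $q_s=u_s/u-2f_s$, the first term equals $\oint_{\bS^2}u_s^2/u\,d\mu-2I$. For the second term, \eqref{eq:general_q_equation} gives
$$\oint_{\bS^2}q(3u_s-\Delta q)\,d\mu=3D'+Q.$$
Together these yield \eqref{eq:general_deficit_second_order}.

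\textbf{The integral identity.} From $I'=6I+2D'$ we get $(2D-I)'=-6I$. Integrating over $[s,T]$ gives
$$2D(s)-I(s)=2D(T)-I(T)+6\int_s^TI\,dr.$$
We then let $T\to\infty$, which requires $D(T)\to0$ and $I(T)\to0$.

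\textbf{Main obstacle: the limits at infinity.} The limit $D(T)\to0$ follows from $u\to1$ uniformly, since then $\Ent(T)\to0$ and $E(T)\le C\|u-1\|_{L^2}^2\to0$.

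For $I(T)\to0$, derivative decay is needed. For large $s$ the function $u$ is pinched between $\frac12$ and $2$. Hence the divergence form \eqref{eq:divergence_pde} is uniformly elliptic there, and De Giorgi--Nash--Moser plus Schauder estimates give uniform $C^{k}$ bounds on unit cylinders $[r,r+1]\times\bS^2$. Lemma \ref{lem:interpolation}, applied to $u-1$ with $\|u-1\|_{L^2}\to0$, then gives $u_s\to0$ uniformly, hence $I\le\|u_s\|_{L^2}^2\to0$.

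Since $I\ge0$, monotone convergence shows $\int_s^\infty I\,dr<\infty$. This gives $D=\frac12I+3\int_s^\infty I\,dr$, and $I\le2D$ follows immediately because the integral term is nonnegative.
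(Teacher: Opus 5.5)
Your proof is correct and follows the paper's overall plan. The first-order computations of $D'$ and $I'$ via \eqref{eq:general_q_equation} match the paper's, as does the use of uniform ellipticity on the tail with interior estimates and Lemma \ref{lem:interpolation} to get $D,I\to0$. The final integration of $(2D-I)'=-6I$ is also the same.

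The one place you differ is the second-order identity. You differentiate $D'=\oint_{\bS^2}qu_s\,d\mu$ and substitute $u_{ss}=3u_s-\Delta q$. Then $Q$ appears at once as $-\oint_{\bS^2}q\Delta q\,d\mu$, and $\oint_{\bS^2}u_s^2/u\,d\mu-2I$ comes directly from $q_s=u_s/u-2f_s$. The paper instead computes $\Ent''$, reusing the calculation from Lemma \ref{lem:entropy_ode}, and $E''$ separately from \eqref{eq:s_pde}. It then assembles $Q$ through the Liouville identity \eqref{eq:general_liouville_identity}.

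The two computations are equivalent. Yours is shorter and never needs \eqref{eq:general_liouville_identity}. The paper's makes the link to the earlier entropy ODE and the Liouville energy explicit.
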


\begin{proof}
	The entropy computation in the proof of Lemma \ref{lem:entropy_ode} gives
	\begin{align}
		\Ent''=\oint_{\bS^2}\left(\frac{u_s^2}{u}+|\nabla\log u|^2\right)\,d\mu+3\Ent'-2\Ent+2\oint_{\bS^2}\log u\,d\mu.
	\end{align}
	Since $E=\oint_{\bS^2}(u-1)f\,d\mu$ and $-\Delta f_s=u_s$, we have
	$$E'=2\oint_{\bS^2}u_s f\,d\mu=2\oint_{\bS^2}u f_s\,d\mu,$$
	and
	\begin{align}
		E''&=2\oint_{\bS^2}u_{ss}f\,d\mu+2\oint_{\bS^2}u_s f_s\,d\mu\\
		&=3E'-4E+2\oint_{\bS^2}(u-1)\log u\,d\mu+2I.
	\end{align}
	Subtracting these identities and using $D=\Ent-E$ and \eqref{eq:general_liouville_identity} gives \eqref{eq:general_deficit_second_order}.

	Differentiating $D$ gives $D'=\oint_{\bS^2}q u_s\,d\mu$. By \eqref{eq:general_q_equation},
	\begin{align}
		I'=2\oint_{\bS^2}u_{ss}f_s\,d\mu
		=6I+2\oint_{\bS^2}q u_s\,d\mu=6I+2D'.
	\end{align}
	This proves \eqref{eq:general_deficit_first_order}.

	The uniform convergence $u\rightarrow1$ makes the equation uniformly elliptic on a tail. The interior estimates and interpolation used in the proof of Corollary \ref{cor:derivative_decay} then give $u_s\rightarrow0$ uniformly. Consequently,
	$$D(s)\rightarrow0,\quad I(s)\rightarrow0,\quad D'(s)\rightarrow0.$$
	Integrating $I'-6I=2D'$ from $s$ to infinity gives \eqref{eq:general_deficit_kinetic_identity}.
\end{proof}

\begin{lemma}\label{lem:general_deficit_monotonicity}
	For every $s>0$,
	\begin{align}\label{eq:general_deficit_monotonicity}
		D'(s)+2I(s)\leq0.
	\end{align}
	In particular, $D$ is nonincreasing on $[0,\infty)$.
\end{lemma}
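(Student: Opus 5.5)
The plan is to find a quantity built from $D'$ and $I$ that satisfies a differential inequality of the form $G'-3G\geq0$, and then use the decay at infinity to force $G\leq0$. Guided by the statement, take
$$G(s)=D'(s)+2I(s),\quad s>0.$$
Lemma \ref{lem:general_deficit_identities} supplies exactly the two relations needed to compute $G'-3G$. These are the second-order identity \eqref{eq:general_deficit_second_order} for $D''-3D'$ and the first-order identity $I'=6I+2D'$ from \eqref{eq:general_deficit_first_order}.

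First, compute $G'-3G$. We have $G'-3G=(D''-3D')+2I'-6I$. Substituting the two identities gives
$$G'-3G=Q+\oint_{\bS^2}\frac{u_s^2}{u}\,d\mu-2I+12I+4D'-6I=Q+\oint_{\bS^2}\frac{u_s^2}{u}\,d\mu+4I+4D'.$$
The middle integral is nonnegative, so the only issue is the sign of $Q+4I+4D'$. For this, rewrite $D'$ using \eqref{eq:general_deficit_first_order} and $-\Delta f_s=u_s$:
$$D'=\oint_{\bS^2}q\,u_s\,d\mu=\oint_{\bS^2}\nabla q\cdot\nabla f_s\,d\mu.$$
By Cauchy--Schwarz, $|D'|\leq\sqrt{Q}\sqrt{I}$, since $\oint_{\bS^2}|\nabla q|^2\,d\mu=Q$ and $\oint_{\bS^2}|\nabla f_s|^2\,d\mu=I$. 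Hence
$$Q+4I+4D'\geq Q+4I-4\sqrt{QI}=(\sqrt Q-2\sqrt I)^2\geq0.$$
This yields $G'-3G\geq0$ on $(0,\infty)$. This completing-the-square step is the crux of the argument. The coefficient $2$ in front of $I$ is precisely the choice that makes the square close: the factor $4$ in front of $D'$ comes from $2I'$, and it matches the factor $4$ in front of $I$.

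Next, use the behaviour at infinity. In the proof of Lemma \ref{lem:general_deficit_identities} it was shown that $D'(s)\to0$ and $I(s)\to0$, so $G(s)\to0$ as $s\to\infty$. Suppose $G(s_0)>0$ for some $s_0>0$. Since $(e^{-3s}G)'\geq0$, we get $G(s)\geq e^{3(s-s_0)}G(s_0)\to\infty$ as $s\to\infty$, which is a contradiction. Therefore $G\leq0$, which is \eqref{eq:general_deficit_monotonicity}.

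Finally, since $I\geq0$, the inequality gives $D'\leq-2I\leq0$ for every $s>0$. It remains to extend monotonicity to the closed interval $[0,\infty)$. For this we only need $D$ to be continuous at $s=0$. This follows from continuity of $u$ up to $s=0$ together with positivity of $\varphi$, which give convergence of $\Ent(s)$ and of $E(s)$ as $s\to0$. So $D$ is nonincreasing on $[0,\infty)$.

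The only step requiring care is the justification that $D'$ and $I$ tend to zero at infinity, and that step has already been carried out in the preceding lemma. I therefore expect no genuine obstacle beyond recognizing the right combination $D'+2I$ and the Cauchy--Schwarz bound $|D'|\leq\sqrt{QI}$.
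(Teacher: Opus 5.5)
Your proposal is correct and follows the paper's proof: the same combination $D'+2I$, the same inequality $(e^{-3s}(D'+2I))'\geq0$, and the same use of $D',I\to0$ at infinity. The only difference is that the paper recognizes $Q+2I'-8I$ as the exact square $\|u_{ss}-u_s\|_{H^{-1}}^2$ (via $Q=\|u_{ss}-3u_s\|_{H^{-1}}^2$ and $I'=2\langle u_{ss},u_s\rangle_{H^{-1}}$); your substitution $I'=6I+2D'$ followed by Cauchy--Schwarz on $D'=\oint\nabla q\cdot\nabla f_s\,d\mu$ is just the inequality form of that square, and your remark on continuity of $D$ at $s=0$ makes explicit a point the paper leaves implicit.
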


\begin{proof}
	By \eqref{eq:general_q_equation},
	$$Q=\|u_{ss}-3u_s\|_{H^{-1}(\bS^2)}^2.$$
	By \eqref{eq:general_kinetic_energy} and differentiation,
	$$I=\|u_s\|_{H^{-1}(\bS^2)}^2,\quad
	I'=2\langle u_{ss},u_s\rangle_{H^{-1}(\bS^2)}.$$
	Thus \eqref{eq:general_deficit_second_order} gives
	\begin{align}\label{eq:general_deficit_monotonicity_ode}
		(D'+2I)'-3(D'+2I)
		&=Q+2I'-8I+\oint_{\bS^2}\frac{u_s^2}{u}\,d\mu\\
		&=\|u_{ss}-u_s\|_{H^{-1}(\bS^2)}^2+\oint_{\bS^2}\frac{u_s^2}{u}\,d\mu\geq0.
	\end{align}
	Hence $e^{-3s}(D'+2I)$ is nondecreasing. Its limit is zero by the proof of Lemma \ref{lem:general_deficit_identities}, so $D'+2I\leq0$. Since $I\geq0$, this also gives $D'\leq0$.
\end{proof}

\begin{lemma}\label{lem:general_deficit_decay}
	There is a constant $C>0$, depending only on an upper bound of $D(\varphi)$, such that
	\begin{align}\label{eq:general_deficit_decay}
		0\leq D(s)\leq Ce^{-\frac12s},\quad s\geq0.
	\end{align}
\end{lemma}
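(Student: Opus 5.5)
The plan is to combine the two identities of Lemmas \ref{lem:general_deficit_identities} and \ref{lem:general_deficit_monotonicity} into one differential inequality for $D$ alone. Then I would feed in the two coercivity estimates: Lemma \ref{lem:general_deficit_coercivity} for the large-deficit regime and Lemma \ref{lem:general_small_deficit_coercivity} for the small-deficit regime. From \eqref{eq:general_deficit_second_order}, dropping the nonnegative term $\oint u_s^2/u$, we get $D''-3D'\geq Q-2I$. Lemma \ref{lem:general_deficit_monotonicity} gives $-2I\geq D'$, and substituting yields
\begin{align}
	D''-4D'\geq Q(s),\quad s>0.
\end{align}
We also know that $D\geq0$ (Lemma \ref{lem:general_deficit_invariance}), that $D$ is nonincreasing, so $D(s)\leq M:=D(\varphi)$, and that $D,D'\rightarrow0$ as $s\rightarrow\infty$.

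\emph{Step 1 (reaching the small-deficit regime in bounded time).} Fix $\epsilon_0=1$ and set $\epsilon=c_0\epsilon_0^2=c_0$, with $c_0$ from Lemma \ref{lem:general_small_deficit_coercivity}. If $M\leq\epsilon$, put $T=0$. Otherwise let $T$ be the first time with $D(T)=\epsilon$; such a time exists since $D\rightarrow0$ and $D$ is continuous. On $(0,T)$ we have $\epsilon\leq D\leq M$, so Lemma \ref{lem:general_deficit_coercivity} gives $Q\geq\kappa=\kappa(\epsilon,M)$, and hence $(e^{-4s}D')'\geq\kappa e^{-4s}$. Integrating from $s$ to $T$ and using $D'(T)\leq0$ gives
\begin{align}
	-D'(s)\geq\frac{\kappa}{4}\bigl(1-e^{-4(T-s)}\bigr).
\end{align}
Thus $-D'\geq\kappa/8$ on $(0,T-1)$, and integrating shows $T\leq 1+8M/\kappa$. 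This bound on $T$ depends only on $M$.

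\emph{Step 2 (exponential decay after time $T$).} For $s\geq T$ we have $0\leq D\leq\epsilon$. Lemma \ref{lem:general_small_deficit_coercivity} then gives $Q\geq7D$, so
\begin{align}
	D''-4D'-7D\geq0\quad\text{on }(T,\infty).
\end{align}
The characteristic roots are $2\pm\sqrt{11}$, and the negative root is $-\beta$ with $\beta=\sqrt{11}-2>\frac12$. The function $\epsilon e^{-\beta(s-T)}$ solves the homogeneous equation and dominates $D$ at $s=T$ and at infinity. A positive interior maximum of $D-\epsilon e^{-\beta(s-T)}$ would contradict the differential inequality, exactly as in the proof of Corollary \ref{cor:entropy_decay}. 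Hence $D(s)\leq\epsilon e^{-\beta(s-T)}\leq\epsilon e^{\beta T}e^{-s/2}$ for $s\geq T$.

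For $s\leq T$ we simply use $D\leq M\leq Me^{T/2}e^{-s/2}$. Together these give \eqref{eq:general_deficit_decay} with $C=\max\{M,\epsilon\}e^{\beta T}$, which depends only on an upper bound for $M$ (larger $M$ only worsens $\kappa$ monotonically, and one may take $\kappa$ for the upper bound). The main obstacle is Step 1: the degenerate regime where $D$ is not small gives no linear coercivity. This is exactly where the compactness bound $Q\geq\kappa$ and the monotonicity $D'\leq-2I$ must be combined to force a quantitative drop in $D$. A minor point is the continuity of $D$ and $D'$ up to $s=0$, which I would handle by working on $[\sigma,\infty)$ and letting $\sigma\rightarrow0$.
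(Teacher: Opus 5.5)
Your proof is correct and follows the paper's argument. A bounded waiting time $T$ in the large-deficit regime comes from $Q\geq\kappa$ and $(e^{-4s}D')'\geq\kappa e^{-4s}$, and ODE comparison takes over once $D$ drops below the threshold of Lemma \ref{lem:general_small_deficit_coercivity}. The only difference is in the small-deficit regime: you reuse $-2I\geq D'$ to get $D''-4D'\geq 7D$ (rate $\sqrt{11}-2$), whereas the paper combines $I\leq 2D$ from \eqref{eq:general_deficit_kinetic_identity} with $Q\geq 6D$ to get $D''-3D'\geq 2D$ (rate $(\sqrt{17}-3)/2$); both rates exceed $\frac12$.
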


\begin{proof}
	By definition, $D(0)=D(\varphi)$. If $D(0)=0$, Lemma \ref{lem:general_deficit_monotonicity} and $D\geq0$ give $D\equiv0$. Assume $D(0)>0$. Let $d_*=4c_0$, where $c_0$ is the universal constant in Lemma \ref{lem:general_small_deficit_coercivity}. Thus
	$$Q(s)\geq6D(s)\quad\text{whenever}\quad D(s)\leq d_*.$$
	Set $T=0$ if $D(0)\leq d_*$. Otherwise, let $T$ be the first time at which $D(T)=d_*$; it is finite because $D(s)\rightarrow0$.

	By Lemma \ref{lem:general_deficit_monotonicity}, $D(s)\leq d_*$ for $s\geq T$. Combining \eqref{eq:general_deficit_second_order} and \eqref{eq:general_deficit_kinetic_identity}, we obtain
	\begin{align}\label{eq:general_deficit_small_ode}
		D''-3D'\geq Q-2I\geq6D-4D=2D,\quad s>T.
	\end{align}
	The same ODE comparison as in Corollary \ref{cor:entropy_decay}, using $D(s)\rightarrow0$, gives
	\begin{align}\label{eq:general_deficit_tail_decay}
		D(s)\leq D(T)e^{-\frac{\sqrt{17}-3}{2}(s-T)}
		\leq D(0)e^{-\frac12(s-T)},\quad s\geq T.
	\end{align}

	It remains to bound $T$ in terms of $D(0)$ when $T>0$. On $[0,T]$, we have $d_*\leq D(s)\leq D(0)$, so Lemma \ref{lem:general_deficit_coercivity} gives $Q(s)\geq\kappa$, where $\kappa=\kappa(d_*,D(0))>0$. By \eqref{eq:general_deficit_second_order} and \eqref{eq:general_deficit_monotonicity},
	$$D''-3D'\geq\kappa-2I\geq\kappa+D',\quad
	(e^{-4s}D')'\geq\kappa e^{-4s}.$$
	Integrating from $s$ to $T$ and using $D'(T)\leq0$ yields
	$$D'(s)\leq-\frac{\kappa}{4}\left(1-e^{-4(T-s)}\right).$$
	Integrating once more gives
	\begin{align}\label{eq:general_deficit_waiting_time}
		D(0)-D(T)&\geq\frac{\kappa}{4}\left(T-\frac{1-e^{-4T}}{4}\right),\\
		T&\leq\frac{4D(0)}{\kappa}+\frac14.
	\end{align}
	For $0\leq s\leq T$, monotonicity gives $D(s)\leq D(0)$. Together with \eqref{eq:general_deficit_tail_decay} and \eqref{eq:general_deficit_waiting_time}, this proves \eqref{eq:general_deficit_decay}, with $C$ depending only on an upper bound of $D(0)$.
\end{proof}

\begin{lemma}\label{lem:general_entropy_energy_decay}
	There is a constant $C>0$, depending only on an upper bound for $D(\varphi)$, such that
	\begin{align}\label{eq:general_entropy_energy_decay}
		E(s)\leq Ce^{-\frac12s},\quad \Ent(s)\leq Ce^{-\frac12s},\quad s\geq0.
	\end{align}
\end{lemma}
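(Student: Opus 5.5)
The plan is to get the bound for $E$ by integrating the kinetic energy $I$ in time, and then read off the entropy from the definition $D=\Ent-E$. All decay will come from Lemma \ref{lem:general_deficit_decay}; no new inequality on $\bS^2$ is needed.

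\textbf{Step 1: decay of $I$.} By \eqref{eq:general_deficit_kinetic_identity} and Lemma \ref{lem:general_deficit_decay},
$$\|u_s(r,\cdot)\|_{H^{-1}(\bS^2)}^2=I(r)\leq2D(r)\leq Ce^{-\frac12r},$$
where $C$ depends only on an upper bound for $D(\varphi)$. Hence $\|u_s(r,\cdot)\|_{H^{-1}(\bS^2)}\leq Ce^{-\frac14r}$.

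\textbf{Step 2: decay of $E$.} Each slice satisfies $\oint_{\bS^2}(u(r,\cdot)-1)\,d\mu=0$ by \eqref{eq:mass_one}, so $u(r,\cdot)-1$ is mean-zero and its $H^{-1}$ norm is defined. Since $u$ is smooth in the interior, $u(s,\cdot)-u(R,\cdot)=-\int_s^R u_s(r,\cdot)\,dr$ for $0<s<R$. Minkowski's inequality for the Hilbert norm $\|\cdot\|_{H^{-1}(\bS^2)}$ then gives
$$\|u(s,\cdot)-1\|_{H^{-1}(\bS^2)}\leq\|u(R,\cdot)-1\|_{H^{-1}(\bS^2)}+\int_s^R Ce^{-\frac14r}\,dr.$$
The uniform convergence $u(R,\cdot)\to1$ from \eqref{eq:boundary_asymptotics} gives $\|u(R,\cdot)-1\|_{H^{-1}}\leq C\|u(R,\cdot)-1\|_{L^2}\to0$. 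This uses the bound $\|g\|_{H^{-1}}^2\leq\frac12\|g\|_{L^2}^2$ for mean-zero $g$, which holds because the first nonzero eigenvalue of $-\Delta$ is $2$. Letting $R\to\infty$ yields $\|u(s,\cdot)-1\|_{H^{-1}}\leq 4Ce^{-\frac14s}$, that is, $E(s)\leq Ce^{-\frac12s}$ for $s>0$. By continuity of $u$ up to $s=0$, the bound extends to $s=0$.

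\textbf{Step 3: decay of $\Ent$.} Finally, $\Ent(s)=D(s)+E(s)\leq Ce^{-\frac12s}$ by \eqref{eq:general_deficit_decay} and Step 2. Every constant used so far depends only on an upper bound for $D(\varphi)$.

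\textbf{Main obstacle.} The only delicate point is justifying the $s\to\infty$ limit, and the continuity of $E$ up to $s=0$, within the $H^{-1}$ framework. The former is handled by the $L^2$ control of $H^{-1}$ noted in Step 2, together with the uniform convergence in \eqref{eq:boundary_asymptotics}. For the latter, continuity of $u$ up to $s=0$ gives continuity in $L^2$, hence in $H^{-1}$. If needed, the bound can alternatively be proved for $s\geq1$, and the interval $[0,1]$ handled by monotonicity of $D$ together with the bound $E(s)\leq\Ent(s)$ from Lemma \ref{lem:general_deficit_invariance}.
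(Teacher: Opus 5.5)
Your proposal is correct and follows essentially the paper's route: you bound $I\le 2D\le Ce^{-s/2}$, integrate the time derivative from $s$ to $\infty$ using $u(R,\cdot)\to1$, and read off $\Ent=D+E$. Your Minkowski inequality in $H^{-1}$ replaces the paper's exponentially weighted Cauchy--Schwarz and gives the same constant $32C_1$. The continuity argument at $s=0$ is all you need; drop the fallback via $E\le\Ent$. Since $\Ent=D+E$, that fallback is circular, and $\Ent(\varphi)$ is not controlled by $D(\varphi)$ alone a priori (cf.\ Remark~\ref{rmk:general_normalized_entropy}).
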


\begin{proof}
	By \eqref{eq:general_deficit_kinetic_identity} and Lemma \ref{lem:general_deficit_decay}, there is a constant $C_1>0$, depending only on an upper bound for $D(\varphi)$, such that
	$$\oint_{\bS^2}|\nabla f_s(s,x)|^2\,d\mu=I(s)\leq2D(s)\leq2C_1e^{-\frac12s}.$$
	By \eqref{eq:general_potential} and the uniform convergence $u\rightarrow1$, we have $f(s,\cdot)\rightarrow0$ in $H^1(\bS^2)$. Thus, integrating $\nabla f_s$ from $s$ to infinity and applying the Cauchy--Schwarz inequality, we obtain
	\begin{align}
		E(s)&=\oint_{\bS^2}|\int_s^\infty-\nabla f_s(\tau,x)\,d\tau|^2\,d\mu\\
		&\leq\oint_{\bS^2}(\int_s^\infty|\nabla f_s(\tau,x)|\,d\tau)^2\,d\mu\\
		&\leq\oint_{\bS^2}(\int_s^\infty|\nabla f_s(\tau,x)|^2e^{\frac14\tau}\,d\tau)(\int_s^\infty e^{-\frac14\tau}\,d\tau)\,d\mu\\
		&=4e^{-\frac14s}\int_s^\infty\oint_{\bS^2}e^{\frac14\tau}|\nabla f_s(\tau,x)|^2\,d\mu\,d\tau\\
		&\leq4e^{-\frac14s}\int_s^\infty2C_1e^{-\frac14\tau}\,d\tau=32C_1e^{-\frac12s}.
	\end{align}
	Hence, by \eqref{eq:general_deficit},
	$$\Ent(s)=D(s)+E(s)\leq33C_1e^{-\frac12s}.$$
	This proves \eqref{eq:general_entropy_energy_decay} with $C=33C_1$. The estimates hold at $s=0$ by continuity.
\end{proof}

\begin{remark}\label{rmk:general_normalized_entropy}
	At $s=0$, Lemma \ref{lem:general_entropy_energy_decay} bounds $\Ent(\varphi)$ in terms of $D(\varphi)$ only when $\varphi$ is the boundary value of a solution with $u\rightarrow1$. It does not give an inequality for arbitrary positive functions of mean one. For example, a nonconstant conformal Jacobian $\varphi=J_\phi$ has $D(\varphi)=0$ by Lemma \ref{lem:general_deficit_invariance}, but its stationary solution $u(s,x)=J_\phi(x)$ has limit $J_\phi$, not $1$.
\end{remark}

Next, we refine the $L^2$ bound argument using the entropy bound, so that only the ordinary Gagliardo--Nirenberg inequality on $\bS^2$ is needed.

\begin{lemma}\label{lem:general_second_moment}
	There is a constant $C>0$, depending only on upper bounds for $D(\varphi)$ and $\|\varphi\|_{L^2(\bS^2)}$, such that $F_2(s)\leq C$ for every $s\geq0$.
\end{lemma}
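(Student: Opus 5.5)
The plan is to rerun the ODE argument of Lemma \ref{lem:second_moment_ode}. The improved even Gagliardo--Nirenberg inequality (Lemma \ref{lem:improved_gns}) is no longer available, so I will replace it by the ordinary one on $\bS^2$ combined with the uniform entropy bound from Lemma \ref{lem:general_entropy_energy_decay}. The entropy bound makes the set where $u$ is large have small mass, and this smallness supplies the small constant that evenness used to provide.

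\textbf{Step 1: the identity.} As in the proof of Lemma \ref{lem:second_moment_ode}, using \eqref{eq:mass_one}, the identity
\begin{align*}
F_2''-3F_2'+4F_2=4+2\oint_{\bS^2}\Bigl(u_s^2+\frac{|\nabla u|^2}{u}\Bigr)d\mu\geq 4+8\oint_{\bS^2}|\nabla\sqrt u|^2d\mu
\end{align*}
holds without any evenness assumption.

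\textbf{Step 2: a small-constant inequality.} Fix a level $K\geq1$ and set $g=(\sqrt u-\sqrt K)_+$. Then $|\nabla g|\leq|\nabla\sqrt u|$ and $u\leq(g+\sqrt K)^2$, so $u^2\leq 8g^4+8K^2$.

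The key smallness comes from the entropy. Since $u\log u\geq -e^{-1}$ everywhere and $u\log u\geq u\log K$ on $\{u>K\}$, we get
$$\oint g^2\,d\mu\leq\oint_{\{u>K\}}u\,d\mu\leq\frac{\Ent(s)+e^{-1}}{\log K}\leq\frac{C_1}{\log K},$$
where $C_1$ depends only on an upper bound for $D(\varphi)$, by Lemma \ref{lem:general_entropy_energy_decay}.

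The ordinary Gagliardo--Nirenberg inequality on $\bS^2$, $\oint g^4\leq A\oint|\nabla g|^2\oint g^2+B(\oint g^2)^2$ with universal $A,B$, then yields
$$F_2\leq \frac{8AC_1}{\log K}\oint|\nabla\sqrt u|^2d\mu+8B\frac{C_1^2}{\log^2K}+8K^2.$$
I will choose $K$ so large, depending on $C_1$, that $8AC_1/\log K\leq 1/2$. This gives $8\oint|\nabla\sqrt u|^2\,d\mu\geq 16F_2-C_2$.

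\textbf{Step 3: the ODE comparison.} Combining Steps 1 and 2 gives $F_2''-3F_2'-12F_2\geq -C_2$. Since $F_2(s)\to1$ as $s\to\infty$ (because $u\to1$ uniformly), a maximum of $F_2$ above $\max\{F_2(0),C_2/12\}$ at an interior point would contradict this differential inequality, exactly as in \eqref{eq:second_moment_bound}. Hence $F_2\leq\max\{\|\varphi\|_{L^2(\bS^2)}^2,C_2/12\}$.

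\textbf{Expected obstacle.} The main point is Step 2: one must check that the truncation interacts correctly with the gradient term, that is, that $\nabla g$ is supported on $\{u>K\}$ and is dominated by $\nabla\sqrt u$. One must also make sure that the entropy bound used is uniform in $s$ with constants depending only on $D(\varphi)$, which is exactly what Lemma \ref{lem:general_entropy_energy_decay} provides. Everything else is routine.
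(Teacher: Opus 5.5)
Your proposal is correct and follows essentially the same route as the paper. It uses the same identity from Lemma \ref{lem:second_moment_ode}, the same truncation $(\sqrt u-\sqrt L)_+$ combined with the entropy tail bound from Lemma \ref{lem:general_entropy_energy_decay} to make the ordinary Gagliardo--Nirenberg constant small, and the same maximum-principle comparison. The only difference is bookkeeping: you obtain $F_2''-3F_2'-12F_2\geq -C_2$ where the paper obtains $F_2''-3F_2'-4F_2\geq -C$, and this does not affect the conclusion.
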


\begin{proof}
	As in Lemma \ref{lem:second_moment_ode},
	\begin{align}\label{eq:general_second_moment_identity}
		F_2''-3F_2'+4F_2&=4+2\oint_{\bS^2}(u_s^2+\frac{|\nabla u|^2}{u})d\mu\\
		&\geq4+8\oint_{\bS^2}|\nabla\sqrt{u}|^2d\mu.
	\end{align}
	Let $C_2$ be the constant in Lemma \ref{lem:general_entropy_energy_decay}. Since $r\log r\geq-e^{-1}$ for $r>0$, for every $L>1$ we have
	\begin{align}\label{eq:general_entropy_tail}
		\oint_{\{u>L\}}u\,d\mu\leq\frac{1}{\log L}\oint_{\{u>L\}}u\log u\,d\mu\leq\frac{C_2+e^{-1}}{\log L}.
	\end{align}
	Instead of applying the improved inequality to $\sqrt{u}$ as in \eqref{eq:second_moment_gns_step}, we apply the ordinary Gagliardo--Nirenberg inequality on $\bS^2$ to $w=(\sqrt{u}-\sqrt{L})_+$. Using \eqref{eq:general_entropy_tail}, we obtain
	\begin{align}\label{eq:general_truncated_gns}
		\oint_{\bS^2}w^4d\mu
		&\leq C\left(\oint_{\bS^2}|\nabla w|^2d\mu\right)\left(\oint_{\bS^2}w^2d\mu\right)+C\left(\oint_{\bS^2}w^2d\mu\right)^2\\
		&\leq\frac{C(C_2+e^{-1})}{\log L}\oint_{\bS^2}|\nabla\sqrt{u}|^2d\mu+C\left(\frac{C_2+e^{-1}}{\log L}\right)^2.
	\end{align}
	Choose $L=L(C_2)$ sufficiently large that $8C(C_2+e^{-1})/\log L\leq1$. Splitting into $\{u\leq L\}$ and $\{u>L\}$ gives
	\begin{align}
		F_2(s)
		&\leq L^2\mu(\{u\leq L\})+\oint_{\{u>L\}}(w+\sqrt{L})^4d\mu
		\\
		&\leq  L^2+8\left(\oint_{\bS^2}w^4d\mu+L^2\right)\leq\oint_{\bS^2}|\nabla\sqrt{u}|^2d\mu+C(C_2).
	\end{align}
	Substituting this into \eqref{eq:general_second_moment_identity} gives
	\begin{align}\label{eq:general_second_moment_ode}
		F_2''-3F_2'-4F_2\geq-C(C_2).
	\end{align}
	The function $F_2$ is bounded by \eqref{eq:boundary_asymptotics}. It follows that
	$$F_2(s)\leq\max\{F_2(0),C(C_2)/4\}.$$
\end{proof}

\begin{lemma}\label{lem:general_upper_bound}
	There is a constant $C>0$, depending only on upper bounds for $D(\varphi)$ and $M_0$, such that
	\begin{align}\label{eq:general_upper_bound}
		\|u\|_{L^\infty([0,\infty)\times\bS^2)}\leq C.
	\end{align}
\end{lemma}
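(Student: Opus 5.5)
The plan is to repeat the Moser-type iteration from the proof of Proposition \ref{prop:upper_bound}. The only change is at the starting step: there the base case came from \eqref{eq:second_moment_bound}, which relied on evenness. Here we use Lemma \ref{lem:general_second_moment} instead. That lemma bounds $N_2=\sup_{s\geq0}F_2(s)^{1/2}$ by a constant depending on upper bounds for $D(\varphi)$ and $\|\varphi\|_{L^2(\bS^2)}$. Since $\oint_{\bS^2}\varphi\,d\mu=1$, we have $\|\varphi\|_{L^2(\bS^2)}^2\leq M_0$, so $N_2\leq C(D(\varphi),M_0)$.

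Next we check that the iteration step does not use evenness. The moment identity \eqref{eq:moment_identity} is a direct computation from \eqref{eq:s_pde}, using only positivity of $u$ and \eqref{eq:mass_one}. The Sobolev inequality on $\bS^2$ applied to $u^{(q-1)/2}$ gives
$$F_q''-3F_q'+2qF_q\geq\frac{4q}{q-1}\left(C^{-1}F_{3(q-1)}^{1/3}-F_{q-1}\right),$$
with a universal constant $C$. By \eqref{eq:boundary_asymptotics}, $F_q(s)\rightarrow1\leq F_q$, so $F_q$ attains its maximum on $[0,\infty)$. There are two cases.
\begin{itemize}
\item If the maximum is at $s=0$, then $F_q\leq M_0^q$.
\item If the maximum is interior, then $F_q'=0$ and $F_q''\leq0$ there. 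This gives $F_{3(q-1)}^{1/3}\leq CqF_q$. Combined with H\"older interpolation between $F_{3(q-1)}$ and $F_{q/2}$, this yields \eqref{eq: M_q iteration}.
\end{itemize}
Evenness entered the earlier proof only through the bound on $N_2$.

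Finally we iterate over $q=4,8,16,\dots$. The accumulated factor $\prod_j(C2^{3j})^{1/(2^{j+1}-6)}$ is finite, and the exponents multiply to $4$. This gives
$$\limsup_{j\rightarrow\infty}N_{2^j}\leq C\max\{M_0,N_2\}^4\leq C(D(\varphi),M_0).$$
Then $\|u(s,\cdot)\|_{L^\infty(\bS^2)}=\lim_{j\rightarrow\infty}F_{2^j}(s)^{1/2^j}$ gives \eqref{eq:general_upper_bound}.

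The main obstacle, the $L^2$ bound, is already handled by Lemma \ref{lem:general_second_moment}, which used entropy decay instead of the improved Gagliardo--Nirenberg inequality. The remaining points to check are routine:
\begin{itemize}
\item In the iteration bookkeeping, the maximum of $\|\varphi\|_{L^\infty}$ and the previous $N$ must be propagated consistently, as in the even case.
\item All constants must depend only on upper bounds for $D(\varphi)$ and $M_0$, and not on $m_0$.
\end{itemize}
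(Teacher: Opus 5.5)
Your proposal is correct and follows the paper's own proof: rerun the iteration \eqref{eq: M_q iteration} from Proposition \ref{prop:upper_bound}, with the base case supplied by Lemma \ref{lem:general_second_moment} in place of \eqref{eq:second_moment_bound}. Your observation that $\|\varphi\|_{L^2(\bS^2)}^2\leq M_0\oint_{\bS^2}\varphi\,d\mu=M_0$ spells out why the constant depends only on upper bounds for $D(\varphi)$ and $M_0$, a step the paper leaves implicit.
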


\begin{proof}
	Apply the iteration \eqref{eq: M_q iteration} from the proof of Proposition \ref{prop:upper_bound}, starting from Lemma \ref{lem:general_second_moment}.
\end{proof}

\begin{lemma}\label{lem:general_tail_lower}
	Let $C$ be the constant in \eqref{eq:general_entropy_energy_decay} and let $\epsilon_*$ be given by Proposition \ref{prop:local_de_giorgi}. Set
	\begin{align}\label{eq:general_tail_threshold}
		S=\max\left\{2,\,2+2\log\left(\frac{32C}{\epsilon_*}\right)\right\}.
	\end{align}
	Then
	\begin{align}\label{eq:general_tail_lower}
		u\geq\frac14\quad\mathrm{on}\quad[S,\infty)\times\bS^2.
	\end{align}
	In particular, $S$ depends only on an upper bound for $D(\varphi)$.
\end{lemma}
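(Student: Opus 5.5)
This is the exact analogue of Proposition \ref{prop:tail_lower}. The only change is that the entropy decay of Corollary \ref{cor:entropy_decay} is replaced by the decay $\Ent(s)\leq Ce^{-\frac12 s}$ from Lemma \ref{lem:general_entropy_energy_decay}. The local De Giorgi estimate, Proposition \ref{prop:local_de_giorgi}, requires neither evenness nor prior upper or lower bounds, so it can be applied directly to the normalized solution on every cylinder $Q_2(S')$ with $S'\geq 2$.

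\textbf{Step 1: measure bound for the sublevel set.} For $0<r\leq\frac12$ we have the pointwise bound $r\log r-r+1\geq\frac18$, as in the proof of Proposition \ref{prop:tail_lower}. Since \eqref{eq:mass_one} holds, we can write $\Ent(s)=\oint_{\bS^2}(u\log u-u+1)\,d\mu$, and the integrand is nonnegative. This gives $\mu(\{u(s,\cdot)<\frac12\})\leq 8\Ent(s)$. Integrating over $s\in(S'-2,S'+2)$ and using \eqref{eq:general_entropy_energy_decay}, we get
\begin{align}
	|\{u<\tfrac12\}\cap Q_2(S')|\leq 8C\int_{S'-2}^{S'+2}e^{-\frac12 s}\,ds< 16Ce^{-\frac12(S'-2)}\leq 32Ce^{-\frac12(S'-2)}.
\end{align}

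\textbf{Step 2: apply the De Giorgi estimate.} If $S'\geq S$, then $\frac12(S'-2)\geq\log(32C/\epsilon_*)$ by \eqref{eq:general_tail_threshold}, so the measure above is less than $\epsilon_*$. Proposition \ref{prop:local_de_giorgi} then gives $u\geq\frac14$ on $Q_1(S')$. Taking the union over all $S'\geq S$ covers $[S,\infty)\times\bS^2$, since the endpoint $S$ itself lies in $Q_1(S+\frac12)$. This proves \eqref{eq:general_tail_lower}.

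\textbf{Dependence of $S$.} By Lemma \ref{lem:general_entropy_energy_decay}, the constant $C$ depends only on an upper bound for $D(\varphi)$, and $\epsilon_*$ is universal. Hence $S$ depends only on an upper bound for $D(\varphi)$.

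\textbf{Main obstacle.} There is essentially none. The one point to check is that Proposition \ref{prop:local_de_giorgi} is genuinely local, requiring only a smooth positive solution on $Q_2(S')$, so no a priori bound is needed. The normalization $u\to1$ enters only through Lemma \ref{lem:general_entropy_energy_decay}.
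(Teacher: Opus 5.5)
Your proposal is correct and follows the paper's proof essentially verbatim. Both combine the sublevel-set bound from the proof of Proposition \ref{prop:tail_lower} with the decay in Lemma \ref{lem:general_entropy_energy_decay}, then apply Proposition \ref{prop:local_de_giorgi} on $Q_1(R)$ for every $R\geq S$. Your exact integration gives the sharper constant $16C$ in place of the paper's $32C$, which is harmless.
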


\begin{proof}
	Using the sublevel-set estimate in the proof of Proposition \ref{prop:tail_lower} and Lemma \ref{lem:general_entropy_energy_decay}, we obtain, for every $R\geq S$,
	\begin{align}
		|\{u<\frac12\}\cap Q_2(R)|
		&\leq8\int_{R-2}^{R+2}\Ent(s)\,ds\\
		&<32Ce^{-\frac12(R-2)}\leq\epsilon_*.
	\end{align}
	Proposition \ref{prop:local_de_giorgi} gives $u\geq1/4$ on $Q_1(R)$ for every $R\geq S$, proving \eqref{eq:general_tail_lower}.
\end{proof}

Combining these estimates gives the following conclusion.

\begin{theorem}\label{thm:general_c0_decay}
	For $0<m\leq1$ and $0<\alpha<1$, there is a constant $C>1$, depending only on $m$ and $\alpha$, such that every positive solution of \eqref{eq:s_pde} and \eqref{eq:boundary_asymptotics} with
	$$m\leq\varphi\leq m^{-1}$$
	satisfies $C^{-1}\leq u\leq C$ on $[0,\infty)\times\bS^2$ and, with $\delta=1/22$,
	\begin{align}\label{eq:general_derivative_decay}
		\sup_{s\geq1}e^{\delta s}\|u-1\|_{C^{2,\alpha}([s,s+1]\times\bS^2)}\leq C.
	\end{align}
\end{theorem}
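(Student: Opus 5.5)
The plan is to assemble the subsection's lemmas, with every constant made to depend only on $m$. First I bound $D(\varphi)$, $\|\varphi\|_{L^2}$ and $M_0$ in terms of $m$. Trivially $M_0\le m^{-1}$ and $\|\varphi\|_{L^2}\le m^{-1}$. Since $E(\varphi)\ge 0$, we have $D(\varphi)\le\Ent(\varphi)\le\log m^{-1}$.

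Lemma \ref{lem:general_upper_bound} then gives $u\le C(m)$ on $[0,\infty)\times\bS^2$. Lemma \ref{lem:general_tail_lower} gives $u\ge\frac14$ on $[S,\infty)\times\bS^2$, where $S=S(m)$ depends only on the bound for $D(\varphi)$.

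For the lower bound on $[0,S]$ I argue exactly as in the proof of Theorem \ref{thm:c0_estimate}, since neither step used evenness. Equation \eqref{eq:boundary_lower} gives $u\ge m_0/4\ge m/4$ on $[0,s_0]\times\bS^2$. The comparison inequality \eqref{eq:comparison_lower} with $S$ in place of $S_\Lambda$ propagates $u\ge\frac14$ from the slice $\{s=S\}$ back to $[s_0,S]$. Together these give $u\ge\frac{m}{16}e^{-2S}$ on the whole cylinder. This proves $C^{-1}\le u\le C$.

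For the decay \eqref{eq:general_derivative_decay} I follow Corollary \ref{cor:derivative_decay}, replacing the entropy decay there by Lemma \ref{lem:general_entropy_energy_decay}.
\begin{itemize}
\item With $u$ pinched between constants, the divergence-form equation \eqref{eq:divergence_pde} is uniformly elliptic. De Giorgi--Nash--Moser followed by bootstrapped Schauder estimates gives $\|u\|_{C^{k+2}([r,r+1]\times\bS^2)}\le C_k(m)$ for $r\ge1$. Only interior estimates on $[r-1,r+2]$ are needed, so there is no dependence on $\|\varphi\|_{C^{2,\alpha}}$.
\item For $L^2$ decay, use the pointwise inequality $(r-1)^2\le 2\Lambda(r\log r-r+1)$ for $0<r\le\Lambda$, as in Corollary \ref{cor:entropy_decay} with $q=2$. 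Combined with \eqref{eq:mass_one} and $\Ent(s)\le C(m)e^{-s/2}$, this gives $\|u-1\|_{L^2([r,r+1]\times\bS^2)}^2\le C(m)e^{-r/2}$.
\item Apply Lemma \ref{lem:interpolation} with orders $j=3$ and $k=4$, so that $\theta=2/11$. This gives $\|u-1\|_{C^{2,\alpha}}\le C\|u-1\|_{C^3}\le C\bigl(e^{-r/2}\bigr)^{1/11}=Ce^{-r/22}$, which is exactly $\delta=\frac1{22}$.
\end{itemize}

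The main thing to check is that each constant depends on $m$ alone. The constants in Lemmas \ref{lem:general_deficit_decay}, \ref{lem:general_entropy_energy_decay}, \ref{lem:general_second_moment} and \ref{lem:general_tail_lower} depend on an upper bound for $D(\varphi)$, and $\kappa(d_*,D(0))$ can be taken monotone in $D(0)$. The only delicate point is that the normalization $h=1$ may come from a conformal change. I interpret $m$ as a bound on the normalized boundary value, following the convention stated at the start of the subsection.
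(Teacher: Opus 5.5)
Your proposal is correct and follows the paper's proof essentially step for step. You bound $D(\varphi)\le\Ent(\varphi)\le\log m^{-1}$, then use Lemmas \ref{lem:general_upper_bound} and \ref{lem:general_tail_lower} with \eqref{eq:boundary_lower} and \eqref{eq:comparison_lower} as in Theorem \ref{thm:c0_estimate}, and finally combine interior estimates, the bound $\oint_{\bS^2}(u-1)^2\,d\mu\le 2\Lambda\Ent(s)\le Ce^{-s/2}$, and interpolation between orders $3$ and $4$ to get $\delta=\frac{1}{22}$. The normalization point you flag is not an issue: the hypothesis \eqref{eq:boundary_asymptotics} already fixes the limit to be $1$, so $m$ bounds exactly the boundary value you are working with.
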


\begin{proof}
	By \eqref{eq:mass_one}, $0\leq D(\varphi)\leq\Ent(\varphi)\leq\log m^{-1}$. The $C^0$ bounds follow from Lemmas \ref{lem:general_upper_bound} and \ref{lem:general_tail_lower}, together with \eqref{eq:comparison_lower} and \eqref{eq:boundary_lower}, as in the proof of Theorem \ref{thm:c0_estimate}.
	
	As in the proof of \eqref{eq:moment_decay}, Lemma \ref{lem:general_entropy_energy_decay} and the upper bound give
	$$\oint_{\bS^2}(u-1)^2\,d\mu\leq2C\Ent(s)\leq Ce^{-\frac12s}.$$
	The interior estimates and interpolation in the proof of Corollary \ref{cor:derivative_decay} now give \eqref{eq:general_derivative_decay} with $\delta=1/22$.
\end{proof}

\subsection{Existence}\label{subsec:general_existence}

With $\mathfrak{j}_a$ as in \eqref{eq:general_stationary_family}, set
$$\mathcal{J}=\{\mathfrak{j}_a: a\in\bR^3\},$$
which is a non-compact 3-manifold diffeomorphic to $\bR^3$ via the map
$$\Psi:\bR^3\rightarrow\mathcal{J},\quad\Psi(a)=\mathfrak{j}_a.$$
For $\mathfrak{j}\in\mathcal{J}$, its tangent space is the kernel of the linearization of \eqref{eq:general_stationary_equation}:
$$T_{\mathfrak{j}}\mathcal{J}=\{b\in C^{2,\alpha}(\bS^2)|\,\Delta(\mathfrak{j}^{-1}b)+2b=0\}.$$
In particular, $T_{\mathfrak{j}_0}\mathcal{J}=T_1\mathcal{J}=\text{span}_{\bR}\{x_1,x_2,x_3\}$.

In this subsection, we prove the following.

\begin{theorem}\label{thm:general_existence_decay}
	For any positive $\varphi\in C^{2,\alpha}(\bS^2)$, $0<\alpha<1$, satisfying $\oint_{\bS^2}\varphi\,d\mu=1$, there exists a unique bounded positive classical solution of \eqref{eq:s_pde} with $u(0,\cdot)=\varphi$. Moreover, there exists a unique $a\in\bR^3$ such that
	$$\lim\limits_{s\rightarrow\infty}\sup_{x\in\bS^2}|u(s,x)-\mathfrak{j}_a(x)|=0.$$
	If $M^{-1}\leq\varphi\leq M$ and $\|\varphi\|_{C^{2,\alpha}(\bS^2)}\leq M$, then there is a constant $C>1$, depending only on $M$ and $\alpha$, such that, with $\delta=1/22$,
	\begin{align}\label{eq:general_existence_bounds}
		C^{-1}\leq u\leq C,\quad
		\sup_{r\geq0}e^{\delta r}\|u-\mathfrak{j}_a\|_{C^{2,\alpha}([r,r+1]\times\bS^2)}\leq C.
	\end{align}
\end{theorem}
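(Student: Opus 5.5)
Uniqueness is already settled: two bounded positive solutions with the same boundary value coincide by \eqref{eq:ball_transform} and Corollary \ref{cor:uniqueness}. Uniqueness of $a$ then follows because $\Psi$ is injective. The plan is a continuity method in the boundary data combined with conformal normalization.

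\textbf{Continuity path and solution set.} Connect $\varphi$ to $1$ by $\varphi_\tau=1-\tau+\tau\varphi$. This path keeps $M^{-1}\leq\varphi_\tau\leq M$, unit mean, and a uniform $C^{2,\alpha}$ bound. Let $\mathcal{I}$ be the set of $\tau\in[0,1]$ for which a bounded positive solution $u_\tau$ with boundary value $\varphi_\tau$ exists and converges uniformly to some $\mathfrak{j}_{a(\tau)}\in\mathcal{J}$ with the decay in \eqref{eq:general_existence_bounds}. Since $u\equiv1$ works for $\tau=0$, the set $\mathcal{I}$ is nonempty.

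\textbf{Closedness via a priori estimates.} For $\tau\in\mathcal{I}$, let $\phi_\tau$ be the conformal map with $J_{\phi_\tau}(\mathfrak{j}_{a(\tau)}\circ\phi_\tau)=1$, and normalize $\tilde u=(u_\tau)_{\phi_\tau}$. Theorem \ref{thm:general_c0_decay} applies to $\tilde u$ once we know $m\leq\tilde\varphi\leq m^{-1}$ for the transformed data $\tilde\varphi=J_{\phi_\tau}(\varphi_\tau\circ\phi_\tau)$. This requires a uniform bound on $|a(\tau)|$, and this is the main obstacle.

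To get it, I would use that $D$ is conformally invariant and nonincreasing (Lemma \ref{lem:general_deficit_monotonicity}), together with the normalized entropy bound of Lemma \ref{lem:general_entropy_energy_decay} at $s=0$. These give
\[
\Ent(\tilde\varphi)\leq C(D(\varphi_\tau))\leq C(\log M).
\]
If $|a(\tau)|\to\infty$, then $\tilde\varphi=J_{\phi_\tau}(\varphi_\tau\circ\phi_\tau)$ with $\varphi_\tau$ pinched between $M^{-1}$ and $M$ concentrates: its mass accumulates near a point, so its entropy is at least roughly $\log\sup J_{\phi_\tau}-C\to\infty$. That is a contradiction. Hence $|a(\tau)|\leq A(M)$, the transformed data is pinched by constants depending only on $M$, and the transformed $C^{2,\alpha}$ norm is controlled.

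Theorem \ref{thm:general_c0_decay} then gives uniform $C^0$ bounds and $e^{-\delta s}$ decay for $\tilde u-1$. Undoing the bounded conformal map yields \eqref{eq:general_existence_bounds} for $u_\tau-\mathfrak{j}_{a(\tau)}$ with constants depending only on $M,\alpha$. Boundary Schauder estimates cover $[0,1]$. Taking limits along $\tau_i\to\tau_\infty$ and extracting $a(\tau_i)\to a_\infty$ (Lemma \ref{lem:weighted_local_compactness} style) gives closedness.

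\textbf{Openness via the implicit function theorem.} I would again normalize so that the limit at $\tau_0$ is $1$, and work in a space modeling $u=E_\tau+\sum_i b_i\chi(s)x_i+h$. Here $h$ lies in the non-even analogue of $X_\delta$, and the finite-dimensional parameter $b\in\bR^3$ moves the limit along $T_1\mathcal{J}$; more precisely, one parametrizes $u\to\mathfrak{j}_{a}$ with $a$ free near $0$.

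The linearized operator $L_1$ on degree-one modes has characteristic roots $3$ and $0$. The root $0$ is exactly the bounded nondecaying direction spanned by $x_i$, so the degree-one modes of $f$ are solved with a constant limit, absorbed by $b$. Degrees $j\geq2$ use the Green kernel estimate \eqref{eq:model_green_bound}, which holds verbatim; for $j=1$ the kernel is bounded and the limit is captured explicitly. Injectivity follows as in Lemma \ref{lem:linear_isomorphism} via the comparison argument, since bounded kernel elements vanish at both ends after the change of variables. The method of continuity from $L_1$ to $L_u$ then carries over, making the augmented linearization an isomorphism. The implicit function theorem gives openness.

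Therefore $\mathcal{I}=[0,1]$, and $\tau=1$ yields the theorem.
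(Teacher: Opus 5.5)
Your proof is correct. It follows the paper's architecture: continuation along $\varphi_\tau$, conformal normalization of the limit, Theorem \ref{thm:general_c0_decay} plus boundary Schauder estimates for closedness, and the implicit function theorem on a domain enlarged by the tangent directions of $\mathcal{J}$ for openness. Your corrected ``$a$ free'' parametrization is exactly the paper's $E_{\tau,a}=\mathfrak{j}_a+e^{-s}(\varphi_\tau-\mathfrak{j}_a)$ together with Proposition \ref{prop:general_linear_isomorphism}. The literal ansatz with $\sum_i b_i\chi(s)x_i$ would not work, because $1+b\cdot x$ is not stationary and $\mathcal{P}$ applied to it does not decay; the $x_i$ should enter only through the linearization.

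The one step where you argue differently is the bound on $|a|$, which is the paper's Proposition \ref{prop:general_limit_parameter_bound}. Both arguments apply Lemma \ref{lem:general_entropy_energy_decay} at $s=0$ to the normalized data.

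\textbf{Paper's route.} The paper uses the $E$ part of that lemma. Conformal invariance of the Dirichlet integral gives $\|\varphi-\mathfrak{j}_a\|_{H^{-1}}^2=E(\varphi_\phi)\leq C$. Hence $\Ent(\mathfrak{j}_a)=E(\mathfrak{j}_a)\leq 2\Ent(\varphi)+2C$, and the explicit formula for $\Ent(\mathfrak{j}_a)$ finishes the bound.

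\textbf{Your route.} You use the $\Ent$ part together with the pinching of $\varphi_\tau$. Your concentration heuristic can be made precise as follows. From $\tilde\varphi\geq M^{-1}J_{\phi_\tau}$, $\oint_{\bS^2}\tilde\varphi\,d\mu=1$, and $r(\log r)_-\leq e^{-1}$, one gets
\[
\Ent(\tilde\varphi)\geq-\log M+M^{-1}\Ent(J_{\phi_\tau})-Me^{-1}.
\]
Moreover $\Ent(J_{\phi_\tau})\to\infty$ as $|a(\tau)|\to\infty$. Indeed, $J_{\phi_\tau}=1/(\mathfrak{j}_{a(\tau)}\circ\phi_\tau)$ is itself some $\mathfrak{j}_{a'}$, and comparing suprema gives $|a'|=|a(\tau)|$.

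\textbf{Comparison.} The paper's bound depends only on an upper bound for $\Ent(\varphi)$ and needs no pointwise lower bound on $\varphi$. Yours needs the two-sided pinching, which the path provides anyway. In exchange, it avoids the $H^{-1}$ triangle inequality and the identity $\Ent(\mathfrak{j}_a)=E(\mathfrak{j}_a)$.
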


\begin{proposition}\label{prop:general_limit_parameter_bound}
	Suppose $u>0$ is a classical solution of \eqref{eq:s_pde} with $u(0,\cdot)=\varphi$ and $u\rightarrow \mathfrak{j}_a$ uniformly. Then $|a|\leq C$, where $C$ depends only on an upper bound for $\Ent(\varphi)$.
\end{proposition}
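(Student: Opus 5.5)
The plan is to move to the conformally normalized picture, where Lemma \ref{lem:general_entropy_energy_decay} applies, and then use that a conformal map with large parameter $|a|$ concentrates mass, which an entropy bound forbids.

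\textbf{Step 1 (normalize).} Let $\phi$ be the conformal transformation with $J_\phi\circ\phi^{-1}\cdot$(appropriate composition) chosen as in the discussion after \eqref{eq:general_conformal_covariance}. That is, $\phi$ is the inverse of the conformal map whose Jacobian is $\mathfrak{j}_a$, so that $u_\phi=J_\phi(u\circ\phi)$ solves \eqref{eq:s_pde} with $u_\phi\to1$ uniformly. Its boundary value is $\varphi_\phi=J_\phi(\varphi\circ\phi)$.

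By Lemma \ref{lem:general_deficit_invariance} and \eqref{eq:mass_one},
$$D(\varphi_\phi)=D(\varphi)\leq\Ent(\varphi).$$
Lemma \ref{lem:general_entropy_energy_decay} at $s=0$ then gives $\Ent(\varphi_\phi)\leq C_1$, with $C_1$ depending only on an upper bound for $\Ent(\varphi)$. So both $\varphi$ and $\varphi_\phi$ have entropy bounded by some $K=K(\Ent(\varphi))$.

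\textbf{Step 2 (uniform integrability from entropy).} For any probability density $v$ with $\Ent(v)\leq K$, any measurable $A$, and any $L>1$, I would use the elementary bound
$$\oint_A v\,d\mu\leq L\mu(A)+\frac{K+e^{-1}}{\log L}.$$
This is the same estimate as \eqref{eq:general_entropy_tail}. Choosing $L$ large and then $\mu(A)$ small makes the right side at most $\frac14$. Hence there is $\eta=\eta(K)>0$ such that $\mu(A)\leq\eta$ implies $\oint_A v\,d\mu\leq\frac14$. This holds for both $v=\varphi$ and $v=\varphi_\phi$.

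\textbf{Step 3 (concentration for large $|a|$).} The measure $\varphi_\phi\,d\mu$ is the pullback of $\varphi\,d\mu$ under $\phi$. Equivalently, $\varphi\,d\mu=(\phi^{-1})^*(\varphi_\phi\,d\mu)$, i.e. $\varphi\,d\mu$ is the pushforward of $\varphi_\phi\,d\mu$ under $\phi$.

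Up to a rotation, $\phi$ is a dilation by a factor comparable to $|a|$ in stereographic coordinates. Concretely, from the explicit formula for $\mathfrak{j}_a$, there are antipodal caps $B_\pm$ with the following properties:
\begin{itemize}
\item the cap $B_-$ has measure $\mu(B_-)\to0$ as $|a|\to\infty$;
\item $\phi$ maps $\bS^2\setminus B_-$ into the cap $B_+$, with $\mu(B_+)\to0$ as $|a|\to\infty$.
\end{itemize}
Here $B_+$ is centered near $a/|a|$ (or its antipode, depending on orientation conventions).

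If $|a|$ is so large that $\mu(B_\pm)\leq\eta$, Step 2 applied to $\varphi_\phi$ gives
$$\oint_{\bS^2\setminus B_-}\varphi_\phi\,d\mu\geq\tfrac34.$$
Since this mass is carried into $B_+$,
$$\oint_{B_+}\varphi\,d\mu\geq\tfrac34.$$
This contradicts Step 2 applied to $\varphi$. Therefore $|a|\leq C(K)$.

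\textbf{Main obstacle.} The main obstacle is the bookkeeping in Step 3. One has to verify quantitatively, from $\mathfrak{j}_a(x)=(\sqrt{1+|a|^2}-a\cdot x)^{-2}$, the cap radii and the direction of the pushforward. A convenient route is to note that $\mathfrak{j}_a\geq\frac14(1+|a|^2)^{-1}\cdot$const on a cap of radius about $|a|^{-1}$, while $\mathfrak{j}_a$ is about $|a|^{-2}$ elsewhere. Computing the $\mathfrak{j}_a$-mass of caps, which is rotation-invariant around $a$, then gives the needed statement. Everything else follows directly from Lemmas \ref{lem:general_deficit_invariance} and \ref{lem:general_entropy_energy_decay}.
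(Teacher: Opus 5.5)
Your proof is correct and follows a genuinely different route from the paper's after Step 1.

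\textbf{Comparison.} The paper uses the same normalization and the same lemma at $s=0$, but extracts $E(\varphi_\phi)\le C$ rather than $\Ent(\varphi_\phi)\le C$. By conformal invariance of the $H^{-1}$ norm, this reads $\|\varphi-\mathfrak{j}_a\|_{H^{-1}(\bS^2)}^2\le C$. The triangle inequality, together with $D(\mathfrak{j}_a)=0$ and $D(\varphi)\ge0$, then gives $\Ent(\mathfrak{j}_a)=E(\mathfrak{j}_a)\le 2\Ent(\varphi)+2C$. The paper bounds $|a|$ through the explicit formula $\Ent(\mathfrak{j}_a)=\frac{2\sqrt{1+|a|^2}}{|a|}\log(\sqrt{1+|a|^2}+|a|)-2\to\infty$. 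You instead use two facts: both $\varphi$ and $\varphi_\phi$ are uniformly integrable because of the entropy bounds, and a M\"obius map with large $|a|$ pushes almost all mass into a tiny cap. Your route avoids the Hilbert structure and the explicit entropy formula, and it would work with any equi-integrability control, for example $L^p$ bounds. The paper's route is a three-line computation once the deficit and $H^{-1}$ machinery is in place.

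\textbf{Correction in Step 3.} The heuristic in your last paragraph is slightly off. A cap of geodesic radius $\sim|a|^{-1}$ about $\hat a=a/|a|$ carries only a fixed fraction of the $\mathfrak{j}_a$-mass, since $\mathfrak{j}_a\sim|a|^2$ there; to capture mass $1-o(1)$ you need a larger cap. Here is a clean version. The identity $J_\phi(\mathfrak{j}_a\circ\phi)=1$ means $\phi_*\mu=\mathfrak{j}_a\,\mu$. With $t=\hat a\cdot x$, one has $d\mu=\frac12dt$, and
$$\oint_{\{t\le1-\rho\}}\mathfrak{j}_a\,d\mu=\frac1{2|a|}\Bigl[\bigl(\sqrt{1+|a|^2}-|a|+|a|\rho\bigr)^{-1}-\bigl(\sqrt{1+|a|^2}+|a|\bigr)^{-1}\Bigr]\le\frac1{2|a|^2\rho}.$$
Take $\rho=|a|^{-1}$, $B_+=\{t>1-|a|^{-1}\}$ and $B_-=\phi^{-1}(\bS^2\setminus B_+)$. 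Then both caps have measure at most $\frac1{2|a|}$, and $\oint_{B_+}\varphi\,d\mu=1-\oint_{B_-}\varphi_\phi\,d\mu$. Your contradiction then gives $|a|<(2\eta)^{-1}$ with your $\eta=\eta(K)$, and no dilation normal form for $\phi$ is needed.
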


\begin{proof}
	Choose a conformal transformation $\phi$ such that $(\mathfrak{j}_a)_\phi=1$. The transformed solution has boundary value $\varphi_\phi$ and limit $1$. By Lemma \ref{lem:general_deficit_invariance},
	$$D(\varphi_\phi)=D(\varphi)\leq\Ent(\varphi).$$
	Lemma \ref{lem:general_entropy_energy_decay}, evaluated at $s=0$, therefore gives $E(\varphi_\phi)\leq C$. Conformal invariance of the Dirichlet integral gives
	$$\|\varphi-\mathfrak{j}_a\|_{H^{-1}(\bS^2)}^2=\|\varphi_\phi-1\|_{H^{-1}(\bS^2)}^2=E(\varphi_\phi)\leq C.$$
	Hence,
	$$\Ent(\mathfrak{j}_a)=E(\mathfrak{j}_a)\leq2E(\varphi)+2\|\varphi-\mathfrak{j}_a\|_{H^{-1}(\bS^2)}^2\leq2\Ent(\varphi)+2C.$$
	A direct integration gives, for $|a|>0$,
	$$\Ent(\mathfrak{j}_a)=\frac{2\sqrt{1+|a|^2}}{|a|}\log(\sqrt{1+|a|^2}+|a|)-2.$$
	This tends to infinity as $|a|\rightarrow\infty$, proving the bound.
\end{proof}

For $0<\alpha<1$ and $\delta>0$, define $X_\delta$ and $Y_\delta$ as in \eqref{eq:x_delta}--\eqref{eq:y_delta}, but without the evenness condition:
\begin{align}\label{eq:general_x_delta}
	X_\delta=\{h\in C_\delta^{2,\alpha}|\,h(0,\cdot)=0,\,\oint_{\bS^2}h(s,x)\,d\mu=0,\,\forall s\geq0\},
\end{align}
\begin{align}\label{eq:general_y_delta}
	Y_\delta=\{f\in C_\delta^{0,\alpha}|\,\oint_{\bS^2}f(s,x)\,d\mu=0,\,\forall s\geq0\},
\end{align}
equipped with the induced norms
$$\|h\|_{X_\delta}=\|h\|_{C_\delta^{2,\alpha}},\quad \|f\|_{Y_\delta}=\|f\|_{C_\delta^{0,\alpha}},$$
where the weighted norms are defined in \eqref{eq:weighted_norm}. These are closed subspaces of the corresponding weighted H\"older spaces and hence are Banach spaces.

Without evenness, the degree-one mode ODE $v''-3v'=f$ need not admit a solution with $v(0)=0$ and $|v(s)|+|v'(s)|=O(e^{-\delta s})$, as required by $X_\delta$, even though $f(s)=O(e^{-\delta s})$. Indeed, these conditions require $\int_0^\infty(1-e^{-3s})f(s)\,ds=0$, which need not hold. Thus $L_1:X_\delta\rightarrow Y_\delta$ is not surjective.

For $\mathfrak{j}\in\mathcal{J}$, enlarge the domain to
\begin{align}\label{eq:general_extended_x_delta}
	X_\delta(\mathfrak{j})=\{(1-e^{-s})b+h|\,b\in T_{\mathfrak{j}}\mathcal{J},\,h\in X_\delta\}.
\end{align}
For $v=(1-e^{-s})b+h\in X_\delta(\mathfrak{j})$, set
\begin{align}\label{eq:general_extended_x_norm}
	\|v\|_{X_\delta(\mathfrak{j})}=\|b\|_{C^{2,\alpha}(\bS^2)}+\|h\|_{X_\delta}.
\end{align}
The decomposition is unique, since $v(s,\cdot)\rightarrow b$, so this is a norm.

\begin{proposition}\label{prop:general_linear_isomorphism}
	Let $0<\delta<1$, $0<\alpha<1$, and $\mathfrak{j}\in\mathcal{J}$. If $u>0$ and $u-\mathfrak{j}\in C_\delta^{2,\alpha}$, then
	\begin{align}\label{eq:general_linear_isomorphism}
		L_u:X_\delta(\mathfrak{j})\rightarrow Y_\delta
	\end{align}
	is an isomorphism of Banach spaces.
\end{proposition}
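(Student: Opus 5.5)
The plan is to follow the structure of the proof of Lemma \ref{lem:linear_isomorphism}: check that the map is well defined, prove injectivity by comparison, solve the model problem by spherical harmonics, and pass to general $u$ by the method of continuity. The new ingredient is the finite-dimensional component $b\in T_{\mathfrak{j}}\mathcal{J}$, which absorbs the obstruction from the degree-one modes.

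\emph{Step 1: well-definedness.} For $v=(1-e^{-s})b+h$ we have $v_{ss}-3v_s=-4e^{-s}b$. Hence
$$L_u\bigl((1-e^{-s})b\bigr)=(1-e^{-s})\bigl[\Delta(\mathfrak{j}^{-1}b)+2b\bigr]+(1-e^{-s})\Delta\bigl((u^{-1}-\mathfrak{j}^{-1})b\bigr)-4e^{-s}b .$$
The bracket vanishes by the definition of $T_{\mathfrak{j}}\mathcal{J}$. Since $u^{-1}-\mathfrak{j}^{-1}\in C^{2,\alpha}_\delta$ and $\delta<1$, the remaining terms lie in $C^{0,\alpha}_\delta$. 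Integrating $\Delta(\mathfrak{j}^{-1}b)+2b=0$ over $\bS^2$ shows that $b$ has mean zero, so the image has zero spherical mean. The map $h\mapsto L_uh$ lands in $Y_\delta$ exactly as before. Together these give a bounded operator $X_\delta(\mathfrak{j})\to Y_\delta$.

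\emph{Step 2: injectivity.} Every $v\in X_\delta(\mathfrak{j})$ is bounded and vanishes at $s=0$. Set $h=(\frac12-\xi)^2v$ and $\Theta=((\frac12-\xi)^2u)^{-1}$. Then $h$ extends continuously to $[0,\frac12]\times\bS^2$ with zero values at both ends. The Kato/convexity argument of Lemma \ref{lem:comparison}, applied to $h$ and $-h$, gives $v\equiv0$ verbatim.

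\emph{Step 3: reduction to $\mathfrak{j}=1$.} Fix a conformal $\phi$ with $\mathfrak{j}_\phi=1$ and use the covariance \eqref{eq:general_conformal_covariance}. Its linearization gives $L_{u_\phi}(J_\phi\, v\circ\phi)=J_\phi\,(L_uv)\circ\phi$. The map $v\mapsto J_\phi(v\circ\phi)$ is an isomorphism of each weighted space, since $\phi$ is a fixed smooth diffeomorphism. It sends $T_{\mathfrak{j}}\mathcal{J}$ onto $T_1\mathcal{J}=\mathrm{span}\{x_i\}$, as it linearizes $\mathfrak{j}\mapsto\mathfrak{j}_\phi$, and it preserves zero spherical means. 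So we may assume $\mathfrak{j}=1$.

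\emph{Step 4: the model operator $L_1$ on $X_\delta(1)$.} Decompose into spherical harmonics.
\begin{itemize}
\item Modes of degree $j\geq2$ are solved by the Green kernel of Lemma \ref{lem:linear_isomorphism}, with the same weighted bound $|G_j(s,t)|\leq e^{-|t-s|}$.
\item For degree one the equation is $v''-3v'=f$ with $v(0)=0$. Set $v'(s)=-\int_s^\infty e^{3(s-t)}f(t)\,dt$, which is $O(e^{-\delta s})$, and define $b=\int_0^\infty v'$, a vector in $\mathrm{span}\{x_i\}$.
\item Then $v-(1-e^{-s})b=-\int_s^\infty v'+e^{-s}b$ is $O(e^{-\delta s})$ because $\delta<1$, and $|b|\leq C\|f\|_{Y_\delta}$.
\end{itemize}
Combining these with the interior and boundary Schauder estimates \eqref{eq:model_interior_schauder}--\eqref{eq:model_boundary_schauder} applied to $h=v-(1-e^{-s})b$ gives surjectivity and the bound $\|v\|_{X_\delta(1)}\leq C\|L_1v\|_{Y_\delta}$.

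\emph{Step 5: continuity method.} Take the path $u_\theta=1+\theta(u-1)$. Each $u_\theta-1\in C^{2,\alpha}_\delta$, and $c\leq u_\theta\leq c^{-1}$ uniformly in $\theta$. I expect the main obstacle here to be the uniform estimate
$$\|v\|_{X_\delta(1)}\leq C\bigl(\|L_{u_\theta}v\|_{Y_\delta}+\|v\|_{C^0([0,R]\times\bS^2)}\bigr).$$
I would prove it with the cutoff $\chi$ as before, now applied to $\chi v\in X_\delta(1)$. The commutator terms and $\chi\Delta((1-u_\theta^{-1})v)$ are small for large $R$. This holds for the $b$-part too, because $\chi\Delta\bigl((1-u_\theta^{-1})(1-e^{-s})b\bigr)$ carries the decaying factor $1-u_\theta^{-1}$, whose $C^{2,\alpha}$ norm on $[R,\infty)$ tends to $0$. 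The model inverse bound from Step 4 then controls $\chi v$, including its $b$-component, and compact Schauder estimates handle $[0,R+1]$.

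The uniform estimate without the $C^0$ term follows by contradiction, as in Lemma \ref{lem:linear_isomorphism}. Since $b$ ranges over a finite-dimensional space, a bounded sequence $v_i=(1-e^{-s})b_i+h_i$ has $b_i\to b_\infty$ along a subsequence, and $h_i$ converges locally by Lemma \ref{lem:weighted_local_compactness}. The limit lies in $X_\delta(1)$ and lies in the kernel, so it vanishes by Step 2, which contradicts the estimate. Openness then follows from continuity of $\theta\mapsto L_{u_\theta}$ in operator norm. Closedness uses the uniform bound together with the same compactness. Hence $L_u$ is an isomorphism.
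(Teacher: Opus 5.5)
\textbf{There is a gap in Step 5.} Your architecture matches the paper's: well-definedness via $L_{\mathfrak{j}}b=0$, injectivity by the positive-part comparison, a spherical-harmonic model inverse with the degree-one limits absorbed into $b$, then continuity. Doing the conformal reduction to $\mathfrak{j}=1$ for the whole problem, rather than only for the model operator as the paper does, is harmless, since $v\mapsto J_\phi(v\circ\phi)$ intertwines $L_u$ and $L_{u_\phi}$ and preserves all the spaces.

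The problem is your claim that $\chi\Delta\bigl((1-u_\theta^{-1})(1-e^{-s})b\bigr)$ is small in $Y_\delta$ for large $R$ "because $1-u_\theta^{-1}\to0$ in $C^{2,\alpha}$ on $[R,\infty)$". The $Y_\delta$ norm carries the weight $e^{\delta s}$, and $b$ does not decay. What you would actually need is
\[
\sup_{s\geq R}e^{\delta s}\|1-u_\theta^{-1}\|_{C^{2,\alpha}([s,s+1]\times\bS^2)}\rightarrow0 \quad\text{as } R\rightarrow\infty,
\]
that is, decay of $u-1$ strictly faster than $e^{-\delta s}$. The hypothesis $u-\mathfrak{j}\in C^{2,\alpha}_\delta$ has the same rate $\delta$ as the target space, so this supremum is only bounded by $C\|u-\mathfrak{j}\|_{C^{2,\alpha}_\delta}$. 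For example, $u=1+\tfrac12e^{-\delta s}x_1$ and $b=x_1$ give a contribution of fixed size for every $R$.

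Unweighted smallness of the coefficient suffices for the $h$-part, because $h$ itself carries the weight. It does not suffice for the $b$-part. Hence your estimate $\|v\|_{X_\delta(1)}\leq C\bigl(\|L_{u_\theta}v\|_{Y_\delta}+\|v\|_{C^0([0,R]\times\bS^2)}\bigr)$ is not established.

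Relatedly, the commutator terms $2\chi'v_s+(\chi''-3\chi')v$ are not small either. They are bounded by $C_R\|v\|_{C^{1,\alpha}([R,R+1]\times\bS^2)}$ and have to be fed into the compact Schauder estimate on $[0,R+2]$.

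\textbf{The fix is the paper's.} Apply the cutoff argument only to $h=v-(1-e^{-s})b$. Bound $\|L_{u_\theta}((1-e^{-s})b)\|_{Y_\delta}\leq C\|b\|_{C^{2,\alpha}(\bS^2)}$ using your Step 1 decomposition. Then keep the finite-dimensional part as an extra compact term:
\[
\|v\|_{X_\delta(1)}\leq C\bigl(\|L_{u_\theta}v\|_{Y_\delta}+\|v\|_{C^0([0,R]\times\bS^2)}+\|b\|_{C^{2,\alpha}(\bS^2)}\bigr).
\]
In the contradiction argument you must then show that both compact terms vanish in the limit.
\begin{itemize}
\item By finite dimensionality, $b_i\to b_\infty$ along a subsequence.
\item The limit $v_\infty=(1-e^{-s})b_\infty+h_\infty$ is $0$ by injectivity. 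Since $h_\infty$ decays, this forces $b_\infty=\lim_{s\to\infty}v_\infty(s,\cdot)=0$.
\item Hence $\|b_i\|_{C^{2,\alpha}(\bS^2)}\to0$ and $\|v_i\|_{C^0([0,R]\times\bS^2)}\to0$, contradicting $\|v_i\|_{X_\delta(1)}=1$ and $\|L_{u_{\theta_i}}v_i\|_{Y_\delta}\to0$.
\end{itemize}
With this modification, the rest of your continuity argument goes through.
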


\begin{proof}
	For $v=(1-e^{-s})b+h\in X_\delta(\mathfrak{j})$, the identity $L_{\mathfrak{j}}b=0$ gives
	\begin{align}\label{eq:general_linear_decomposition}
		L_uv=L_uh-4e^{-s}b+(1-e^{-s})\Delta((u^{-1}-\mathfrak{j}^{-1})b).
	\end{align}
	Since $u^{-1}-\mathfrak{j}^{-1}\in C_\delta^{2,\alpha}$ and $\delta<1$, the right-hand side belongs to $C_\delta^{0,\alpha}$ with norm at most $C\|v\|_{X_\delta(\mathfrak{j})}$. Integrating $L_{\mathfrak{j}}b=0$ over $\bS^2$ gives $\oint_{\bS^2}b\,d\mu=0$, so $L_uv$ also has zero spherical mean. Thus $L_u$ maps $X_\delta(\mathfrak{j})$ continuously into $Y_\delta$. Injectivity follows from the proof of Lemma \ref{lem:linear_isomorphism}.

	For $\mathfrak{j}=1$, we solve $L_1v=f$ by spherical harmonics. The modes of degree at least two are estimated exactly as in Lemma \ref{lem:linear_isomorphism}. For each degree-one mode, the bounded solution with $v_{1m}(0)=0$ is determined by
	$$v_{1m}'(s)=-\int_s^\infty e^{-3(r-s)}f_{1m}(r)\,dr.$$
	It has a finite limit $b_{1m}$, and $v_{1m}-b_{1m}$ decays at rate $e^{-\delta s}$. Since $\delta<1$, these modes belong to $X_\delta(1)$. Together with the Schauder estimates used in \eqref{eq:model_inverse_estimate}, this proves surjectivity and
	$$\|v\|_{X_\delta(1)}\leq C\|L_1v\|_{Y_\delta}.$$
	A conformal change sending $\mathfrak{j}$ to $1$ identifies $X_\delta(\mathfrak{j})$ with $X_\delta(1)$ and gives the same conclusion for $L_{\mathfrak{j}}$.

	For general $u$, set $u_\theta=\mathfrak{j}+\theta(u-\mathfrak{j})$, $0\leq\theta\leq1$. Apply the cutoff argument for \eqref{eq:linear_compact_remainder} to $h=v-(1-e^{-s})b$, using the model estimate for $L_{\mathfrak{j}}$. Applying \eqref{eq:general_linear_decomposition} with $u_\theta$ in place of $u$ and $h=0$, we obtain
	$$\|L_{u_\theta}((1-e^{-s})b)\|_{Y_\delta}\leq C\|b\|_{C^{2,\alpha}(\bS^2)},$$
	uniformly in $\theta$. Since $h=v-(1-e^{-s})b$, the resulting estimate for $h$ gives
	\begin{align}\label{eq:general_linear_compact_remainder}
		\|v\|_{X_\delta(\mathfrak{j})}
		\leq C\left(\|L_{u_\theta}v\|_{Y_\delta}
		+\|v\|_{C^0([0,R]\times\bS^2)}
		+\|b\|_{C^{2,\alpha}(\bS^2)}\right),
	\end{align}
	uniformly in $\theta$. We claim that
	\begin{align}\label{eq:general_uniform_linear_estimate}
		\|v\|_{X_\delta(\mathfrak{j})}\leq C\|L_{u_\theta}v\|_{Y_\delta}
	\end{align}
	for every $v\in X_\delta(\mathfrak{j})$, with $C$ independent of $\theta\in[0,1]$. If \eqref{eq:general_uniform_linear_estimate} failed, there would be $\theta_i\rightarrow\theta_\infty$ and $v_i=(1-e^{-s})b_i+h_i$ such that
	$$\|v_i\|_{X_\delta(\mathfrak{j})}=1,\quad \|L_{u_{\theta_i}}v_i\|_{Y_\delta}\rightarrow0.$$
	After passing to a subsequence, finite dimensionality gives $b_i\rightarrow b_\infty$ in $C^{2,\alpha}(\bS^2)$, and the compactness argument in Lemma \ref{lem:weighted_local_compactness} gives $h_i\rightarrow h_\infty\in X_\delta$ locally in $C^{2,\beta}$, $0<\beta<\alpha$. Then $v_\infty=(1-e^{-s})b_\infty+h_\infty$ satisfies $L_{u_{\theta_\infty}}v_\infty=0$, so injectivity gives $v_\infty=0$. Since $h_\infty$ tends to zero at infinity, we also have $b_\infty=0$. Thus both $\|v_i\|_{C^0([0,R]\times\bS^2)}$ and $\|b_i\|_{C^{2,\alpha}(\bS^2)}$ tend to zero. This contradicts \eqref{eq:general_linear_compact_remainder} and $\|v_i\|_{X_\delta(\mathfrak{j})}=1$.

	The estimate \eqref{eq:general_uniform_linear_estimate} and the operator-norm continuity of $L_{u_\theta}$ give the same method of continuity as in Lemma \ref{lem:linear_isomorphism}, starting from the isomorphism $L_{\mathfrak{j}}$.
\end{proof}

We now apply the continuity method as in Section \ref{subsec:continuity_method}.
Fix $\delta=1/22$ and a positive $\varphi\in C^{2,\alpha}(\bS^2)$ with $\oint_{\bS^2}\varphi\,d\mu=1$, and set
$$\varphi_\tau=1-\tau+\tau\varphi,\quad \tau\in[0,1].$$
The bounds \eqref{eq:path_bounds} hold without evenness. In particular, $\Ent(\varphi_\tau)$ and $\|\varphi_\tau\|_{C^{2,\alpha}(\bS^2)}$ are uniformly bounded, and $\varphi_\tau$ has a uniform positive lower bound.

For every positive solution with boundary value $\varphi_\tau$ and limit $\mathfrak{j}_a$, Proposition \ref{prop:general_limit_parameter_bound} bounds $|a|$ uniformly. We may therefore normalize $\mathfrak{j}_a$ to $1$ by conformal transformations in a compact family. The transformed boundary values have uniform positive lower bounds and $C^{2,\alpha}$ bounds. Theorem \ref{thm:general_c0_decay}, together with the boundary estimates in the proof of Corollary \ref{cor:derivative_decay}, gives a constant $C>1$, independent of $\tau$ and the solution, such that, after transforming back,
\begin{align}\label{eq:general_path_weighted_bounds}
	|a|\leq C,\quad C^{-1}\leq u\leq C,\quad
	\|u-\mathfrak{j}_a\|_{C_\delta^{2,\alpha}}\leq C.
\end{align}
For $\tau\in[0,1]$ and $a\in\bR^3$, define the extension of $\varphi_\tau$ by
\begin{align}\label{eq:general_boundary_extension}
	E_{\tau,a}=\mathfrak{j}_a+e^{-s}(\varphi_\tau-\mathfrak{j}_a)\in \mathfrak{j}_a+C_\delta^{2,\alpha}.
\end{align}
It has boundary value $\varphi_\tau$ and spherical mean one. Thus $u-E_{\tau,a}\in X_\delta$, and \eqref{eq:general_path_weighted_bounds} gives
\begin{align}\label{eq:general_path_h_bound}
	\|u-E_{\tau,a}\|_{X_\delta}
	\leq\|u-\mathfrak{j}_a\|_{C_\delta^{2,\alpha}}+\|E_{\tau,a}-\mathfrak{j}_a\|_{C_\delta^{2,\alpha}}\leq C.
\end{align}
Set
\begin{align}\label{eq:general_continuity_set}
	\mathcal I=\{\tau\in[0,1]|\,\mathcal{P}(E_{\tau,a}+h)=0,\ E_{\tau,a}+h>0
	\text{ for some }a\in\bR^3,\ h\in X_\delta\}.
\end{align}
We show that $\mathcal I=[0,1]$.

\begin{itemize}
	\item $\mathcal I$ is non-empty: $0\in\mathcal I$, since $a=0$ and $h=0$ give $u\equiv1$.

	\item $\mathcal I$ is closed: let $\tau_i\in\mathcal I$ with $\tau_i\rightarrow\tau_\infty$, and write the corresponding solutions as $u_i=E_{\tau_i,a_i}+h_i$. By \eqref{eq:general_path_weighted_bounds}--\eqref{eq:general_path_h_bound}, after passing to a subsequence, $a_i\rightarrow a_\infty$. The proof of Lemma \ref{lem:weighted_local_compactness} gives $h_i\rightarrow h_\infty$ locally in $C^{2,\beta}$, $0<\beta<\alpha$, with $h_\infty\in X_\delta$. Thus $u_i\rightarrow u_\infty=E_{\tau_\infty,a_\infty}+h_\infty$ locally. The lower bound in \eqref{eq:general_path_weighted_bounds} gives $u_\infty>0$, and passing the equation to the limit yields $\mathcal{P}(u_\infty)=0$. Hence $\tau_\infty\in\mathcal I$.

	\item $\mathcal I$ is open: We use the implicit function theorem. Assume $\tau_0\in\mathcal I$. For $(\tau,a,h)\in\bR\times\bR^3\times X_\delta$ with $E_{\tau,a}+h>0$, set
	\begin{align}\label{eq:general_implicit_function_map}
		\mathcal{F}(\tau,a,h)=\mathcal{P}(E_{\tau,a}+h)\in Y_\delta.
	\end{align}
	Then $\mathcal F$ is smooth on its domain.
	
	At a solution $u=E_{\tau,a}+h$, for $(\eta,v)\in\bR^3\times X_\delta$, differentiation gives
	\begin{align}\label{eq:general_implicit_derivative}
		D_{(a,h)}\mathcal{F}(\tau,a,h)[\eta,v]
		=L_u\bigl((1-e^{-s})D\Psi(a)[\eta]+v\bigr).
	\end{align}
	The map $\eta\mapsto D\Psi(a)[\eta]$ identifies $\bR^3$ with $T_{\mathfrak{j}_a}\mathcal{J}$. Thus $(\eta,v)\mapsto(1-e^{-s})D\Psi(a)[\eta]+v$ is an isomorphism onto $X_\delta(\mathfrak{j}_a)$. Proposition \ref{prop:general_linear_isomorphism} shows that $D_{(a,h)}\mathcal{F}$ is an isomorphism. The implicit function theorem therefore gives solutions for all $\tau'\in[0,1]$ sufficiently close to $\tau$, proving openness.
\end{itemize}

Hence $\mathcal I=[0,1]$, and $\tau=1$ gives the desired solution. The comparison argument at the beginning of Section \ref{sec:general_case} gives uniqueness among bounded positive solutions, since boundedness gives \eqref{eq:intro_degenerate_end} under \eqref{eq:ball_transform}. The parameter $a$ is unique because $\Psi$ is one-to-one. Finally, \eqref{eq:general_path_weighted_bounds} gives \eqref{eq:general_existence_bounds} with $\delta=1/22$ and $C$ depending only on $M$ and $\alpha$. This proves Theorem \ref{thm:general_existence_decay}, and hence Theorem \ref{thm:general_existence_uniqueness}.

Interior Schauder estimates applied to $u-\mathfrak{j}_a$ give decay of every derivative with the same rate $\delta=1/22$ as in \eqref{eq:general_existence_bounds}. Together with the boundary estimate in \eqref{eq:general_existence_bounds} and \eqref{eq:ball_transform}, this gives Theorem \ref{thm:intro_general_case}.

\section{The line bundle case}\label{sec:line_bundle_case}

In this section, we consider the $SU(\infty)$-Toda equation
\begin{align}\label{eq:line_bundle_toda}
	(e^w)_{\xi\xi}+\Delta_{\bS^2}w=2
\end{align}
on $[0,\frac12)\times\bS^2$, with boundary conditions
\begin{align}\label{eq:line_bundle_toda_boundary}
	w(0,\cdot)=\psi,\quad w(\xi,x)=\log(\frac12-\xi)+O(1)\quad\text{as }\xi\rightarrow\frac12,
\end{align}
where $\psi\in C^{2,\alpha}(\bS^2)$, $0<\alpha<1$, and the $O(1)$ term is uniform in $x$.

As in \cite[Section 4.2]{LiLiu2025}, make the transformation
\begin{align}\label{eq:line_bundle_transform}
	e^w=(\frac12-\xi)u,\quad \xi=\frac12-\frac14|Z|^2,\quad V(Z,x)=u(\frac12-\frac14|Z|^2,x).
\end{align}
For $Z\in\bR^4$ with $0<|Z|<\sqrt{2}$, a direct computation gives
$$\Delta_{\bR^4}V=(\frac12-\xi)u_{\xi\xi}-2u_\xi=\bigl((\frac12-\xi)u\bigr)_{\xi\xi}.$$
Thus \eqref{eq:line_bundle_toda} transforms to
\begin{align}\label{eq:line_bundle_pde}
	\Delta_{\bR^4}V+\Delta_{\bS^2}\log V=2
\end{align}
on $\Omega=B_{\sqrt{2}}\times\bS^2$ away from $Z=0$, where $B_{\sqrt{2}}$ is the ball centered at $0$ in $\bR^4$. The boundary condition at $\xi=0$ becomes
\begin{align}\label{eq:line_bundle_boundary}
	V|_{\partial\Omega}=\varphi(x).
\end{align}
Here $\varphi=2e^\psi$.
Throughout this section, we consider positive solutions $V\in C^{2,\alpha}(\overline{\Omega})$ that solve \eqref{eq:line_bundle_pde} on $\Omega$.

\begin{remark}
	The radial change of variables is the same as in Li--Liu \cite[Section 4.2]{LiLiu2025}, but here we lift $u=e^w/(\frac12-\xi)$ instead of $w-\bar{w}$, where
	$e^{\bar{w}(\xi)}=\oint_{\bS^2}e^{w(\xi,x)}\,d\mu.$
	This gives the simpler equation \eqref{eq:line_bundle_pde}, to which the comparison argument in the proof of Lemma \ref{lem:line_bundle_comparison_bounds} applies directly.
\end{remark}

For a function $g=g(Z,x)$, set $\bar{g}(Z)=\oint_{\bS^2}g(Z,x)\,d\mu$. Integrating \eqref{eq:line_bundle_pde} over $\bS^2$, we obtain
$$\Delta_{\bR^4}\overline{V}=2,\quad \overline{V}|_{\partial B_{\sqrt{2}}}=\oint_{\bS^2}\varphi\,d\mu.$$
Since $\Delta_{\bR^4}|Z|^2=8$, uniqueness for this Dirichlet problem gives
\begin{align}\label{eq:line_bundle_mean}
	\overline{V}(Z)=\oint_{\bS^2}\varphi\,d\mu-\frac12+\frac14|Z|^2.
\end{align}
Since $V>0$ at $Z=0$, we have the necessary condition
\begin{align}\label{eq:integral constraint line bundle case}
	\oint_{\bS^2}\varphi\,d\mu>\frac12.
\end{align}

\subsection{A priori $C^0$ estimates}

Set
\begin{align}\label{eq:line_bundle_boundary_constants}
	m_1=\inf_{\bS^2}\varphi,\quad M_1=\sup_{\bS^2}\varphi.
\end{align}

\begin{lemma}\label{lem:line_bundle_comparison_bounds}
	Every positive solution of \eqref{eq:line_bundle_pde}--\eqref{eq:line_bundle_boundary} satisfies
	\begin{align}\label{eq:line_bundle_comparison_bounds}
		\max\{0,m_1-\frac12\}\leq V\leq M_1\quad\text{on }\overline{\Omega}.
	\end{align}
\end{lemma}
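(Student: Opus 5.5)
The plan is to prove both bounds in \eqref{eq:line_bundle_comparison_bounds} by adapting the Kato-inequality argument of Lemma \ref{lem:comparison}. The $\xi$-direction is replaced by the flat Laplacian $\Delta_{\bR^4}$ in $Z$, and the convexity of $P$ in one variable is replaced by subharmonicity of the spherical average in $Z$. In each case I compare $V$ with an explicit $x$-independent function $W$, using the identity $\log W-\log V=\Theta(W-V)$ with $\Theta>0$ smooth, which comes from the mean value formula as in Lemma \ref{lem:comparison}.

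\emph{Upper bound.} Take $W\equiv M_1$, a supersolution: $\Delta_{\bR^4}W+\Delta_{\bS^2}\log W-2=-2\leq0$. Set $h=V-M_1$ and write $\log V-\log M_1=\Theta h$ with $\Theta>0$ smooth, since $V>0$. Subtracting gives
$$\Delta_{\bR^4}h+\Delta_{\bS^2}(\Theta h)=2\geq0 \quad\text{on }\Omega.$$
The convex approximations $\beta_\epsilon$ of $r_+$ from the proof of Lemma \ref{lem:comparison} give
$$\Delta_{\bR^4}h_++\Delta_{\bS^2}(\Theta h_+)\geq \boldsymbol{1}_{\{h>0\}}\cdot 2\geq0$$
in the distributional sense. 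Here I use $(\Theta h)_+=\Theta h_+$ and $\{\Theta h>0\}=\{h>0\}$, exactly as before. Testing against $\chi=\eta(Z)$ kills the $\Delta_{\bS^2}$ term. Hence $P(Z)=\oint_{\bS^2}h_+(Z,x)\,d\mu$ satisfies $\Delta_{\bR^4}P\geq0$ distributionally on $B_{\sqrt2}$. Moreover $P$ is continuous on $\overline{B_{\sqrt2}}$ and vanishes on $\partial B_{\sqrt2}$, because $V=\varphi\leq M_1$ there. The maximum principle for subharmonic functions then gives $P\equiv0$, hence $V\leq M_1$.

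\emph{Lower bound.} Since $V>0$, it suffices to treat $m_1>\frac12$. In that case take $W(Z)=m_1-\frac12+\frac14|Z|^2$. This is positive and is an exact solution of \eqref{eq:line_bundle_pde}: $\Delta_{\bR^4}W=2$ and $\Delta_{\bS^2}\log W=0$. On $\partial\Omega$ we have $W=m_1\leq\varphi$. Let $h=W-V$ and $\log W-\log V=\Theta h$. Then $\Delta_{\bR^4}h+\Delta_{\bS^2}(\Theta h)=0$, and the same Kato argument shows that $\oint_{\bS^2}(W-V)_+\,d\mu$ is subharmonic in $Z$ and vanishes on the boundary. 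Hence $W\leq V\leq M_1$, which contains the stated lower bound $V\geq m_1-\frac12$.

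\emph{Main obstacle.} The delicate point is the origin $Z=0$. The transformed equation \eqref{eq:line_bundle_pde} was derived only for $0<|Z|<\sqrt2$, so the distributional inequality for $P$ is a priori known only on $B_{\sqrt2}\setminus\{0\}$. I will handle this by removability: $P$ is bounded and continuous, and a point has zero capacity in $\bR^4$. Concretely, I multiply the test function by a cutoff $1-\zeta(Z/\rho)$ that vanishes near $0$. The error terms are bounded by $\|P\|_\infty\rho^{-2}\rho^4\to0$, so $\Delta_{\bR^4}P\geq0$ on the whole ball. If $V\in C^{2,\alpha}(\overline\Omega)$ is already assumed to solve the equation on all of $\Omega$, this step is unnecessary, but the capacity argument covers either reading. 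No bounds on $\Theta$ are needed, since $\Theta$ only enters through its positivity.
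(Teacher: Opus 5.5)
Your proposal is correct and is essentially the paper's argument: the positive-part (Kato) comparison making $\oint_{\bS^2}(\cdot)_+\,d\mu$ subharmonic in $Z$, with the same barrier $m_1-\frac12+\frac14|Z|^2$ for the lower bound. The one variation is the upper barrier, where you use the constant supersolution $M_1$ instead of the exact solution $M_1-\frac12+\frac14|Z|^2$. This avoids the appeal to $M_1>\frac12$, at the cost of the slightly sharper radial bound. Your removability step at $Z=0$ is, as you note, unnecessary, since the paper assumes the equation holds on all of $\Omega$.
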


\begin{proof}
	The positive-part argument from Lemma \ref{lem:comparison} gives comparison for \eqref{eq:line_bundle_pde}: for two positive solutions $V,W$, the function $\oint_{\bS^2}(V-W)_+\,d\mu$ is subharmonic on $B_{\sqrt{2}}$. Hence $V\leq W$ on $\partial\Omega$ implies $V\leq W$ in $\overline{\Omega}$.
	
	Consider $V_M=M_1-\frac12+\frac14|Z|^2$. By \eqref{eq:integral constraint line bundle case}, $M_1>\frac12$, so $V_M>0$. It solves \eqref{eq:line_bundle_pde}, and $V\leq V_M$ on $\partial\Omega$. Thus $V\leq V_M\leq M_1$ in $\overline{\Omega}$.
	If $m_1>\frac12$, consider $V_m=m_1-\frac12+\frac14|Z|^2>0$. Since $V\geq V_m$ on $\partial\Omega$, comparison gives $V\geq V_m\geq m_1-\frac12$ in $\overline{\Omega}$. If $m_1\leq\frac12$, the lower estimate follows from $V>0$.
\end{proof}

In particular, if $m_1>\frac12$, we have an a priori positive lower bound for $V$. The next lemma gives a lower bound under the natural constraint \eqref{eq:integral constraint line bundle case}, without this stronger assumption on $m_1$.

\begin{lemma}\label{lem:line_bundle_lower_bound}
	For each $n_1>1$, there is a constant $c=c(n_1)>0$ such that, if
	\begin{align}\label{eq:line_bundle_quantitative_data}
		n_1^{-1}\leq\oint_{\bS^2}\varphi\,d\mu-\frac12\leq n_1,\quad n_1^{-1}\leq\varphi\leq n_1,
	\end{align}
	then every positive solution of \eqref{eq:line_bundle_pde}--\eqref{eq:line_bundle_boundary} satisfies $V\geq c$ on $\overline{\Omega}$.
\end{lemma}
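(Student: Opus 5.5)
We argue by contradiction combined with a compactness argument, modeled on the De Giorgi-type reasoning of Proposition \ref{prop:local_de_giorgi} and on the comparison principle of Lemma \ref{lem:line_bundle_comparison_bounds}. Suppose there are data $\varphi_k$ satisfying \eqref{eq:line_bundle_quantitative_data} and positive solutions $V_k$ with $\min V_k\to0$. By Lemma \ref{lem:line_bundle_comparison_bounds}, $0<V_k\leq n_1$ uniformly. By \eqref{eq:line_bundle_mean}, the spherical means satisfy $\overline{V_k}(Z)\geq n_1^{-1}$ on all of $B_{\sqrt2}$. So each slice $V_k(Z,\cdot)$ has mass bounded below while being bounded above.

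\emph{Step 1: small sublevel sets.} For each slice we have $\mu(\{V_k(Z,\cdot)\geq \frac{1}{2n_1}\})\geq \frac{1}{2n_1^2}$, because the mean is at least $n_1^{-1}$ and $V_k\leq n_1$. This alone is not enough for Proposition \ref{prop:local_de_giorgi}, which requires the sublevel set to have small total measure. To obtain smallness, we use the entropy-type energy. Write the equation in the divergence form $\Delta_{\bR^4}V+\mathrm{div}_{\bS^2}(V^{-1}\nabla V)=2$. Testing against $\log V$, or against $(k-V)_+/V$, with cutoffs should give a uniform bound on $\int\oint|\nabla_{\bS^2}\log V|^2+V^{-1}|\nabla_Z V|^2$ on compact subsets away from $\partial\Omega$ and near the boundary. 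The boundary contributions are controlled since $\varphi\geq n_1^{-1}$. In particular, $\oint|\nabla_{\bS^2}\log V(Z,\cdot)|^2$ is integrable in $Z$. Combined with the positive-measure superlevel set and a Poincaré or John--Nirenberg-type argument on $\bS^2$, this bounds $\oint -\log V\,d\mu$ on most slices. Hence $|\{V<\epsilon\}\cap \Omega'|\to0$ as $\epsilon\to0$ uniformly in $k$, on any subdomain $\Omega'$.

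\emph{Step 2: De Giorgi iteration at small levels.} Next we adapt Proposition \ref{prop:local_de_giorgi} to the present equation, which is the 6-dimensional analogue. The coefficient $V^{-1}\geq n_1^{-1}$ gives ellipticity on sublevel sets, and a degenerate coefficient is not a problem there, since $V^{-1}$ is large precisely where $V$ is small. We run the iteration with levels $k_j=\epsilon(\frac12+2^{-j-1})$ on cylinders $B_{r_j}(Z_0)\times\bS^2$, using the Sobolev inequality in dimension 6. Smallness of $|\{V<\epsilon\}|$ relative to the scale then yields $V\geq\epsilon/2$ on an interior cylinder. This must be scale-invariant in $\epsilon$: after dividing by $\epsilon$, the lower-order term $2$ becomes $2/\epsilon$. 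However, it enters with a favorable sign, since $\Delta V\leq 2$ only helps supersolution bounds for $w=(k-V)_+$ up to an error of size $|A_j|$. Near $\partial\Omega$, the boundary data $\varphi\geq n_1^{-1}$ gives the estimate directly by the boundary version of the iteration, or alternatively by Corollary \ref{cor:boundary_lower}-style comparison in the original variables with $a=\sqrt{}$ of the lower bound.

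\emph{Step 3: covering.} We cover $\overline{\Omega}$ by finitely many such cylinders, with the number depending only on $n_1$, to conclude $V\geq c(n_1)$. The main obstacle is Step 1: making the measure of $\{V<\epsilon\}$ uniformly small using only the integral constraint. If the energy estimate is delicate near $Z=0$, we would first try exploiting the $SO(4)$-averaging in $Z$ together with the subharmonicity of $\oint(W-V)_+d\mu$ from the comparison argument. Concretely, we would compare $V$ against the radial solution $\overline{V}$ to show that the deficit $\oint(\overline V-V)_+\,d\mu$ is subharmonic and hence controlled by its boundary values, which are at most $n_1$.
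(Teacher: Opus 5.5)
\textbf{Where the argument fails.} The genuine gap is in Step 2, not in Step 1. The inhomogeneous term does not enter with a favorable sign. Since $\Delta_{\bR^4}V+\mathrm{div}_{\bS^2}(V^{-1}\nabla_{\bS^2}V)=2>0$, $V$ is a \emph{sub}solution. Testing with $\eta^2w$, $w=(k-V)_+$, therefore puts $+2\int\oint\eta^2w\,d\mu\,dZ\leq 2k|A_j|$ on the right-hand side of the energy inequality. After normalizing by $k^2$ this term has size $|A_j|/k$. At level $k=\epsilon$ the smallness threshold of the iteration therefore degenerates polynomially; with the six-dimensional Sobolev exponent it is roughly $|A_0|\lesssim\epsilon^3$.

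No better iteration can fix this. The function $V=\frac{\epsilon}{4}+\frac14|Z-Z_0|^2$ is an exact local solution, independent of $x$, with $\min V=\frac{\epsilon}{4}$ and $|\{V<\epsilon\}\cap(B_r(Z_0)\times\bS^2)|\approx C\epsilon^2$. So any local statement of the form ``$\{V<\epsilon\}$ has small measure, hence $V\geq\epsilon/2$'' needs a threshold of order $\epsilon^2$ or smaller.

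Step 1 only gives logarithmic smallness. Testing with $\log V$, or with $\min\{\log(2n_1V),0\}$ to avoid boundary terms (it vanishes on $\partial\Omega$), bounds the energy by $2\int\oint(\log\frac{1}{2n_1V})_+$, not by a constant. It closes only after you combine it with the slicewise Poincar\'e inequality for functions vanishing on a set of measure $\geq\frac{1}{2n_1^2}$. That gives $\int\oint(\log\frac1V)_+^2\leq C(n_1)$, i.e.\ $|\{V<\epsilon\}|\lesssim(\log\frac1\epsilon)^{-2}$, which is nowhere near $\epsilon^2$. So Steps 1 and 2 do not connect, and the contradiction never closes. The subharmonicity of $\oint(\overline V-V)_+\,d\mu$ only gives an $L^1$ bound that already follows from $V>0$, so it does not help either.

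\textbf{The missing idea.} You need a pointwise mechanism that carries the mean constraint $\overline V\geq n_1^{-1}$ to every point and beats the term $2/V$ produced by the inhomogeneity. The paper uses the slicewise potential $f$, with $-\Delta_{\bS^2}f=V-\overline V$ and $\bar f=0$. It satisfies $\|f\|_{L^\infty(\Omega)}\leq 2C_0n_1$ by Lemma \ref{lem:line_bundle_comparison_bounds}, and $\Delta_{\bR^4}f=\log V-\overline{\log V}$ by the equation. At an interior minimum $p$ of $\log V-Af$, the operator $\Delta_{\bR^4}+V^{-1}\Delta_{\bS^2}$ (no first-order terms) together with Jensen gives
\[
0\leq\frac{2}{V}-A\log V+A\log\overline V+A-\frac{A\overline V}{V}.
\]
Choosing $A(\oint_{\bS^2}\varphi\,d\mu-\frac12)=3$ makes $-A\overline V/V$ dominate $2/V$, which forces $V(p)\geq c_0(n_1)$. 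The $L^\infty$ bound on $f$ then transfers this to the minimum of $V$, and boundary minima are handled by $\varphi\geq n_1^{-1}$. No measure-theoretic or compactness argument is needed.
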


\begin{proof}
	Let $f(Z,x)$ be the slicewise mean-zero solution of
	\begin{align}\label{eq:line_bundle_potential}
		-\Delta_{\bS^2}f=V-\overline{V},\quad \bar{f}=0.
	\end{align}
	By Green's function representation and Lemma \ref{lem:line_bundle_comparison_bounds},
	$$\|f(Z,\cdot)\|_{L^\infty(\bS^2)}\leq C_0\|V(Z,\cdot)-\overline{V}(Z)\|_{L^\infty(\bS^2)}\leq2C_0n_1,$$
	where $C_0$ is universal. Thus
	\begin{align}\label{eq:line_bundle_potential_bound}
		\|f\|_{L^\infty(\Omega)}\leq2C_0n_1.
	\end{align}
	By \eqref{eq:line_bundle_pde} and \eqref{eq:line_bundle_mean},
	$$\Delta_{\bS^2}\Delta_{\bR^4}f=-\Delta_{\bR^4}V+\Delta_{\bR^4}\overline{V}=\Delta_{\bS^2}\log V.$$
	Since $\bar{f}=0$, it follows that
	\begin{align}\label{eq:line_bundle_potential_z}
		\Delta_{\bR^4}f=\log V-\overline{\log V}.
	\end{align}
	
	Consider the linear operator
	\begin{align}\label{eq:line_bundle_adjoint}
		\widetilde{\mathcal{L}}_V=\Delta_{\bR^4}+V^{-1}\Delta_{\bS^2},
	\end{align}
	which is uniformly elliptic and satisfies the maximum principle. Let $A>0$ be determined below, and suppose that $\log V-Af$ achieves its minimum at $p\in\overline{\Omega}$. If $p\in\Omega$, then at $p$, using \eqref{eq:line_bundle_potential} and \eqref{eq:line_bundle_potential_z}, we have
	\begin{align}
		0\leq\widetilde{\mathcal{L}}_V(\log V-Af)
		&=\Delta_{\bR^4}\log V+V^{-1}\Delta_{\bS^2}\log V-A\Delta_{\bR^4}f-AV^{-1}\Delta_{\bS^2}f\\
		&=\Delta_{\bR^4}\log V+V^{-1}(2-\Delta_{\bR^4}V)-A(\log V-\overline{\log V})+AV^{-1}(V-\overline{V})\\
		&=-|\nabla_{\bR^4}\log V|^2+2V^{-1}-A\log V+A\overline{\log V}+A-A\frac{\overline{V}}{V}\\
		&\leq2V^{-1}-A\log V+A\log(\overline{V})+A-A\frac{\oint_{\bS^2}\varphi\,d\mu-\frac12}{V}\\
		&\leq2V^{-1}-A\log V+A\log(\oint_{\bS^2}\varphi\,d\mu)+A-A\frac{\oint_{\bS^2}\varphi\,d\mu-\frac12}{V}.
	\end{align}
	Here we used Jensen's inequality and \eqref{eq:line_bundle_mean}. Choose $A$ so that
	$$A\left(\oint_{\bS^2}\varphi\,d\mu-\frac12\right)=3.$$
	Then $3n_1^{-1}\leq A\leq3n_1$, and
	$$0\leq-V(p)^{-1}-A\log V(p)+A\left(1+\log\left(\oint_{\bS^2}\varphi\,d\mu\right)\right).$$
	Multiplying by $V(p)$ and using $r|\log r|\rightarrow0$ as $r\rightarrow0$ shows that $V(p)\geq c_0(n_1)>0$.
	
	Suppose $V$ achieves its minimum at $q\in\overline{\Omega}$. We use \eqref{eq:line_bundle_potential_bound} to transfer the estimate at $p$ to $q$.
	\begin{itemize}
		\item If $q\in\partial\Omega$, then $V(q)\geq\inf_{\bS^2}\varphi\geq n_1^{-1}$.
		\item If $q\in\Omega$ and $p\in\Omega$, then
		$$\log V(q)\geq\log V(p)+A(f(q)-f(p))\geq\log c_0(n_1)-4AC_0n_1.$$
		Thus $V(q)\geq c_0(n_1)e^{-12C_0n_1^2}$.
		\item If $q\in\Omega$ and $p\in\partial\Omega$, then
		$$\log V(q)\geq\log V(p)+A(f(q)-f(p))\geq\log(n_1^{-1})-4AC_0n_1.$$
		Thus $V(q)\geq n_1^{-1}e^{-4AC_0n_1}\geq n_1^{-1}e^{-12C_0n_1^2}$.
	\end{itemize}
	All three bounds depend only on $n_1$, proving the lemma.
\end{proof}

\subsection{Existence}

With the a priori upper and lower bounds, we can establish existence by the continuity method as in Section \ref{subsec:continuity_method}. The argument is simpler here because $\Omega$ is bounded, so no estimates at infinity are needed.

Consider the nonlinear operator
\begin{align}\label{eq:line_bundle_nonlinear_operator}
	\mathcal{P}(V)=\Delta_{\bR^4}V+\Delta_{\bS^2}\log V-2,
\end{align}
whose linearization at $V$ is
\begin{align}\label{eq:line_bundle_linearization}
	\mathcal{L}_VW=\Delta_{\bR^4}W+\Delta_{\bS^2}(V^{-1}W).
\end{align}
Set
$$\mathcal{X}=\{h\in C^{2,\alpha}(\overline{\Omega})|\,h|_{\partial\Omega}=0\},\quad\mathcal{Y}=C^{0,\alpha}(\overline{\Omega}),$$
equipped with the induced norms.

\begin{lemma}\label{lem:line_bundle_linear_isomorphism}
	For every positive $V\in C^{2,\alpha}(\overline{\Omega})$, the operator $\mathcal{L}_V:\mathcal{X}\rightarrow\mathcal{Y}$ is an isomorphism of Banach spaces.
\end{lemma}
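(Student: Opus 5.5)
Since $V\in C^{2,\alpha}(\overline{\Omega})$ is positive on the compact set $\overline{\Omega}$, we have $c\leq V\leq c^{-1}$ for some $c>0$. Expanding $\Delta_{\bS^2}(V^{-1}W)=V^{-1}\Delta_{\bS^2}W+2\nabla V^{-1}\cdot\nabla W+W\Delta_{\bS^2}V^{-1}$ shows that
$$\mathcal{L}_VW=\Delta_{\bR^4}W+V^{-1}\Delta_{\bS^2}W+\text{(first and zeroth order terms)}.$$
This is a uniformly elliptic second order operator on the compact manifold with boundary $\overline{\Omega}=\overline{B_{\sqrt2}}\times\bS^2$. Its coefficients lie only in $C^{0,\alpha}(\overline{\Omega})$, because $\Delta_{\bS^2}V^{-1}\in C^{0,\alpha}$, and that is enough for Schauder theory. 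Continuity of $\mathcal{L}_V:\mathcal{X}\to\mathcal{Y}$ is then immediate. The plan follows the proof of Lemma \ref{lem:linear_isomorphism}: first injectivity via the positive-part comparison argument, then Fredholm index zero via the continuity method, from which surjectivity follows.

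\emph{Injectivity.} Suppose $\mathcal{L}_VW=0$ with $W|_{\partial\Omega}=0$. The zeroth order coefficient $\Delta_{\bS^2}V^{-1}$ has no sign, so the classical maximum principle does not apply directly. Instead we repeat the Kato-type argument of Lemma \ref{lem:comparison}, with $\Theta=V^{-1}>0$ and $h=W$, so that $\Delta_{\bR^4}h+\Delta_{\bS^2}(\Theta h)=0$. Using the convex approximations $\beta_\epsilon$ exactly as there, we get in the distributional sense
$$\Delta_{\bR^4}h_++\Delta_{\bS^2}(\Theta h_+)\geq0.$$
Testing with functions depending only on $Z$, the $\bS^2$-Laplacian term integrates out. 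Hence $P(Z)=\oint_{\bS^2}W_+(Z,x)\,d\mu$ is a continuous subharmonic function on $B_{\sqrt2}$, and $P=0$ on $\partial B_{\sqrt2}$. This was already asserted in the proof of Lemma \ref{lem:line_bundle_comparison_bounds}. The maximum principle for subharmonic functions gives $P\equiv0$, so $W\leq0$. Applying the same argument to $-W$ gives $W\equiv0$.

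\emph{Surjectivity.} Set $V_\theta=1-\theta+\theta V$ for $\theta\in[0,1]$. Each $V_\theta$ is positive with uniform bounds, the family is continuous in $C^{2,\alpha}$, and $\theta\mapsto\mathcal{L}_{V_\theta}$ is continuous in operator norm. At $\theta=0$ the operator is $\Delta_{\bR^4}+\Delta_{\bS^2}$, the Laplacian of the product metric. Its Dirichlet problem is uniquely solvable in $C^{2,\alpha}$ by standard theory (for instance \cite[Theorem 6.14]{GilbargTrudinger2001} applied in charts, or simply Fredholm index zero together with injectivity), so $\mathcal{L}_{V_0}$ is an isomorphism.

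The Schauder estimate with Dirichlet data gives
$$\|W\|_{C^{2,\alpha}}\leq C\bigl(\|\mathcal{L}_{V_\theta}W\|_{C^{0,\alpha}}+\|W\|_{C^0}\bigr),$$
uniformly in $\theta$. We remove the $C^0$ term by contradiction. If the improved estimate failed, there would be $\theta_i\to\theta_\infty$ and $W_i$ with $\|W_i\|_{C^{2,\alpha}}=1$ and $\|\mathcal{L}_{V_{\theta_i}}W_i\|_{C^{0,\alpha}}\to0$. By Arzel\`a--Ascoli a subsequence converges in $C^{2,\beta}$ to some $W_\infty\in\mathcal{X}$ with $\mathcal{L}_{V_{\theta_\infty}}W_\infty=0$. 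Injectivity forces $W_\infty=0$, so $\|W_i\|_{C^0}\to0$, which contradicts the Schauder estimate. Hence
$$\|W\|_{C^{2,\alpha}}\leq C\|\mathcal{L}_{V_\theta}W\|_{C^{0,\alpha}}$$
uniformly in $\theta$.

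Let $\mathcal{I}_0$ be the set of $\theta$ for which $\mathcal{L}_{V_\theta}$ is an isomorphism. It is open because isomorphisms form an open set in operator norm. It is closed by the uniform estimate: given $f$, solve $\mathcal{L}_{V_{\theta_i}}W_i=f$, extract a convergent subsequence, and pass to the limit. Since $0\in\mathcal{I}_0$, we get $\mathcal{I}_0=[0,1]$, and $\theta=1$ gives the lemma.

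The only delicate point is injectivity, because $\mathcal{L}_V$ carries an indefinite zeroth order term. The divergence structure $\Delta_{\bS^2}(V^{-1}W)$ is exactly what makes the slice-averaged positive part subharmonic, so the argument of Lemma \ref{lem:comparison} transfers with only notational changes.
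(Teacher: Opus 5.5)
Your proof is correct. The injectivity step is the same as the paper's: the slice-averaged positive part $\oint_{\bS^2}W_+\,d\mu$ is subharmonic on $B_{\sqrt2}$ and vanishes on its boundary.

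For surjectivity the paper takes a shorter route. It observes that the formal adjoint of $\mathcal{L}_V$ is $\widetilde{\mathcal{L}}_V=\Delta_{\bR^4}+V^{-1}\Delta_{\bS^2}$, the operator already used in Lemma \ref{lem:line_bundle_lower_bound}. This operator has no zeroth-order term, so it obeys the classical maximum principle. Hence the adjoint Dirichlet problem has trivial kernel, and the Fredholm alternative gives surjectivity at once. In particular, the indefinite zeroth-order term $W\Delta_{\bS^2}V^{-1}$ that you flag as the delicate point disappears on passing to the adjoint.

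You instead deform $V$ to $1$ and prove a uniform Schauder estimate along the path by compactness plus injectivity. This is homotopy invariance of the index, and it mirrors Lemma \ref{lem:linear_isomorphism}. Your route costs an extra compactness argument and a separate base case for the product Laplacian. For that base case, Lax--Milgram in $H^1_0(\Omega)$ followed by elliptic regularity is a cleaner justification than Gilbarg--Trudinger's Theorem 6.14 in charts. In exchange, you never identify the adjoint or relate the H\"older-space cokernel to the adjoint kernel. Your argument also works for any path of injective, uniformly elliptic Dirichlet operators that ends at an invertible one. The paper's route is shorter and exploits the specific duality between $\mathcal{L}_V$ and $\widetilde{\mathcal{L}}_V$.
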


\begin{proof}
	The positive-part comparison argument above shows that
	$$\mathcal{L}_VW=0,\quad W|_{\partial\Omega}=0$$
	has only the trivial solution: $\oint_{\bS^2}W_+\,d\mu$ is subharmonic on $B_{\sqrt{2}}$ and vanishes on its boundary, and the same argument applies to $-W$.
	
	Moreover, the formal adjoint of $\mathcal{L}_V$ is $\widetilde{\mathcal{L}}_V$ from \eqref{eq:line_bundle_adjoint}. By the maximum principle,
	$$\widetilde{\mathcal{L}}_V\widetilde{W}=0,\quad\widetilde{W}|_{\partial\Omega}=0$$
	also has only the trivial solution. The Fredholm alternative for the uniformly elliptic Dirichlet problem therefore gives surjectivity. Together with injectivity and the Schauder estimate, this proves the lemma.
\end{proof}

We can now complete the continuity argument.

\begin{theorem}\label{thm:line_bundle_existence}
	For any positive $\varphi\in C^{2,\alpha}(\bS^2)$, $0<\alpha<1$, satisfying
	$$\oint_{\bS^2}\varphi\,d\mu>\frac12,$$
	there is a unique positive $V\in C^{2,\alpha}(\overline{\Omega})$ solving \eqref{eq:line_bundle_pde}--\eqref{eq:line_bundle_boundary}. Moreover, $V$ is $O(4)$--invariant in $Z$, that is, $V(Z_1,x)=V(Z_2,x)$ whenever $|Z_1|=|Z_2|$.
\end{theorem}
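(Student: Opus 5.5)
Uniqueness follows at once from the comparison principle already used in Lemma \ref{lem:line_bundle_comparison_bounds}. For two positive solutions $V_1,V_2$ with the same boundary value, $\oint_{\bS^2}(V_1-V_2)_+\,d\mu$ is subharmonic on $B_{\sqrt2}$ and vanishes on $\partial B_{\sqrt2}$, so $V_1\leq V_2$; by symmetry $V_1=V_2$. The $O(4)$-invariance then also follows from uniqueness. For $R\in O(4)$, the function $V(RZ,x)$ solves \eqref{eq:line_bundle_pde}, since $\Delta_{\bR^4}$ is rotation invariant, and it has the same boundary value $\varphi$. Hence $V(RZ,x)=V(Z,x)$.

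For existence I will run the continuity method along the path $\varphi_\tau=\tau\varphi+(1-\tau)c$. Here $c=\oint_{\bS^2}\varphi\,d\mu>\frac12$, so every $\varphi_\tau$ has the same mean $c$. The path starts at the explicit solution $V_0=c-\frac12+\frac14|Z|^2>0$. Along the path $\min\varphi\le\varphi_\tau\le\max\varphi$, so the hypothesis \eqref{eq:line_bundle_quantitative_data} of Lemma \ref{lem:line_bundle_lower_bound} holds with a single $n_1$ independent of $\tau$. Let $\mathcal I$ be the set of $\tau\in[0,1]$ for which a positive solution $V_\tau\in C^{2,\alpha}(\overline\Omega)$ exists. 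It contains $0$.

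\emph{Openness.} Write $V=E_\tau+h$ with $h\in\mathcal X$ and $E_\tau(Z,x)=\varphi_\tau(x)$, the constant-in-$Z$ extension. The map $(\tau,h)\mapsto\mathcal P(E_\tau+h)\in\mathcal Y$ is smooth near a positive solution, and its $h$-derivative is $\mathcal L_V$. By Lemma \ref{lem:line_bundle_linear_isomorphism} this is an isomorphism, so the implicit function theorem gives openness.

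\emph{Closedness.} This step needs uniform $C^{2,\alpha}$ bounds, and I expect it to be the main obstacle. Lemmas \ref{lem:line_bundle_comparison_bounds} and \ref{lem:line_bundle_lower_bound} give $c(n_1)\le V_\tau\le\max\varphi$ uniformly in $\tau$. Then $\log V_\tau$ is bounded, and $V_\tau$ solves the divergence-form equation $\Delta_{\bR^4}V+\mathrm{div}_{\bS^2}(V^{-1}\nabla_{\bS^2}V)=2$. With the coefficient $V^{-1}$ treated as given, this equation is uniformly elliptic on the smooth bounded product domain $\Omega$. The De Giorgi--Nash--Moser interior and boundary estimates (\cite[Theorems 8.22, 8.29]{GilbargTrudinger2001}), with $C^{2,\alpha}$ boundary data, give a uniform $C^{\beta}$ bound for $V_\tau$. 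The coefficient is then $C^\beta$, and the $C^{1,\beta}$ divergence-form estimates give a uniform $C^{1,\beta}$ bound. After that, the nondivergence form $\Delta_{\bR^4}V+V^{-1}\Delta_{\bS^2}V-V^{-2}|\nabla_{\bS^2}V|^2=2$ has $C^{\beta}$ coefficients, and boundary Schauder theory upgrades the bound to a uniform $C^{2,\alpha}$ bound, up to bootstrapping the Hölder exponent. Arzelà--Ascoli then gives a subsequence converging in $C^{2,\beta}$ to a limit that solves the equation. The limit is positive by the uniform lower bound, and it lies in $C^{2,\alpha}$ by the same estimates. So $\mathcal I$ is closed and $\mathcal I=[0,1]$.

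One point needs checking: that the equation, with its singular-looking behaviour in $\xi$, is genuinely regular across $Z=0$. In the $(Z,x)$ variables it is a smooth uniformly elliptic equation on all of $\Omega$, including $Z=0$, so this is not an issue.
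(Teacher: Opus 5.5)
Your proposal is correct and follows the paper's proof essentially step for step. It uses the same comparison-based uniqueness and $O(4)$-symmetry argument, the same uniform $C^0$ bounds from Lemmas \ref{lem:line_bundle_comparison_bounds} and \ref{lem:line_bundle_lower_bound} followed by De Giorgi--Nash--Moser and Schauder estimates for closedness, and the implicit function theorem with Lemma \ref{lem:line_bundle_linear_isomorphism} for openness. The only difference is cosmetic: your constant-mean path $\tau\varphi+(1-\tau)\oint_{\bS^2}\varphi\,d\mu$ starts from $V_0=\oint_{\bS^2}\varphi\,d\mu-\frac12+\frac14|Z|^2$, whereas the paper's path $1-\tau+\tau\varphi$ starts from $\frac12+\frac14|Z|^2$, and both keep \eqref{eq:line_bundle_quantitative_data} uniform in $\tau$.
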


\begin{proof}
	Choose $n_1\geq2$ so that $\varphi$ satisfies \eqref{eq:line_bundle_quantitative_data}. Set $\varphi_\tau=1-\tau+\tau\varphi$ for $\tau\in[0,1]$. This path satisfies \eqref{eq:line_bundle_quantitative_data} with the same $n_1$, since $\varphi_\tau$ is uniformly positive and bounded, and
	$$\oint_{\bS^2}\varphi_\tau\,d\mu-\frac12=(1-\tau)\frac12+\tau\left(\oint_{\bS^2}\varphi\,d\mu-\frac12\right)\geq n_1^{-1}.$$
	By Lemmas \ref{lem:line_bundle_comparison_bounds} and \ref{lem:line_bundle_lower_bound}, every positive solution with boundary value $\varphi_\tau$ satisfies $c\leq V\leq C$, independently of $\tau$ and $V$. Written in divergence form, the equation is
	$$\Delta_{\bR^4}V+\mathrm{div}_{\bS^2}(V^{-1}\nabla_{\bS^2}V)=2.$$
	Interior and boundary De Giorgi--Nash--Moser estimates, followed by Schauder estimates, give
	\begin{align}\label{eq:line_bundle_schauder_bound}
		\|V\|_{C^{2,\alpha}(\overline{\Omega})}\leq C,
	\end{align}
	where $C$ depends only on $n_1,\alpha$, and $\|\varphi\|_{C^{2,\alpha}(\bS^2)}$.
	
	Let $\mathcal{I}$ be the set of $\tau\in[0,1]$ for which the problem with boundary value $\varphi_\tau$ has a positive solution in $C^{2,\alpha}(\overline{\Omega})$.
	\begin{itemize}
		\item $\mathcal{I}$ is non-empty: at $\tau=0$, the function $V=\frac12+\frac14|Z|^2$ is a positive solution with boundary value $1$.
		\item $\mathcal{I}$ is closed: if $\tau_i\in\mathcal{I}$ and $\tau_i\rightarrow\tau_\infty$, then \eqref{eq:line_bundle_schauder_bound} gives a subsequence of corresponding solutions converging in $C^{2,\beta}(\overline{\Omega})$, $0<\beta<\alpha$, to $V_\infty\in C^{2,\alpha}(\overline{\Omega})$. The uniform lower bound gives $V_\infty>0$. Passing to the limit in the equation and boundary condition gives $\tau_\infty\in\mathcal{I}$.
		\item $\mathcal{I}$ is open: For $(\tau,h)\in\bR\times\mathcal{X}$ with $\varphi_\tau+h>0$, define
		$$
		\mathcal{F}(\tau,h)=\mathcal{P}(\varphi_\tau+h)\in\mathcal{Y}.
		$$
		Then $\mathcal{F}$ is smooth on its domain, and
		$$
		D_h\mathcal{F}(\tau,h)=\mathcal{L}_{\varphi_\tau+h}
		$$
		is an isomorphism by Lemma \ref{lem:line_bundle_linear_isomorphism}. The implicit function theorem gives openness.
	\end{itemize}
	Thus $\mathcal{I}=[0,1]$, giving existence at $\tau=1$. Uniqueness follows from the comparison argument in the proof of Lemma \ref{lem:line_bundle_comparison_bounds}. For each $R\in O(4)$, the function $V(RZ,x)$ solves the same equation with the same boundary value. Uniqueness therefore gives $O(4)$ invariance.
\end{proof}

Interior Schauder estimates give bounds for all derivatives of $V$ away from $\partial\Omega$. Since $V$ is smooth and radial in $Z$, it is smooth in $|Z|^2$ at $Z=0$, with $j$ derivatives in $\xi$ controlled by $2j$ derivatives in $Z$. Together with \eqref{eq:line_bundle_schauder_bound}, \eqref{eq:line_bundle_transform}, and Corollary \ref{cor:uniqueness}, these bounds give Theorem \ref{thm:intro_line_case}.

We can now complete the proof of Theorem \ref{thm:intro_asymptotics}. If
$
\oint_{\bS^2}e^\psi\,d\mu=\frac14,
$
Theorem \ref{thm:intro_general_case} gives a solution with the asymptotic behavior in \eqref{eq:intro_ball_end}. If
$
\oint_{\bS^2}e^\psi\,d\mu>\frac14,
$
Theorem \ref{thm:intro_line_case} gives a solution with the asymptotic behavior in \eqref{eq:intro_line_end}. In either case, Corollary \ref{cor:uniqueness} shows that any classical solution with the same boundary value satisfying \eqref{eq:intro_degenerate_end} must coincide with this solution. Hence every such solution has the asserted asymptotic behavior, and the existence and uniqueness statements follow as well.

\section{The Poincar\'e--Einstein metrics}\label{sec:pe_metrics}

Let $w$ be a solution of the $SU(\infty)$-Toda equation \eqref{eq:intro_toda}, and set $W=1-\frac12\xi w_\xi$ as in \eqref{eq:intro_W}. Then
\begin{align}\label{eq:pe_W_equation}
	\Delta W+(We^w)_{\xi\xi}=0,\quad W(0,\cdot)=1.
\end{align}

\begin{lemma}\label{lem:pe_W_positive}
	For every solution $w$ given by Theorem \ref{thm:intro_general_case} or Theorem \ref{thm:intro_line_case}, we have $W=1-\frac12\xi w_\xi>0$ on $[0,\frac12)\times\bS^2$.
\end{lemma}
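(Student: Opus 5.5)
The plan is to recognize $W$ as the infinitesimal generator of the scaling symmetry used in Corollary \ref{cor:comparison_lower}. Then positivity of $W$ comes from the comparison $w\leq w_\lambda$ combined with a strong maximum principle.

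\emph{Nonnegativity.} For $\lambda>1$ set $w_\lambda(\xi,x)=w(\xi/\lambda,x)+2\log\lambda$. The proof of Corollary \ref{cor:comparison_lower} gives $w\leq w_\lambda$ on $(0,\frac12)\times\bS^2$. It uses only \eqref{eq:intro_toda}, \eqref{eq:intro_toda_boundary}, \eqref{eq:intro_degenerate_end} and Lemma \ref{lem:comparison}, all of which hold for the solutions of Theorems \ref{thm:intro_general_case} and \ref{thm:intro_line_case}. Hence for every fixed $(\xi,x)$ with $0<\xi<\frac12$,
$$\frac{w(\xi/\lambda,x)-w(\xi,x)+2\log\lambda}{\lambda-1}\geq0,\qquad \lambda>1.$$
Since $w$ is $C^1$ in the interior, letting $\lambda\downarrow1$ gives $-\xi w_\xi(\xi,x)+2\geq0$, that is, $2W\geq0$. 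At $\xi=0$ we have $W=1$ directly, using the $C^{2,\alpha}$ regularity up to $\xi=0$ from the two theorems.

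\emph{Linear equation.} Differentiating the scaling family in $\lambda$ at $\lambda=1$ (or computing directly) shows that $W$ satisfies \eqref{eq:pe_W_equation}. Expanded, this reads
$$e^wW_{\xi\xi}+\Delta W+2(e^w)_\xi W_\xi+(e^w)_{\xi\xi}W=0 .$$
On any compact subset of $(0,\frac12)\times\bS^2$ this operator is uniformly elliptic, because $e^w$ is bounded above and below there. Its coefficients are smooth there, since $w$ is smooth in the interior.

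\emph{Strict positivity.} Suppose $W(\xi_0,x_0)=0$ at some interior point. Then $W\geq0$ attains an interior minimum value $0$. Hopf's strong maximum principle applies to nonnegative solutions vanishing at an interior point whatever the sign of the zero-order coefficient $c=(e^w)_{\xi\xi}$. To see this, write $c=c^+-c^-$; then $W$ satisfies $\mathcal{M}W-c^-W=-c^+W\leq0$, where $\mathcal{M}$ is the principal part together with the first-order term. So $W$ is a nonnegative supersolution of an operator with nonpositive zero-order term, and the strong maximum principle applies. It gives $W\equiv0$ on the connected set $(0,\frac12)\times\bS^2$. By continuity up to $\xi=0$ this contradicts $W(0,\cdot)=1$. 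Hence $W>0$ on $[0,\frac12)\times\bS^2$.

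The main point needing care is this last step: the sign of $(e^w)_{\xi\xi}$ is not controlled, and the operator degenerates only at $\xi=\frac12$. Both issues are handled by the $c^\pm$ splitting and by working on interior compact sets. The first step also needs the one-sided derivative in $\lambda$, which uses only interior differentiability of $w$.
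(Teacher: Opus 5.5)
Your proof is correct, and it differs from the paper's in the nonnegativity step. The paper rewrites \eqref{eq:pe_W_equation} as $H_{\xi\xi}+\Delta(e^{-w}H)=0$ with $H=We^w$. It then runs the positive-part (Kato) argument of Lemma \ref{lem:comparison} directly on $-H$. For this it needs $(-We^w)_+\to0$ as $\xi\to\frac12$, and it takes this from the derivative asymptotics \eqref{eq:intro_general_limit} and \eqref{eq:intro_line_derivative_bound}. You instead differentiate the nonlinear comparison $w\le w_\lambda$ of Corollary \ref{cor:comparison_lower} at $\lambda=1^+$, using $\partial_\lambda w_\lambda|_{\lambda=1}=2W$.

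Your route gets the condition at the degenerate end for free, since $\oint_{\bS^2}(e^w-e^{w_\lambda})_+\,d\mu\le\oint_{\bS^2}e^w\,d\mu\to0$. So $W\ge0$ holds for every classical solution of \eqref{eq:intro_toda}--\eqref{eq:intro_degenerate_end} that is $C^1$ in the interior, with no need for the refined asymptotics. It also identifies $W$ as the generator of the scaling symmetry, so $W\ge0$ is an infinitesimal form of the monotonicity in Corollary \ref{cor:comparison_lower}. The paper's route stays within the linear equation for $We^w$, which both arguments need for the second step anyway.

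For strict positivity, your strong maximum principle with the splitting $c=c^+-c^-$ is equivalent to the paper's argument. The paper applies the interior Harnack inequality on $[0,b]\times\bS^2$ and uses $W(0,\cdot)=1$.
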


\begin{proof}
	The estimates \eqref{eq:intro_general_limit} and \eqref{eq:intro_line_derivative_bound} give $(-We^w)_+\rightarrow0$ uniformly as $\xi\rightarrow\frac12$. Since $We^w=e^\psi>0$ at $\xi=0$, the positive-part argument in the proof of Lemma \ref{lem:comparison}, applied to \eqref{eq:pe_W_equation} with $-We^w$ and $\Theta=e^{-w}$, gives $W\geq0$ throughout $[0,\frac12)\times\bS^2$.

	For every $0<b<\frac12$, equation \eqref{eq:pe_W_equation} is uniformly elliptic on $[0,b]\times\bS^2$. Since $W(0,\cdot)=1$, the interior Harnack inequality gives $W>0$ on $(0,b)\times\bS^2$. As $b$ is arbitrary, the conclusion follows.
\end{proof}

\subsection{The 4-ball case}

Let $\psi\in C^\infty(\bS^2)$ satisfy $\oint_{\bS^2}e^\psi\,d\mu=\frac14$, and let $w$ be the solution given by Theorem \ref{thm:intro_general_case}.

Let $\pi:\bS^3\rightarrow\bS^2$ be the Hopf fibration, with circle period $2\pi$, and use the same notation for its product with $[0,\frac12)$. We omit the pullback symbol $\pi^*$ when writing functions, differential forms, and metrics from the base. Let $d\mathrm{vol}_{\bS^2}$ denote the area form of $g_{\bS^2}$, with total area $4\pi$, and choose the standard Hopf connection one-form $\eta_0$ so that
\begin{align}\label{eq:ball_hopf_normalization}
	d\eta_0=-\frac12d\mathrm{vol}_{\bS^2},\quad
	g_{\bS^3}=\eta_0^2+\frac14g_{\bS^2}
\end{align}
is the unit round metric. We choose a connection one-form $\eta$ of $\pi:\bS^3\times [0,\frac12)\rightarrow\bS^2\times [0,\frac12)$ satisfying
\begin{align}\label{eq:pe_connection_curvature}
	d\eta=(We^w)_\xi\,d\mathrm{vol}_{\bS^2}+*_{\bS^2}(d_{\bS^2}W)\wedge d\xi.
\end{align}
Such a connection exists because the right-hand side of \eqref{eq:pe_connection_curvature} is a closed 2-form on $\bS^2\times [0,\frac12)$ by \eqref{eq:pe_W_equation} and has the same integral over each spherical slice as $d\eta_0$, as verified below.
Here $*_{\bS^2}$ is the Hodge star on one-forms: in  oriented conformal coordinates $(x,y)$,
$$*_{\bS^2}(d_{\bS^2}W)=W_x\,dy-W_y\,dx.$$
Moreover, \eqref{eq:intro_spherical_mean} gives
\begin{align}\label{eq:ball_connection_period}
	\oint_{\bS^2}We^w\,d\mu=\frac12(\frac12-\xi),\quad
	\oint_{\bS^2}(We^w)_\xi\,d\mu=-\frac12.
\end{align}
Thus its integral of $d\eta$ over each spherical slice is $-2\pi$, agreeing with the curvature of $\eta_0$.

The choice of $\eta$ in \eqref{eq:pe_connection_curvature} is not unique. Any two such connections differ by a closed one-form on the base $\bS^2\times[0,\frac12)$, hence by $df$ for some smooth function $f(\xi,x)$, since the base is simply connected. Locally, we can write $\eta=d\theta+\beta$, where $\theta$ is the circle coordinate and $\beta$ is a one-form on the base. The map $\theta\mapsto\theta+f(\xi,x)$, leaving $(\xi,x)$ fixed, pulls $\eta$ back to $\eta+df$. This is what we mean by a change of circle coordinate, or a gauge transformation. The metrics in \eqref{eq:intro_metric_ansatz} change only by pullback under the same map.

Define $g$ and $h$ by \eqref{eq:intro_metric_ansatz}. By LeBrun's construction \cite[Proposition 1]{LeBrun1991Toda} and Tod's Einstein condition \cite[Section 1]{Tod2006}, or \cite{LiLiu2025}, $g$ is scalar-flat K\"ahler and $h$ is anti-self-dual Einstein, with $\mathrm{Ric}(h)=-3h$, away from $\xi=\frac12$.

\begin{proposition}\label{prop:ball_metric_extension}
	For a suitable choice of $\eta$ satisfying \eqref{eq:pe_connection_curvature}, the metric $g$ extends to a smooth scalar-flat K\"ahler metric on $\overline{B^4}$. The metric $h=\xi^{-2}g$ is a complete anti-self-dual Poincar\'e--Einstein metric on $B^4$ with $\bS^1$ symmetry, and its conformal boundary is represented by $\eta^2|_{\xi=0}+e^\psi g_{\bS^2}$.
\end{proposition}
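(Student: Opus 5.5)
We compare $g$ near $\xi=\frac12$ with a model flat metric, and first reduce to the case $a=0$ using conformal invariance. The action \eqref{eq:intro_conformal_action} with a fixed Möbius map $\phi$ sending $\mathfrak{j}_a$ to $1$ lifts to the Hopf bundle, since $\phi$ preserves the area form up to $J_\phi$. Under this lift the ansatz \eqref{eq:intro_metric_ansatz} is pulled back isometrically, so we may assume $w-2\log(\frac12-\xi)\to0$.

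Set $\rho^2=\frac12-\xi$, or more precisely choose $\rho$ so that $Wd\xi^2$ becomes $d\rho^2$ to leading order. By \eqref{eq:intro_general_limit} with $a=0$, $e^w=(\frac12-\xi)^2(1+O((\frac12-\xi)^\delta))$ with derivative control, and $W=1-\frac12\xi w_\xi=\frac{\xi}{1-2\xi}+1+O((\frac12-\xi)^{\delta-1})$. Hence $W\approx\frac{1}{4(\frac12-\xi)}$, since $\xi\to\frac12$ gives $W\sim\frac{1}{2}\cdot\frac{2}{2(\frac12-\xi)}\cdot\frac12$; the constant is to be checked. The model $w_0=2\log(\frac12-\xi)$ gives exactly $W_0=\frac{1}{4(\frac12-\xi)}$ up to bounded terms. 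With $\rho=\sqrt{\frac12-\xi}$ we get $W_0d\xi^2=d\rho^2$, $W_0e^{w_0}g_{\bS^2}=\frac{\rho^2}{4}g_{\bS^2}$, and $W_0^{-1}\eta_0^2=4\rho^2\eta_0^2$. After rescaling the fiber, this is $d\rho^2+\rho^2 g_{\bS^3}$ in \eqref{eq:ball_hopf_normalization}, i.e. flat $\bR^4$ up to the bounded corrections, and the curvature normalization \eqref{eq:ball_connection_period} matches $d\eta_0$.

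Next we choose the gauge. Writing $\eta=\eta_0+\beta$, the form $d\beta=d\eta-d\eta_0$ is exact on the base and is controlled by $(We^w-W_0e^{w_0})_\xi$ and $\nabla W$. Choose $\beta$ by a Coulomb/radial gauge (e.g. $\beta(\partial_\xi)=0$, obtained by integrating from $\xi=\frac12$), so that $|\partial^k\beta|\le C\rho^{2\delta-k}$-type bounds hold. Then $g=g_{\bR^4}+k$, where $k$ and its derivatives satisfy $|\nabla^jk|\le C\rho^{2\delta-j}$. This gives $C^{0}$, indeed $C^{0,2\delta}$, extension across the origin.

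The main obstacle is upgrading this to smoothness, since the decay rate $\delta=\frac1{22}$ is only Hölder. The plan is to use that $g$ is scalar-flat Kähler, in particular Ricci-flat after the conformal change? No, so instead we use that $h$ is Einstein: $g=\xi^2h$ with $\xi$ smooth and bounded away from $0$ near the point, so $g$ satisfies an elliptic system in harmonic coordinates. In harmonic coordinates for the $C^{0,\gamma}$ metric with $W^{1,p}$ control, the Einstein equation $\mathrm{Ric}(h)=-3h$ together with the harmonic coordinate system (or directly the metric being anti-self-dual with the Killing field) gives an elliptic equation. A removable-singularity theorem for Einstein metrics with a $C^0$ (in fact $L^\infty$ plus curvature in $L^2$) point singularity, as in Bando–Kasue–Nakajima, yields a smooth extension. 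Curvature in $L^2$ follows from the derivative bounds. Near $\xi=0$, smoothness follows from smooth $\psi$ and boundary regularity, and $W=1$ there. Completeness and the conformal infinity follow as in the introduction, since $\xi$ is a defining function with $|d\xi|_g^2=W^{-1}=1$ at $\xi=0$.
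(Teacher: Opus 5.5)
Your skeleton matches the paper's proof: normalize $\mathfrak{j}_a$ to $1$ by a lifted conformal map, pass to a radial coordinate in which the leading term is flat, fix a radial gauge so that $g-g_0=O(r^{2\delta})$ with derivative bounds, then apply Einstein regularity. Replacing the paper's DeTurck--Kazdan bootstrap (from a $W^{1,p}$, $p>4$, weak solution) by a Bando--Kasue--Nakajima removable-singularity theorem is a legitimate variant. Finite $\int|Rm|^2$ follows from the second-derivative bounds, the tangent cone is $\bR^4$ once $g-g_0\to0$, and the theorem works directly on the punctured ball. This spares you checking the equation weakly across the center, at the cost of a heavier black box.

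\textbf{First gap: the leading-order constant.} Your computation is wrong, and the proposed fix is not permissible. For $w_0=2\log(\frac12-\xi)$ one has exactly $W_0=1+\frac{2\xi}{1-2\xi}=\frac{1}{1-2\xi}$, not $\frac{1}{4(\frac12-\xi)}$. With $r=\sqrt{1-2\xi}$ this gives $W_0\,d\xi^2=dr^2$, $W_0^{-1}\eta_0^2=r^2\eta_0^2$ and $W_0e^{w_0}g_{\bS^2}=\frac{r^2}{4}g_{\bS^2}$. That is exactly $dr^2+r^2g_{\bS^3}$ with Hopf period $2\pi$; your $\rho=\sqrt{\frac12-\xi}$ equals $r/\sqrt2$, which is harmless. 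Your value instead yields $d\rho^2+\rho^2(4\eta_0^2+\frac14 g_{\bS^2})$, a cone over a Berger sphere. ``Rescaling the fiber'' cannot repair this, because the circle period is fixed by the bundle and rescaling $\eta_0$ changes the link. This constant is exactly what distinguishes a smooth point from a cone singularity, so it cannot be left ``to be checked''.

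\textbf{Second gap: smoothness of $\xi$ at the center.} The regularity theorem makes $h$ smooth in new (harmonic or orbifold) coordinates $y$. Their relation to your coordinates $Z$ at the center is a priori only $C^{1,\gamma}$. So smoothness of $\xi=\frac12-\rho^2$ in $Z$ does not give smoothness of $\xi$ in $y$, and hence does not give smoothness of $g=\xi^2h$. The paper closes this with the identity $\Delta_h\xi=\frac{\xi^2w_\xi-2\xi}{W}=-2\xi$, which follows from the ansatz and $W=1-\frac12\xi w_\xi$. A bounded solution of this linear equation has a removable isolated singularity, so $\xi$, and then $g$, are smooth.

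\textbf{Smaller omissions.} After imposing $\beta(\partial_r)=0$, $\beta$ is determined only up to an $r$-independent one-form $\alpha_0$ on $\bS^2$. You must show $\alpha_0$ is closed, using $(We^w)_\xi\to-\frac12$ pointwise, and then gauge it away. Also, the K\"ahler property across the center requires extending the parallel complex structure, which you do not address.
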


\begin{proof}
	Boundary elliptic regularity gives smoothness at $\xi=0$. It therefore suffices to find good coordinates around $\xi=\frac12$. Define $s=-\log(1-2\xi)$ and $u(s,x)=(\frac12-\xi)^{-2}e^{w(\xi,x)}$ as in \eqref{eq:ball_transform}. By Theorem \ref{thm:general_existence_decay}, $u$ converges to a conformal Jacobian $\mathfrak{j}_a$. Choose a fixed orientation-preserving conformal transformation $\phi$ such that $J_\phi(\mathfrak{j}_a\circ\phi)=1$, so $u_\phi=J_\phi(u\circ\phi)\rightarrow1$. Let $\Phi:\bS^3\times[0,\frac12)\rightarrow\bS^3\times[0,\frac12)$ be a lifted bundle diffeomorphism, which leaves $\xi$ fixed and sends the Hopf circle over $x$ to the Hopf circle over $\phi(x)$. Since $\phi^*g_{\bS^2}=J_\phi g_{\bS^2}$, we have
	\begin{align}\label{eq:ball_conformal_metric}
		\begin{split}
			\Phi^*g&=(W\circ\phi)d\xi^2+(W\circ\phi)^{-1}(\Phi^*\eta)^2+J_\phi((We^w)\circ\phi)g_{\bS^2}\\
			&=W_\phi d\xi^2+W_\phi^{-1}(\Phi^*\eta)^2+W_\phi e^{w_\phi}g_{\bS^2}.
		\end{split}
	\end{align}
	Here $w_\phi=w\circ\phi+\log J_\phi$ and $W_\phi=1-\frac12\xi\partial_\xi w_\phi=W\circ\phi$. Since $\phi^*d\mathrm{vol}_{\bS^2}=J_\phi d\mathrm{vol}_{\bS^2}$ and the Hodge star on one-forms is conformally invariant, while $\phi$ and $J_\phi$ are independent of $\xi$, we obtain
	\begin{align}\label{eq:ball_conformal_connection}
		d(\Phi^*\eta)=\Phi^*(d\eta)=(W_\phi e^{w_\phi})_\xi\,d\mathrm{vol}_{\bS^2}+*_{\bS^2}(d_{\bS^2}W_\phi)\wedge d\xi.
	\end{align}
	Hence $\Phi^*\eta$ satisfies \eqref{eq:pe_connection_curvature} for the transformed solution, so it suffices to prove smooth extension assuming $u\rightarrow1$.

	Set
	\begin{align}\label{eq:ball_radial_coordinate}
		\xi=\frac{1-r^2}{2},\quad s=-2\log r,\quad
		A=r^2W=1-\xi\frac{u_s}{u}.
	\end{align}
	By \eqref{eq:general_existence_bounds}, with $\delta=1/22$, we have $u-1=O(r^{2\delta})$ and $A-1=O(r^{2\delta})$. Interior estimates for \eqref{eq:s_pde} give the same bounds after any fixed number of $r\partial_r$ and spherical derivatives. The metric becomes
	\begin{align}\label{eq:ball_radial_metric}
		g=A\,dr^2+r^2A^{-1}\eta^2+\frac{r^2}{4}uA\,g_{\bS^2}.
	\end{align}
	Define
	\begin{align}\label{eq:ball_radial_correction}
		a(r,x)=\int_0^r\frac{A(q,x)-1}{q}\,dq.
	\end{align}
	The function $a$ is $O(r^{2\delta})$, and this bound remains valid after any fixed number of applications of $r\partial_r$ and spherical derivatives. 
	
	We claim that a suitable gauge transformation $\eta\mapsto\eta+df$, preserving \eqref{eq:pe_connection_curvature}, puts $\eta$ in the form
	\begin{align}\label{eq:ball_connection_gauge}
		\eta=\eta_0+*_{\bS^2}(d_{\bS^2}a).
	\end{align}

	Let $b(r,x)$ be the coefficient of $dr$ in $\eta$, and set
	$F(r,x)=\int_1^r b(q,x)\,dq.$
	Replacing $\eta$ by $\eta-dF$ makes its coefficient of $dr$ vanish. With this choice, write $\eta=\eta_0+\alpha(r,x)$, where $\alpha(r,\cdot)$ is a one-form on $\bS^2$. Then
	$$d\eta=d\eta_0+d_{\bS^2}\alpha+dr\wedge\partial_r\alpha.$$
	Substituting this into \eqref{eq:pe_connection_curvature} and using $W=A/r^2$ and $d\xi=-r\,dr$, we obtain
	$$\partial_r\alpha=\frac1r*_{\bS^2}(d_{\bS^2}A)=\partial_r\bigl(*_{\bS^2}d_{\bS^2}a\bigr).$$
	Integrating in $r$, we obtain
	$$\alpha(r,x)=*_{\bS^2}(d_{\bS^2}a)+\alpha_0(x),$$
	where $\alpha_0$ is a one-form on $\bS^2$ independent of $r$.

	Restricting \eqref{eq:pe_connection_curvature} to each spherical slice gives
	$$\left(-\frac12+\Delta a\right)d\mathrm{vol}_{\bS^2}+d_{\bS^2}\alpha_0=(We^w)_\xi\,d\mathrm{vol}_{\bS^2},$$
	where we used \eqref{eq:ball_hopf_normalization} and $d_{\bS^2}(*_{\bS^2}d_{\bS^2}a)=(\Delta a)d\mathrm{vol}_{\bS^2}$. Since $We^w=r^2uA/4$ and $\partial_\xi=-r^{-1}\partial_r$, the decay estimates for $u$ and $A$ give $(We^w)_\xi\rightarrow-\frac12$ as $r\rightarrow0$. Also, $\Delta a\rightarrow0$. Taking the limit in the spherical identity therefore gives $d_{\bS^2}\alpha_0=0$, since $\alpha_0$ is independent of $r$. The sphere is simply connected, so $\alpha_0=d_{\bS^2}f$ for some function $f(x)$. Replacing $\eta$ by $\eta-df$ gives \eqref{eq:ball_connection_gauge}.

	Consequently, with $g_0=dr^2+r^2g_{\bS^3}$, we have
	\begin{align}\label{eq:ball_euclidean_leading_term}
		|g-g_0|_{g_0}=O(r^{2\delta}).
	\end{align}

	We now use Einstein regularity to obtain smoothness at the center. In the Cartesian coordinates $Z=r\theta\in\bR^4$, $\theta\in\bS^3$, each partial  derivative is a combination of $\partial_r$ and $r^{-1}$ times angular derivatives on $\bS^3$. The derivative bounds for $u$, $A$, and $a$ therefore give $|\partial_Z(g-g_0)|_{g_0}=O(r^{2\delta-1})$. Moreover, since $h-4g_0=\xi^{-2}(g-g_0)+(\xi^{-2}-4)g_0$ and $\delta<1$, 
	 we obtain
	\begin{align}\label{eq:ball_einstein_remainder}
		|\partial_Z^j(h-4g_0)|_{g_0}\leq C_j|Z|^{2\delta-j},\quad j=0,1.
	\end{align}
	These bounds give a positive definite $W^{1,p}\hookrightarrow C^{0,1-4/p}$ extension of $h$ for some $p>4$, satisfying $\mathrm{Ric}(h)=-3h$ weakly across the center. We apply the harmonic-coordinate regularity argument of DeTurck--Kazdan \cite[Sections 4--5]{DeTurckKazdan1981}. In harmonic coordinates, the Einstein equation takes the form
	\begin{align}\label{eq:harmonic_coordinates}
		-\frac12 h^{ab}\partial_a\partial_b h_{ij}
		+Q_{ij}(h^{-1},\partial h)
		=-3h_{ij},
	\end{align}
	where $Q_{ij}$ is quadratic in the first derivatives of $h$, with coefficients depending smoothly on $h^{-1}$. Since $h\in W^{1,p}$ with $p>4$, we have
	$Q_{ij}(h^{-1},\partial h)\in L^{p/2}.$ Elliptic regularity therefore gives $h\in W^{2,p/2}.$ Since $p/2>2$, Sobolev embedding and repeated application of \eqref{eq:harmonic_coordinates} improve the integrability of $\partial h$ and yield $h\in W^{2,q}$
	for some $q>4$. Hence $h\in C^{1,\alpha'}$ for some $\alpha'>0$, and Schauder estimates bootstrap $h$ to smoothness across the center.

	Moreover, the metric ansatz and \eqref{eq:intro_W} give
	\begin{align}\label{eq:ball_defining_function_equation}
		\Delta_h\xi=\frac{\xi^2w_\xi-2\xi}{W}=-2\xi.
	\end{align}
	Since $\xi$ is bounded, the isolated singularity in this linear equation is removable. Elliptic regularity therefore makes $\xi$ and $g=\xi^2h$ smooth. The circle action extends isometrically, fixing the center. At $\xi=0$ we have $W=1$, so $\xi$ is a boundary defining function, $|d\xi|_g=1$, and the induced boundary metric is $\eta^2|_{\xi=0}+e^\psi g_{\bS^2}$. This proves the stated smooth compactification and completeness of $h$.
	
	Finally, since $g$ is K\"ahler on the punctured ball, and smooth on the ball, its parallel complex structure extends smoothly across the center. Hence the extension of $g$ is K\"ahler.
\end{proof}

\subsection{The complex line bundle over $\bS^2$ case}

Smooth extension of the metrics $g,h$ across the zero section was treated in \cite[Section 4.4]{LiLiu2025}. For completeness, we include a proof here. Let $\mathcal{O}(-m)$ be the complex line bundle over $\bS^2$ of degree $-m$, where $m\geq3$ is an integer. Let $P_m$ be its circle bundle and $\overline{D_m}$ the associated closed disk bundle, with interior $D_m$.

Let $\psi\in C^\infty(\bS^2)$ satisfy
\begin{align}\label{eq:line_metric_boundary_data}
	\oint_{\bS^2}e^\psi\,d\mu=\frac{m-1}{4}>\frac14
\end{align}
and let $w$ be the solution given by Theorem \ref{thm:intro_line_case}. On $P_m\times[0,\frac12)$, use circle period $\pi$ and choose $\eta$ satisfying \eqref{eq:pe_connection_curvature}. As in the ball case, existence follows from closedness of the right-hand side and the curvature integral. Indeed, \eqref{eq:intro_spherical_mean} and \eqref{eq:intro_W} give
\begin{align}\label{eq:line_metric_connection_period}
	\oint_{\bS^2}(We^w)_\xi\,d\mu=-\oint_{\bS^2}e^\psi\,d\mu-\frac14=-\frac m4.
\end{align}
Thus the right-hand side of \eqref{eq:pe_connection_curvature} has integral $-m\pi$ over each spherical slice, as required for degree $-m$.

By Lemma \ref{lem:pe_W_positive}, $W>0$. Define $g$ and $h$ by \eqref{eq:intro_metric_ansatz}; the same construction as in the ball case gives a scalar-flat K\"ahler metric $g$ and an anti-self-dual Einstein metric $h$ away from $\xi=\frac12$.

\begin{proposition}\label{prop:line_bundle_metric_extension}
	For a suitable choice of $\eta$ satisfying \eqref{eq:pe_connection_curvature}, the metric $g$ extends to a smooth scalar-flat K\"ahler metric on $\overline{D_m}$. The metric $h=\xi^{-2}g$ is a complete anti-self-dual Poincar\'e--Einstein metric with $\bS^1$ symmetry on $D_m$, which is diffeomorphic to $\mathcal{O}(-m)$. The conformal boundary is represented by $\eta^2|_{\xi=0}+e^\psi g_{\bS^2}$ on $P_m$.
\end{proposition}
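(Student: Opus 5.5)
Boundary regularity at $\xi=0$ is handled as in Proposition \ref{prop:ball_metric_extension}, so the work is near $\xi=\frac12$, where the circle fibers collapse onto the zero section. I would use the coordinates from \eqref{eq:line_bundle_transform}: $\xi=\frac12-\frac14\rho^2$ and $e^w=(\frac12-\xi)V$, where $V(\rho,x)$ is smooth in $\rho^2$ by Theorem \ref{thm:line_bundle_existence} together with the remark after it. Then
$$W=1-\tfrac12\xi w_\xi=\frac{\xi}{2(\frac12-\xi)}+O(1)=\frac{2\xi}{\rho^2}\bigl(1+\rho^2\,\tilde W\bigr),$$
where $\tilde W$ is smooth in $(\rho^2,x)$. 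The point is that $w_\xi=-(\frac12-\xi)^{-1}+V_\xi/V$, and $V_\xi/V$ is smooth in $\rho^2$. At $\xi=\frac12$ this gives $W\rho^2\to 1$. I set $B=\rho^2W$, which is smooth and positive near $\rho=0$ with $B|_{\rho=0}=1$.

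Since $d\xi=-\frac12\rho\,d\rho$, the metric becomes
$$g=\tfrac14 B\,d\rho^2+\rho^2B^{-1}\eta^2+\tfrac14 BV\,g_{\bS^2}.$$
With circle period $\pi$, the term $\frac14 d\rho^2+\rho^2\eta^2$ is exactly the flat disc metric $\frac14(d\rho^2+\rho^2(2\eta)^2)$, where $2\eta$ has period $2\pi$. So the leading part is the smooth metric on the total space of $\mathcal{O}(-m)$, namely the disc bundle metric with connection $2\eta_0$, where $\eta_0$ is a fixed connection on $P_m$ with the curvature integral \eqref{eq:line_metric_connection_period}.

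The key remaining step is the gauge. I would argue as in the ball case. First remove the $d\rho$ component of $\eta$ by a gauge transformation. Then write $\eta=\eta_0+\alpha(\rho,x)$ and integrate \eqref{eq:pe_connection_curvature} in $\rho$. Using $d\xi=-\frac12\rho\,d\rho$, this gives
$$\partial_\rho\alpha=\tfrac12\rho\, *_{\bS^2}d_{\bS^2}W=\tfrac12\rho^{-1}*_{\bS^2}d_{\bS^2}B.$$
Because $d_{\bS^2}B=\rho^2\,d_{\bS^2}\tilde W\cdot(\text{smooth})$ is $O(\rho^2)$ and smooth in $\rho^2$, this integrates to $\alpha=*_{\bS^2}d_{\bS^2}b+\alpha_0(x)$, where $b$ is smooth in $\rho^2$ and vanishes at $\rho=0$. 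The slice curvature condition then forces $d\alpha_0$ to equal the difference of the curvatures of $\eta_0$ and of the limit $\eta$; both have integral $-m\pi$. Adjusting $\eta_0$ by a fixed one-form on $\bS^2$, or absorbing this exact difference, I can take $\alpha_0=0$ up to a gauge $df(x)$.

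Once this is done, $\eta=\eta_0+O(\rho^2)$ with all terms smooth in $\rho^2$. Then $g$ is a smooth metric on $D_m$ near the zero section, because functions of $\rho^2$ and forms of the type $\rho^2\eta$ are smooth in linear fiber coordinates. This avoids the Einstein-regularity bootstrap needed in the ball case, since $V$ is already smooth in $\rho^2$. The Kähler property extends because the complex structure is parallel and $g$ is smooth. Completeness and the conformal boundary $\eta^2|_{\xi=0}+e^\psi g_{\bS^2}$ follow exactly as in Proposition \ref{prop:ball_metric_extension}, using $W=1$ at $\xi=0$.

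The main obstacle is the bookkeeping that shows $B$ and $\alpha$ are genuinely smooth functions of $\rho^2$, with no odd powers of $\rho$ and no $\log\rho$ terms. This depends on the $O(4)$-invariance and smoothness of $V$ at $Z=0$ from Theorem \ref{thm:line_bundle_existence}. I would check it first by expanding $w_\xi$ through $V$.
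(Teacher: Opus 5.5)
Your proposal is correct and follows essentially the same route as the paper. Your $\rho$ and $B$ are the paper's $2r$ and $A=4r^2W$, and the paper likewise uses smoothness of $u$ in $r^2$, a gauge with no radial component so that $2\eta=d\theta+\alpha(r^2,x)$, and Cartesian fiber coordinates, where the key point is that $(B-1)/\rho^2$ is smooth in $\rho^2$; your explicit integration of the curvature equation against a background connection on $P_m$ simply spells out what the paper states in one line.
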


\begin{proof}
	As in Proposition \ref{prop:ball_metric_extension}, boundary elliptic regularity gives smoothness at $\xi=0$, so it remains to extend the metric across $\xi=\frac12$. Write $e^w=(\frac12-\xi)u$ as in \eqref{eq:line_bundle_transform}. By Theorem \ref{thm:intro_line_case}, $u$ extends smoothly and positively to $\xi=\frac12$.

	Set
	\begin{align}\label{eq:line_metric_radial_coordinate}
		\xi=\frac12-r^2,\quad A=4r^2W=1+2r^2-2\xi r^2\frac{u_\xi}{u}.
	\end{align}
	Then $u$ and $A$ are smooth functions of $(r^2,x)$, with $A(0,x)=1$, and the metric becomes
	\begin{align}\label{eq:line_metric_radial_form}
		g=A\,dr^2+r^2A^{-1}(2\eta)^2+\frac14uA\,g_{\bS^2}.
	\end{align}
	Since $W-1/(4r^2)$ and $We^w=uA/4$ are smooth in $(r^2,x)$, the curvature in \eqref{eq:pe_connection_curvature} is smooth up to $r=0$. Choosing a gauge so that $2\eta$ has no $dr$ component, we may therefore write locally
	\begin{align}\label{eq:line_metric_connection_gauge}
		2\eta=d\theta+\alpha(r^2,x),
	\end{align}
	where $\theta$ has period $2\pi$ and $\alpha(r^2,\cdot)$ is a smooth family of one-forms on the base.

	Use the Cartesian fiber coordinates $z=y_1+iy_2=re^{i\theta}$. Since $(A-1)/r^2$ is smooth, the identities
	$$dr^2+r^2d\theta^2=dy_1^2+dy_2^2,\quad r\,dr=y_1dy_1+y_2dy_2,\quad r^2d\theta=y_1dy_2-y_2dy_1$$
	show that \eqref{eq:line_metric_radial_form} extends smoothly across $y_1=y_2=0$. At the zero section, its value is
	$$dy_1^2+dy_2^2+\frac14u(\frac12,x)g_{\bS^2},$$
	which is positive definite. The circle action becomes rotation in the fibers and fixes the zero section. Moreover, $\xi=\frac12-|z|^2$ is smooth and positive there, so $h=\xi^{-2}g$ also extends smoothly.

	Finally, at $\xi=0$ we have $W=1$, so $|d\xi|_g=1$ and the induced boundary metric is $\eta^2|_{\xi=0}+e^\psi g_{\bS^2}$. As in Proposition \ref{prop:ball_metric_extension}, this proves the stated smooth compactification and completeness of $h$. Since $g$ is K\"ahler away from the smooth codimension-$2$ zero section and extends smoothly across it, its parallel complex structure $J$ satisfies $|\partial J|\leq C$ in smooth local coordinates. The codimension-$2$ condition gives a unique continuous extension of $J$, and $\nabla^gJ=0$ then implies that this extension is smooth. Hence the extension of $g$ is K\"ahler.
\end{proof}

\Addresses

\end{document}